\documentclass[11pt,a4paper,english,reqno]{amsart}
\usepackage[margin=2.5cm,
]{geometry}

\usepackage{babel}
\usepackage[T1]{fontenc}
\usepackage{uniinput} %
\usepackage{csquotes}
\usepackage{comment}
\usepackage{enumerate}
\usepackage{enumitem}
\usepackage{amsmath,amssymb,amsthm}
\usepackage{thmtools} %
\usepackage{esint} %
\usepackage[only,llbracket,rrbracket]{stmaryrd}
\usepackage{nicefrac}
\usepackage[spaced=-1]{diffcoeff} %
\usepackage[most]{tcolorbox}
\usepackage{footnote}
\BeforeBeginEnvironment{tcolorbox}{\savenotes}
\AfterEndEnvironment{tcolorbox}{\spewnotes}

\usepackage{orcidlink}

\usepackage{tikz}
\usetikzlibrary{decorations.pathmorphing,decorations.pathreplacing,tikzmark,calc,angles,quotes}
\usepackage{tabto}
\usepackage{xparse}
\usepackage{ifthen}
\usepackage{mathtools}
\usepackage{centernot} %
\usepackage{float}
\usepackage[disable]{todonotes}

\NewDocumentCommand{\todu}{s O{5} m}
{%
	\IfBooleanTF{#1}%
	{\todo[color=orange!\fpeval{ #2 * 10 }]{#3}}%
	{\todo[inline,color=orange!\fpeval{ #2 * 10 }]{#3}}%

}
\NewDocumentCommand{\note}{s m}
{%
	\IfBooleanTF{#1}%
	{\todo[color=green!10]{#2}}%
	{\todo[inline,color=green!10]{#2}}%

}

\usepackage{environ}

\newlist{pfparts}{description}{1}
\setlist[pfparts,1]{%
	font=\normalfont\textsf,
	itemindent=2pt,
	wide,
	itemsep=0pt,topsep=2pt,
	labelsep=0.75ex
}

\NewDocumentCommand{\Lp}{m O{\RR^n}}{
	L^{#1}(#2)
}

\newcommand{\indicator}[1]{\mathbf{1}_{#1}}
\newcommand{\hd}[1]{\mathcal{H}^{#1}}
\newcommand{\grapha}{\tilde{\mathcal{A}}}
\newcommand{\G}{\mathcal{G}}
\newcommand{\VG}{\mathcal{VG}}

\numberwithin{equation}{section}

\newcommand{\NN}[0]{\mathbb{N}}
\newcommand{\ZZ}[0]{\mathbb{Z}}
\newcommand{\RR}[0]{\mathbb{R}}
\newcommand{\CC}[0]{\mathbb{C}}
\renewcommand{\epsilon}{\varepsilon}
\renewcommand{\phi}{\varphi}
\renewcommand{\subset}{\subseteq}
\renewcommand{\supset}{\supseteq}

\renewcommand{\angle}{\measuredangle}

\DeclareMathOperator{\osc}{osc}

\DeclareMathOperator{\supp}{supp}

\DeclareMathOperator{\dist}{dist}
\DeclareMathOperator{\excess}{excess}

\DeclarePairedDelimiter{\paren}{(}{)}
\DeclarePairedDelimiter{\brac}{[}{]} %
\DeclarePairedDelimiter{\cbrac}{\{}{\}} %
\DeclarePairedDelimiter{\abs}{\lvert}{\rvert}
\DeclarePairedDelimiter{\norm}{\lVert}{\rVert}
\DeclarePairedDelimiterX{\inp}[2]{\langle}{\rangle}{#1, #2} %

\newcommand{\weaklyto}{\rightharpoonup}

\renewcommand{\restriction}{\big|}
\renewcommand\Re{\operatorname{Re}}%
\renewcommand\Im{\operatorname{Im}}%

\setlist[enumerate,1]{label=(\roman*)}

\usepackage[
	backend=biber,
	style=alphabetic, %
	maxalphanames=3,
	minalphanames=3,
	giveninits=true,
]{biblatex}
\DeclareSourcemap{
	\maps[datatype=bibtex]{
		\map{
			\step[fieldsource=doi,final]
			\step[fieldset=isbn,null]
			\step[fieldset=url,null]
			\step[fieldset=urldate,null]
			\step[fieldset=issn,null]
			\step[fieldset=eprint,null]
		}
	}
}%

\usepackage{hyperref}
\hypersetup{hidelinks}
\usepackage[nameinlink]{cleveref}

\crefname{equation}{}{}

\makeatletter
\DeclareRobustCommand{\crefnosort}[1]{%
	\begingroup\@cref@sortfalse\cref{#1}\endgroup
}
\makeatother

\makeatletter
\newcommand{\proofstep}[1]{%
	\par%
	\addvspace{\medskipamount}%
	\textit{#1\@addpunct{.}}\enspace\ignorespaces
}
\makeatother

\declaretheoremstyle[
	bodyfont=\normalfont,
	headfont=\normalfont\bfseries,
	notefont=\normalfont, notebraces={(}{)},
]{myrem}
\declaretheoremstyle[
	headfont=\normalfont\bfseries,
	notefont=\mdseries\bfseries, notebraces={(}{)},
]{mydefi}
\declaretheorem[name=Definition, style=mydefi]{definition}
\numberwithin{definition}{section}

\declaretheorem[name=Theorem]{theorem}

\declaretheorem[name=Theorem, numbered=no]{theorem*}
\declaretheorem[name=Corollary,sibling=definition]{corollary}
\declaretheorem[name=Corollary, numbered=no]{cor*}
\declaretheorem[name=Proposition,sibling=definition]{proposition}
\declaretheorem[name=Proposition, numbered=no]{prop*}
\declaretheorem[name=Lemma,sibling=definition]{lemma}
\declaretheorem[name=Lemma, numbered=no]{lemma*}
\declaretheorem[name=Remark, sibling=definition, style=myrem]{rem}
\declaretheorem[name=Remark, style=myrem, numbered=no]{rem*}

\declaretheorem[name=Example, style=myrem, numbered=no]{ex*}

\tcolorboxenvironment{lemma}{
	colback=blue!5,
	borderline west={2pt}{0pt}{blue!80!white},
	breakable,
	skin=enhanced,
	frame hidden,
	boxrule=0pt,
}
\tcolorboxenvironment{theorem}{
	colback=green!10,
	borderline west={2pt}{0pt}{green!80!black},
	skin=enhanced,
	frame hidden,
	boxrule=0pt,}
\tcolorboxenvironment{corollary}{
	colback=green!5,
	borderline west={2pt}{0pt}{green!80!white},
	skin=enhanced,
	frame hidden,
	boxrule=0pt,}
\tcolorboxenvironment{proposition}{
	colback=green!5,
	borderline west={2pt}{0pt}{green!80!white},
	skin=enhanced,
	frame hidden,
	boxrule=0pt,}
\tcolorboxenvironment{definition}{
	colback=yellow!5,
	borderline west={2pt}{0pt}{yellow!80!white},
	skin=enhanced,
	frame hidden,
	boxrule=0pt,
	}

\title{The Usual Square Function on Weakly Flat Sets}
\author[Benjamin Jaye \and Tobias Wang]{Benjamin Jaye\,\orcidlink{0000-0001-6326-7392} \and Tobias Wang\,\orcidlink{0009-0007-8208-308X}}
\address{
  School of Mathematics,
  Georgia Institute of Technology,
  Atlanta,
  GA 30332,
  USA
}
\email{\href{mailto:bjaye3@gatech.edu}{bjaye3@gatech.edu}}
\email{\href{mailto:tobias.wang@gatech.edu}{tobias.wang@gatech.edu}}
\subjclass[2020]{42B20, 28A75.}
\date{June 23, 2026}
\keywords{Square functions, Rectifiability, Usual square function estimate}

\hypersetup{
    pdftitle={The Usual Square Function on Weakly Flat Sets},
    pdfauthor={Benjamin Jaye, Tobias Wang},
    pdfsubject={Square Functions, Rectifiability, Usual square function estimate},
}

\begin{document}

\begin{abstract}
  We study the usual square function estimate associated with the Cauchy single-layer kernel in the plane, without assuming Ahlfors-David regularity.
  We prove that a finite Radon measure with positive and finite upper density is rectifiable if it satisfies the usual square function estimate and a weak flatness condition.
  We also prove that, under the same finiteness and density hypotheses, the weak flatness condition follows when the support is contained in a locally two-sided NTA curve.
  As a corollary, rectifiability follows when the support is contained in a quasicircle.
\end{abstract}

\maketitle

\setcounter{tocdepth}{1}
\tableofcontents

\section{Introduction}

We identify $\RR^2$ with $\CC$.
Let $\mu$ be a finite Radon measure in the plane and set
\begin{align}
  E:=\supp \mu .
\end{align}
We say that $μ$ is \emph{$1$-rectifiable} if $μ$ is absolutely continuous with respect to $1$-dimensional Hausdorff measure (we write $μ\le \hd 1$), and $\supp μ$ is $1$-rectifiable, i.e. there are Lipschitz maps $f_i\colon ℝ \to ℝ$, such that
\begin{align}
  \hd1\paren[\Big]{\supp μ\setminus \bigcup_{i=1}^{∞} f_i(ℝ)} = 0.
\end{align}

In their groundbreaking study of uniform rectifiability (a quantitative version of rectifiability), see for instance \cite{davidAnalysisUniformlyRectifiable1993,davidSingularIntegralsRectifiable1991}, \citeauthor{davidAnalysisUniformlyRectifiable1993} introduced the usual square function estimate which is the following boundedness condition for the Cauchy single-layer kernel.
\begin{definition}[Usual square function estimate]\label{def:usfe}
  We say that $\mu$ satisfies the \emph{usual square function estimate} (USFE) if there is a constant $C_{\mathrm{USFE}}$ such that, for every $f\in L^2(\mu)$,
  \begin{align}\label{eq:intro-usfe}
    \int_{\CC\setminus E}
    \abs[\Big]{\int_E \frac{f(\xi)}{(z-\xi)^2}\dl{\mu(\xi)}}^2
    \dist(z,E)\dl{m_2(z)}
    \le C_{\mathrm{USFE}} \norm{f}_{L^2(\mu)}^2 .
  \end{align}
\end{definition}

Indeed, one of the principal results in \cite{davidAnalysisUniformlyRectifiable1993} states that, for an Ahlfors--David-regular (\emph{AD-regular}) measure $μ$, i.e. a measure satisfying $μ(B(x,r))\approx r$ for all $r>0$, USFE holds if and only if $\mu$ is uniformly rectifiable \cite[Theorem I.2.41]{davidAnalysisUniformlyRectifiable1993}.
This characterization has proved useful in free boundary problems and partial differential equations \cite{hofmannUniformRectifiabilityHarmonic2014b,hofmannUniformRectifiabilityHarmonic2014c}.
The purpose of this paper is to initiate the study of USFE without the presence of AD-regularity.

For a Radon measure $\mu$, write
\begin{align}\label{eq:intro-density-notation}
  \delta_\mu(x,r)=\delta_\mu(B(x,r)):=\frac{\mu(B(x,r))}{r},
  \quad\text{and}\quad
  \delta_\mu^*(x):=\limsup_{r\to0}\delta_\mu(x,r),
\end{align}
for the \emph{density} at $x$ and scale $r$ and the \emph{upper density} at $x$, respectively.
One might hope that, if $\mu$ has finite and positive upper density, then USFE forces rectifiability.
In other words, one might conjecture that the measure-theoretic density assumption
\begin{align}\label{eq:intro-positive-upper-density}
  0<\delta_\mu^*(x)<\infty
  \qquad\text{for }\mu\text{-a.e. }x
\end{align}
is enough to recover a non-AD-regular analogue of the David--Semmes theorem.  This is false.  In \cref{thm:finite-support-counterexample}, we construct a finite Radon measure satisfying \cref{eq:intro-positive-upper-density} and USFE, with support of finite $\hd1$ measure, which is not rectifiable.
Moreover, the set satisfies that the $\alpha$-numbers introduced by Tolsa (cf. \cite{tolsaUniformRectifiabilityCalderon2009}) tend to zero, which would be a natural measure theoretic weakening of weak flatness.
Thus, an additional assumption on the support of the measure is required for a rectifiability theorem involving the support-sensitive square function in \cref{eq:intro-usfe}.

We now introduce the support condition used in the first theorem.
For a closed set $F\subset\CC$ we define Jones' $β$-number
\begin{align}\label{eq:intro-beta-number}
  \beta_F(x,r)
  :=
  \inf_L \sup_{y\in F\cap B(x,3r)}\frac{\dist(y,L)}{r},
\end{align}
where the infimum is taken over all lines $L\subset\CC$ \cite{jonesRectifiableSetsTraveling1990}.

\begin{definition}[Weak flatness]\label{def:weak-flatness}
  We say that $\mu$ is \emph{weakly flat} if, with $E=\supp\mu$,
  \begin{align}\label{eq:intro-weak-flatness}
    \lim_{r\to0}\beta_E(x,r)\delta_\mu(x,r)=0
    \qquad\text{for }\mu\text{-a.e. }x\in E.
  \end{align}
\end{definition}
Note that weak flatness does not imply rectifiability even in the AD-regular case.
In that setting, the weak geometric lemma, a condition stronger than ours, does not imply rectifiability as exemplified by a variation of the von Koch-curve \cite[Section 20]{davidSingularIntegralsRectifiable1991}.
Weak flatness is not a standard necessary condition for rectifiability in the non-AD-regular setting: low-mass pieces of the support may destroy support flatness while carrying little measure.
Its role here is to rule out the support-dust pathology in \cref{thm:finite-support-counterexample}.

The first main result is the following non-AD-regular analogue of the David--Semmes implication for the usual square function.
\begin{theorem}[Weakly flat case]\label{thm:main-theorem}
  Let $\mu$ be a finite Radon measure on $\RR^2$ satisfying
  \begin{enumerate}
    \item \label{item:finite-positive-upper-density} $0<\delta_\mu^*(x)<\infty$ for $\mu$-almost every $x$, and
    \item \label{item:weak-flatness} $\mu$ is weakly flat.
  \end{enumerate}
  If $\mu$ satisfies USFE, then $\mu$ is $1$-rectifiable.
\end{theorem}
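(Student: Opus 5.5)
We argue via tangent measures and Preiss-type density results. With the positive and finite upper density, by the Besicovitch--Marstrand--Mattila theory it suffices to show that for $\mu$-a.e.\ $x$ every tangent measure of $\mu$ at $x$ (taken along scales $r_k\to0$ with $\delta_\mu(x,r_k)\approx\delta_\mu^*(x)$) is a constant multiple of $\hd1$ restricted to a line. Indeed, a flat tangent along density-maximizing sequences, combined with \cref{item:finite-positive-upper-density}, yields positive lower density via a Mattila--Preiss type criterion (for $1$-dimensional measures in the plane: all tangents flat plus $\delta^*<\infty$ gives rectifiability). So the plan is: (1) localize, (2) blow up, (3) identify the blow-up using USFE and weak flatness.

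\textbf{Localization.} First we restrict $\mu$ to compact sets $K$ on which $\delta_\mu(x,r)\le M$ for $r<r_0$ and $\delta^*_\mu\ge 1/M$. We then check that USFE (with $E$ still $\supp\mu$) gives, after a standard density-point argument, a scale-invariant local estimate. Testing \cref{eq:intro-usfe} with $f=\indicator{B(x,r)}$ and combining with Fubini/Lebesgue differentiation gives for $\mu$-a.e.\ $x\in K$:
\begin{align*}
\lim_{r\to0}\frac{1}{\mu(B(x,r))}\int_{B(x,Ar)\setminus E}\abs[\Big]{\int_{B(x,Ar)}\frac{d\mu(\xi)}{(z-\xi)^2}}^2\dist(z,E)\,dm_2(z)=0
\end{align*}
for each fixed $A$. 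This is the usual argument that a finite integral of local square-function densities forces decay of the normalized local square function at almost every point, made precise by a Carleson-type summation over dyadic scales.

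\textbf{Blow-up.} Take $T_{x,r_k}\mu/\mu(B(x,r_k))\to\nu$. Weak flatness is used here. Since $\beta_E(x,r_k)\delta_\mu(x,r_k)\to0$ and $\delta_\mu(x,r_k)\gtrsim1/M$, we get $\beta_E(x,r_k)\to0$, so the rescaled supports $E_k=(E-x)/r_k$ converge in Hausdorff distance on $B(0,3)$ to a subset of a line $L$. In particular $\dist(z,E_k)\to\dist(z,L\cap\cdots)$ locally, and, importantly, $\dist(z,E_k)\ge \dist(z,L)-o(1)$. Passing to the limit in the vanishing local square function (the kernel $(z-\xi)^{-2}$ is continuous away from $L$, and $\dist(z,E_k)$ converges uniformly) yields, for $z\notin L$ in $B(0,A)$, that $\int (z-\xi)^{-2}d\nu(\xi)=0$ up to the contribution of the mass outside $B(0,A)$. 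That contribution is controlled by the polynomial growth $\nu(B(0,R))\lesssim R$ inherited from the density bounds. Letting $A\to\infty$, the Cauchy transform derivative $\partial_z\int d\nu(\xi)/(z-\xi)$ vanishes off $L$ (suitably renormalized to handle growth).

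\textbf{Identification.} Here $\nu$ is supported on $L\cong\RR$ with linear growth, and its (renormalized) Cauchy transform is locally constant on each half-plane. A Plemelj jump argument then shows that the distributional difference of boundary values, namely $2\pi i\,\nu$ up to the derivative structure, must be a constant times Lebesgue measure on $L$. More precisely, the second-derivative kernel yields a harmonic extension whose derivative is affine, forcing the density of $\nu$ along $L$ to be constant. Thus $\nu=c\hd1|_L$ with $c>0$ because $0\in\supp\nu$. The main obstacle is making the density-point/localization step rigorous without AD-regularity. The set $E$ may carry dust of tiny mass near $x$, which makes $\dist(z,E)$ small and could break the passage to the limit. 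Weak flatness is exactly what keeps $E_k$ near $L$. We still must ensure that points of $E_k$ with negligible $\mu$-mass do not spoil the lower bound on the weight, and we would handle this by only integrating over $z$ with $\dist(z,L)>\epsilon$, where $\dist(z,E_k)\ge\epsilon/2$ for large $k$.
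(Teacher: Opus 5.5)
There is a genuine gap, and it lies in the reduction in your first paragraph rather than in the localization you flag as the main obstacle. The criterion you invoke is false. Flat tangents along density-maximizing scales, together with $0<\delta_\mu^*<\infty$, give neither positive lower density nor rectifiability. The measure $\Phi$ of Preiss, used in the proof of \cref{thm:preiss-core-alpha}, is a counterexample:
\begin{itemize}
  \item $\delta_\Phi^*=2$ and the lower density is $0$ almost everywhere;
  \item every non-zero tangent $\lim_k \Phi(x+r_k\cdot)/r_k$ is of the form $c\,\hd1\restriction_L$;
  \item $\Phi$ is not rectifiable.
\end{itemize}
Along your scales with $\delta_\mu(x,r_k)\approx\delta_\mu^*(x)$, normalizing by $\mu(B(x,r_k))$ or by $r_k$ differs only by a bounded factor. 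So $\Phi$ already has exactly the property your blow-up is designed to produce. The tangent-measure criteria (Marstrand--Mattila, Preiss) need either positive lower density or a single tangent line shared by all blow-ups at $x$. Single-scale blow-ups give neither: at scales where $\delta_\mu(x,r)\to 0$, weak flatness says nothing about $\beta_E(x,r)$, and no compactness argument prevents the approximating line from rotating slowly as $r\to 0$.

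Your blow-up and identification steps are essentially sound. They parallel \cref{thm:blow-up-lemma}: Fatou for the rescaled energies, tail control, and the fact that a Cauchy-flat measure of linear growth carried by a line is $c\,\hd1\restriction_L$. But they use USFE only qualitatively, one scale at a time, and that information is compatible with non-rectifiable measures. What is missing is a quantitative, multi-scale use of the square function. The paper gets this through \cref{thm:main-lemma}:
\begin{itemize}
  \item A stopping-time Lipschitz graph is built from very good balls (density $\ge\delta$ and angle $\le\alpha$ at all larger scales).
  \item The set $F_1$ of points whose stopping ball has low density has measure $\lesssim\delta$ (\cref{thm:f1-small}).
  \item The big-angle set satisfies $\mu(F_2)\lesssim\norm{A'}_2^2/\alpha^2$. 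Here $\norm{A'}_2$ is bounded, via the Fourier computation and \cref{thm:lower-bound-conical}, by the square function of $1$ on the graph. That square function is then compared with the square function of $\mu$ (\cref{thm:f2-small}).
\end{itemize}
The bound on $\norm{A'}_2$ is what rules out the slow rotation of tangent lines seen in Preiss's example. This is also why the paper's own blow-up argument, in the proof of \cref{thm:local-nta-rectifiable}, is used only to establish weak flatness before \cref{thm:main-theorem} is invoked.
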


This result is also naturally compared with non-AD-regular rectifiability criteria based on Jones' square function.
\Citeauthor{pajotConditionsQuantitativesRectifiabilite1997}, \citeauthor{badgerTwoSufficientConditions2015}, and \citeauthor{azzamCharacterizationNrectifiabilityTerms2015} showed that rectifiability can be characterized or forced under suitable density hypotheses by the finiteness of square functions built from $\beta$-numbers \cite{pajotConditionsQuantitativesRectifiabilite1997,badgerTwoSufficientConditions2015,tolsaCharacterizationNrectifiabilityTerms2015,azzamCharacterizationNrectifiabilityTerms2015}.  The present paper concerns a different square function, namely the usual square function in \cref{eq:intro-usfe}.

The two-dimensional definition of USFE generalizes to higher dimensions (maintaining codimension $1$) via the fundamental solution of the Laplacian (cf. \cite[Definition I.2.38]{davidAnalysisUniformlyRectifiable1993}).
We also mention related higher-codimension square-function characterizations based on regularized distances, due to \citeauthor{davidSquareFunctionsNontangential2021} \cite{davidSquareFunctionsNontangential2021}.
Our focus here, however, remains on the classical codimension-one Cauchy/single-layer square function.

The second result gives a geometric situation in which weak flatness follows from USFE.  We use a local version of two-sided NTA geometry.  NTA domains were introduced by \citeauthor{jerisonBoundaryBehaviorHarmonic1982} \cite{jerisonBoundaryBehaviorHarmonic1982}; local two-phase NTA blow-up arguments of this type are standard in the harmonic-measure literature, for example in the work of \citeauthor{kenigBoundaryStructureSize2009} \cite{kenigBoundaryStructureSize2009}.

\begin{definition}[Locally two-sided NTA curve]\label{def:local-two-sided-nta}
  Let $\Gamma\subset\CC$ be closed and let $x\in\Gamma$.  We say that $\Gamma$ is \emph{locally two-sided NTA at $x$} if there are constants $c_x>0$, $C_x<\infty$, a radius $r_x>0$, and two disjoint open connected sets $\Omega_x^+,\Omega_x^-\subset B(x,r_x)\setminus\Gamma$ such that
  \begin{align}
    B(x,r_x)\setminus\Gamma=(\Omega_x^+\cup\Omega_x^-),
    \qquad
    \partial\Omega_x^+\cap B(x,r_x)=\partial\Omega_x^-\cap B(x,r_x)=\Gamma\cap B(x,r_x),
  \end{align}
  and both $\Omega_x^+$ and $\Omega_x^-$ satisfy the local corkscrew and local Harnack-chain conditions in $B(x,r_x)$ with constants $c_x,C_x$.
  Concretely, let $Ω_x$ denote either $Ω_x^+$ or $Ω_x^-$.
  The local corkscrew condition for $Ω_x$ means that whenever \(y\in\Gamma\), $t>0$ and
  \(B(y,t)\subset B(x,r_x)\), then $Ω_x$ contains a ball $B(a,c_x t)\subset\Omega_x\cap B(y,t)$.
  By the local Harnack-chain condition we mean\footnote{Any localized version of the many equivalent Harnack-chain conditions would work.} that for every \(\eta>0\) there are \(N_\eta,\rho_\eta>0\), such that whenever for some $b\in\CC$, $t>0$,
  \[
    B(b,C_x t)\subset B(x,r_x), \quad
    p,q\in \Omega_x\cap B(b,t),
    \quad\text{and}\quad
    \dist(p,\Gamma),\dist(q,\Gamma)\ge \eta t,
  \]
  then \(p\) and \(q\) can be joined by a Harnack chain $B_j = B(a_j,s_j)$, $j=1,...,N$ with
  \(B_j \subset \Omega_x\cap B(b,C_x t)\), $s_j \approx \dist(B_j, Γ)$, \(N\le N_\eta\), consecutive balls intersecting, $p\in B_1$ and $q\in B_N$.
\end{definition}

\begin{theorem}[Locally two-sided NTA case]\label{thm:local-nta-rectifiable}
  Let $\mu$ be a finite Radon measure on $\RR^2$ satisfying
  \begin{enumerate}
    \item $0<\delta_\mu^*(x)<\infty$ for $\mu$-almost every $x$, and
    \item $\supp\mu\subset\Gamma$, where $\Gamma$ is a closed set and locally two-sided NTA at $\mu$-almost every point of $\supp\mu$.
  \end{enumerate}
  If $\mu$ satisfies USFE, then $\mu$ is $1$-rectifiable.
\end{theorem}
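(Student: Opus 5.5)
The plan is to reduce the statement to \cref{thm:main-theorem}: it suffices to show that under the hypotheses $\mu$ is weakly flat, i.e.\ $\beta_E(x,r)\delta_\mu(x,r)\to0$ for $\mu$-a.e.\ $x$. Since $E=\supp\mu\subset\Gamma$, we have $\beta_E\le\beta_\Gamma$ on $E$. We argue by contradiction. Suppose there is a set $A\subset E$ with $\mu(A)>0$ such that for $x\in A$ the hypotheses hold: $0<\delta_\mu^*(x)<\infty$, $\Gamma$ is locally two-sided NTA at $x$, and $x$ is a Lebesgue-type point for the relevant localized USFE quantities. Suppose also that $\limsup_{r\to0}\beta_E(x,r)\delta_\mu(x,r)>0$ for $x\in A$. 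By restricting to a subset of positive measure we may take the NTA constants $c_x,C_x,r_x$, the density bounds, and the lower bound on the $\limsup$ to be uniform, say with density $\le M$ and $\beta\delta\ge\epsilon_0$ along a sequence $r_k\to0$.

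Next we localize the USFE. Testing \cref{eq:intro-usfe} with $f=\indicator{B(x,Cr)}$ and using finiteness of the total square function integral, a standard density argument shows the following for $\mu$-a.e.\ $x\in A$: the normalized quantity
\[
  \frac{1}{\mu(B(x,r))}\int_{B(x,Cr)\setminus E}\Big|\int_{B(x,C'r)}\frac{d\mu(\xi)}{(z-\xi)^2}\Big|^2\dist(z,E)\,dm_2(z)
\]
tends to zero along suitable scales. The tails from $\mu$ outside $B(x,C'r)$ are controlled by the upper density bound $M$, which gives a Carleson-type decay. At the scales $r_k$ we blow up: set $\mu_k=\mu(B(x,r_k))^{-1}T_{x,r_k\#}\mu$ and $\Gamma_k=r_k^{-1}(\Gamma-x)$. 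One subtlety is that $\delta_\mu(x,r_k)\ge\epsilon_0/\beta\ge\epsilon_0/3$, so the density is bounded below at these scales. This is exactly what makes the rescaled masses comparable to $r_k$ and prevents degeneration. After passing to subsequences, $\mu_k\to\nu$ weakly and $\Gamma_k\to\Gamma_\infty$ in Hausdorff distance on compacts. The limit $\nu$ is a nonzero measure with linear growth, and $\supp\nu\subset\Gamma_\infty$. The set $\Gamma_\infty$ is a two-sided NTA curve globally with uniform constants, since corkscrews and Harnack chains pass to Hausdorff limits. Moreover $\beta_{\supp\nu}(0,1)\gtrsim\epsilon_0/M>0$. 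For this last point we need that flatness defects of $E$ carry mass. The mass-carrying part is guaranteed because $\beta_E\delta_\mu\ge\epsilon_0$, but points of $E$ may carry tiny mass. This is the main obstacle, addressed below.

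In the limit the localized USFE gives
\[
  \int_{\CC\setminus\Gamma_\infty}|\,\partial_z\mathcal C\nu(z)|^2\dist(z,\Gamma_\infty)\,dm_2=0,
\]
at least on compacts. For this we use that $\dist(z,E)\ge\dist(z,\Gamma)$, that the kernel converges locally uniformly away from $\Gamma_\infty$, and Fatou. Hence the Cauchy transform $\mathcal C\nu$ is affine, indeed constant after accounting for decay, on each complementary component $\Omega^\pm_\infty$. The two-sided NTA structure is then used: $\mathcal C\nu$ is constant on each of the two sides, and the jump of $\mathcal C\nu$ across $\Gamma_\infty$ equals $\nu$ times the tangent (Plemelj). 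One then shows that $\nu$ is a constant multiple of arclength on a line, e.g.\ $\bar\partial\mathcal C\nu=\pi\nu$ vanishes off $\Gamma_\infty$ with piecewise-constant values. Harnack chains give that $\Omega^\pm_\infty$ are connected and that the constants do not vary. This forces $\supp\nu$ to be a line, contradicting $\beta_{\supp\nu}(0,1)>0$.

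The hard step is the transfer of non-flatness to the limit. $\beta_E(x,r_k)\delta_\mu(x,r_k)\ge\epsilon_0$ only says $E$ is non-flat, and the non-flat part may be mass-poor. Here I would exploit the NTA hypothesis. Because $\Gamma\supset E$ has two-sided corkscrews, a point $y\in E$ far from the best line forces the region near $y$ to be separated by $\Gamma$. The distance weight $\dist(z,E)$ is then comparable to $r_k$ on corkscrew balls near $y$. If $\nu$ gave little mass near $y$, the derivative of the Cauchy transform would see a definite amount of the mass elsewhere, and the square function integral near $y$ would be $\gtrsim\epsilon_0^2\mu(B(x,r_k))$. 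So instead of proving $\beta_{\supp\nu}>0$, one proves directly that the limiting constant-on-each-side Cauchy transform is incompatible with $\Gamma_\infty$ having a corkscrew point off the support line of $\nu$. This is the argument I would push first, using that $\Gamma_\infty$ is a connected curve and that $\mathcal C\nu$ is analytic across $\Gamma_\infty\setminus\supp\nu$.
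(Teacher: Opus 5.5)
Your setup is essentially the paper's: the reduction to weak flatness, differentiation of the localized energy, and bad scales with $\beta_E(x,r_k)\ge\epsilon_0/M$ and $\delta_\mu(x,r_k)\ge\epsilon_0/3$. So is the blow-up, with Fatou giving $\int(z-\xi)^{-2}\,d\nu(\xi)=0$ on $\CC\setminus\Gamma_\infty$. The gap is in the endgame.

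\textbf{Where it fails.} You aim to contradict $\beta_{\supp\nu}(0,1)>0$, but this bound is not available. The points of $E$ witnessing $\beta_E(x,r_k)\ge\epsilon_0/M$ may carry $o(r_k)$ mass and disappear in the weak limit. This is exactly the mechanism of the dust example in \cref{thm:finite-support-counterexample}.

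Your proposed repair does not work. You claim that near a mass-poor non-flat point $y$ the energy on corkscrew balls is $\gtrsim\epsilon_0^2\mu(B(x,r_k))$ because the kernel sees mass elsewhere. But for $\nu=c\,\hd1\restriction_L$ one has $\int(z-\xi)^{-2}\,d\nu(\xi)=0$ at every $z\notin L$, however far from $L$. So mass located elsewhere gives no lower bound at all.

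Separately, the step from ``the Cauchy transform is constant on each side'' to ``$\nu$ is arclength on a line'' via a Plemelj jump relation is not an argument as it stands. Nothing is known a priori about tangents or rectifiability of $\Gamma_\infty$.

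\textbf{The missing idea.} Put the non-flatness on $\Gamma_\infty$, where no mass is needed. The rescaled supports $r_k^{-1}(E-x)$ lie in $\Gamma_k$, and $\beta$ is monotone in the set. So \cref{thm:beta-local-convergence} gives $\epsilon_0/M\le\limsup_k\beta_{\Gamma_k}(0,1)\le 2\beta_{\Gamma_\infty}(0,2)$. You noted $\beta_E\le\beta_\Gamma$ at the outset but never used it. The contradiction is then topological and comes from two-sidedness.

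\textbf{The paper's route.} It extends the vanishing to all of $\CC\setminus\supp\nu$ by analyticity and the two-sided corkscrew condition. Thus $\nu$ is Cauchy-flat with at most two corkscrew complementary components. The classification \cref{thm:cauchy-flat-finite-components} (proved via the piecewise-affine, convex logarithmic potential) then makes $\supp\nu$ a line $L$. Since $\partial\Omega^+_\infty=\partial\Omega^-_\infty=\Gamma_\infty$, a point of $\Gamma_\infty\setminus L$ would have a ball inside one open half-plane that misses one side. Hence $\Gamma_\infty=L$.

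\textbf{A direct alternative.} Your Plemelj intuition can also be made rigorous. Take the renormalized transform $F_\nu$ of \cref{app:proof-cauchy-flat-finite-components}, equal to constants $c^\pm$ on $\Omega^\pm_\infty$. The set $\Gamma_\infty$ is Lebesgue-null by porosity. Then $\pi\nu=\bar\partial F_\nu=(c^+-c^-)\,\bar\partial\indicator{\Omega^+_\infty}$, so $c^+\neq c^-$. Hence $\nabla\indicator{\Omega^+_\infty}$ is a fixed vector times the positive measure $\nu$. This forces $\Omega^+_\infty$ to be a half-plane up to a null set, and $\Gamma_\infty$ to be its boundary line.

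In either form, the contradiction must be with $\beta_{\Gamma_\infty}$, not with $\beta_{\supp\nu}$.
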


\begin{corollary}[Quasicircle case]\label{cor:quasicircle-rectifiable}
  Let $\mu$ be a finite Radon measure on $\RR^2$ satisfying $0<\delta_\mu^*(x)<\infty$ for $\mu$-almost every $x$.  Assume that $\supp\mu$ is contained in a quasicircle $\Gamma$.  If $\mu$ satisfies USFE, then $\mu$ is $1$-rectifiable.
\end{corollary}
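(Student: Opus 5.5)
The plan is to deduce the corollary directly from \cref{thm:local-nta-rectifiable}. The density hypothesis and USFE are the same in both statements, so all that is needed is that a quasicircle $\Gamma$ (a closed set) is locally two-sided NTA at every one of its points, and in particular at $\mu$-almost every point of $\supp\mu\subset\Gamma$.

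First, recall that a quasicircle is the image of the unit circle under a global quasiconformal map $\phi\colon\CC\to\CC$. By the Jordan curve theorem, $\CC\setminus\Gamma$ has exactly two components, a bounded one $\Omega^+$ and an unbounded one $\Omega^-$, and both are quasidisks. The classical facts we need are:
\begin{itemize}
\item[(a)] Ahlfors' three-point condition: $\Gamma$ is a Jordan curve, and for any two points of $\Gamma$ the smaller subarc between them has diameter comparable to their distance.
\item[(b)] The result of Jerison--Kenig and Gehring: quasidisks are NTA domains, with constants depending only on the quasiconformality constant.
\end{itemize}
For the unbounded complement, apply (b) to the image of $\Omega^-$ under an inversion about a point of $\Omega^+$. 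Alternatively, reflect through $\Gamma$ with a quasiconformal reflection and transport the corkscrew and Harnack-chain conditions through the bi-Hölder, locally quasisymmetric reflection.

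Now fix $x\in\Gamma$ and choose $r_x<\tfrac{1}{10}\operatorname{diam}\Gamma$. Set $\Omega_x^\pm:=\Omega^\pm\cap B(x,r_x)$. Then $B(x,r_x)\setminus\Gamma=\Omega_x^+\cup\Omega_x^-$, and since $\partial\Omega^\pm=\Gamma$, the boundary identity in \cref{def:local-two-sided-nta} holds inside $B(x,r_x)$. The local corkscrew condition is inherited from the global one: if $B(y,t)\subset B(x,r_x)$ with $y\in\Gamma$, the global corkscrew ball of $\Omega^\pm$ in $B(y,t)$ already lies in $\Omega^\pm\cap B(x,r_x)$.

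The delicate part is connectedness together with the local Harnack chains. A ball intersected with a quasidisk need not be connected; near $x$, the set $\Omega^+\cap B(x,r_x)$ could have several components. I would handle this in one of two ways.
\begin{itemize}
\item Replace the ball by a comparable round disk chosen using (a). Equivalently, pull back by $\phi$: near a point of $\partial\mathbb{D}$, the preimage of $B(x,r_x)$ contains and is contained in balls of comparable size. Then use that $\phi$ maps half-disks adjacent to $\partial\mathbb{D}$ onto connected pieces of $\Omega^\pm$ that contain $\Omega^\pm\cap B(x,c r_x)$.
\item More simply, note that the definition only requires $\Omega_x^\pm$ to be open, connected and to exhaust $B(x,r_x)\setminus\Gamma$. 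If $\Omega^+\cap B(x,r_x)$ turns out disconnected, shrink $r_x$ slightly, or use quasiconvexity of the global NTA domain: any two points of $\Omega^+\cap B(x,r)$ are joined inside $\Omega^+\cap B(x,Cr)$, and $C$ can be absorbed by redefining the constants.
\end{itemize}

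For the Harnack chains, given $p,q\in\Omega^\pm\cap B(b,t)$ with $B(b,C_xt)\subset B(x,r_x)$ and $\dist(p,\Gamma),\dist(q,\Gamma)\ge\eta t$, the global NTA property provides a chain of at most $N_\eta$ Whitney-type balls. This chain stays within a $Ct$-neighbourhood of $p$, by the cigar-curve characterization of uniform domains. It therefore lies in $B(b,C_xt)$ once $C_x$ is taken large.

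The main obstacle I expect is exactly this localization: matching the local definition's requirement that $\Omega_x^\pm$ be connected and exactly partition $B(x,r_x)\setminus\Gamma$. I would resolve it using the quasiconformal parametrization as described above.

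Once these points are verified, $\Gamma$ is locally two-sided NTA at every point, and \cref{thm:local-nta-rectifiable} gives that $\mu$ is $1$-rectifiable.
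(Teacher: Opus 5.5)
Your reduction is the paper's. The paper deduces the corollary from \cref{thm:local-nta-rectifiable} by citing Jones' theorem that both complementary components of a quasicircle are NTA (you cite Jerison--Kenig and Gehring), and then simply asserts that a quasicircle is locally two-sided NTA at every point.

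The gap is in the step you single out yourself: none of your suggestions closes it. Since $\Omega_x^\pm$ must be disjoint, open, connected and exhaust $B(x,r_x)\setminus\Gamma$, \cref{def:local-two-sided-nta} forces $B(x,r_x)\setminus\Gamma$ to have exactly two components. Equivalently, $\Gamma\cap B(x,r_x)$ must be a single crosscut of the disc.
\begin{itemize}
\item This is a topological property of one radius, so it cannot be absorbed into constants. Quasiconvexity only joins points of $\Omega^+\cap B(x,r)$ inside $\Omega^+\cap B(x,Cr)$, which makes neither set connected.
\item Shrinking $r_x$ is not justified. The three-point condition only places $\Gamma\cap B(x,r/C)$ inside the component of $\Gamma\cap B(x,r)$ through $x$. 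It does not stop $\Gamma$ from leaving $B(x,r)$ and re-entering at distance between $r/C$ and $r$ from $x$. This can happen at arbitrarily small scales: take a quasicircle which near $x=0$ contains the Lipschitz graph of $s\mapsto Ms\sin(\log s)$, $0<s<s_0$, with $M\ge 3$; there $|z|$ is non-monotone along the curve in every logarithmic period. You would have to exhibit a single-crosscut radius at $\mu$-a.e.\ $x$, and nothing in your argument does.
\item The pull-back by $\phi$ does give connected pieces $\phi(D\cap\mathbb{D})$ and $\phi(D\setminus\overline{\mathbb{D}})$ exhausting $\phi(D)\setminus\Gamma$. However, $\phi(D)$ is only a quasi-ball with $B(x,cr)\subset\phi(D)\subset B(x,r)$, not a ball $B(x,r_x)$ as the definition requires.
\end{itemize}

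The missing observation is that the proof of \cref{thm:local-nta-rectifiable} never uses the connectedness of $\Omega_x^\pm$, nor an exact two-piece partition of a round ball. \Cref{thm:local-nta-blowup-limit} only uses three things: the identity $B(0,R)\setminus\Gamma_k=(\Omega_k^+\cup\Omega_k^-)\cap B(0,R)$ for disjoint open sets $\Omega_k^\pm$, the local corkscrew condition, and the local Harnack-chain condition. The connectedness of the limiting sides $\Omega_\infty^\pm$ is then derived from the limiting Harnack chains.

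So take $\Omega_x^\pm:=\Omega^\pm\cap B(x,r_x)$, or your $\phi$-pieces with the chart identity required only on $B(x,cr_x)$. Your verification of the corkscrews, and of the Harnack chains via cigar curves, then suffices for the blow-up argument, and the corollary follows. The repair therefore consists in checking that the theorem's proof runs under this weaker chart hypothesis, rather than verifying \cref{def:local-two-sided-nta} literally. The paper's one-line deduction passes over the same point.
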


Indeed, by the planar theorem of Jones, quasicircles are exactly the Jordan curves whose two complementary components are NTA domains, with quantitative constants (cf. \cite[Theorem C]{jonesQuasiconformalMappingsExtendability1981} and the discussion therein).
Thus a global quasicircle is locally two-sided NTA at every point, and the corollary follows from \cref{thm:local-nta-rectifiable}.
We also stress that support containment in a Jordan curve alone is not sufficient; see \cref{rem:jordan-curve-version} after \cref{thm:finite-support-counterexample}.

\subsection*{Outline of the proof.}
The proof of \cref{thm:main-theorem} is reduced to a quantitative lemma at a fixed scale.  After decomposing the measure according to upper density, weak flatness, and local square-function size, it is enough to prove that a normalized piece of the measure with small weak flatness and small square-function energy has a fixed positive proportion lying on a Lipschitz graph.

The proof of this quantitative lemma follows the David--Semmes--L\'eger--Tolsa stopping-time and Fourier energy scheme and later square-function variants \cite{legerMengerCurvatureRectifiability1999,tolsaPrincipalValuesRiesz2008,jayeProofCarlesons$varepsilon^2$conjecture2021,jayeHuovinenTransformRectifiability2022}.
The good balls determine a Lipschitz graph, and the remaining part of the measure is divided into two exceptional pieces.  The first exceptional piece is small by density considerations.  The second is controlled by comparing the square function for $\mu$ with the corresponding square function on the constructed graph.  The usual square function on Lipschitz graphs has delicate dependence on the Lipschitz constant, so the comparison is carried out through a conical square function and through estimates for the operator applied to the constant function $1$.

The proof of \cref{thm:local-nta-rectifiable} reduces the locally two-sided NTA case to \cref{thm:main-theorem}.  If weak flatness fails, then on a density-controlled piece one finds bad scales at which both flatness and density are bounded from below.  Since $\mu$ is finite, the square function estimate may be applied to indicator functions of finite-measure sets, and it implies that the normalized square-function energy tends to zero at almost every such point.  Blowing up along the bad scales at a locally two-sided NTA point produces a non-zero limiting measure $\nu$ with at most linear growth, supported on the limiting boundary of a two-sided corkscrew configuration, and satisfying a \emph{Cauchy flatness} condition:
\begin{align}
  \int \frac{1}{(z-\xi)^2}\dl{\nu(\xi)}=0,
  \qquad \forall z\in\CC\setminus\supp\nu.
\end{align}

Cauchy-flat measures were introduced by David and Semmes in this context.
As part of their work in the AD-regular setting, they showed that every connected component of the complement of the support of an AD-regular Cauchy-flat measure is convex \cite[Proposition III.2.38]{davidAnalysisUniformlyRectifiable1993}.
Here we use more elementary means to understand Cauchy-flat measures without AD-regularity, but in the more specialized local geometric setup needed for the blow-up argument.
In particular, the appendix describes such Cauchy-flat measures when the complement of the support has finitely many corkscrew components: these complementary components are polygonal domains and the measure is a finite positive linear combination of arclength measure on their sides.
In the two-sided local blow-up, this forces the limiting boundary to be a line, contradicting the lower bound on the $\beta$-number.
Thus, weak flatness holds and \cref{thm:main-theorem} applies.

\subsection*{Outline of the paper.}
The paper is organized as follows.
\Cref{sec:main-lemma} proves the reduction of \cref{thm:main-theorem} to the quantitative main lemma.
\Cref{sec:graph-construction} constructs the approximating Lipschitz graph.
\Cref{sec:usf-lipschitz-graphs} proves square function estimates on Lipschitz graphs.
\Cref{sec:comparison} contains the localization and comparison arguments needed to control the second exceptional set.
Finally, we summarize the estimates for the two exceptional sets in \cref{sec:F1} and provide the proof of the main lemma.
\Cref{sec:blow-up} proves \cref{thm:local-nta-rectifiable} by a blow-up argument.
\Cref{sec:appendix} contains the auxiliary graph estimates used earlier in the paper,
\cref{app:proof-cauchy-flat-finite-components} proves the description of Cauchy-flat measures with finitely many complementary components,
and \cref{app:finite-support-dust-example} provides an example with support of finite length showing that a support-level condition cannot simply be removed from \cref{thm:main-theorem}.

\subsection*{Acknowledgements.}
We thank John Hoffman for a discussion at an early stage of this project.  Research supported in part by NSF grants DMS-2453251 and DMS-2049477.
This research was undertaken in part while B.J. was a Simons' Fellow.
T.W. was supported in part by the Department of Education's Graduate Assistance in Areas of National Need Award (P200A240169).

AI tools were used for proofreading.

\newpage
\section{Main Lemma}\label{sec:main-lemma}
In this \namecref{sec:main-lemma} we present the main lemma, an outline for its proof and the proof of \cref{thm:main-theorem}.

\begin{lemma}\label{thm:main-lemma}
  Let $μ$ be a Radon measure on $\RR^2$, $0\in\supp μ$, $B_0 :=B(0,1)$ and $F\subset 10B_0$ a compact set satisfying
  \begin{enumerate}
    \item $δ_{\mu}(B_0)=1$ and $μ(10B_0\setminus F) \le η$,
    \item  $μ(B(x,r))\le (1+δ_1)r$ for all $x\in F$ and $r\in(0,100)$,
    \item \label{item:weakly-flat-uniform} $β_{E}(x,r)δ_μ(x,r) \le λ_1$ for all $x\in F$ and $r\in(0,100)$,
    \item \label{item:usf-small} $\displaystyle \int_{z\in B(0,100)\setminus E}\abs[\Big]{ \int_{\CC} \frac{1}{(z-ξ)^2} \dl{μ(ξ)} }^2 \dist(z,E) \dl m_2(z) \le λ_2$,
    \item \label{item:main-lemma-usfe} the localized USFE bound
          \begin{equation}\label{eq:main-lemma-local-usfe}
            \int_{8B_0\setminus E}
            \abs[\Big]{\int_{10B_0\setminus F}\frac{1}{(z-ξ)^2}\dl{μ(ξ)}}^2
            \dist(z,E)\dl{m_2(z)}
            \le M μ(10B_0\setminus F),
          \end{equation}
    \item \label{item:main-lemma-tail} for some finite constant $C_{\operatorname{tail}}$,
          \begin{equation}
            \sup_{\zeta\in 8B_0}
            \int_{\CC\setminus 10B_0} \frac{1}{|\zeta-ξ|^2}\dl{μ(ξ)}
            \le C_{\operatorname{tail}} .
          \end{equation}
  \end{enumerate}
  where $E=\supp μ$.
  There exists an absolute constant $c_0>0$ such that if $δ_1,λ_1,λ_2$ are chosen small enough, and $η$ is small enough in terms of $M$ and $C_{\operatorname{tail}}$, then there exists a Lipschitz graph $γ$ such that $μ(B_0\cap F\cap γ)\ge c_0 μ(B_0)$.
\end{lemma}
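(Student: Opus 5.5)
We follow the Léger--Tolsa stopping-time scheme, adapted to the usual square function. Fix a small angle parameter $\alpha$ and a small density threshold $\tau$, both absolute. For $x\in F\cap B_0$ define the stopping scale $h(x)$ as the supremum of radii $r\le 1$ for which one of the following fails at some scale $r'\in[r,1]$:
\begin{enumerate}
  \item the density lower bound $\delta_\mu(x,r')\ge\tau$;
  \item the best approximating line for $E\cap B(x,3r')$ forms angle at most $\alpha$ with the line $L_0$ chosen at the top scale.
\end{enumerate}
Set $Z=\{x: h(x)=0\}$. Hypothesis (iii) gives $\beta_E(x,r)\le\lambda_1/\tau$ at every non-stopped scale. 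This is the point where weak flatness replaces the $\beta_1$-control of Léger. It makes $E$, and not only $\mu$, lie in thin strips, so $\dist(z,E)$ is comparable to the distance to a line near non-stopped balls. Because the approximating lines rotate only slightly between consecutive scales, the standard argument shows that $Z$ lies on a Lipschitz graph $\gamma_0$ over $L_0$ with slope $\lesssim\alpha$. Extending by a Whitney-type construction over $\{h>0\}$, using the function $d(x)=\inf_{y\in Z\cup\{h>0\}}(h(y)+|x-y|)$, gives a graph $\gamma$ with $\dist(y,\gamma)\lesssim \lambda_1 h(y)/\tau$ for points near stopped balls. The argument then reduces to showing that the union of stopped balls (type (i) or (ii)) together with $10B_0\setminus F$ carries at most $\tfrac12\mu(B_0)$, so that $\mu(Z\cap F\cap B_0)\ge c_0\mu(B_0)$.

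\textbf{Low-density stops.} Stops of type (i) are handled by density considerations alone. Hypothesis (ii) combined with $\delta_\mu(B_0)=1$ and a Vitali covering gives a total mass $\lesssim\tau$ plus $\eta$.

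\textbf{Angle stops.} Stops of type (ii) form the second exceptional set, and this is the main obstacle. We need to show that if a large portion of $\mu(B_0)$ sits in balls where the approximating line has turned by angle $\alpha$, then the square-function energy in (iv) is bounded below by some $c(\alpha,\tau)>0$. This contradicts (iv) once $\lambda_2$ is small.

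The plan for this step is as follows.
\begin{enumerate}
  \item Replace $\mu$ by the smoothed graph measure $\sigma=\rho\,\mathcal H^1|_\gamma$, where $\rho$ is given by local densities. Hypotheses (ii) and (iii) give $\rho\approx1$ up to $\delta_1$ and $\tau$.
  \item Control the error $\mu-\sigma$ in the square function. Tested against the kernel $(z-\xi)^{-2}$ at points $z$ with $\dist(z,E)\gtrsim d(z)$, it is estimated by flatness (small $\beta$ and density close to constant). The contribution of $10B_0\setminus F$ is bounded using (v) by $M\eta$. The far part is bounded using (vi) by $C_{\mathrm{tail}}$ times a small region, which is acceptable once $\eta$ is small in terms of $M$ and $C_{\mathrm{tail}}$.
  \item On the graph, prove a lower bound of the form
  \[
  \int_{\Omega}\abs[\Big]{\int\frac{d\sigma(\xi)}{(z-\xi)^2}}^2\dist(z,\gamma)\,dm_2(z)\gtrsim \|\gamma'\|_{\dot H^{1/2}}^2-\text{(errors)}.
  \]
  Here $\Omega$ is a cone-type region away from $\gamma$. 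For a small-slope graph this is essentially a Fourier computation: the Cauchy integral of $\mathcal H^1|_\gamma$ linearizes to the Hilbert transform of $\gamma'$. Because the dependence on the Lipschitz constant is delicate, we would pass through a conical square function, and through estimates for the operator applied to $1$ on $\gamma$, keeping the slope $\lesssim\alpha$ small.
  \item Finish with the usual Léger-type bound: angle stops with total mass $\ge\epsilon$ force $\|\gamma'\|_{\dot H^{1/2}}^2\gtrsim \alpha^2\epsilon$.
\end{enumerate}
Combining these gives $\alpha^2\epsilon\lesssim \lambda_2+M\eta+C_{\mathrm{tail}}\eta+o(1)$ in $\delta_1$, $\lambda_1$. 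Choosing the parameters appropriately makes $\epsilon$ small and closes the argument.

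The hardest step is item 2: comparing $\dist(z,E)$ with $\dist(z,\gamma)$, and controlling $\mu-\sigma$ near stopped balls. This is where the weak flatness of the support, rather than of the measure, is essential.
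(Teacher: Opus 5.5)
Your outline follows the paper's architecture: Tolsa's stopping time and graph, the partition $F=Z\cup F_1\cup F_2$, Tolsa's bound $\mu(F_2)\lesssim\|A'\|_2^2/\alpha^2$, weak flatness of the support used to compare $|s|$ with $\dist(\cdot,E)$ above the graph, and (v), (vi) for $10B_0\setminus F$ and the far part. However, several steps are wrong or unjustified as stated.

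\textbf{Step 3 uses the wrong norm.} The relevant quantity is $\|A'\|_2$, not $\|\gamma'\|_{\dot H^{1/2}}$. By Plancherel, the first-order term of the square function of $\mathcal H^1|_\gamma$ is comparable to $\int|\xi|^2|\hat A(\xi)|^2\,d\xi=\|A'\|_2^2$. The weight $\dist(z,\gamma)$ in front of the squared derivative of a Cauchy integral gives a Littlewood--Paley functional, which sees the $L^2$ norm of the boundary datum; $\dot H^{1/2}$ would come from the unweighted Dirichlet integral. The higher-order terms are handled through $(m+1)\Theta_m 1=m\Theta_{m-1}(A')$, so the graph square function is $\lesssim\|A'\|_2^2$. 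A lower bound by $\|\gamma'\|_{\dot H^{1/2}}^2$ is therefore false. Take $A=\frac{\alpha}{N}\phi\,\sin(N\cdot)$ with $N$ large, or note that the square function scales like a length while $\|\gamma'\|_{\dot H^{1/2}}$ is scale invariant.

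\textbf{The low-density stops need the graph.} They are not controlled by (ii) and a Vitali covering. A Besicovitch covering by the stopping balls gives $\mu(F_1)\lesssim\tau\sum_i h(x_i)$. The bound $\sum_i h(x_i)\lesssim1$ comes from the stopped points lying within $\lesssim\varepsilon h(x_i)$ of $\gamma$, so each bounded-overlap ball carries length $\gtrsim h(x_i)$ of $\gamma$.

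\textbf{The main gap is the error budget behind your final inequality.} Step 1 claims that (ii) and (iii) give $\rho\approx1$. But (ii) is only an upper bound and (iii) constrains only the support, so nothing there keeps the density along $\gamma$ from dropping towards your threshold $\tau$.

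What the paper works with (imported from Tolsa) is only $0\le g\le1+C\alpha^2$ and $\|\mathbf 1_{8I_0}(g-1)\|_1\lesssim\alpha^2$ for the mollified projected density $g$. The corresponding comparison error is $\Theta\big((1-g)\mathbf 1_{7I_0}\big)$. In $L^2$ this costs $\|(1-g)\mathbf 1_{7I_0}\|_2\lesssim\alpha$, which is exactly the size of $\|A'\|_2$ you must beat. You would get $\|A'\|_2\lesssim\alpha+\sqrt{\lambda_2}$, hence only $\mu(F_2)\lesssim1+\lambda_2/\alpha^2$. So $\alpha^2\epsilon\lesssim\lambda_2+M\eta+C_{\mathrm{tail}}\eta+o(1)$ does not follow.

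This is what the cones do in the paper's proof. $T^{\vee}$ is bounded on $L^p$ for $1<p<\infty$ by extrapolation from $L^2$. So all localization and comparison estimates are done in $L^{3/2}$, where the density error is only $\alpha^{4/3}$. This gives $\|T^{\vee}1\|_{3/2}\lesssim\alpha^{4/3}+\sqrt{\lambda_2}$.

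The $L^2$ lower bound is then recovered by the interpolation $\|T^{\vee}1\|_2^2\le\|T^{\vee}1\|_{3/2}\|T^{\vee}1\|_3$, together with the a priori bound $\|T^{\vee}1\|_3\lesssim\alpha$. This yields $\|A'\|_2^2\lesssim\alpha\|T^{\vee}1\|_{3/2}+\alpha^4$, and hence $\mu(F_2)\lesssim\alpha^{1/3}+\sqrt{\lambda_2}/\alpha$. Without this $L^{3/2}$--$L^3$ interpolation, or some other way of making the squared $L^2$ density error $o(\alpha^2)$, your steps 2--4 do not close.
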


Before we come to the proof of \cref{thm:main-theorem}, we record the following \namecref{thm:localized-square-function-differentiation}.

\begin{lemma}[Localized square-function differentiation]\label{thm:localized-square-function-differentiation}
    Let $μ$ satisfy USFE, set $E=\supp μ$, and let $G,H\subset E$ be Borel sets with $μ(G)<\infty$.
    Suppose that $H$ has local linear growth: there are constants $C_H,r_H>0$ such that
    \begin{align}\label{eq:local-linear-growth}
        μ(B(x,r))\le C_H r,\qquad \forall x\in H,
        \quad 0<r<r_H .
    \end{align}
    There exists a $μ$-nullset $O$ such that for every $x\in H\setminus O$ and every $n\in\NN$,
    \begin{align}\label{eq:localized-cutoff-energy-zero}
        \lim_{r\to0}\frac{1}{r}
        \int_{z\in B(x,nr)\setminus E }
        \abs[\Big]{\int_G \frac{1}{(z-ξ)^2}\dl{μ(ξ)}}^2 \dist(z,E)\dl{m_2(z)}=0 .
    \end{align}
\end{lemma}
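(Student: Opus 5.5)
The plan is a standard differentiation argument that compares the energy with a finite measure. Write
\[
T_G(z):=\int_G \frac{1}{(z-\xi)^2}\,d\mu(\xi),\qquad
d\sigma(z):=\abs{T_G(z)}^2\dist(z,E)\indicator{\CC\setminus E}(z)\,dm_2(z).
\]
Since $\mu(G)<\infty$, the function $\indicator G$ lies in $L^2(\mu)$. USFE then gives $\sigma(\CC)\le C_{\mathrm{USFE}}\,\mu(G)<\infty$, so $\sigma$ is a finite Borel measure on $\CC$. The quantity in \cref{eq:localized-cutoff-energy-zero} is $\sigma(B(x,nr))/r$. Because of the local linear growth \cref{eq:local-linear-growth}, we have $\mu(B(x,nr))\le C_H nr$ for small $r$. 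It therefore suffices to show that
\[
\frac{\sigma(B(x,nr))}{\mu(B(x,nr))}\to0
\]
for $\mu$-a.e. $x\in H$, at least wherever $\mu(B(x,nr))$ is comparable to $r$. To avoid that comparability issue, I would instead work directly with $\sigma(B(x,s))/s$.

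\textbf{Step 1: $\sigma$ does not charge $E$.} Since $\sigma$ is supported on $\CC\setminus E$, it is singular with respect to $\mu$, because $\mu$ is concentrated on $E$ and $\sigma(E)=0$.

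\textbf{Step 2: a density theorem for singular measures against a linear-growth reference.} For $t>0$ consider the set
\[
A_t=\Bigl\{x\in H:\ \limsup_{s\to0}\frac{\sigma(B(x,s))}{s}>t\Bigr\}.
\]
I will show $\mu(A_t)=0$. Fix $\epsilon>0$. Since $\sigma(\overline{E})=0$ (and $E$ is closed), outer regularity gives an open $U\supset E$ with $\sigma(U)<\epsilon$. For each $x\in A_t$ there are arbitrarily small $s<r_H$ with $B(x,s)\subset U$ and $\sigma(B(x,s))>ts$. Apply the Vitali $5r$-covering lemma to these balls, restricted to a bounded piece of $A_t$, to get a disjoint subfamily $\{B(x_i,s_i)\}$ with $A_t\subset\bigcup_i B(x_i,5s_i)$. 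Linear growth at the centres $x_i\in H$ gives
\[
\mu(A_t)\le\sum_i C_H\,5s_i\le \frac{5C_H}{t}\sum_i\sigma(B(x_i,s_i))\le\frac{5C_H}{t}\,\sigma(U)<\frac{5C_H\,\epsilon}{t}.
\]
Here I need $5s_i<r_H$, which holds since the $s_i$ are chosen small. Letting $\epsilon\to0$ gives $\mu(A_t)=0$. Setting $O=\bigcup_k A_{1/k}$ we get $\sigma(B(x,s))/s\to0$ for $x\in H\setminus O$. Rescaling $s=nr$ then yields \cref{eq:localized-cutoff-energy-zero} for every $n$ simultaneously, since $\sigma(B(x,nr))/r=n\cdot\sigma(B(x,nr))/(nr)$.

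\textbf{Main obstacle.} The only delicate point is the covering step. The $5r$-covering lemma requires a uniform bound on the radii, which is fine, and a bounded set; I handle the latter by intersecting $A_t$ with large balls. Measurability of $A_t$ is not an issue if I work with outer measure $\mu^*$. One also has to use the growth bound at the centres $x_i\in H$ rather than at arbitrary points, which is why the balls are centred on $A_t\subset H$.
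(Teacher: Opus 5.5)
Your proof is correct and follows essentially the same route as the paper: the finite measure $\sigma$ obtained from USFE applied to $\indicator{G}$, a covering of the bad set by small balls centred in $H$ so that the linear growth bound applies, and the fact that $\sigma$ gives small mass to small neighbourhoods of $E$. The differences are cosmetic. You use the Vitali $5r$-lemma with a disjoint subfamily, which works because the growth bound is only ever applied at the centres $x_i\in H$, together with outer regularity of $\sigma$. The paper instead uses Besicovitch with bounded overlap and the explicit shrinking neighbourhoods $\{z\in\CC\setminus E:\dist(z,E)\le nr^*\}$.
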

\begin{proof}
    Since $μ(G)<\infty$, $\indicator{G}\in L^2(μ)$.  Hence USFE gives a finite measure
    \begin{align}
        σ_G(A):=\int_{A\setminus E}
        \abs[\Big]{\int_G \frac{1}{(z-ξ)^2}\dl{μ(ξ)}}^2 \dist(z,E)\dl{m_2(z)} .
    \end{align}
    Fix $n\in\NN$ and $λ>0$, and let
    \begin{align}
        A_{λ,n}:=\cbrac[\Big]{x\in H:
            \limsup_{r\to0}\frac{1}{r}σ_G(B(x,nr)\setminus E)\ge λ } .
    \end{align}
    Let $0<r^*<r_H/n$.  For each $x\in A_{λ,n}$ choose $r_x<r^*$ such that
    \begin{align}
        σ_G(B(x,nr_x)\setminus E)\ge λ r_x .
    \end{align}
    By the Besicovitch covering theorem, there is a countable subcollection of the balls $B(x,nr_x)$, say $B(x_a,nr_a)$, which covers $A_{λ,n}$ with bounded overlap.  Since $x_a\in H$ and $nr_a<r_H$,
    \begin{align}
        μ(B(x_a,nr_a))\le C_H n r_a .
    \end{align}
    Therefore
    \begin{align}
        μ(A_{λ,n})
         & \lesssim \sum_a μ(B(x_a,nr_a))
        \lesssim_{C_H,n} \sum_a r_a                                                          \\
         & \lesssim_{C_H,n,λ} \sum_a σ_G(B(x_a,nr_a)\setminus E) \\
         & \lesssim_{C_H,n,λ} σ_G(\cbrac{z\in\CC\setminus E:\dist(z,E)\le nr^*}) .
    \end{align}
    Letting $r^*\downarrow0$ gives $μ(A_{λ,n})=0$, because $σ_G$ is finite and the sets $\cbrac{z\in\CC\setminus E:\dist(z,E)\le nr^*}$ decrease to the empty set.
    Taking the union over $λ\in\mathbb Q_+$ and $n\in\NN$ proves the lemma.
\end{proof}

\begin{proof}[Proof of \cref{thm:main-theorem} using \cref{thm:main-lemma}]
  Let $E:= \supp μ$.
  Absolute continuity of $μ$ with respect to $\hd 1$ follows from the finite upper density $μ$-almost everywhere.
  Take an arbitrary Borel subset $\tilde E\subset E$ with $μ(\tilde E)>0$.
  Our goal is to show that there is a Lipschitz graph $γ$ such that $μ(\tilde E \cap γ)>0$.
  This implies rectifiability of $\supp μ$.

  Let $C_{\mathrm{USFE}}$ be the constant in the USFE for $μ$.  Since $μ$ is finite, the constant function $1$ belongs to $L^2(μ)$.  Thus USFE gives a finite square-function measure
  \begin{align}
    σ(A):=\int_{A\setminus E}
    \abs[\Big]{\int_E \frac{1}{(z-ξ)^2}\dl{μ(ξ)}}^2
    \dist(z,E)\dl{m_2(z)} .
  \end{align}
  The finiteness of $σ$ is used below to prove local square-function smallness.  The finiteness of $μ$ itself is used in the tail estimate.  No global upper linear growth is assumed.

  We now use a uniformization procedure to partition $\tilde E$ so that we have the desired control as stated in \cref{thm:main-lemma}.
  Given $δ>0$, for each $i\in \ZZ$, define
  \begin{align}
    E_i = \{x\in \tilde E : (1+δ)^{i}\le δ_μ^*(x) < (1+δ)^{i+1} \} .
  \end{align}
  Assumption \ref{item:finite-positive-upper-density} of \cref{thm:main-theorem} ensures $μ(\tilde E\setminus \cup_i E_i) =0$.
  For $j\ge 1$, denote
  \begin{align}
    E_{i,j} = \{x\in E_i : δ_{\mu}(x,r) \le (1+δ)^{i+2} \text{ if } 0<r<1/j \} .
  \end{align}
  Next, we set
  \begin{align}
    E_{i,j,k} := \{ x\in E_{i,j} : β_{E}(x,r) δ_{\mu}(x,r) < (1+δ)^{i-1} λ_1 \text{ if } 0<r\le 1/k \}.
  \end{align}
  Assumption \ref{item:weak-flatness} ensures that $μ(\tilde E\setminus \cup_{i,j,k} E_{i,j,k}) =0$.
  Finally, we denote by $E_{i,j,k,ℓ}$ the set
  \begin{align}
    \cbrac[\Big]{x\in E_{i,j,k} :
      \frac{1}{r} σ(B(x,100r))
      \le (1+δ)^{2i-2} λ_2
      \text{ if } 0<r\le 1/ℓ }.
  \end{align}
  Since $μ$ is finite, and $E_{i,j}$ has local linear growth in the sense of \cref{eq:local-linear-growth},
  we can apply \cref{thm:localized-square-function-differentiation} to $G := E$, $H:=E_{i,j,k}$ which shows that
  \begin{align}
    \lim_{r\to0}\frac{1}{r}
    σ(B(x,100r))=0, \quad \forall x\in E_{i,j,k}\setminus O,
  \end{align}
  where $O$ is a $μ$-nullset.
  Thus, $μ(\tilde E\backslash \bigcup_{i,j,k,\ell} E_{i,j,k,\ell})=0$.

  We \enquote{disjointify} the sets $(E_{i,j,k,ℓ})$ by choosing pairwise disjoint sets $\tilde E_{i,j,k,ℓ} \subset E_{i,j,k,ℓ}$ while maintaining
  $μ\paren[\Big]{ \tilde E \setminus \cup_{i,j,k,ℓ} \tilde E_{i,j,k,ℓ} } = 0$.
  Now fix $i,j,k,ℓ$ with $μ(\tilde E_{i,j,k,ℓ}) > 0$.
  We will apply \cref{thm:main-lemma} with a finite tail constant $C_{\operatorname{tail}}$ depending only on the fixed density level; $η$ is chosen small enough in terms of this constant and the scaled USFE constant.
  For each density point $x_0$ of $\tilde E_{i,j,k,ℓ}$, we choose $r_0<\min \{1/(100j),1/(100k), 1/ℓ \}$, small enough also for the tail estimate verified at the end of the proof, such that
  \begin{align}\label{eq:density-choice}
    μ(B(x_0,10r_0)\setminus \tilde E_{i,j,k,ℓ})
    < \frac{η}{20(1+δ)^3} μ(B(x_0,10r_0))
    \quad\text{and}\quad
    (1+δ)^{i-1} \le δ_{\mu}(x_0,r_0).
  \end{align}
  Choose a compact set $K\subset \tilde E_{i,j,k,ℓ}\cap B(x_0,10r_0)$ so that
  \begin{align}
    μ(B(x_0,10r_0)\setminus K) < \frac{η}{10(1+δ)^3} μ(B(x_0,10r_0));
  \end{align}
  this is possible by the density choice and the inner regularity of $μ$, after decreasing $η$ by an absolute factor.
  We wish to apply the main lemma to the translated and rescaled measure
  \begin{align}
    ν := \frac{(T_{x_0,r_0})_{\#} μ}{μ(B(x_0,r_0))},
    \quad T_{x_0,r_0}(y):=\frac{y-x_0}{r_0},
  \end{align}
  with $F:= T_{x_0,r_0}(K)$.
  This shows that there exists a Lipschitz graph $γ$ such that
  \begin{align}
    μ\paren[\Big]{\tilde E\cap (x_0+r_0 γ)}
     & \ge μ\paren[\Big]{B(x_0,r_0) \cap K\cap (x_0+r_0 γ)} \\
     & \ge c_0 μ(B(x_0,r_0)) > 0.
  \end{align}

  Let us verify that the assumptions are satisfied; here $δ>0$ is chosen so small that $(1+δ)^3\le 1+δ_1$.
  We denote $E' := T_{x_0,r_0}(E) = \supp ν$ and $μ_0 := μ(B(x_0,r_0))$.
  For any $x\in F$, we have $x_0+r_0x\in K\subset E_{i,j}$, and the lower bound in \cref{eq:density-choice} gives
  \begin{align}\label{eq:doubling-scale}
    μ(B(x_0+r_0x,rr_0)) \le (1+δ)^{i+2} (rr_0)
    \le r (1+δ)^3 μ_0, \quad\forall r\in(0,100).
  \end{align}
  Assumption (i) follows since $δ_{\nu}(B_0)=ν(B(0,1))=1$ and
  \begin{align}
    ν(B(0,10)\setminus F)
    = \frac{μ(B(x_0,10r_0)\setminus T_{x_0,r_0}^{-1}(F))}{μ_0}
    < \frac{η}{10(1+δ)^3} \frac{μ(B(x_0,10r_0))}{μ_0}
    \le η.
  \end{align}
  For $r\in(0,100)$ and $x\in F$, we have
  \begin{align}
    ν(B(x,r)) = \frac{μ(B(x_0+r_0x,rr_0))}{μ_0} \le (1+δ)^3 r
  \end{align}
  and
  \begin{align}
    β_{E'}(x,r) δ_{\nu}(x,r)
     & = β_{E}(x_0+r_0x,rr_0) \frac{μ(B(x_0+r_0x,rr_0))}{μ_0 r}             \\
     & \le β_{E}(x_0+r_0x,rr_0) δ_{\mu}(x_0+r_0x,rr_0) \frac{r_0}{μ_0}      \\
     & \le β_{E}(x_0+r_0x,rr_0) δ_{\mu}(x_0+r_0x,rr_0) (1+δ)^{1-i} \le λ_1,
  \end{align}
  since $x_0+r_0x\in K\subset E_{i,j,k}$.
  This verifies assumptions (ii) and (iii).
  For assumption (iv), we have
  \begin{align}
    \int_{B(0,100)\setminus E'} & \abs[\Big]{\int_{\CC} \frac{1}{(z-ξ)^2} \dl{ν(ξ)} }^2 \dist(z,E') \dl m_2(z)                                                   \\
                                & = \frac{r_0}{μ_0^2}\int_{B(x_0,100r_0)\setminus E} \abs[\Big]{\int_{\CC} \frac{1}{(y-ξ)^2} \dl{μ(ξ)} }^2 \dist(y,E) \dl m_2(y) \\
                                & \le (1+δ)^{2-2i} (1+δ)^{2i-2} λ_2
    \le λ_2,
  \end{align}
  since $x_0\in E_{i,j,k,ℓ}$ and we substituted $y=x_0+r_0 z$.
  The localized USFE bound \ref{item:main-lemma-usfe} follows by applying USFE to
  \begin{align}
    f=\indicator{B(x_0,10r_0)\setminus K}.
  \end{align}
  After the change of variables $y=x_0+r_0z$, we obtain
  \begin{align}
     & \int_{8B_0\setminus E'}
    \abs[\Big]{\int_{10B_0\setminus F}\frac{1}{(z-ξ)^2}\dl{ν(ξ)}}^2
    \dist(z,E')\dl{m_2(z)}                                                        \\
     & \qquad = \frac{r_0}{μ_0^2}
    \int_{B(x_0,8r_0)\setminus E}
    \abs[\Big]{\int_{B(x_0,10r_0)\setminus K}\frac{1}{(y-ξ)^2}\dl{μ(ξ)}}^2
    \dist(y,E)\dl{m_2(y)}                                                         \\
     & \qquad \le C_{\mathrm{USFE}}\frac{r_0}{μ_0^2}\, μ(B(x_0,10r_0)\setminus K)
    = C_{\mathrm{USFE}}\frac{r_0}{μ_0}\,ν(10B_0\setminus F)                       \\
     & \qquad \le C_{\mathrm{USFE}}(1+δ)^{1-i}ν(10B_0\setminus F).
  \end{align}
  Thus \ref{item:main-lemma-usfe} holds with a finite constant depending only on the fixed density level and the original USFE constant.

  It remains to verify the tail condition \ref{item:main-lemma-tail}.  Since $μ(\CC)<∞$, for $\zeta\in8B_0$,
  \begin{align}\label{eq:scaled-tail-condition}
    \int_{\CC\setminus 10B_0}\frac{1}{|\zeta-z|^2}\dl{ν(z)}
    = \frac{r_0^2}{μ_0}\int_{|y-x_0|>10r_0}
    \frac{1}{|x_0+r_0\zeta-y|^2}\dl{μ(y)}.
  \end{align}
  On the part where $10r_0<|y-x_0|<1/j$, the estimate $|x_0+r_0\zeta-y|\gtrsim |x_0-y|$, the upper-density bound defining $E_{i,j}$, and the lower bound $μ_0\ge (1+δ)^{i-1}r_0$ give, by dyadic annuli,
  \begin{align}
    \frac{r_0^2}{μ_0}
    \int_{10r_0<|y-x_0|<1/j}
    \frac{1}{|x_0+r_0\zeta-y|^2}\dl{μ(y)}
    \lesssim_{i,δ} 1 .
  \end{align}
  On the remaining part, $|y-x_0|\ge 1/j$, and for $r_0<1/(100j)$ we have $|x_0+r_0\zeta-y|\gtrsim 1/j$.  Hence
  \begin{align}
    \frac{r_0^2}{μ_0}
    \int_{|y-x_0|\ge 1/j}
    \frac{1}{|x_0+r_0\zeta-y|^2}\dl{μ(y)}
    \lesssim_{i,δ} j^2 μ(\CC) r_0 .
  \end{align}
  Decreasing $r_0$ if necessary, the last term is bounded by $1$.  Thus $ν$ satisfies \ref{item:main-lemma-tail} with a finite constant $C_{\operatorname{tail}}$ depending only on the fixed density level.  This is the tail constant used in the choice of $η$ above.
\end{proof}

\subsection*{Outline of proof of Main Lemma}
Using the first three properties of \cref{thm:main-lemma}, we can partition the set following David, Semmes, Léger and Tolsa, and build a Lipschitz graph $γ$ related to $\supp μ$ (\cref{sec:graph-construction}).
For the scheme to be meaningful, the graph needs to cover a decent amount of $\supp μ$.
We need to prove that the exceptional sets $F_1$ (points with low density) and $F_2$ (points with big angle) have small measure.
The proof of the latter relies on the smallness of the original square function (\cref{item:usf-small} of \cref{thm:main-lemma}) and the behaviour of the usual square function on Lipschitz graphs (\cref{sec:usf-lipschitz-graphs}).
We show that the norm of the latter can be essentially bounded below by $\norm{A'}$, which itself is an upper bound for the measure of the exceptional set.
Our strategy for the norm bound is to expand the kernel using its Taylor expansion and show that the term corresponding to $m=1$ is lower bounded by $\norm{A'}$.
Subsequently, we show that the terms for $m\ge 2$ each yield bounded operators with good dependence on the Lipschitz constant.
Finally, we need to connect the operator on $γ$ to the true operator.
We approximate the true operator with respect to $μ$ by the one with respect to $\hd1\restriction_{\gamma}$ in \cref{sec:comparison} using the following steps.
\begin{enumerate}
  \item Localization: Only pieces around the origin and close to the graph are important.
  \item We can pass from $\dl y$ to $h\hd1\restriction_{γ}$, where $h$ is the density of the projection of $μ$ onto $γ$.
  \item The inner integrals with respect to $μ\restriction_F$ and $h\hd1\restriction_{γ}$ are close.
  \item The measure $μ\restriction_F$ behaves like $μ$.
  \item For the outer integral, we can pass from $s\dl s \dl x$ to $\dist(ζ,E) \dl m_2(ζ)$.
\end{enumerate}
In order to hide the error term appearing in the \cref{sec:db-to-hH}, we actually need to investigate the conical square function instead of the original (\enquote{vertical}) one.
We introduce the relevant background in \cref{sec:usfe-cones}.

The final proof of \cref{thm:main-lemma} is in \cref{sec:F2}.

\section{Construction of Lipschitz Graph}\label{sec:graph-construction}
In the construction of the Lipschitz graph, we will use
\begin{equation}
  \log ε \ll \log τ \ll \log α \ll \log δ \ll -1.
\end{equation}
After the tail constant in \cref{item:main-lemma-tail} of \cref{thm:main-lemma} is fixed, $ε$ is also chosen small enough in terms of this constant.

The constants below are chosen so that every radius used in the construction is less than the radius $100$ appearing in \cref{thm:main-lemma}.
Let $x\in F\subset 10B_0$ (where $F$ is the set from \cref{thm:main-lemma}) and $r\in(0,50)$.
We say that a ball $B=B(x,r)$ is \emph{good} and write $B\in\G$ if
\begin{enumerate}
  \item $δ_μ(B)\ge δ$, and
  \item $\angle (L_B,L_0)\le α$ for some best approximating line $L_B$.
\end{enumerate}
We say that $B$ is \emph{very good}, and write $B\in\VG$, if $B(x,s)\in\G$ for all $s\in[r,50]$.

Note that by \cref{item:weakly-flat-uniform} of \cref{thm:main-lemma}, every good ball satisfies
\begin{align}
  β(x,r) \le \frac{λ_1}{δ}.
\end{align}

For $x\in F$, we set
\begin{align}
  h(x) := \inf\{r\in(0,50] : B(x,r)\in\VG  \},
\end{align}
with the convention that the infimum is $50$ if the displayed set is empty.
Finally, we partition the set relevant to the main lemma, namely $F$, by defining
\begin{align}
  Z &:= \{ x\in F : h(x) = 0 \}, \\
  F_1 &:= \{ x\in F\setminus Z : δ_μ(B(x,h(x))) \le δ \} \text{ and} \\
  F_2 &:= F \setminus (Z\cup F_1).
\end{align}

We now introduce the functions $d$ and $D$ defined by
\begin{align}
  d(x) := \inf_{B(z,r)\in\VG} |x-z| + r, \quad x\in\CC,
\end{align}
and
\begin{align}
  D(p) := \inf_{x\in π^{-1}(p)} d(x) = \inf_{B(z,r)\in\VG} |p-π(z)| + r,\quad p\in\RR.
\end{align}
For $x\in \CC$, we further set
\begin{align}
  ℓ(x)=\frac{1}{10}D(π(x)).
\end{align}
We also define
\begin{align}
  Z_0 := \{ x\in\CC : d(x) = 0 \},
\end{align}
and since $d(x)\le h(x)$ for $x\in F$, we have $Z⊂ Z_0$.
In the following we let $I_0 := (-1,1)\subset \RR$.

These definitions allow us to build a Lipschitz graph as described in \cite[Sections 7.4-7.6]{tolsaAnalyticCapacityCauchy2014}.
\begin{proposition}\label{thm:graph-properties}
  There exists a Lipschitz function $A\colon\RR\to\RR$ satisfying $\norm{A}_{\text{Lip}}\lesssim α$ and $\supp(A)\subset 3I_0$.
  Furthermore, denoting $\grapha(x) := (x,A(x))$ and $γ:= \{\grapha(x) : x\in\RR \}$,
  the following properties hold:
  \begin{enumerate}
    \item $ |A''(x)| \lesssim  \frac{ε}{D(x)}$ for all $x\in\RR$, \label{item:second-derivative-estimate}
    \item $\dist(p,L_0) \lesssim ε$ for $p\in γ$ and \label{item:graph-close-to-L0}
    \item \label{item:x-in-F-close-to-graph}if $x\in F\cap π^{-1}(8I_0)$, then
          \begin{align}
            \abs{\grapha(π(x)) - x} \lesssim ε D(π(x)).
          \end{align}
    \item $Z_0\subset γ$.
  \end{enumerate}
\end{proposition}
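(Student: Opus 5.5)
We follow the construction of \cite[Sections 7.4--7.6]{tolsaAnalyticCapacityCauchy2014}, adapted to the hypotheses of \cref{thm:main-lemma}. Throughout, $π$ denotes orthogonal projection onto $L_0$, which we identify with $\RR$.

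\textbf{Step 1: basic geometric lemma.} The key input is a two-point estimate. If $x,y\in F$ lie in $2B(z,r)$ for some very good ball $B(z,r)$, and $|x-y|\gtrsim \max(d(x),d(y))$, then
\begin{align}
  \dist(y,L_{B(x,|x-y|)})\lesssim ε|x-y| .
\end{align}
Consequently $|A$-slope$|\lesssim α$ in the sense that $|π^\perp(x)-π^\perp(y)|\lesssim α|π(x)-π(y)|$. For the proof, every ball $B(x,s)$ with $s\ge h(x)$ is good. Hence its $β$-number is at most $λ_1/δ\ll ε$, and its best line makes angle at most $α$ with $L_0$. Comparable good balls have nearby best lines: both balls carry mass $\gtrsim δ r$, and their supports are $λ_1/δ$-close to the respective lines, so the lines are $(λ_1/δ^2)$-close. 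From this, $Z_0$ is a graph over its projection, with Lipschitz constant $\lesssim α$. We also obtain that $D$ is $1$-Lipschitz and that $d(x)\le h(x)$.

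\textbf{Step 2: Whitney-type construction off $π(Z_0)$.} Cover $\RR\setminus π(Z_0)$ (intersected with a neighbourhood of $3I_0$) by dyadic intervals $I$ with $\ell(I)\approx D$ on $I$, the usual Whitney family relative to $D$, and attach a smooth partition of unity $\phi_I$ with $|\phi_I''|\lesssim \ell(I)^{-2}$. For each $I$ with $I\cap 3I_0\neq\emptyset$, pick a very good ball $B_I$ with $|π(z_I)-p|+r_I\lesssim D(p)$ for $p\in I$ and $r_I\approx \ell(I)$; such a ball is available by the definition of $D$ and the scale invariance of $\VG$ upward. Let $A_I$ be the affine function whose graph is $L_{B_I}$. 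Set $A$ equal to the $Z_0$-graph function on $π(Z_0)$, equal to $\sum_I\phi_I A_I$ elsewhere near $3I_0$, and taper it to $0$ outside $3I_0$ using $L_{B(0,50)}$. Since $B(0,50)$ is very good if any ball is, and its best line is $ε$-close to $L_0$ after normalising $L_0$, this tapering gives $\supp A\subset 3I_0$ and property (ii).

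\textbf{Step 3: verification.} For neighbouring Whitney intervals $I,J$, Step 1 gives
\begin{align}
  |A_I-A_J|\lesssim ε\ell(I)\ \text{on } 10I, \qquad |A_I'-A_J'|\lesssim ε .
\end{align}
Writing $A''=\sum_I \phi_I''(A_I-A_J)+2\phi_I'(A_I'-A_J')$ yields $|A''|\lesssim ε/D$, which is (i). The same estimates give $|A'|\lesssim α$ on the Whitney region. Combined with Step 1 on $π(Z_0)$ and continuity at the boundary, since $A_I\to$ the $Z_0$-graph as $\ell(I)\to0$, this gives the global Lipschitz bound and (iv). For (iii), take $x\in F$ with $π(x)\in I$. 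Then $x\in 2B_I$ up to constants, and Step 1 gives $\dist(x,L_{B_I})\lesssim ε\ell(I)$, hence $|A(π(x))-π^\perp(x)|\lesssim ε D(π(x))$. When $D(π(x))=0$, the point $x$ lies in $Z_0\subset γ$.

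\textbf{Main obstacle.} The main obstacle is Step 1 in the non-AD-regular setting. Good balls have only the lower density bound $δ$ and the upper bound $1+δ_1$, so comparability of best lines across scales must be extracted from the density lower bound. This requires showing that $μ$-mass forces two well-separated points near each line. Care is also needed near the boundary of $3I_0$, in order to match the tapering with the tail constant fixed earlier.
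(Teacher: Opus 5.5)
Your route is the paper's. It also just runs Tolsa's Whitney-type construction \cite[Sections 7.4--7.6]{tolsaAnalyticCapacityCauchy2014}, and its only added ingredient, \cref{thm:dx-Dpix}, is what two of your unproved claims in (iii) amount to: that $x$ lies in a bounded dilate of $B_I$, and that $D(\pi(x))=0$ forces $x\in Z_0$. Both follow from your Step 1 slope bound, applied with $y=z_I$ in the very good ball $B(z_I,\max\{r_I,|x-z_I|\})$, respectively with $y\in Z_0$, $\pi(y)=\pi(x)$.

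The genuine gap is the tapering. Step 1 only compares lines of very good balls with one another. It therefore gives nothing like $|A_I-A_J|\lesssim\epsilon\ell(I)$, $|A_I'-A_J'|\lesssim\epsilon$ when $A_J$ is the taper line and $I$ is a small Whitney interval near $\pm3$. A small very good ball there may have a line at an angle comparable to $\alpha$ with $L_0$, and then (i) and $\norm{A}_{\mathrm{Lip}}\lesssim\alpha$ fail. Nothing excludes such intervals, since $F$ must fill most of $E\cap 10B_0$. For the same reason, your prescriptions ``$A$ is the $Z_0$-graph on $\pi(Z_0)$'' and ``$A=0$ off $3I_0$'' contradict each other once $\pi(Z_0)\not\subset 3I_0$. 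Also, your proof of (iii) has no ball $B_I$ available when $\pi(x)\in 8I_0\setminus 3I_0$.

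This cannot be repaired within the statement as written. Take a suitable multiple of arclength on the circle $\{|z-iR|=R\}$ with $R$ huge, and let $F$ be its part in $\overline{B(0,10-\eta/4)}$. This measure satisfies all hypotheses of \cref{thm:main-lemma}, and every ball centred in $F$ is good at every scale, so $Z_0=F$. This arc is not contained in a line over $8I_0\setminus3I_0$, so $\supp A\subset 3I_0$ is incompatible with (iv) and with (iii). The paper's bare citation does not address this either.

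What is needed is a localization that makes the taper act only where $D\gtrsim1$. One option is to admit only very good balls centred in $F\cap\overline{2B_0}$ (the final count only involves $Z\cap B_0$), so that $D(p)\ge|p|-2$. Another is to keep all of $F$ and taper beyond $\pi(10B_0)$, accepting a larger support. Either way, the Whitney intervals next to the taper have length $\gtrsim1$, and your neighbour estimates hold there by the following pinning argument, which is also what actually proves (ii); (ii) does not come from the tapering.

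If $z\in F\cap B_0$, then $\delta_\mu(z,99)\ge1/99$, so $\beta_E(z,99)\le 99\lambda_1$. Hence all of $E\cap B(0,100)$ lies within $O(\lambda_1)$ of $L_0$, taken to be an optimal line there. A good ball $B(w,r)$ contains, besides $w$, a point of $E$ at distance at least $\delta r/4$ from $w$, because $\mu(B(w,\delta r/4))\le(1+\delta_1)\delta r/4<\delta r$. Therefore, on $B(w,Cr)$ the line $L_{B(w,r)}$ stays within $O(\lambda_1/\delta^2)$ of $L_0$, which gives (ii) once $\lambda_1\ll\epsilon\delta^2$. It also makes an angle $O(\lambda_1/(\delta^2 r))$ with $L_0$, which is small for $r\gtrsim1$.

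Finally, $0$ need not lie in $F$, so ``$B(0,50)$ is very good if any ball is'' is unjustified; simply use $L_0$ itself as the taper line.
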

\Cref{item:x-in-F-close-to-graph} is similar to \cite[Lemma 7.30]{tolsaAnalyticCapacityCauchy2014} where we additionally use that $d(x) \lesssim D(π(x))$ for $x\in F\cap π^{-1}(8I_0)$ (see \cref{thm:dx-Dpix}).

\section{Usual Square Function on Lipschitz Graphs}\label{sec:usf-lipschitz-graphs}
The goal of this section is to prove that $\norm{A'}_2$ can essentially be bounded above by the usual square function with respect to $\hd 1\restriction_{γ}$ applied to $1$.
The key estimate is \cref{thm:linear-term-A-prime} which yields the $L^2$-estimate in \cref{thm:l2-lower-bound}.
In order to carry out the comparison estimates in \cref{sec:comparison}, we need to introduce a version of the usual square function using cones which is bounded on $L^p$ for $p ∈ (1, 2)$ in \cref{sec:usfe-cones}.
Finally, we derive an $L^p$-estimate in \cref{thm:lower-bound-conical}.

\subsection{Preliminaries}\label{sec:usfe-lip-preliminaries}
We will make use of established theory for square functions and use \cite{hofmann$L^p$squareFunctionEstimates2017} as a reference.
In the following, we summarize and specialize the results therein.

Consider $θ\colon(\RR^2\setminus \RR)\times \RR \to \CC$ with $C_{\theta}>0$ which satisfies
\begin{align}
  |θ(ξ,y)|           & \le \frac{C_{\theta}}{|ξ-y|^2}, \label{eq:kernel-estimate-size}             \\
  |θ(ξ,y) - θ(ξ,y')| & \le C_{\theta} \frac{|y-y'|}{|ξ-y|^{3}}, \label{eq:kernel-estimate-hoelder}
\end{align}
for all $ξ\in\RR^2\setminus\RR$, $y,y'\in\RR$ such that $|y-y'|\le \frac{1}{2}|ξ-y|$.
Then define the integral operator $Θ$ for functions $f\in L^p(\RR)$, $p\in [1,∞]$, by
\begin{align}
  (Θf)(ξ) := \int_{\RR} θ(ξ,y) f(y) \dl y,
\end{align}
for every $ξ \in \RR^2\setminus \RR$.

We further define \emph{cones} in $\CC$ based at $z\in\RR$ to be
\begin{align}
  Γ(z) := \{ ξ\in\CC : |ξ-z| < (1+κ) |\Im ξ| \},
\end{align}
where $κ>0$ is a small constant which will not be important to us.
We therefore fix a small $κ$ from now on and will suppress dependence on $κ$.
These definitions enable us to define the conical and vertical square function corresponding to $θ$ as
\begin{align}\label{eq:general-square-function}
  (S^{∨}f)(z) := \paren[\Bigg]{ \int_{Γ(z)} |(Θf)(ξ)|^2  \dl{ξ}}^{1/2}, \quad
  (S^{|}f)(x) := \paren[\Bigg]{ \int_{\RR\setminus\{0\}} |(Θf)(x+is)|^2 |s| \dl s}^{1/2},
\end{align}
respectively.
We will discuss conical square functions in more detail in \cref{sec:usfe-cones} and first focus on how the vertical square function relates to usual square function estimates for $\hd 1\restriction_{γ}$.
We let
\begin{align}
  θ(x+is, y) := \frac{1}{\paren[\big]{x-y+i(A(x)-A(y)+s)}^2},
\end{align}
where $x,y\in\RR$ and $s\in\RR\setminus\{0\}$,
and will often expand this to
\begin{equation}
  \begin{aligned}\label{eq:kernel-expansion}
    θ(x+is,y)
    = \sum_{m=0}^{\infty} (m+1) (-i)^m \frac{(A(x)-A(y))^m}{(x-y+is)^{m+2}}
    =: \sum_{m=0}^{\infty} ω_m θ_{m}(x+is,y),
  \end{aligned}
\end{equation}
where $ω_m := (m+1) (-i)^m$.
We denote the corresponding integral operators as $Θ$ and $Θ_m$.
Let $T^|$, $T^{∨}$ and $T_m^|$, $T_m^{∨}$ denote the vertical and conical square functions corresponding to $θ$ and $θ_m$, respectively.
We omit the dependence on the Lipschitz graph, as we will only consider the graph $γ$ constructed in \cref{thm:graph-properties}.
For convenience, we also define
\begin{align}
  θ_*\colon (\CC\times\CC) \setminus \{ (ξ,ξ):ξ\in\CC\}\to \CC, \quad (x+is, ζ) \mapsto \frac{1}{\paren[\big]{x+iA(x)+is-ζ}^2},
\end{align}
for usage outside the above definitions of square-function operators.
Note that $θ_*(x+is, \grapha(y))=θ(x+is,y)$.
With our notation set up, an application of the coarea formula
\begin{align}
  \int_{\RR^2} g(ζ) |\nabla u(ζ)| \dl{ζ}
  = \int_{\RR} \int_{u^{-1}(t)} g(ζ) \dl{\hd1(ζ)} \dl t,
\end{align}
with
\begin{align}
  u(x+is) = s - A(x) \quad\text{and}\quad g(ζ) = \abs[\big]{ \int_{\gamma} \frac{f(ξ)}{(ζ-ξ)^2} \dl {\hd1(ξ)}}^2 \frac{|\dist(ζ,γ)|}{\sqrt{1+A'(\Re ζ)^2}},
\end{align}
together with the area formula,
shows that $\hd 1\restriction_{γ}$ satisfies USFE if and only if $T^|$ is bounded on $L^2(\RR)$.
Note that $\norm{1+(A')^2}_{∞}\approx 1$ and $\dist(\cdot+iA(\cdot)+is,γ)\approx |s|$.

\subsection{Vertical Square Function}
In the following, we record a few simple observations about the vertical square function.
\begin{lemma}\label{thm:zero-term-mean-zero}
  For all $x+is\in\CC\setminus\RR$, we have
  \begin{align}
    (Θ_01)(x+is) = \int_{\RR} θ_{0}(x+is,y) \dl y = 0.
  \end{align}
\end{lemma}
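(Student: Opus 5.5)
The plan is to compute the integral directly. By \cref{eq:kernel-expansion}, the $m=0$ term of the kernel is
\[
  θ_0(x+is,y)=\frac{1}{(x-y+is)^2}.
\]
For fixed $x\in\RR$ and $s\neq0$ this is a smooth function of $y$ with no singularity on $\RR$, since $|x-y+is|\ge|s|>0$. It decays like $|y|^{-2}$ as $|y|\to\infty$. Hence the integral $\int_\RR θ_0(x+is,y)\dl y$ converges absolutely, so $Θ_01$ is well defined pointwise.

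The key step is to observe that $θ_0$ is an exact derivative in $y$:
\[
  \diff{}{y}\,\frac{1}{x-y+is}=\frac{1}{(x-y+is)^2}.
\]
Therefore, for $R>0$,
\[
  \int_{-R}^{R}θ_0(x+is,y)\dl y=\frac{1}{x-R+is}-\frac{1}{x+R+is}.
\]
Both terms are bounded in modulus by $1/(R-|x|)$ once $R>|x|$, so the right-hand side tends to $0$ as $R\to\infty$. Absolute convergence lets us identify the improper integral with this limit, and thus $(Θ_01)(x+is)=0$.

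As a cross-check, one could instead use a contour argument. The function $w\mapsto (x+is-w)^{-2}$ is holomorphic in the half-plane not containing $x+is$ and decays like $|w|^{-2}$. Closing the contour with a large semicircle in that half-plane gives integral $0$. The antiderivative computation is simpler, though, and I would present that.

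There is no real obstacle here. The only point requiring a word of justification is the absolute convergence, which makes the truncation $R\to\infty$ legitimate. That follows from $|θ_0(x+is,y)|\le (|x-y|^2+s^2)^{-1}$, which is integrable in $y$.
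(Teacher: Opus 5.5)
Your proof is correct and complete. The paper states this lemma without a separate proof. Your observation that $\theta_0(x+is,\cdot)$ is the $y$-derivative of $(x-y+is)^{-1}$ is exactly the $m=0$ case of the derivative identity used in the induction step of \cref{thm:each-term-bounded}, and your contour cross-check is the Cauchy-integral argument the paper invokes for the analogous vanishing integrals in \cref{thm:lower-cone-localisation-lp}, so this is essentially the paper's route.
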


\begin{proposition}\label{thm:each-term-bounded}
  There exists $R>0$ such that
  \begin{equation}
    \norm{T^|_m f}_{L^2(\RR)} \le R^{m+1} L^m \norm f_{L^2(\RR)},
  \end{equation}
  for every $f\in L^2(\RR)$, where $L$ is the Lipschitz constant of $A$.
\end{proposition}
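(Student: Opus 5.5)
The plan is to treat each $T^|_m$ as a square function with a Calderón--Zygmund-type kernel $θ_m(x+is,y)=\frac{(A(x)-A(y))^m}{(x-y+is)^{m+2}}$, and to prove the bound by a $T1$-type argument. The point is to keep track of how every constant depends on $m$ and $L$.

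\textbf{Kernel bounds.} Since $|A(x)-A(y)|\le L|x-y|\le L|x-y+is|$, we get $|θ_m(x+is,y)|\le L^m|x-y+is|^{-2}$, which is the size condition \cref{eq:kernel-estimate-size} with $C_θ=L^m$. For the regularity condition \cref{eq:kernel-estimate-hoelder} in $y$, I would differentiate in $y$ (or take differences, for $|y-y'|\le\frac12|ξ-y|$). The derivative of $(A(x)-A(y))^m$ produces $mA'(y)(A(x)-A(y))^{m-1}$, bounded by $mL^m|x-y+is|^{m-1}$. The derivative of the denominator produces a factor $(m+2)$. Hence
\[
|θ_m(ξ,y)-θ_m(ξ,y')|\lesssim (m+2)L^m\frac{|y-y'|}{|ξ-y|^3},
\]
and $(m+2)\le 2^{m+1}$ is absorbed into $R^{m+1}$. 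I would also record the dual smoothness in $x$ by the same computation, in case the $T1$ theorem needs it.

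\textbf{Testing condition.} I would invoke the local $T1$ theorem for square functions from \cite{hofmann$L^p$squareFunctionEstimates2017}. It reduces $L^2$ boundedness of $T^|_m$ to the Carleson measure condition
\[
\int_{I}\int_0^{\ell(I)}|Θ_m1(x+is)|^2 s\dl s\dl x\le C\,|I|,
\]
with bounds linear in $C_θ$ and in the Carleson constant. For $m=0$ this holds trivially by \cref{thm:zero-term-mean-zero}.

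For $m\ge1$ I would compute $Θ_m1$ through integration by parts and the binomial theorem. Since $(x-y+is)^{-(m+2)}=\frac{1}{m+1}\partial_y(x-y+is)^{-(m+1)}$, integrating by parts in $y$ gives
\[
Θ_m1(x+is)=\frac{m}{m+1}\int A'(y)\frac{(A(x)-A(y))^{m-1}}{(x-y+is)^{m+1}}\dl y .
\]
This is an operator of Calderón commutator type applied to $A'\in L^\infty$. I would then use the classical bounds for Calderón commutators, $\norm{C_{m-1}}\le C^m L^{m-1}$ (Coifman--McIntosh--Meyer), in their square-function form. This gives the Carleson estimate with constant $\lesssim (C^mL^{m-1}\cdot L)^2=C^{2m}L^{2m}$. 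Concretely, I would localize $A'=A'\indicator{3I}+A'\indicator{(3I)^c}$. The far part is handled by kernel decay plus the size bound; the near part is handled by the $L^2$ bound of the vertical square function for the commutator kernel, applied to $A'\indicator{3I}$, whose $L^2$ norm squared is at most $L^2|3I|$.

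\textbf{Expected obstacle.} The main difficulty is obtaining geometric dependence $R^{m+1}$ rather than something like $m!$. Naively applying $T1$ to the commutator square function is circular, because one needs its boundedness with constants exponential in $m$. I would resolve this by induction on $m$: the commutator square function above has kernel $θ_{m-1}$ up to replacing $(x-y+is)^{m+1}$ by $(x-y+is)^{m+1}$ versus $(x-y+is)^{m}\cdot(x-y+is)$, which is harmless. Its bound then follows from the $m-1$ case with one extra factor of $L$. Each induction step costs only a fixed constant factor, since the $T1$ theorem's constant is linear in $C_θ$ plus the Carleson norm. This yields $R^{m+1}L^m$.
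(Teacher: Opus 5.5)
Your proposal is correct and follows essentially the same route as the paper: the size/smoothness bounds $C_{\theta_m}\lesssim (m+1)L^m$, the $T(1)$ theorem for square functions, the integration-by-parts identity $(m+1)\Theta_m 1=m\,\Theta_{m-1}(A')$, and induction on $m$, with the Carleson condition checked by splitting $A'=A'\indicator{3I}+A'\indicator{\RR\setminus 3I}$ and applying the $m-1$ bound to the near part (so the Coifman--McIntosh--Meyer detour is unnecessary, as you yourself conclude). One point needs stating precisely: the $T(1)$ theorem gives $\norm{T^|_m}_{2\to 2}\lesssim\max\{C_{\theta_m},\sqrt{\mathcal C_{\theta_m}}\}$, which depends on the square root of the Carleson constant and not linearly on it as you first wrote, and this square root is exactly what makes each induction step cost only a fixed factor of $R$ (linear dependence would square the constant at every step).
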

\begin{proof}[Sketch of proof.]
  The kernels $θ_m$ satisfy the kernel estimates in \cref{eq:kernel-estimate-size,eq:kernel-estimate-hoelder} with $C_{θ_m} \lesssim (m+1)L^m$ which can be shown using the mean-value theorem.
  In view of \cref{thm:zero-term-mean-zero}, the case $m=0$ follows from an application of the $T(1)$-theorem (\cite[Theorem 3.2]{hofmann$L^p$squareFunctionEstimates2017}).
  Note that with the formulation therein, the operator-norm satisfies $\norm{T^|_0}_{2\to 2}\lesssim \max\{ C_{θ_0}, \sqrt{\mathcal C_{θ_0}} \}$, where $\mathcal C_{θ_0}$ is the constant of the Carleson measure condition \cite[Eq. (3.5)]{hofmann$L^p$squareFunctionEstimates2017}, i.e.
  \begin{align}
    \sup_I \frac{1}{ℓ(I)} \int_{-ℓ(I)}^{ℓ(I)}\int_{I}  |Θ_0 1(x+is)|^2 \dl x |s|\dl s \le \mathcal C_{θ_0},
  \end{align}
  where the $\sup$ ranges over all (dyadic) intervals.

  We now induct on $m$.
  Let $m\ge 1$.
  The induction step relies on the fact that
  \begin{align}
    (m+1) Θ_m(1) = m Θ_{m-1}(A').
  \end{align}
  This is true since
  \begin{equation}
    \diff*{\frac{(A(x)-A(y))^{m}}{(x-y+is)^{m+1}}}{y} = m \frac{(A(x)-A(y))^{m-1}}{(x-y+is)^{m+1}} (-A'(y)) + (m+1)\frac{(A(x)-A(y))^{m}}{(x-y+is)^{m+2}}.
  \end{equation}
  Integrating this term equals $0$ and yields the claim.

  We now verify the Carleson measure condition for $θ_m$ and show that
  \begin{align}
    |Θ_m(1)(x+is)|^2 \dl x |s|\dl s = \paren[\Big]{\frac{m}{m+1}}^2 |Θ_{m-1}(A')(x+is)|^2 \dl x |s| \dl s
  \end{align}
  is a Carleson measure.
  Let $I\subset \RR$ be an interval with length $ℓ(I)$.
  By the induction hypothesis, we have
  \begin{align}
    \int_{-ℓ(I)}^{ℓ(I)} \int_{I} |Θ_{m-1}(\indicator{3I}A')(x+is)|^2 \dl x |s| \dl s \lesssim (RL)^{2m} ℓ(I).
  \end{align}
  For the non-local part we get
  \begin{align}
    \int_{-ℓ(I)}^{ℓ(I)} \int_I |Θ_{m-1,s}(\indicator{\RR\setminus 3I} A')(x)|^2 \dl x |s|\dl s
     & = \int_{-ℓ(I)}^{ℓ(I)} \int_I \abs[\Big]{\int_{\RR\setminus 3I} \frac{(A(x)-A(y))^{m-1}}{(x-y+is)^{m+1}} A'(y) \dl y }^2 \dl x |s|\dl s \\
     & \le \int_{-ℓ(I)}^{ℓ(I)} \int_I \paren[\Big]{\int_{\RR\setminus 3I} \frac{L^{m-1}}{|x-y|^2} \cdot L \dl y }^2 \dl x |s|\dl s            \\
     & \lesssim L^{2m} ℓ(I).
  \end{align}
  Therefore, an application of the $T(1)$-theorem implies that $\norm{T^|_m}_{2\to 2} \lesssim \max\{ C_{θ_m}, \sqrt{\mathcal C_{θ_m}} \} \lesssim (R^m+1) L^m$.
  Carefully tracking the implicit constants yields the existence of $R>0$ as claimed in the statement.
\end{proof}

Summing up \cref{thm:each-term-bounded} yields the following.
\begin{proposition}\label{thm:usfe-bdd-lipschitz-graph}
  For Lipschitz graphs with sufficiently small Lipschitz constant, the vertical square function $T^|$ is bounded on $L^2(\RR)$, i.e.
  \begin{equation}
    \paren[\Big]{ \int_{\RR} \int_{\RR} \abs[\Big]{ \int_{\RR} θ(x+is,y) f(y) \dl y}^2 \dl x |s|\dl s}^{1/2} \\
    \lesssim \norm f_2, \quad\forall f\in L^2(\RR).
  \end{equation}
\end{proposition}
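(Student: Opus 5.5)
The plan is to sum the Taylor expansion \cref{eq:kernel-expansion} term by term and use the geometric bound from \cref{thm:each-term-bounded}. Let $L$ be the Lipschitz constant of $A$ and assume $L\le 1/(2R)$, where $R$ is the constant from \cref{thm:each-term-bounded}.

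\textbf{Step 1: convergence of the expansion.} For $s\neq 0$ and $x\neq y$ we have
\begin{align}
  \abs{(A(x)-A(y))/(x-y+is)}\le L|x-y|/|x-y+is|\le L<1 .
\end{align}
Hence \cref{eq:kernel-expansion} converges absolutely and pointwise. The bound
\begin{align}
  |θ_m(x+is,y)|\le L^m|x-y+is|^{-2}
\end{align}
gives an integrable majorant $\sum_m (m+1)L^m |x-y+is|^{-2}\lesssim |x-y+is|^{-2}$. For $f\in L^2(\RR)$ and fixed $x+is$, the function $y\mapsto |x-y+is|^{-2}$ lies in $L^2$, so dominated convergence yields
\begin{align}
  Θf(x+is)=\sum_{m\ge0} ω_m Θ_m f(x+is)
\end{align}
pointwise.

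\textbf{Step 2: summation in the square-function norm.} Apply Minkowski's inequality in $L^2(\RR\times(\RR\setminus\{0\}), \dl x\,|s|\dl s)$:
\begin{align}
  \norm{T^|f}_{2}\le \sum_{m\ge0}|ω_m|\,\norm{T^|_m f}_{2}\le \sum_{m\ge0}(m+1)R^{m+1}L^m\norm{f}_2 .
\end{align}
If $RL\le 1/2$, this series is bounded by $R\sum_m (m+1)2^{-m}\lesssim 1$, which gives the claim.

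\textbf{Main obstacle.} The nontrivial input is \cref{thm:each-term-bounded}, which is assumed here. So the only delicate point is justifying the interchange of sum and integral, which Step 1 handles. A second point is that $R$ must be independent of $m$; this is exactly the statement of \cref{thm:each-term-bounded}. Finally, by the coarea/area formula remark in the preliminaries, the result translates into USFE for $\hd1\restriction_γ$.
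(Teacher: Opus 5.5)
Your proposal is correct and follows the paper's own argument: the paper obtains this proposition simply by summing the bounds $\norm{T^|_m f}_2\le R^{m+1}L^m\norm{f}_2$ from \cref{thm:each-term-bounded} over the expansion \cref{eq:kernel-expansion}. Your Steps 1--2 only make explicit the pointwise convergence of the expansion and the countable Minkowski inequality; for Step 1 you may assume $R\ge1$ without loss of generality, so that $L\le 1/(2R)$ also gives the $L<1$ you use there.
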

\Cref{thm:usfe-bdd-lipschitz-graph} in fact also holds for Lipschitz graphs with large constant (cf. \cite[Lemma III.2.3]{davidAnalysisUniformlyRectifiable1993}).
We shall not need this result.

\subsection{Fourier Calculation for Lower Bound}
We prove that the vertical square function corresponding to $\Im (ω_1θ_1)$ captures the norm of $A'$ using an energy argument.
\begin{lemma}\label{thm:linear-term-A-prime}
  We have
  \begin{equation}
    \int_{\RR} \int_{\RR} \abs[\Big]{\Im \int_{\RR} (-2i) \frac{A(x)-A(y)}{(x-y+is)^3} \dl y }^2 \dl x |s|\dl s
    \approx \norm{A'}_2^2.
  \end{equation}
\end{lemma}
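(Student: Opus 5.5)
The plan is to compute the left-hand side exactly by Plancherel. The result should in fact be an identity, with an absolute constant, rather than a two-sided estimate.

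\textbf{Step 1: reduce to a convolution.} Fix $s\neq0$ and set $k_s(t):=(t+is)^{-3}$. This kernel is integrable, and $\int_{\RR}k_s(t)\dl t=0$: close the contour in the half-plane not containing the pole $-is$, or note that $k_s=\tfrac12\partial_t^2 (t+is)^{-1}$. Hence
\begin{align}
  \int_{\RR}\frac{A(x)-A(y)}{(x-y+is)^3}\dl y = A(x)\int_{\RR}k_s -(A*k_s)(x) = -(A*k_s)(x).
\end{align}
By \cref{thm:graph-properties}, $A$ is Lipschitz with compact support, so $A\in L^2\cap L^\infty$ and $A'\in L^2$. This justifies splitting the integral and applying Fubini.

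\textbf{Step 2: compute the multiplier.} The Fourier transform of $t\mapsto (t+is)^{-1}$ is a residue computation. It equals $c\,e^{-2\pi|s||\xi|}\indicator{\{\xi\,\mathrm{sgn}(s)<0\}}$ with $|c|=2\pi$; the exact sign convention is irrelevant. Differentiating twice gives
\begin{align}
  \widehat{k_s}(\xi)=c'\,\xi^2 e^{-2\pi |s||\xi|}\indicator{H_s}(\xi),
\end{align}
where $H_s$ is a half-line determined by the sign of $s$ and $c'\neq0$ is absolute. So $g_s:=-2i\,(-(A*k_s))$ has Fourier transform supported in $H_s$.

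\textbf{Step 3: handle the imaginary part.} Write $\Im g_s=(g_s-\overline{g_s})/(2i)$. Since $A$ is real, $\widehat{\overline{g_s}}(\xi)=\overline{\widehat{g_s}(-\xi)}$ is supported in $-H_s$. The two summands therefore have disjoint Fourier supports and are orthogonal in $L^2$. Consequently
\begin{align}
  \norm{\Im g_s}_2^2=\tfrac12\norm{g_s}_2^2=2|c'|^2\int_{H_s}\xi^4e^{-4\pi|s||\xi|}|\hat A(\xi)|^2\dl\xi .
\end{align}

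\textbf{Step 4: integrate in $s$.} Integrate against $|s|\dl s$ and use Tonelli. The half $s>0$ covers one half-line of frequencies and $s<0$ covers the other. On each, $\int_0^\infty s\,e^{-4\pi s|\xi|}\dl s=(4\pi|\xi|)^{-2}$. The total is therefore an absolute constant times $\int\xi^2|\hat A(\xi)|^2\dl\xi$, which is a constant multiple of $\norm{A'}_2^2$. This proves the claim, with equality up to an absolute constant.

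\textbf{Main obstacle.} The only delicate point is bookkeeping rather than analysis: the signs in the Fourier transform of $(t+is)^{-1}$, and the check that the $s>0$ and $s<0$ halves together cover all frequencies. This is needed so that no part of $\hat A$ is lost.
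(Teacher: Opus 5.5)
Your argument is correct, but it handles the multiplier differently from the paper. Both proofs rest on Plancherel in $x$. The paper first rewrites the imaginary part through the real kernel $\Re(g+is)^{-3}=(g^3-3gs^2)/\abs{g+is}^6$, using that $A$ is real. It then rescales $u=g\xi$, $t=s\xi$ to factor out $\abs{\xi}^2$, which leaves an unevaluated constant $C_0$. Finiteness of $C_0$ is proved by splitting into $t>1$ (a crude size bound) and $t<1$ (an integration by parts plus a mean-value estimate giving $O(1+\abs{\log t})$ for the inner integral). Positivity is asserted on the grounds that the multiplier is not identically zero.

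You instead evaluate the Fourier transform of $(t+is)^{-3}$ in closed form, as a factor $\xi^2e^{-2\pi\abs{s}\abs{\xi}}$ supported on a half-line. You then dispose of $\Im$ through the orthogonality of $H_s$ and $-H_s$. This gives an exact identity: carrying your constants through, the left-hand side equals $\tfrac{\pi^2}{2}\norm{A'}_2^2$. Equivalently, the paper's $C_0$ equals $\pi^2/16$, and its inner $u$-integral has modulus exactly $\tfrac{\pi}{2}e^{-t}$. So in your route finiteness and positivity are automatic rather than needing separate arguments.

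The paper's real-variable route avoids contour integration and uses only size and cancellation properties of the rescaled kernel, so it would tolerate perturbations of the kernel. For this specific kernel, your computation is shorter and sharper.
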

\begin{proof}
  The left-hand side can be written as
  \begin{align}
    \int_{\RR} \int_{\RR} \paren[\Big]{\int_{\RR} & \frac{\paren[\big]{A(x)-A(y)} \paren[\big]{(x-y)^3 - 3(x-y)s^2}}{|x-y+is|^6} \dl y }^2 \dl x |s|\dl s                                 \\
                                                  & = \int_{\RR} \int_{\RR} \paren[\Big]{\int_{\RR} \paren[\big]{A(x)-A(x-g)} \frac{ g^3 - 3gs^2}{|g+is|^6} \dl g}^2 \dl x |s|\dl s,      \\
                                                  & = \int_{\RR} |\hat A(ξ)|^2 \int_{\RR} \abs[\Big]{\int_{\RR} (1-e^{-igξ}) \frac{g^3 - 3gs^2}{|g+is|^6}   \dl g}^2 |s|\dl s \dl{ξ}      \\
                                                  & = \int_{\RR} |\hat A(ξ)|^2 |ξ|^2 \int_{\RR} \abs[\Big]{\int_{\RR} (1-e^{-iu}) \frac{u^3 - 3ut^2}{|u+it|^6} \dl u}^2 |t| \dl t \dl{ξ},
  \end{align}
  where we substituted $g=x-y$, used Plancherel and Fubini, and finally substituted $u=gξ$ and $t = sξ$.

  The conclusion follows once we show that the constant
  \begin{align}
    C_0:=\int_{0}^{∞} \abs[\Big]{\int_{\RR} (1-e^{-iu}) \frac{u^3 - 3ut^2}{|u+it|^6} \dl u}^2 t \dl t
  \end{align}
  is finite and positive.  Positivity follows because the multiplier is not identically zero.  For finiteness, split $C_0=I_1+I_2$, where the $t$-integration is over $(0,1)$ and $(1,∞)$, respectively.
  For $I_2$, taking absolute values yields
  \begin{align}
    I_2
    &\lesssim \int_{1}^{∞} \abs[\Big]{\int_{\RR} \frac{1}{|u+it|^3} \dl u}^2 t \dl t
    \lesssim \int_{1}^{∞} \frac{1}{t^3} \dl t < ∞.
  \end{align}
  For $I_1$, we note that
  \begin{equation}
    - \diff*{\frac{u^3}{(u^2+t^2)^2}}{u}
    = \frac{-3u^2}{(u^2+t^2)^2} + \frac{4u^4}{(u^2+t^2)^3}
    = \frac{u^4-3u^2t^2}{|u+it|^6}.
  \end{equation}
  Thus, for $u\neq0$,
  \begin{equation}
    \frac{u^3-3ut^2}{|u+it|^6}
    = -\frac{1}{u}\diff*{\paren[\Big]{\frac{u^3}{|u+it|^4}}}{u}.
  \end{equation}
  Since $\int_{\RR}\diff*{\paren[\Big]{u^3/|u+it|^4}}{u}\dl u=0$, this allows us to introduce the constant $i=\lim_{u\to0}(1-e^{-iu})/u$, i.e.
  \begin{align}
    \abs[\Big]{ \int_{\RR} (1-e^{-iu}) \frac{\paren{u^3 - 3ut^2 }}{|u+it|^6} \dl u }
     & = \abs[\Big]{ \int_{\RR} \paren[\Big]{ i - \frac{1-e^{-iu}}{u}} \diff*{\paren[\Big]{\frac{u^3}{|u+it|^4}}}{u} \dl u } \\
     & \lesssim \int_{0}^{∞} \frac{\min(2, |u|)}{u^2+t^2} \dl u
    \lesssim 1+|\log t|,
  \end{align}
  where we used the mean-value theorem for the real and imaginary part, respectively.
  We therefore get the desired bound
  \begin{align}
    I_1 \lesssim \int_{0}^{1} (1+|\log t|)^2 t \dl t < \infty.
  \end{align}

\end{proof}

\begin{lemma} \label{thm:l2-tail}
  For $m_0\ge 1$, we have
  \begin{equation}
    \int_{\RR} \int_{\RR} \abs[\Big]{ \int_{\RR} \sum_{m=m_0}^{\infty} ω_m θ_{m}(x+is,y) \dl y }^2 |s| \dl x\dl s \lesssim α^{2m_0}.
  \end{equation}
\end{lemma}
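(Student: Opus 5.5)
The approach is to apply the operator $\sum_{m\ge m_0}\omega_m\Theta_m$ to the constant function $1$, rewrite each term via the identity from the proof of \cref{thm:each-term-bounded}, and then sum the resulting $L^2$ bounds with the triangle inequality in $L^2(\dl x\,|s|\dl s)$.

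\emph{Interchanging sum and integral.} First I justify moving the sum outside the $y$-integral. Since $|A(x)-A(y)|\le L|x-y|\le L|x-y+is|$, we have
\begin{align}
  |\omega_m\theta_m(x+is,y)|\le (m+1)^2L^m|x-y+is|^{-2}.
\end{align}
This bound is summable in $m$ because $L\lesssim\alpha$ is small, and it is integrable in $y$ for fixed $s\neq0$. Hence
\begin{align}
  \int_\RR\sum_{m\ge m_0}\omega_m\theta_m(x+is,y)\dl y=\sum_{m\ge m_0}\omega_m(\Theta_m1)(x+is),
\end{align}
and the left-hand side of the lemma is bounded by
\begin{align}
  \paren[\Big]{\sum_{m\ge m_0}\norm{\omega_m\Theta_m 1}_{L^2(\dl x|s|\dl s)}}^2 .
\end{align}

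\emph{Rewriting each term.} For $m\ge1$ I use the identity $(m+1)\Theta_m(1)=m\,\Theta_{m-1}(A')$ from the proof of \cref{thm:each-term-bounded}. It comes from integrating the displayed $y$-derivative over $\RR$. The boundary terms vanish because $A$ is bounded (it has compact support $\subset 3I_0$), so
\begin{align}
  \frac{(A(x)-A(y))^m}{(x-y+is)^{m+1}}\to0 \quad\text{as } |y|\to\infty .
\end{align}
Consequently $\omega_m\Theta_m1=m(-i)^m\Theta_{m-1}(A')$, and its $L^2(\dl x\,|s|\dl s)$ norm is exactly $m\norm{T^|_{m-1}A'}_{L^2(\RR)}$.

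\emph{Bounding each term.} By \cref{thm:each-term-bounded}, $\norm{T^|_{m-1}A'}_2\le R^{m}L^{m-1}\norm{A'}_2$. From \cref{thm:graph-properties}, $|A'|\le L\lesssim\alpha$ and $\supp A'\subset 3I_0$, so $\norm{A'}_2\lesssim\alpha$. Therefore the $m$-th term has norm at most $mR^m(C\alpha)^{m-1}\cdot C\alpha\le m(C'\alpha)^m$.

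\emph{Summing.} For $\alpha$ small enough that $C'\alpha\le1/2$, the sum satisfies $\sum_{m\ge m_0}m(C'\alpha)^m\lesssim m_0(C'\alpha)^{m_0}$. Squaring gives the bound $\lesssim\alpha^{2m_0}$, with the implicit constant depending on $m_0$, which is allowed for fixed $m_0$. If one wants uniformity in $m_0$, one can absorb $m_0$ into a slightly larger geometric base.

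\emph{Main obstacle.} The only delicate point is the bookkeeping of constants. We need the geometric growth $R^{m+1}L^m$ from \cref{thm:each-term-bounded}, rather than some factorial growth, so that the series converges once $\alpha\ll R^{-1}$. This is precisely what the proposition provides, together with the smallness convention $\log\alpha\ll-1$. Everything else is the interchange-of-limits justification above.
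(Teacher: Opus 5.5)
Your argument is correct, but it takes a different route from the paper's. The paper never rewrites $\Theta_m 1$. Instead it splits $1=\indicator{5I_0}+\indicator{\RR\setminus 5I_0}$ and applies the summed operator bound of \cref{thm:each-term-bounded} to the $L^2$ function $\indicator{5I_0}$. It then estimates the far piece by hand, using that $A$ vanishes off $3I_0$ and that $|A|\lesssim\varepsilon$ (\cref{item:graph-close-to-L0} of \cref{thm:graph-properties}), which gives $\norm{T^{|}_m\indicator{\RR\setminus 5I_0}}_2\lesssim\varepsilon^m$.

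You instead use the integration-by-parts identity $(m+1)\Theta_m 1=m\,\Theta_{m-1}(A')$. The paper uses this identity only inside the proof of \cref{thm:each-term-bounded}, to verify the Carleson condition. It lets you trade the non-$L^2$ input $1$ for the compactly supported $A'\in L^2$. The operator bound then applies directly, so you need neither a localization nor the $\varepsilon$-smallness of $A$.

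Your route is also quantitatively sharper. Before you bound $\norm{A'}_2\lesssim\alpha$, you have in fact shown
\[
\norm[\Big]{\sum_{m\ge m_0}\omega_m\Theta_m 1}_{L^2(dx\,|s|\,ds)}\lesssim L^{m_0-1}\norm{A'}_2 .
\]
For $m_0=2$, the error term in \cref{thm:l2-lower-bound} is therefore $\lesssim L\norm{A'}_2\lesssim\norm{A'}_\infty^2$, which is exactly the form stated there; the $\alpha^{m_0}$ bound alone only yields an error $C\alpha^2$. Once $L$ is small, this error can even be absorbed to give $\norm{T^{|}1}_2\gtrsim\norm{A'}_2$.

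What the paper's splitting buys is mainly uniformity of method: the same template is reused for the $L^q$ bound in \cref{thm:lower-bound-conical} and in the localization lemmas of \cref{sec:comparison}. Your identity would work in the conical $L^q$ setting as well, combined with the extrapolated bounds of \cref{thm:conical-sf-extrapolation}.
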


\begin{proof}
  The essence of the proof is a localization argument.
  Let us denote the vertical square-function operator corresponding to the kernel $\sum_{m=m_0}^{\infty} ω_m θ_{m}$ by $T^|_{\text{tail}}$.
  For any $f\in L^2(\RR)$, we have
  \begin{align}
    \norm{T^|_{\text{tail}}f}_2 \le \sum_{m=m_0}^{∞} |ω_m| \norm{T^|_m f}_2 \lesssim α^{m_0} \norm{f}_2,
  \end{align}
  by summing up \cref{thm:each-term-bounded}.
  We have
  \begin{equation}
    \norm{T^|_{\text{tail}} 1}_2 \le \norm{T^|_{\text{tail}} \indicator{5I_0}}_2 + \norm{T^|_{\text{tail}} \indicator{\RR\setminus 5I_0}}_2
    \lesssim α^{m_0} + \sum_{m=m_0}^{∞} |ω_m| \norm{T^|_m \indicator{\RR\setminus 5I_0}}_2.
  \end{equation}
  For $m\ge m_0$, the last term evaluates to
  \begin{align}
    \norm{T_m^| \indicator{\RR\setminus 5I_0}}_2^2
     & = \int_{\RR} \int_{\RR} \abs[\Big]{\int_{\RR\setminus 5I_0} \frac{(A(x)-A(y))^m}{(x-y+is)^{m+2}} \dl y }^2 |s| \dl x\dl s \\
     & \le \int_{\RR} \int_{3I_0} \abs[\Big]{ ε^m  \int_{\RR\setminus 5I_0} \frac{1}{|x-y+is|^{m+2}} \dl y }^2 |s| \dl x\dl s    \\
     & \lesssim \int_{\RR} \int_{3I_0} \abs[\Big]{ε^m  \frac{1}{(1+|s|)^{m+1}} }^2 |s| \dl x\dl s
    \lesssim ε^{2m}.
  \end{align}

  In total, we get
  \begin{align}
    \norm{T_{\text{tail}}^| 1}_2
    \lesssim α^{m_0} + \sum_{m=m_0}^{\infty} |ω_m| ε^m
    \lesssim α^{m_0} + ε^{m_0}
    \lesssim α^{m_0}.
  \end{align}
\end{proof}

\begin{lemma}\label{thm:l2-lower-bound}
  There exists constants $c,C>0$ and $α_0>0$ such that if $\norm{A'}_{∞}\le α_0$, then we have the lower bound
  \begin{equation}
    \norm{T^| 1}_2 \ge c\norm{A'}_2 - C \norm{A'}_{∞}^2 .
  \end{equation}
\end{lemma}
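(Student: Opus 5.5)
We use the Taylor expansion \cref{eq:kernel-expansion} of $θ$ and split
\begin{align}
  Θ1 = ω_0Θ_01 + ω_1Θ_11 + \sum_{m\ge 2}ω_mΘ_m1 .
\end{align}
By \cref{thm:zero-term-mean-zero} the zeroth term vanishes identically. Hence $T^|1$ is the vertical square function of $ω_1Θ_11 + Θ_{\mathrm{tail}}1$, where the tail starts at $m_0=2$.

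Since $|\Im w|\le |w|$ pointwise, the triangle inequality in $L^2(|s|\,\dl x\,\dl s)$ gives
\begin{align}
  \norm{T^|1}_2 \ge \norm{\Im(ω_1Θ_11)}_{L^2(|s|\dl x\dl s)} - \norm{T^|_{\mathrm{tail}}1}_2 .
\end{align}
By \cref{thm:linear-term-A-prime}, the first term is $\ge c\norm{A'}_2$, because $ω_1θ_1 = -2i\,(A(x)-A(y))/(x-y+is)^3$.

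For the tail we should not use \cref{thm:l2-tail} directly, since it bounds by $α^{2}$, a fixed bound in terms of the construction constant. The statement instead asks for $C\norm{A'}_∞^2$. So I would redo that argument with $L:=\norm{A'}_∞$ in place of $α$. By \cref{thm:each-term-bounded}, $\norm{T^|_m f}_2\le R^{m+1}L^m\norm f_2$. Summing over $m\ge2$ with weights $|ω_m|=m+1$ converges once $RL\le 1/2$, which fixes $α_0$, and gives $\norm{T^|_{\mathrm{tail}}}_{2\to2}\lesssim L^2$.

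Since $1\notin L^2$, localize as in \cref{thm:l2-tail}: write $1=\indicator{5I_0}+\indicator{\RR\setminus5I_0}$. The local piece contributes $\lesssim L^2\norm{\indicator{5I_0}}_2\lesssim L^2$.

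For the far piece, use $\supp A\subset 3I_0$. Then $A(x)-A(y)=0$ whenever $x,y\notin 3I_0$, so the integrand vanishes unless $x\in3I_0$, where $|x-y|\gtrsim1+|y|$. Also $|A(x)-A(y)|\le|A(x)|\lesssim L$, using $|A|\le 3L$ from the support and Lipschitz bound. This gives $\norm{T^|_m\indicator{\RR\setminus5I_0}}_2\lesssim (CL)^m$, with the $s$-integral handled exactly as in \cref{thm:l2-tail}. Summing over $m\ge2$ gives $\lesssim L^2$.

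Combining, $\norm{T^|1}_2\ge c\norm{A'}_2 - CL^2$.

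The main obstacle is this tail bound with the sharp $L^2$ dependence, in particular the non-local piece. Swapping $ε$ for $L$ there requires the far-field estimate $|A|\lesssim L$, which rests on the compact support of $A$. A secondary point is justifying convergence of the series in $L^2(|s|\dl x\dl s)$, and the interchange with the $y$-integral for $f=1$. I would handle this by applying the expansion separately to the two localized pieces, where absolute convergence holds because $L<1$.
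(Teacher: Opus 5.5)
Your argument is correct and follows the paper's proof. You drop the $m=0$ term by \cref{thm:zero-term-mean-zero}, bound the imaginary part of the $m=1$ term from below by \cref{thm:linear-term-A-prime}, and control the $m\ge 2$ tail by the per-term bounds of \cref{thm:each-term-bounded} together with the splitting $1=\indicator{5I_0}+\indicator{\RR\setminus 5I_0}$ from \cref{thm:l2-tail}. The one difference is that you redo that tail estimate with $L=\norm{A'}_\infty$ in place of $\alpha$ and $\varepsilon$, using $|A|\le 3L$ from $\supp A\subset 3I_0$. This gives exactly the stated $\norm{A'}_\infty^2$ error, whereas the paper's direct citation of \cref{thm:l2-tail} only yields an error of order $\alpha^2$, which suffices for its later use but is weaker than the lemma as literally stated.
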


\begin{proof}
  Combining \cref{thm:zero-term-mean-zero,thm:linear-term-A-prime,thm:l2-tail}, we conclude
  \begin{align}
    \norm{T^| 1}_2
     & \ge \paren[\Big]{ \int_{\RR} \int_{\RR} \abs[\big]{ ω_1 (Θ_1 1)(x+is) }^2 |s| \dl s \dl x }^{1/2}
    - \paren[\Big]{ \int_{\RR} \int_{\RR} \abs[\big]{ \sum_{m=2}^{∞} ω_m (Θ_m 1)(x+is) }^2 |s| \dl s \dl x }^{1/2} \\
     & \ge c\norm{A'}_2 - \norm{\sum_{m=2}^{∞} ω_m T_m^| 1 }_2
    \ge c\norm{A'}_2 - C\norm{A'}_{∞}^2.
  \end{align}
\end{proof}

\subsection{Conical Square Function}\label{sec:usfe-cones}
An estimate like \cref{thm:l2-lower-bound} would be sufficient for the classical execution of this scheme.
However, in our case, comparisons using the vertical square function are too delicate, and we need to switch to conical square functions.
We will need the following two results relating conical and vertical square functions.

\begin{proposition}\label{thm:conical-vertical-sf}
  For any $u\colon\CC\setminus\RR\to [0,∞]$, we have
  \begin{align}
    \int_{\RR} \int_{Γ(z)} u(ξ) \dl{ξ}  \dl z \approx \int_{\RR}\int_{\RR} u(x+is) |s| \dl s  \dl x.
  \end{align}

  In particular, the square-function operators $S^{∨}$ and $S^|$ in \cref{eq:general-square-function} satisfy
  \begin{align}
    \norm{S^{∨}f}_2 \approx \norm{S^{|}f}_{2}, \quad\forall f\in L^2(\RR).
  \end{align}
\end{proposition}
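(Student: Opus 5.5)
The plan is a direct Fubini/Tonelli argument on the product space $\RR\times(\CC\setminus\RR)$. Since $u\ge 0$ is measurable, Tonelli allows us to swap the order of integration:
\begin{align}
  \int_{\RR}\int_{Γ(z)}u(ξ)\dl{ξ}\dl z
  =\int_{\CC\setminus\RR}u(ξ)\,\abs{\{z\in\RR: ξ\in Γ(z)\}}\dl{ξ},
\end{align}
where $\abs{\cdot}$ denotes one-dimensional Lebesgue measure. Everything therefore reduces to computing the length of the \enquote{shadow} set $\{z\in\RR : ξ\in Γ(z)\}$ for a fixed $ξ=x+is$ with $s\neq0$.

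For fixed $ξ=x+is$, the condition $ξ\in Γ(z)$ reads $\abs{ξ-z}<(1+κ)\abs{s}$. Since $z\in\RR$, we have $\abs{ξ-z}^2=(x-z)^2+s^2$, so the condition becomes
\begin{align}
  \abs{x-z}<\sqrt{(1+κ)^2-1}\,\abs{s}.
\end{align}
The shadow is thus an interval centred at $x$ of length $2\sqrt{(1+κ)^2-1}\,\abs{s}=c_κ\abs{s}$, with $c_κ>0$ depending only on the fixed $κ$. Inserting this and writing $\dl ξ=\dl x\dl s$ gives the identity
\begin{align}
  \int_{\RR}\int_{Γ(z)}u\dl ξ\dl z=c_κ\int_{\RR}\int_{\RR}u(x+is)\abs{s}\dl s\dl x,
\end{align}
which is in fact an equality up to the constant $c_κ$, stronger than the claimed $\approx$.

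For the second assertion, apply the identity with $u(ξ)=\abs{(Θf)(ξ)}^2$. This $u$ is nonnegative and measurable, since $Θf$ is continuous off $\RR$ by the kernel bounds \cref{eq:kernel-estimate-size,eq:kernel-estimate-hoelder} for $f\in L^2$. The left side becomes $\norm{S^{∨}f}_2^2$ and the right side becomes $c_κ\norm{S^{|}f}_2^2$, so taking square roots finishes the proof.

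There is no real obstacle. The only point needing care is measurability of $\{(z,ξ):ξ\in Γ(z)\}$, which is an open set in $\RR\times\CC$, so Tonelli applies.
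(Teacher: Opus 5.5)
Your proof is correct and follows the paper's argument: Tonelli to swap the integrals, then the observation that $\{z\in\RR : x+is\in\Gamma(z)\}$ is an interval of length $2\sqrt{(1+\kappa)^2-1}\,|s|$, which you compute explicitly where the paper only writes $\approx$. One small correction: \cref{eq:kernel-estimate-hoelder} controls regularity in $y$, not in $\xi$, so it does not give continuity of $\Theta f$; measurability of $|\Theta f|^2$ instead follows from joint measurability of $\theta$ together with the absolute convergence guaranteed by \cref{eq:kernel-estimate-size}, or simply from the smoothness of the concrete kernels used in the paper.
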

\begin{proof}
  Using Fubini, we get
  \begin{align}
    \int_{\RR} \int_{Γ(z)} u(ξ) \dl{ξ} \dl z
    = \int_{\RR} \int_{\RR} \int_{\RR} \indicator{Γ(z)}(x+is) u(x+is)\dl z \dl s \dl x
    \approx \int_{\RR} \int_{\RR} u(x+is) |s| \dl s \dl x.
  \end{align}
  The second part follows by considering $u=|Θf|^2$, where $Θ$ is the integral operator corresponding to $S$.
\end{proof}

The following extrapolation result can be found in \cite{hofmann$L^p$squareFunctionEstimates2017}.
\begin{proposition}[{\cite[Theorem 6.18]{hofmann$L^p$squareFunctionEstimates2017}}]\label{thm:conical-sf-extrapolation}
  Let $S^{∨}$ and $S^|$ be as in \cref{eq:general-square-function}.
  If $S^{∨}$ is bounded in $L^2(\RR)$, then it is bounded in $L^p(\RR)$ for $p\in(1,∞)$.
  More specifically, $\norm{S^{∨}}_{p\to p}\lesssim \max\{ \norm{S^{∨}}_{2\to 2}, C_{\theta} \}$, where the implicit constant only depends on $p$.
\end{proposition}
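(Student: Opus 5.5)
The plan is to treat $S^{∨}$ as a Hilbert-space-valued singular integral and run Calderón--Zygmund theory. Let $\mathcal H:=L^2(Γ(0),\dl{ξ'})$ and, for $z\in\RR$, put
\begin{align}
  (\mathcal T f)(z):=\paren[\big]{ξ'\mapsto (Θf)(z+ξ')}\in\mathcal H,
  \qquad\text{so that}\qquad
  (S^{∨}f)(z)=\norm{(\mathcal Tf)(z)}_{\mathcal H}.
\end{align}
Then $\mathcal T$ is a linear operator $L^2(\RR)\to L^2(\RR;\mathcal H)$ with operator norm $\norm{S^{∨}}_{2\to2}$. Its kernel is $K(z,y)=θ(z+\cdot\,,y)\in\mathcal H$. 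The first step is the size estimate. For $ξ'\in Γ(0)$ we have $|\Im ξ'|\approx|ξ'|$, and hence
\begin{align}
  |z+ξ'-y|\gtrsim |z-y|+|ξ'| .
\end{align}
Indeed, $|z+ξ'-y|\ge|\Im ξ'|\approx|ξ'|$, and if $|ξ'|\le |z-y|/2$ the triangle inequality gives $|z+ξ'-y|\ge|z-y|/2$. Combining this with \cref{eq:kernel-estimate-size},
\begin{align}
  \norm{K(z,y)}_{\mathcal H}^2\lesssim C_θ^2\int_{\CC}\frac{\dl{ξ'}}{(|z-y|+|ξ'|)^4}\approx \frac{C_θ^2}{|z-y|^2}.
\end{align}

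The second step is the Hörmander condition in the $y$ variable. Suppose $|y-y'|\le c|z-y|$ with $c$ small. Then for every $ξ'\in Γ(0)$ we have $|y-y'|\le\frac12|z+ξ'-y|$, so \cref{eq:kernel-estimate-hoelder} applies pointwise and gives
\begin{align}
  \norm{K(z,y)-K(z,y')}_{\mathcal H}\lesssim C_θ\frac{|y-y'|}{|z-y|^{2}}.
\end{align}
Integrating over $|z-y|>c^{-1}|y-y'|$ yields a bound $\lesssim C_θ$. This is exactly the condition needed for the vector-valued Calderón--Zygmund decomposition: split $f=g+\sum_j b_j$ at height $λ$. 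The good part $g$ is handled by the $L^2$ bound. For each $b_j$, supported on an interval $I_j$ with mean zero, subtract $K(z,c_j)$ (with $c_j$ the centre of $I_j$) and use the estimate above off $2I_j$. This gives the weak type $(1,1)$ bound $\lesssim \norm{S^{∨}}_{2\to2}+C_θ$. Marcinkiewicz interpolation with $L^2$ then gives $L^p$ for $1<p\le2$ with the claimed dependence of the constants. Note that only $y$-regularity is needed here, which is why no smoothness of $θ$ in $ξ$ is assumed.

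For $2<p<\infty$ I would not use duality, since the adjoint would require regularity of $θ$ in $ξ$, which is not assumed. Instead I would use a Fefferman--Stein sharp-function argument applied to $(S^{∨}f)$, exploiting the cone geometry. Fix an interval $I\ni z$ and write $f=f\indicator{3I}+f\indicator{\RR\setminus3I}$. The local part is controlled by the $L^2$ bound averaged over $I$. For the far part, split the cone $Γ(z)$ at height $\ell(I)$. The upper piece of the cone is essentially independent of $z\in I$: the symmetric difference of the cones $Γ(z)$, $Γ(z_I)$ above height $\ell(I)$ lies in a thin wedge, which contributes $\lesssim C_θ Mf(z)$ by the size estimate alone. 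On the lower piece, points $ξ$ are at distance $\approx\dist(y,I)$ from every $y\notin 3I$, so the size bound gives $\lesssim C_θ Mf$ again. Together these show $M^{\#}(S^{∨}f)\lesssim (\norm{S^{∨}}_{2\to2}+C_θ)\,M_2f$, and $L^p$ boundedness for $p>2$ follows.

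The step I expect to be the main obstacle is this $p>2$ sharp-function estimate. Specifically, the wedge-difference term has to be controlled without any regularity of $θ$ in $ξ$, and one must check that the wedge measure, weighted by $|ξ-y|^{-4}$, sums over dyadic annuli to a maximal function. The $1<p\le2$ range, which is what the paper uses later, is the routine part.
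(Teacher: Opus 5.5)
Your proposal is correct, but it does not follow the paper, because the paper gives no argument of its own. It imports the statement from Theorem 6.18 of Hofmann--Mitrea--Mitrea--Morris. That theorem is proved in a much more general setting: Ahlfors--David regular sets of arbitrary codimension in quasi-metric spaces, with Hardy-space versions as well.

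Your argument is a self-contained one-dimensional proof with two halves:
\begin{itemize}
\item for $1<p<2$, an $\mathcal H$-valued Calder\'on--Zygmund argument with $\mathcal H=L^2(\Gamma(0))$, giving weak type $(1,1)$;
\item for $p>2$, a Fefferman--Stein sharp-function estimate in which the cone geometry replaces the missing regularity of $\theta$ in $\xi$.
\end{itemize}
What your route buys is transparency. It shows directly that only the size bound and the $y$-regularity enter. It also produces the linear dependence on $\max\{\norm{S^{\vee}}_{2\to2},C_\theta\}$ explicitly, which is exactly what the paper needs to get $\norm{T^{\vee}_{\text{tail}}}_{q\to q}\lesssim\alpha$. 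What the citation buys is generality without checking details.

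One correction to your assessment: the range $p>2$ is not optional here. In \cref{thm:lower-bound-conical} the paper interpolates $\norm{T^{\vee}1}_2\le\norm{T^{\vee}1}_{3/2}^{1/2}\norm{T^{\vee}1}_{3}^{1/2}$ and bounds the $L^3$ norm through this proposition. So your sharp-function half is genuinely needed.

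The step you flag as the main obstacle does go through with the size bound alone. Fix $z_0,z,z'\in I$, set $f_2=f\indicator{\RR\setminus 3I}$, and write $t=\abs{\Im\xi}$.
\begin{itemize}
\item On $\Gamma(z)\cap\{t<\ell(I)\}$, a region of area $\lesssim\ell(I)^2$, you have $\abs{\Theta f_2(\xi)}\lesssim C_\theta Mf(z_0)/\ell(I)$.
\item On $(\Gamma(z)\triangle\Gamma(z'))\cap\{t\ge\ell(I)\}$, each horizontal slice has length at most $2\abs{z-z'}\le 2\ell(I)$. There $\abs{\xi-y}\gtrsim t+\abs{y-z_I}$, so $\abs{\Theta f_2(\xi)}\lesssim C_\theta Mf(z_0)/t$. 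This region therefore contributes
\[
\lesssim C_\theta^2\, Mf(z_0)^2\int_{\ell(I)}^{\infty}\frac{\ell(I)}{t^2}\,dt\approx C_\theta^2\, Mf(z_0)^2 .
\]
\end{itemize}
Take $c_I$ to be the $L^2$ norm of $\Theta f_2$ over $\Gamma(z')\cap\{t\ge\ell(I)\}$, for some $z'\in I$ where it is finite. This gives $M^{\#}(S^{\vee}f)\lesssim\norm{S^{\vee}}_{2\to2}M_2f+C_\theta Mf$, as you claimed.

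Two minor bookkeeping points:
\begin{itemize}
\item In the Calder\'on--Zygmund step the exceptional set should be $\bigcup_j CI_j$ with $C\approx c^{-1}$, not $\bigcup_j 2I_j$.
\item Fefferman--Stein needs an a priori integrability hypothesis. Run the argument for $f\in L^2\cap L^p$, where $S^{\vee}f\in L^2$. Then pass to general $f$ by density and Fatou, using that $\Theta f_n\to\Theta f$ pointwise off $\RR$.
\end{itemize}
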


\begin{proposition}\label{thm:conical-sf-lp-bdd}
  The conical square function $T^{∨}$ is bounded in $\Lp{p}[\RR]$ for $p\in (1,∞)$.
\end{proposition}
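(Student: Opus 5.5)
The proof reduces $L^p$ boundedness of $T^{∨}$ to its $L^2$ boundedness and then applies the extrapolation result \cref{thm:conical-sf-extrapolation}. It has three steps.

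\emph{Step 1 ($L^2$ bound).} By \cref{thm:usfe-bdd-lipschitz-graph}, the vertical square function $T^|$ associated with $θ$ is bounded on $L^2(\RR)$. This uses that the Lipschitz constant of $A$ is $\lesssim α$, which is small by the choice of parameters in \cref{sec:graph-construction}. By \cref{thm:conical-vertical-sf}, applied with $u=|Θf|^2$, we get $\norm{T^{∨}f}_2\approx\norm{T^|f}_2\lesssim\norm{f}_2$. Hence $T^{∨}$ is bounded on $L^2(\RR)$.

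\emph{Step 2 (kernel estimates).} Extrapolation requires the size and smoothness conditions \cref{eq:kernel-estimate-size,eq:kernel-estimate-hoelder} for $θ(x+is,y)=(x-y+i(A(x)-A(y)+s))^{-2}$. Write $ξ=x+is$ and $w(y)=x-y+i(A(x)-A(y)+s)$.

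For the size bound, note that $|w(y)|\gtrsim |x-y|+|s|$ when $\norm{A'}_\infty$ is small. Indeed, the imaginary part differs from $s$ by at most $L|x-y|$. This gives $|θ(ξ,y)|\lesssim |ξ-y|^{-2}$, where $|ξ-y|\approx|x-y|+|s|$.

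For the smoothness bound, take $|y-y'|\le\frac12|ξ-y|$. Then $|w(y)-w(y')|\le(1+L)|y-y'|$, which is small compared to $|w(y)|$. The mean value theorem applied to $w\mapsto w^{-2}$ then gives
\[
|θ(ξ,y)-θ(ξ,y')|\lesssim \frac{|y-y'|}{|ξ-y|^3}.
\]
Thus $C_θ\lesssim 1$.

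\emph{Step 3 (extrapolation).} With Steps 1 and 2 in hand, \cref{thm:conical-sf-extrapolation} yields
\[
\norm{T^{∨}}_{p\to p}\lesssim_p \max\{\norm{T^{∨}}_{2\to2},C_θ\}<\infty
\]
for every $p\in(1,\infty)$.

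\emph{Main obstacle.} The step needing the most care is Step 2. The size comparison $|w(y)|\approx|ξ-y|$ must hold uniformly in $s$ of either sign, and this is exactly where the smallness of $\norm{A'}_\infty$ enters: we need $L<1/2$, say, so that $|A(x)-A(y)|$ cannot cancel $s$. The cone geometry in \cref{thm:conical-sf-extrapolation} is relative to $\RR$ (parametrizing $γ$), which is consistent with how the kernel $θ$ is defined, so no further adjustment is needed.
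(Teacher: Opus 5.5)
Your proof is correct and follows the paper's argument. You obtain the $L^2$ bound from \cref{thm:usfe-bdd-lipschitz-graph} combined with \cref{thm:conical-vertical-sf}, and then extrapolate via \cref{thm:conical-sf-extrapolation}. Your Step 2, which checks the size and H\"older bounds for $\theta$ so that $C_\theta\lesssim 1$, makes explicit a hypothesis the paper uses tacitly; the size bound is essentially \cref{thm:x-y-A-s-comparability}.
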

\begin{proof}
  For $p=2$, \cref{thm:conical-vertical-sf,thm:usfe-bdd-lipschitz-graph} yield
  \begin{align}
    \norm{T^{∨}f}_2 \approx \norm{T^{|}f}_2 \lesssim \norm{f}_2,
  \end{align}
  for every $f\in L^2(\RR)$,
  and thus the claim follows with \cref{thm:conical-sf-extrapolation}.
\end{proof}

With the boundedness of the conical square function in hand, we now aim for a variant of \cref{thm:l2-lower-bound} using the conical square function.
\begin{lemma}\label{thm:lower-bound-conical}
  There exists constants $c,C>0$ and $α_0>0$ such that if $\norm{A'}_{∞}\le α_0$, then we have the lower bound
  \begin{align}
    \norm{T^{∨} 1}_{3/2}^{1/2} \ge c\frac{\norm{A'}_2}{α^{1/2}} - C \frac{\norm{A'}_{∞}^2}{α^{1/2}}.
  \end{align}
\end{lemma}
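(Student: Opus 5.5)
We will derive the conical estimate from the $L^2$ lower bound of \cref{thm:l2-lower-bound} by interpolating $L^2$ between $L^{3/2}$ and a higher exponent, and use \cref{thm:conical-sf-lp-bdd} to control the higher exponent. Concretely, \cref{thm:conical-vertical-sf} applied to $u=|\Theta 1|^2$ gives $\norm{T^{\vee}1}_2\approx\norm{T^{|}1}_2$. Hölder's inequality with exponents $3/2$ and $3$ and $\tfrac12=\tfrac{1/2}{3/2}+\tfrac{1/2}{3}$ then gives
\begin{align}
  \norm{T^{|}1}_2 \lesssim \norm{T^{\vee}1}_2 \le \norm{T^{\vee}1}_{3/2}^{1/2}\,\norm{T^{\vee}1}_{3}^{1/2}.
\end{align}
The whole proof therefore reduces to the upper bound $\norm{T^{\vee}1}_3\lesssim α$. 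Granting it, we get $\norm{T^{\vee}1}_{3/2}^{1/2}\gtrsim α^{-1/2}\norm{T^{|}1}_2$, and inserting \cref{thm:l2-lower-bound} yields the claim with new constants $c,C$.

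For the bound $\norm{T^{\vee}1}_3\lesssim α$, we split as in \cref{thm:l2-tail}. By \cref{thm:zero-term-mean-zero} we have $\Theta 1=\sum_{m\ge1}ω_m\Theta_m 1$, so $T^{\vee}1$ is the conical square function associated with the kernel $\sum_{m\ge1}ω_mθ_m$, and its $L^2$ operator norm is $\lesssim α$. This follows from \cref{thm:each-term-bounded} summed over $m\ge1$, using $L\lesssim α$, together with \cref{thm:conical-vertical-sf}. The size and smoothness constant of this kernel is also $\lesssim \sum_{m\ge1}(m+1)L^m\lesssim α$. Hence \cref{thm:conical-sf-extrapolation} with $p=3$ shows that this conical operator has $L^3$ norm $\lesssim α$.

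Next write $1=\indicator{5I_0}+\indicator{\RR\setminus5I_0}$. The local part then contributes $\lesssim α\norm{\indicator{5I_0}}_3\lesssim α$. For the far part, $A$ is supported in $3I_0$ with $|A|\lesssim ε$ by \cref{thm:graph-properties}. The terms $(A(x)-A(y))^m$ with $y\notin5I_0$ vanish unless $x\in3I_0$, and for such $x$ they are bounded by $ε^m$. This gives $|\Theta_m\indicator{\RR\setminus5I_0}(x+is)|\lesssim ε^m\indicator{3I_0}(x)(1+|s|)^{-m-1}$. Integrating over cones, the resulting conical square function is $\lesssim ε(1+|z|)^{-1}$, which has $L^3$ norm $\lesssim ε\le α$.

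The main obstacle is this far-field estimate in $L^3$ rather than $L^2$. The cone $Γ(z)$ reaches the strip $x\in3I_0$ only at heights $|s|\gtrsim|z|$. There the bound $(1+|s|)^{-2}$ squared and integrated over the part of the cone above that strip gives decay like $|z|^{-2}$ for $|T^{\vee}|^2$, which is comfortably in $L^{3/2}$. This needs checking with care, but nothing beyond the decay already used in \cref{thm:l2-tail} should be required. A second point to check is that \cref{thm:conical-sf-extrapolation} applies to the tail kernel $\sum_{m\ge1}ω_mθ_m$ with constants that scale linearly in $α$, which follows from the $m$-by-$m$ kernel bounds stated in the sketch of \cref{thm:each-term-bounded}.
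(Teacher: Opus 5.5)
Your proposal is correct and follows the paper's proof essentially step for step. It combines the interpolation $\norm{T^{\vee}1}_2\le\norm{T^{\vee}1}_{3/2}^{1/2}\norm{T^{\vee}1}_3^{1/2}$ with \cref{thm:l2-lower-bound} and \cref{thm:conical-vertical-sf}, and obtains $\norm{T^{\vee}1}_3\lesssim\alpha$ by extrapolating the tail operator with kernel $\sum_{m\ge1}\omega_m\theta_m$ on $\indicator{5I_0}$ and estimating the far part on $\indicator{\RR\setminus5I_0}$ directly via $\supp A\subset 3I_0$ and $|A|\lesssim\varepsilon$ (your kernel-constant sum should read $\sum_{m\ge1}|\omega_m|C_{\theta_m}\lesssim\sum_{m\ge1}(m+1)^2L^m$, still $\lesssim\alpha$).
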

\begin{proof}
  Combining \cref{thm:l2-lower-bound,thm:conical-vertical-sf}, we get
  \begin{align}
    c\norm{A'}_2 - C \norm{A'}_{∞}^2
    \le \norm {T^{|}1}_2
    \approx \norm {T^{∨}1}_2.
  \end{align}
  Now for every $p$ and $q$ such that $p<2<q$, there exists $θ\in (0,1)$ such that
  \begin{align}
    \norm {T^{∨}1}_2
    \le \norm{T^{∨} 1}_p^{θ} \norm{T^{∨} 1}_q^{1-θ}.
  \end{align}
  Finally, we claim that $\norm{T^{∨} 1}_q \lesssim α$:
  We basically follow the proof of \cref{thm:l2-tail}.
  In view of \cref{thm:zero-term-mean-zero}, let us denote the conical square-function operator corresponding to the kernel $\sum_{m=1}^{\infty} ω_m θ_{m}$ by $T^{∨}_{\text{tail}}$, so that $T^{∨} 1 = T^{∨}_{\text{tail}} 1$.
  By \cref{thm:conical-sf-extrapolation,thm:conical-vertical-sf,thm:l2-tail},
  \begin{align}
    \norm{T^{∨}_{\text{tail}}}_{q\to q}
    \lesssim \max\{ \norm{T^{∨}_{\text{tail}}}_{2\to 2}, α \}
    \approx \max\{ \norm{T^{|}_{\text{tail}}}_{2\to 2}, α \}
    \lesssim α.
  \end{align}
  This implies $\norm{T^{∨}_{\text{tail}} \indicator{5I_0}}_q \lesssim α$.

  Fix $m\ge 1$ and $z\in\RR$,
  the inner integrals are
  \begin{align}
    \paren[\big]{(T_m^{∨}\indicator{\RR\setminus 5I_0})(z)}^2
     & = \int_{Γ(z)} \abs[\Big]{ \int_{\RR\setminus 5I_0} \frac{(A(\Re ξ)-A(y))^m}{(ξ-y)^{m+2}} \dl y }^2 \dl{ξ}                            \\
     & \le \int_{Γ(z) \cap \{\Re ξ \in 3I_0 \}} \abs[\Big]{ ε^m  \int_{\RR\setminus 5I_0} \frac{1}{|\Re ξ-y+i\Im ξ|^{m+2}} \dl y }^2 \dl{ξ} \\
     & \lesssim ε^{2m} \int_{Γ(z) \cap \{\Re ξ \in 3I_0 \}}\abs[\Big]{\frac{1}{(1+|\Im ξ|)^{m+1}} }^2 \dl{ξ}                               \\
    &\lesssim ε^{2m} \frac{1}{(1+\dist(z,3I_0))^{2m} }.
  \end{align}
  This leads to
  \begin{align}
    \norm{T^{∨}_{m} \indicator{\RR\setminus 5I_0}}_q^q
    \lesssim ε^{qm} \int_{\RR} \frac{1}{(1+\dist(z,3I_0))^{qm} } \dl z
    \lesssim ε^{qm}.
  \end{align}
  Using triangle-inequality, we finish the proof of the claim, since
  \begin{align}
    \norm{T^{∨}_{\text{tail}} \indicator{\RR\setminus 5I_0}}_q
    \le \sum_{m=1}^{∞}|ω_m|\norm{T_m^{∨}\indicator{\RR\setminus 5I_0}}_q
     & \lesssim \sum_{m=1}^{∞} |ω_m| ε^m \lesssim ε \lesssim α.
  \end{align}

  With the choice of $p=\frac{3}{2}$, $q=3$ and therefore $θ=\frac{1}{2}$, we get
  \begin{align}
    \norm{T^{∨} 1}_{3/2}^{1/2} \ge c\frac{\norm{A'}_2}{α^{1/2}} - C \frac{\norm{A'}_{∞}^2}{α^{1/2}}.
  \end{align}
\end{proof}

\section{Comparison Estimates}\label{sec:comparison}
In this section we prove the comparison estimates needed to relate the estimates on the usual square function (with respect to the original measure $μ$) to the usual square function on the approximating Lipschitz graph.

\subsection{Localization}
We show that the outer integrals over the shaded areas in \cref{fig:localization} are small, and it therefore suffices to compare the different integrals while the outer integrals only cover the white area.
We start by proving that the integral over the grey area is small.
\begin{figure}
  \centering
  \begin{tikzpicture}[scale=1.2]
    \clip (-5,-1) rectangle (5,5);

    \fill[gray!20] (-5,0) -- (-1,0) -- (-5,4);

    \fill[gray!20] (5,0) -- (1,0) -- (5,4);

    \fill[orange!30] (-5,4) -- (-3,2) -- (3,2) -- (5,4);
    \node at (2.4,3) {$\displaystyle \bigcup_{z\in 5I_0} Γ^+(z)$};

    \path[fill=green!40,draw=red!20] (-1.5,0.5) -- (-1,0) -- (1,0) -- (1.5,0.5) decorate[decoration={random steps, segment length=2mm}]{--  (-1.5,0.5)};

    \draw (-1,0.1) -- (-1,-0.1) node[below] {\footnotesize $-5$};
    \draw (1,0.1) -- (1,-0.1) node[below] {\footnotesize $5$};

    \draw (-0.1,0.5) -- (0.1,0.5) node[right] {\footnotesize $τD(z)$};
    \draw (-0.1,2) -- (0.1,2) node[right] {\footnotesize $\sqrt{ε}$};

    \draw[->] (-3,0) -- (3,0) node[right] {$z$};
    \draw[->] (0,-3) -- (0,3) node[above] {$s$};
\end{tikzpicture}
  \caption{Integration regions on the upper half space.}\label{fig:localization}
\end{figure}

\begin{lemma}[Localization]\label{thm:outer-localisation-lp}
  We have
  \begin{equation}
    \norm{(T^{∨}1)\indicator{\RR\setminus 5I_0}}_p =
    \brac[\Big]{
      \int_{\RR\setminus 5I_0}
      \paren[\Big]{ \int_{Γ(z)} \abs[\big]{\int_{\RR} θ(ξ,y) \dl y}^2  \dl{ξ}}^{p/2} \dl z
    }^{1/p}
    \lesssim ε.
  \end{equation}
\end{lemma}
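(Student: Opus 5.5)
Since $Θ_0 1\equiv 0$ by \cref{thm:zero-term-mean-zero}, we have $Θ1=\sum_{m\ge1}ω_m Θ_m 1$. The plan is to bound $\abs{Θ_m1(ξ)}$ pointwise for $ξ\in Γ(z)$ with $z\notin 5I_0$ and then integrate, exactly as in the proof of \cref{thm:lower-bound-conical}. The key structural fact is that $\supp A\subset 3I_0$ and, by \cref{thm:graph-properties}\ref{item:graph-close-to-L0}, $\norm{A}_\infty\lesssim ε$. Consequently, for $ξ=x+is$ the numerator $(A(x)-A(y))^m$ vanishes unless $x\in 3I_0$ or $y\in 3I_0$.

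Fix $z\in\RR\setminus 5I_0$ and $ξ=x+is\in Γ(z)$. By the cone condition, $|x-z|\lesssim κ|s|$, so $|s|\gtrsim\dist(z,3I_0)$ whenever $x\in 3I_0$. I would split the $y$-integral into the parts $y\in 3I_0$ and $y\notin 3I_0$.

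If $x\notin 3I_0$, only $y\in 3I_0$ contributes. Then $|A(x)-A(y)|\le ε$ and $|x-y+is|\gtrsim |ξ-z|+\dist(z,3I_0)$, since $|x-y+is|\approx |x-y|+|s|$ and $z$ lies near $x$ at scale $|s|$. This gives
\begin{align}
  |Θ_m1(ξ)|\lesssim ε^m\,\frac{1}{(\dist(z,3I_0)+|s|)^{m+2}}.
\end{align}
If instead $x\in 3I_0$, I would use the cruder bound $|A(x)-A(y)|\le 2ε$ for all $y$, and
\begin{align}
  \int_\RR|x-y+is|^{-m-2}\dl y\lesssim |s|^{-m-1}, \qquad\text{with } |s|\gtrsim \dist(z,3I_0)\ge 2.
\end{align}
In both cases, using $|ω_m|=m+1$ and $ε$ small, summing over $m\ge1$ yields
\begin{align}
  |Θ1(ξ)|\lesssim ε\,(\dist(z,3I_0)+|s|)^{-2}
\end{align}
up to harmless factors, uniformly for $ξ\in Γ(z)$.

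Integrating over the cone, which is contained in $\{|s|\gtrsim \dist(z,3I_0)\}$ because of the two cases above (the point is that $\xi$ near $z$ but far from $3I_0$ still sees the mass at distance $\gtrsim \dist(z,3I_0)$), gives
\begin{align}
  (T^∨1(z))^2\lesssim ε^2\int_0^\infty \frac{s\,\dl s}{(d_z+s)^4}\lesssim ε^2 d_z^{-2},\qquad d_z:=\dist(z,3I_0)\ge 2.
\end{align}
Hence
\begin{align}
  \int_{\RR\setminus 5I_0}(T^∨1)^p\dl z\lesssim ε^p\int_{|z|>5}d_z^{-p}\dl z\lesssim ε^p,
\end{align}
which is finite since $p>1$, as is the case for the exponent $p=3/2$ used later. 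The only delicate point, which I expect to be the main obstacle, is the uniformity of the pointwise bound when $s$ is small and $y$ is near $x$; this is avoided exactly because $z\notin5I_0$ forces either $x\notin 3I_0$ (the $y$-integral then stays away from the singularity) or $|s|\ge \dist(z,3I_0)$.
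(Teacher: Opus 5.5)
Your proof is correct and is essentially the paper's argument. You drop the $m=0$ term via \cref{thm:zero-term-mean-zero} and split according to whether $\Re\xi$ lies near $\supp A$ (you use $3I_0$, the paper uses $4I_0$). You then bound the inner integral using $\supp A\subset 3I_0$, $|A|\lesssim\varepsilon$ and the fact that the cone forces $|\Im\xi|\gtrsim|\Re\xi-z|$, and integrate over the cone and over $|z|>5$ using $p>1$; packaging the pointwise bound as $\varepsilon(d_z+|s|)^{-2}$ is only a cosmetic difference from the paper's two-piece integration.

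One sentence of yours is false: $\Gamma(z)$ is not contained in $\{|s|\gtrsim\dist(z,3I_0)\}$ for small $s$. Your computation does not use it, since it integrates over all $s>0$ with the weight $(d_z+s)^{-4}$.
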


\begin{proof}
  We have
  \begin{align}\label{eq:loc-expansion}
    \abs[\big]{ \int_{\RR} \frac{1}{(ξ + iA(\Re ξ)-\grapha(y))^2} \dl y}
     & \le \sum_{m=0}^{∞} |ω_m| \abs[\big]{ \int_{\RR} \frac{(A(\Re ξ)-A(y))^m}{(ξ-y)^{m+2}} \dl y} \\
     & \le \sum_{m=1}^{∞} |ω_m| \int_{\RR} \frac{|A(\Re ξ)-A(y)|^m}{|ξ-y|^{m+2}} \dl y
  \end{align}

  Let $m\ge 1$.
  Case distinction yields
  \begin{align}
    \int_{\RR} \frac{|A(\Re ξ)-A(y)|^m}{|ξ-y|^{m+2}} \dl y
     & \lesssim ε^m \indicator{4I_0}(\Re ξ) \int_{\RR} \frac{1}{|ξ-y|^{m+2}} \dl y
    + ε^m \indicator{\RR\setminus 4I_0}(\Re ξ) \int_{3I_0} \frac{1}{|ξ-y|^{m+2}} \dl y
    \\
     & \lesssim ε^m \frac{ \indicator{4I_0}(\Re ξ) }{|\Im ξ|^{m+1}}
    + ε^m \frac{ \indicator{\RR\setminus 4I_0}(\Re ξ) }{|1+|\Re ξ|+i\Im ξ|^{m+2}}  \\
     & \lesssim ε^m  \frac{ \indicator{4I_0}(\Re ξ) }{|\Im ξ|^{2}}
    + ε^m \frac{ \indicator{\RR\setminus 4I_0}(\Re ξ) }{|1+|\Re ξ|+i\Im ξ|^{3}}.
  \end{align}
  Where we have used $|\Re ξ-y|\gtrsim 1+|\Re ξ|$ in the second integral, and that $|\Im ξ| \gtrsim 1$ if $|\Re ξ|<4$ and $ξ\in Γ(z)$ for $|z|\ge 5$.

  Let $z\in\RR\setminus 5I_0$.
  Then, we plug the previous estimate in the cone integral and get
  \begin{align}
    ε^{2m} \int_{Γ(z)} & \abs[\big]{
    \frac{ \indicator{4I_0}(\Re ξ) }{|\Im ξ|^{2}}
    + \frac{ \indicator{\RR\setminus 4I_0}(\Re ξ) }{|1+|\Re ξ|+i\Im ξ|^{3}}
    }^2  \dl{ξ}                                                                                   \\
                       & \lesssim ε^{2m} \int_{4I_0} \int_{|x-z|}^{∞} \frac{1}{|s|^4} \dl s \dl x
    + ε^{2m} \int_{\RR\setminus 4I_0} \int_{|x-z|}^{∞}  \frac{1}{|1+|x|+is|^6} \dl s \dl x        \\
                       & \lesssim ε^{2m} \int_{4I_0} \frac{1}{|x-z|^3} \dl x
    + ε^{2m} \int_{Γ(z)} \frac{1}{|1+ξ|^6} \dl{ξ}                                                 \\
                       & \lesssim ε^{2m} \frac{1}{|4-|z||^2}
    + ε^{2m} \frac{1}{|1+z|^4},
  \end{align}
  where we used
  \begin{align}
    |1+|x|+is| \approx 1+|x| + |s| \ge |1+x| + |s| \approx |(1+i0) + x+is|,
  \end{align}
  and $|x-z|\ge |4-|z||$.

  Putting these estimates together, and noting \cref{thm:zero-term-mean-zero}, we get
  \begin{align}
    \norm{(T^{∨}1)\indicator{\RR\setminus 5I_0}}_p
    \lesssim \sum_{m=1}^{∞} |ω_m| ε^m \brac[\Big]{ \int_{\RR\setminus 5I_0} \paren[\big]{ \frac{1}{|4-|z||^2} + \frac{1}{|1+z|^4} }^{p/2} \dl z}^{1/p}
    \lesssim ε
  \end{align}
\end{proof}

Our comparison estimates will work well over the \emph{truncated cones}
\begin{align}
  \tilde{\Gamma}(z) := Γ(z)\cap \paren[\big]{\RR\times (-\sqrt{ε}, -τD(z))\cup (τD(z),\sqrt{ε})}.
\end{align}
We will show that the integrals over the remaining parts of the cones are small and denote
\begin{align}
  \tilde{\Gamma}(z)^- := Γ(z)\cap (\RR\times (-τD(z),τD(z))) \quad\text{and}\quad \tilde{\Gamma}(z)^+ := Γ(z)\cap \paren[\big]{\RR\times (-∞,-\sqrt{ε})\cup (\sqrt{ε},∞)}
\end{align}

Next, we prove that the integral over the green area in \cref{fig:localization} is small.

\begin{lemma}[Localization]\label{thm:upper-cone-localisation-lp}
  We have
  \begin{equation}\label{eq:localization-plus-bound}
    \brac[\Big]{
      \int_{5I_0}
      \paren[\Big]{ \int_{\tilde{\Gamma}(z)^+} \abs[\big]{\int_{\RR} θ(ξ,y) \dl y}^2  \dl{ξ}}^{p/2} \dl z
    }^{1/p}
    \lesssim \sqrt ε.
  \end{equation}
\end{lemma}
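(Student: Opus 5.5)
The plan is to reuse the pointwise bound from the proof of \cref{thm:outer-localisation-lp}. By \cref{thm:zero-term-mean-zero} the $m=0$ term vanishes. For $m\ge1$, $\sum_{m\ge1}|ω_m|\ge$ term is controlled by the estimate
\begin{align}
  \int_{\RR}\frac{|A(\Re ξ)-A(y)|^m}{|ξ-y|^{m+2}}\dl y
  \lesssim ε^m\frac{\indicator{4I_0}(\Re ξ)}{|\Im ξ|^{m+1}}
  + ε^m\frac{\indicator{\RR\setminus 4I_0}(\Re ξ)}{|1+|\Re ξ|+i\Im ξ|^{m+2}} .
\end{align}
This follows from $\supp A\subset 3I_0$ and $|A|\lesssim ε$ (by \cref{thm:graph-properties}\ref{item:graph-close-to-L0}). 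Since we only integrate over $|\Im ξ|\ge\sqrt ε$, we keep the exponent $m+1$ rather than degrading it to $2$. This is where the gain appears: on $\tilde\Gamma(z)^+$ the first term is at most $ε^m|\Im ξ|^{-m-1}$, and $ε^m|\Im ξ|^{-m+1}\le ε^{m/2}\cdot ε^{(m-1)/2}\cdot|\Im ξ|^{-(m-1)}\le \sqrt ε^{\,m+1}$ up to the factor $|\Im ξ|^{-2}$.

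Concretely, for fixed $z\in5I_0$ I would square the sum and estimate
\begin{align}
  \int_{\tilde\Gamma(z)^+}\abs[\Big]{\int_{\RR}θ(ξ,y)\dl y}^2\dl ξ
  \lesssim \paren[\Big]{\sum_{m\ge1}|ω_m| ε^m \paren[\Big]{\int_{Γ(z),\,|\Im ξ|>\sqrt ε}\frac{\dl ξ}{|\Im ξ|^{2m+2}}}^{1/2}}^2
  + (\text{second term}),
\end{align}
using Minkowski's inequality in $L^2(\dl ξ)$. The cone at height $s$ has width $\approx|s|$, so the inner integral is $\approx\int_{\sqrt ε}^\infty s^{-2m-1}\dl s\approx ε^{-m}$. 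Each summand is therefore $\lesssim |ω_m|ε^{m}ε^{-m/2}=|ω_m|ε^{m/2}$. The series is then $\lesssim\sqrt ε$ as long as $ε$ is small (geometric with ratio $\sqrt ε$, polynomial weights $|ω_m|=m+1$). For the second term the integrand is bounded by $ε^m|1+|\Re ξ|+i\Im ξ|^{-m-2}\le ε^m(1+|ξ|)^{-3}$, which gives a cone integral $\lesssim ε^{2m}$ uniformly in $z$ and a contribution $\lesssim ε$.

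This yields the pointwise bound $\paren[\big]{\int_{\tilde\Gamma(z)^+}|\cdot|^2}^{1/2}\lesssim\sqrt ε$ uniformly for $z\in5I_0$. Raising it to the power $p$ and integrating over the bounded interval $5I_0$ gives \cref{eq:localization-plus-bound}.

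The only delicate point is checking the first case of the kernel bound when $\Re ξ\in4I_0$. For $y\notin 3I_0$ we use $|A(\Re ξ)-A(y)|=|A(\Re ξ)|\lesssimε$, and for $y\in3I_0$ we use the bound $\lesssim ε$ as well. The integral $\int|ξ-y|^{-m-2}\dl y\approx|\Im ξ|^{-m-1}$ then holds with a constant uniform in $m$ (it is at most $2|\Im ξ|^{-m-1}\int(1+t^2)^{-1}\dl t$). I expect no real obstacle beyond tracking that this uniformity in $m$ is compatible with the growth of $|ω_m|$.
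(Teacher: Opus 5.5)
Your proposal is correct and is essentially the paper's argument. You bound $|A(\Re\xi)-A(y)|^m\lesssim\varepsilon^m$ and integrate in $y$ to get $\varepsilon^m|\Im\xi|^{-m-1}$. Using $|\Im\xi|>\sqrt{\varepsilon}$ on $\tilde{\Gamma}(z)^+$, the cone integral of $|\Im\xi|^{-2m-2}$ is $\lesssim\varepsilon^{-m}$, which gives $\varepsilon^{m/2}$ per term and $\sqrt{\varepsilon}$ after summing. The differences are cosmetic: your split according to $\Re\xi\in 4I_0$ is unnecessary, since the bound $\varepsilon^m|\Im\xi|^{-m-1}$ holds for every $\xi$ (this is what the paper uses), and you sum over $m$ pointwise in $z$ rather than in the mixed $L^p(L^2)$ norm.
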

\begin{proof}
  In view of \cref{eq:kernel-expansion,thm:zero-term-mean-zero}, fix $m\ge 1$.
  The naive bound works
  \begin{align}
    \int_{5I_0}
    \paren[\Big]{ \int_{\tilde{\Gamma}(z)^+} \abs[\big]{ & \int_{\RR} \frac{|A(\Re ξ)-A(y)|^m}{|ξ-y|^{m+2}} \dl y}^2  \dl{ξ}}^{p/2} \dl z \\
                                                         & \lesssim ε^{mp} \int_{5I_0}
    \paren[\Big]{ \int_{\tilde{\Gamma}(z)^+} \abs[\big]{\int_{\RR} \frac{1}{|ξ-y|^{m+2}} \dl y}^2  \dl{ξ}}^{p/2} \dl z                    \\
                                                         & \lesssim ε^{mp} \int_{5I_0}
    \paren[\Big]{ \int_{\tilde{\Gamma}(z)^+} \frac{1}{|\Im ξ|^{2m+2}}  \dl{ξ}}^{p/2} \dl z                                                \\
                                                         & \lesssim ε^{mp} \int_{5I_0}
    \paren[\Big]{ \int_{\RR} \int_{\max\{|x-z|,\sqrt{ε}\}}^{∞} \frac{1}{s^{2m+2}}  \dl s \dl x}^{p/2} \dl z                               \\
                                                         & \lesssim ε^{mp} \int_{5I_0} \paren[\Big]{ \frac{1}{ε^m}}^{p/2} \dl z
    \lesssim ε^{mp/2}.
  \end{align}

  In total, we conclude that the left-hand side of \cref{eq:localization-plus-bound} is bounded by
  \begin{align}
    \sum_{m=1}^{∞} |ω_m| ε^{m/2} \lesssim \sqrt ε.
  \end{align}
\end{proof}

Finally, we prove that the integral over the orange area in \cref{fig:localization} is small.
\begin{lemma}[Localization]\label{thm:lower-cone-localisation-lp}
  We have
  \begin{equation}
    \brac[\Big]{
      \int_{5I_0}
      \paren[\Big]{ \int_{\tilde{\Gamma}(z)^-} \abs[\big]{\int_{\RR} θ(ξ,y) \dl y}^2  \dl{ξ}}^{p/2} \dl z
    }^{1/p}
    \lesssim \sqrt τα.
  \end{equation}
\end{lemma}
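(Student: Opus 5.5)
We estimate $\int_{\RR}\theta(\xi,y)\dl y$ pointwise for $\xi=x+is$ with $0<|s|<\tau D(z)$ and $\xi\in\Gamma(z)$, and then integrate over the small cone. We do not expand in $m$ as in \cref{eq:kernel-expansion}. Instead we subtract the kernel of the tangent line at $x$. Set
\begin{align}
  \theta^{x}(\xi,y):=\frac{1}{\paren[\big]{x-y+i(A'(x)(x-y)+s)}^2}.
\end{align}
Since $y\mapsto y+iA'(x)y$ parametrizes a line, $\int_{\RR}\theta^x(\xi,y)\dl y=0$; this is the same computation as \cref{thm:zero-term-mean-zero}, obtained by expanding and using $\int_{\RR}(u+is)^{-k}\dl u=0$ for $k\ge2$. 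Hence
\begin{align}
  \abs[\Big]{\int_{\RR}\theta(\xi,y)\dl y}\le\int_{\RR}\abs{\theta(\xi,y)-\theta^x(\xi,y)}\dl y .
\end{align}
Because $\norm{A'}_\infty\lesssim\alpha$ is small, both denominators are comparable to $|x-y+is|^2$. The mean value theorem then gives
\begin{align}
  \abs{\theta(\xi,y)-\theta^x(\xi,y)}\lesssim\frac{\abs{R_x(y)}}{|x-y+is|^{3}},
  \qquad R_x(y):=A(x)-A(y)-A'(x)(x-y).
\end{align}

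Next we bound $R_x(y)$ in two regimes. For all $y$ we have the trivial bound $|R_x(y)|\lesssim\alpha|x-y|$. Since $D$ is $1$-Lipschitz, $D(y')\approx D(x)$ for $|y'-x|\le D(x)/2$. Combining this with \cref{thm:graph-properties}\ref{item:second-derivative-estimate} and Taylor's formula gives $|R_x(y)|\lesssim\varepsilon|x-y|^2/D(x)$ for $|x-y|\le D(x)/2$. Splitting the $y$-integral accordingly,
\begin{align}
  \abs[\Big]{\int_{\RR}\theta(\xi,y)\dl y}
  \lesssim \frac{\varepsilon}{D(x)}\int_{|u|\le D(x)/2}\frac{u^2}{|u+is|^3}\dl u+\alpha\int_{|u|>D(x)/2}\frac{|u|}{|u|^3}\dl u
  \lesssim \frac{\varepsilon\paren{1+\log\frac{D(x)}{|s|}}+\alpha}{D(x)}.
\end{align}
Since $\log\varepsilon\ll\log\alpha$, the first term is bounded by $\alpha(1+\log\frac{D(x)}{|s|})/D(x)$.

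Now fix $z\in5I_0$ with $D(z)>0$; if $D(z)=0$ then $\tilde\Gamma(z)^-$ is empty and there is nothing to prove. For $\xi=x+is\in\tilde\Gamma(z)^-$ we have $|x-z|\le(1+\kappa)|s|\le 2\tau D(z)$, hence $D(x)\approx D(z)$. Integrating first in $x$ (over an interval of length $\lesssim|s|$) and then in $s$,
\begin{align}
  \int_{\tilde\Gamma(z)^-}\abs[\Big]{\int_{\RR}\theta(\xi,y)\dl y}^2\dl\xi
  \lesssim\frac{\alpha^2}{D(z)^2}\int_0^{\tau D(z)}s\paren[\Big]{1+\log\frac{D(z)}{s}}^2\dl s
  \lesssim\alpha^2\tau^2\log^2(1/\tau)\le\tau\alpha^2 .
\end{align}
This pointwise bound is uniform in $z$. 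Raising it to the power $p/2$ and integrating over $5I_0$, which has bounded length, gives the claimed bound $\sqrt\tau\,\alpha$.

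The main obstacle is the remainder estimate in the second step. We must make sure the comparability $D(y)\approx D(x)$ and the second-derivative bound of \cref{thm:graph-properties} are used only where $|x-y|\lesssim D(x)$, and that the logarithm coming from $\int u^2/|u+is|^3\dl u$ is harmless once integrated in $s$. We must also check the comparability of the two denominators, which uses that the Lipschitz constant $\lesssim\alpha$ is small.
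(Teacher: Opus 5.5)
Your proof is correct. It uses the same core idea as the paper: subtract the contribution of the tangent line at $\Re\xi$, which integrates to zero, then use $|A''|\lesssim\varepsilon/D$ near $\Re\xi$ and the Lipschitz bound far away. The execution, however, is different.

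The paper expands $\theta=\sum_m\omega_m\theta_m$, discards $m=0$ by \cref{thm:zero-term-mean-zero}, and for each $m\ge1$ subtracts $A'(\Re\xi)^m\int_{\RR}(\Re\xi-y)^m(\xi-y)^{-m-2}\,dy=0$. It then bounds $|a^m-b^m|$ term by term, splits the $y$-integral at $\rho=\sqrt\tau D(z)$, bounds the resulting logarithm by a square root, and finally sums $\sum_m|\omega_m|m\alpha^m$.

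The series $\sum_m\omega_m A'(x)^m(x-y)^m(x-y+is)^{-m-2}$ is exactly your kernel $\theta^x$. So the paper's subtracted terms add up to yours, and you are working with the resummed decomposition $\theta-\theta^x$.

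Your version buys you a single algebraic estimate $|\theta-\theta^x|\lesssim|R_x(y)|/|x-y+is|^3$ in place of the series bookkeeping. Splitting at $D(x)/2$ and integrating the logarithm exactly also gives the stronger bound $\tau(1+\log(1/\tau))\,\alpha$, which implies the stated $\sqrt\tau\,\alpha$.

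The paper's termwise route loses more in $\tau$. Its advantage is that it matches the expansion-based organization of the neighboring localization lemmas (\cref{thm:outer-localisation-lp,thm:upper-cone-localisation-lp}).
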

\begin{proof}
  In view of \cref{eq:kernel-expansion,thm:zero-term-mean-zero}, fix $m\ge 1$.
  We rewrite
  \begin{align}\label{eq:localization-gamma-minus-proof}
    (Θ_m 1)(ξ)
    = \int_{\RR} \frac{(\Re ξ-y)^m}{(ξ-y)^{m+2}} \brac[\Big]{ \paren[\Big]{\frac{A(\Re ξ)-A(y)}{\Re ξ-y}}^m - A'(\Re ξ)^m } \dl y,
  \end{align}
  where we introduced the constant term $A'(\Re ξ)^m$ by using Cauchy's integral formula which shows that
  \begin{equation}
    \int_{\RR} \frac{(\Re ξ-y)^m}{(ξ-y)^{m+2}} \dl y = 0 \quad\forall m\in\NN, \, ξ\in\CC\setminus\RR.
  \end{equation}
  Let $ρ>0$.
  We split the domain of integration in \cref{eq:localization-gamma-minus-proof} and first estimate the contribution from $|\Re ξ-y|>ρ$:
  \begin{align}
     & \int_{|\Re ξ - y|>ρ}
    \frac{|\Re ξ-y|^m}{|ξ-y|^{m+2}}
    \abs[\Big]{ \paren[\Big]{\frac{A(\Re ξ)-A(y)}{\Re ξ-y}}^m - A'(\Re ξ)^m } \dl y \\
     & \qquad\lesssim \int_{|\Re ξ - y|>ρ} \frac{1}{|ξ-y|^2} 2α^m \dl y
    \lesssim \frac{α^m}{ρ}.
  \end{align}
  Plugging this into the outer integrals with $ρ=\sqrt τ D(z)$ yields
  \begin{align}
    \int_{5I_0} \paren[\Big]{ \int_{\tilde{\Gamma}(z)^-} \frac{α^{2m}}{ρ^2} \dl{ξ} }^{p/2} \dl z
    \approx \int_{5I_0} \paren[\Big]{ (τD(z))^2 \frac{α^{2m}}{τD(z)^2} }^{p/2} \dl z
    \approx (\sqrt τα^m)^p,
  \end{align}
  where we used that $|\tilde{\Gamma}(z)^-| \approx (τD(z))^2$.

  For the other part, let $z\in 5I_0$, $ξ\in \tilde{\Gamma}(z)^-$ and $|\Re ξ - y|< ρ$.
  We use \cref{item:second-derivative-estimate} of \cref{thm:graph-properties} and calculate
  \begin{equation}
    \abs[\Big]{ \frac{A(\Re ξ)-A(y)}{\Re ξ-y} - A'(\Re ξ) } \lesssim |A''(ζ)| |\Re ξ - y| \lesssim \frac{ε}{D(z)} |\Re ξ -y|
  \end{equation}
  where $ζ\in [\Re ξ,y]$, and we used that $D(ζ)\approx D(z)$.
  To see this, notice that $y\in B(\Re ξ, \sqrt τD(z))$ and $\Re ξ\in B(z,τD(z))$ %
  so that $ζ\in B(z,2\sqrt τD(z))$.
  Now since $D$ is $1$-Lipschitz, we deduce:
  \begin{align}
    D(z) \le |D(z)-D(ζ)| + D(ζ) \le 2 \sqrt τ D(z) + D(ζ).
  \end{align}
  We also have $|a^m-b^m| \le m|a-b|\max\{|a|,|b|\}^{m-1}$ and therefore
  \begin{equation}
    \abs[\Bigg]{ \paren[\Big]{\frac{A(\Re ξ)-A(y)}{\Re ξ-y}}^m - A'(\Re ξ)^m }
    \lesssim \frac{εm}{D(z)} α^{m-1} |\Re ξ- y|
    \lesssim \frac{mα^m}{D(z)} |\Re ξ- y|.
  \end{equation}
  Therefore, the other part of \cref{eq:localization-gamma-minus-proof} can be estimated as
  \begin{align}
     & \int_{|\Re ξ -y|<ρ}
    \frac{|\Re ξ-y|^m}{|ξ-y|^{m+2}}
    \abs[\Big]{ \paren[\Big]{\frac{A(\Re ξ)-A(y)}{\Re ξ-y}}^m - A'(\Re ξ)^m } \dl y \\
     & \lesssim \frac{mα^m}{D(z)}  \int_0^{\sqrt τ D(z)} \frac{1}{r+|\Im ξ|} \dl r  \\
     & \lesssim \frac{mα^m}{D(z)} \log\paren[\big]{1 + \frac{D(z)}{|\Im ξ|}}        \\
     & \lesssim \frac{mα^m}{D(z)} \sqrt{1 + \frac{D(z)}{|\Im ξ|}}.
  \end{align}
  Plugging this into the outer integrals yields

  \begin{align}
    (mα^m)^p \int_{5I_0} \paren[\Big]{ \int_{\tilde{\Gamma}(z)^-} \frac{1}{D(z)^2} \paren[\Big]{1+\frac{D(z)}{|\Im ξ|}} \dl{ξ} }^{p/2} \dl z
     & \lesssim (m\sqrt τα^m)^p .
  \end{align}

  Putting the pieces together, we get
  \begin{align}
    \brac[\Big]{
      \int_{5I_0}
      \paren[\Big]{ \int_{\tilde{\Gamma}(z)^-} \abs[\big]{\int_{\RR} θ(ξ,y) \dl y}^2  \dl{ξ}}^{p/2} \dl z
    }^{1/p}
    \lesssim \sqrt τ \sum_{m=1}^{∞} |ω_m|(α^m + mα^m) \lesssim \sqrt τ α.
  \end{align}
\end{proof}

\subsection{Comparing \texorpdfstring{$\dl y$ to $h\hd1\restriction_{\gamma}$}{db to hH|γ}}\label{sec:db-to-hH}

Let $η\colon\RR\to\RR$ be a smooth, radial and non-increasing function such that $\indicator{[-1/4,1/4]} \le η \le \indicator{[-1,1]}$ and $\norm{η}_1=1$.
For $ρ>0$, we denote
\begin{align}
  η_{\rho} = \frac{1}{ρ} η\paren[\Big]{\frac{\cdot}{ρ}}.
\end{align}
We define $σ:= π_{\#}(μ\restriction_F)$ to be the pushforward of $μ\restriction_F$ under the orthogonal projection onto the real axis and let $g\colon\RR\to\RR$ be defined by
\begin{align}
  g(x) = \paren[\big]{ η_{\sqrt ε D(x)} \ast σ }(x).
\end{align}

\begin{lemma}[{\cite[Lemma 10.5]{tolsaPrincipalValuesRiesz2008}}]\label{thm:sigma-g-properties}
  We have
  \begin{align}
    σ(B(x,r)) \lesssim r
  \end{align}
  for $x\in\RR$ and $r\ge \sqrt{ε}D(x)$.
\end{lemma}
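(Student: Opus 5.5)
The plan is to reduce the estimate to the definition of $D$, the very good balls, and the upper density bound (ii) of \cref{thm:main-lemma}, following Tolsa's argument. Fix $x\in\RR$ and $r\ge\sqrt{ε}D(x)$. Since $σ=π_{\#}(μ\restriction_F)$, we have
\begin{align}
  σ(B(x,r))=μ\paren[\big]{F\cap π^{-1}(B(x,r))},
\end{align}
so we must bound the $μ$-mass of $F$ inside the vertical strip $π^{-1}((x-r,x+r))$ by a multiple of $r$. There are two regimes, depending on whether the strip is large compared with the scale of the construction.

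\emph{Large $r$.} If $r\gtrsim 1$, then since $F\subset 10B_0$, we simply use $σ(B(x,r))\le μ(F)\le μ(10B_0)$. Covering $10B_0\cap F$ by boundedly many balls of radius $1$ centred at points of $F$ and using (ii) gives $μ(10B_0)\lesssim 1\lesssim r$.

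\emph{Small $r$.} This is the main case, $r\lesssim 1$. By definition of $D(x)$, there is a very good ball $B(z,t)\in\VG$ with
\begin{align}
  |x-π(z)|+t\le 2D(x)\le 2ε^{-1/2}r.
\end{align}
Set $R:=C(r+D(x))\lesssim ε^{-1/2}r$, and pick the largest scale $s\in[t,50]$ comparable to $R$. Then $B(z,s)\in\G$, so its best approximating line $L_{B(z,s)}$ makes an angle at most $α$ with $L_0$, and $β_E(z,s)\le λ_1/δ$. Hence $E\cap B(z,3s)$ lies in a $(λ_1/δ)s$-neighbourhood of a line of slope at most $α$. The portion of the strip $π^{-1}(B(x,r))$ inside $B(z,3s)$ is then contained in a rectangle of width about $r$ and height about $r\,α+(λ_1/δ)s$.

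Since $λ_1\ll δ\sqrt{ε}$ is allowed in the parameter hierarchy, the height is $\lesssim r$. This needs the ordering $λ_1$ small relative to $δ\sqrt{ε}$, which I would state explicitly. The rectangle can therefore be covered by boundedly many balls of radius $\approx r$ centred at points of $F$, discarding balls that miss $F$. By (ii), each such ball carries mass $\lesssim r$, giving $\lesssim r$ in total.

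It remains to check that the strip inside $10B_0$ does not reach points far from $z$ vertically, outside $B(z,3s)$. Such points lie above or below the graph region, and here I would invoke \cref{item:x-in-F-close-to-graph} of \cref{thm:graph-properties}: for $y\in F$ with $π(y)$ in the strip,
\begin{align}
  |y-\grapha(π(y))|\lesssim εD(π(y))\lesssim ε(D(x)+r)\lesssim r,
\end{align}
using that $D$ is $1$-Lipschitz. So every point of $F$ in the strip lies within $O(r)$ of the graph point above $π(y)$. Since $\norm{A}_{\text{Lip}}\lesssim α$, these graph points lie in a rectangle of size $r\times O(r)$, and the covering argument with (ii) applies directly. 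With this, the $β$-number route becomes unnecessary and the argument is cleaner.

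The main obstacle is that \cref{item:x-in-F-close-to-graph} only covers $π(y)\in 8I_0$. For strips outside $8I_0$ but meeting $F\subset 10B_0$, I would fall back on the $β$-number argument above, or note that $D(x)\gtrsim 1$ there, so that $r\gtrsim\sqrt{ε}$ and the crude bound $σ(B(x,r))\le μ(10B_0)\lesssim 1\lesssim ε^{-1/2}r$ applies. That crude bound loses a factor $ε^{-1/2}$, so the $β$-number route is the one to push through in this regime.
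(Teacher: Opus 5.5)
Your argument is sound where \cref{item:x-in-F-close-to-graph} of \cref{thm:graph-properties} applies. Suppose every $y\in F$ in the strip $\pi^{-1}(B(x,r))$ has $\pi(y)\in 8I_0$. Then $|y-\grapha(\pi(y))|\lesssim\varepsilon(D(x)+r)\lesssim\sqrt{\varepsilon}\,r$. Together with $\norm{A}_{\text{Lip}}\lesssim\alpha$, this puts the strip portion of $F$ inside $B(y_0,Cr)$ for any $y_0$ in it, and (ii) finishes; the case $r\gtrsim 1$ is trivial.

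The paper gives no argument of its own here and imports the lemma from Tolsa. But the statement is for all $x\in\RR$, and $F\subset 10B_0$ can have points with $\pi(y)\in 10I_0\setminus 8I_0$. For those, your proposal has a genuine gap, because neither fallback works.
\begin{itemize}
\item The claim $D(x)\gtrsim 1$ outside $8I_0$ is false in general. If $\mu$ is a multiple of arclength on a long segment and $F$ is its part in $10B_0$, then every ball centred on $F$ is very good, so $D\equiv 0$ on $\pi(F)$.
\item The single-scale $\beta$-number route leaves open exactly the problem you flagged: points of $F$ in the strip that lie outside $B(z,3s)$. That problem disappears only when $s=50$, since then $B(z,150)\supset 10B_0$.
\end{itemize}

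The missing idea is to use that $B(z,t)\in\VG$ is good at every scale $s'\in[t,50]$, not just one. Let $y\in F\cap\pi^{-1}(B(x,r))$ with $s':=|y-z|\ge t$; the case $|y-z|<t$ is trivial since $t\lesssim D(x)+r$. Then $s'\le 20$, so $B(z,s')\in\G$ and $\beta_E(z,s')\le\lambda_1/\delta\le\varepsilon$, a parameter choice the construction needs anyway. Hence both $y$ and $z$ lie within $\varepsilon s'$ of a line making angle at most $\alpha$ with $L_0$, which gives
\begin{align}
  |y-z|\le(1+\tan\alpha)\,|\pi(y)-\pi(z)|+4\varepsilon|y-z|,
  \quad\text{so}\quad
  |y-z|\le 2|\pi(y)-\pi(z)|\lesssim D(x)+r .
\end{align}
This is the same argument the paper uses in the proof of \cref{thm:dx-Dpix}. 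So the whole strip portion of $F$ lies in $B(z,C(D(x)+r))$.

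Your single-scale step at $s=\min\{C(D(x)+r),50\}$ then confines it to a parallelogram of width $2r$ and height $\lesssim\alpha r+\varepsilon s\lesssim(\alpha+\sqrt{\varepsilon})r$. This uses $D(x)\le\varepsilon^{-1/2}r$, and also $D(x)+r\gtrsim 1$ when $s=50$. From there (ii) applies as before. This version is uniform in $x\in\RR$ and does not need \cref{item:x-in-F-close-to-graph} at all.
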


\begin{lemma}\label{thm:g-small}
  If the auxiliary parameters have been chosen small enough with respect to $α$, then we have
  \begin{align}
    \norm{\indicator{8I_0} (g-1)}_p \lesssim α^{2/p}.
  \end{align}
\end{lemma}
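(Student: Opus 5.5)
The estimate $\norm{\indicator{8I_0}(g-1)}_p\lesssim α^{2/p}$ comes from interpolating between an $L^\infty$ bound and an $L^1$ bound. Since $\abs{g-1}$ will be shown to be bounded on $8I_0$ (with the bound degrading only where $D$ is tiny), we have
\begin{align}
  \norm{\indicator{8I_0}(g-1)}_p^p\le \norm{\indicator{8I_0}(g-1)}_\infty^{p-1}\norm{\indicator{8I_0}(g-1)}_1 ,
\end{align}
so it suffices to prove two things: $\abs{g-1}\lesssim 1$ on $8I_0$ (this uses \cref{thm:sigma-g-properties}), and $\norm{\indicator{8I_0}(g-1)}_1\lesssim α^2$, or at least $\lesssim α^{2}$ once the auxiliary parameters $ε,τ$ are small relative to $α$. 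Note that this target is the same order as $\norm{A'}_2^2$, which is what the exceptional set $F_2$ is compared against.

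\textbf{The $L^\infty$ bound.} By \cref{thm:sigma-g-properties} applied with $r=\sqrt ε D(x)$,
\begin{align}
  g(x)\le \frac{1}{\sqrt ε D(x)}σ\paren[\big]{B(x,\sqrt εD(x))}\lesssim 1 .
\end{align}
At points with $D(x)=0$ (the projection of $Z_0$), $g$ is interpreted as the density of $σ$. This density exists and is $\lesssim 1$ by the same lemma together with Lebesgue differentiation.

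\textbf{The $L^1$ bound.} I would split $8I_0$ into the set $\{D=0\}$ and a Whitney-type decomposition of $\{D>0\}$ into intervals $J$ with $\ell(J)\approx D$ on $J$.
\begin{enumerate}
  \item On $\{D=0\}$, i.e.\ over $π(Z)$, the balls are very good at all scales. Hence the density satisfies $δ_μ\in[δ,1+δ_1]$ and the support lies within angle $α$ of $L_0$, with flatness $λ_1/δ$, down to scale $0$. Standard arguments (Tolsa, \cite[Ch.~7]{tolsaAnalyticCapacityCauchy2014}) then give that $σ$ has density comparable to $1$ there. To get $\abs{g-1}$ small rather than merely bounded, I would use that $μ\restriction_Z$ has density $\ge δ$ and $\le 1+δ_1$. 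Projecting onto a graph of slope $\lesssim α$ changes length by a factor $1+O(α^2)$. Upper density $\le 1+δ_1$ plus $1$-rectifiability of $Z\subset γ$ forces density $\le (1+δ_1)$ times arclength. I expect this to give $g\le 1+O(α^2+δ_1)$ pointwise, with the lower bound needing a mass-counting argument.
  \item On each Whitney interval $J$, compare $σ$ with Lebesgue measure at scale $\sqrt ε D$. A very good ball of radius $\approx D(x)$ lies nearby with density in $[δ,1+δ_1]$, and $F$ lies within $εD$ of $γ$ by \cref{thm:graph-properties}\ref{item:x-in-F-close-to-graph}. So $\int_J g$ is essentially the projected mass over $J$, which equals $\ell(J)\paren[\big]{1+O(α^2)}+O(\text{error})$ up to the low-density part.
\end{enumerate}

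\textbf{The one-sided defect (main obstacle).} The difficulty is that $δ_μ(B)\ge δ$ only gives density at least $δ$, not close to $1$. The excess $\int(1-g)_+$ therefore has to be controlled through the normalization $δ_μ(B_0)=1$ together with the upper bound $μ(B(x,r))\le(1+δ_1)r$. I would argue via total mass: $\int_{I_0}g\approx μ(B_0\cap F)\ge 1-η$, while $g\le 1+O(α^2+δ_1)$ everywhere on $I_0$. Since
\begin{align}
  \int_{I_0}(1-g)_+=\int_{I_0}(g-1)_+ + \abs{I_0}-\int_{I_0}g ,
\end{align}
this bounds the $L^1$ defect on $I_0$ by $O(α^2+δ_1+η)$. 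Extending from $I_0$ to $8I_0$ requires the same normalization at larger balls, using the very good balls of size up to $50$ centred near $0$, whose densities lie in $[δ,1+δ_1]$. This is the step I am least sure of. If the lower density really is only $δ$, I would have to absorb $\{g<1-\text{small}\}$ using the tail and projection smoothing. Since $δ_1,η,ε\ll α^2$, all errors then fit into $α^2$.
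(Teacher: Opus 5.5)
Your reduction is the same as the paper's: it also writes $\int_{8I_0}|1-g|^p\le(1+\norm{g}_\infty)^{p-1}\int_{8I_0}|1-g|$, and your $L^\infty$ bound via \cref{thm:sigma-g-properties} is fine. However, the paper does not prove the $L^1$ estimate. It imports both $0\le g\le 1+C\alpha^2$ and $\norm{\indicator{8I_0}(g-1)}_1\lesssim\alpha^2$ from \cite[Lemma 10.5]{tolsaPrincipalValuesRiesz2008}, and that is exactly the part your proposal leaves unproved. The $\{D=0\}$ step is only conjectured (``I expect''). The mass count is also wrong as written. With $\delta_\mu(B)=\mu(B)/r$, unit density means $\tfrac12\hd1$ on a line, so $\int_{I_0}g\approx\sigma(I_0)\approx\mu(B_0)=1$ while $|I_0|=2$. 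Your identity therefore gives a defect $|I_0|-\int_{I_0}g\approx 1$, not $O(\alpha^2+\delta_1+\eta)$. The comparison constant has to be $\tfrac12$, or the densities normalized by $2r$; the displayed statement carries the same factor-two slip.

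The more serious gap is on $8I_0\setminus I_0$: nothing you invoke forces mass there, even after renormalizing. Take $\mu=\tfrac12\hd1\restriction_{[-1,1]}$ and $F=[-1,1]$.
\begin{itemize}
\item Hypotheses (i), (ii), (iii), (v) and (vi) of \cref{thm:main-lemma} hold.
\item Every ball centred on $F$ with radius below $50$ has density at least $1/50>\delta$ and $\beta_E\equiv0$, so all such balls are very good.
\item Hence $D(p)=\dist(p,[-1,1])$, so $g=\tfrac12$ on $I_0$ and $g=0$ on $8I_0\setminus\overline{I_0}$. There the mollifier radius $\sqrt{\varepsilon}\,D(p)$ is smaller than the distance $D(p)$ to $\supp\sigma$.
\end{itemize}
So $\norm{\indicator{8I_0}(g-c)}_1\gtrsim1$ for both $c=1$ and $c=\tfrac12$. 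The only hypothesis this example violates is \cref{item:usf-small} of \cref{thm:main-lemma}. Indeed, $\int(z-\xi)^{-2}\,d\mu(\xi)=(z^2-1)^{-1}$, so on a sector at each endpoint the integrand is comparable to $|z\mp1|^{-1}$, and the square function is of order one.

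Consequently, upper density, the weak lower bound $\delta\ll1$ on good balls, $\beta$-flatness and the normalization at $B_0$ cannot yield the $L^1$ bound. Your fallback via ``tail and projection smoothing'' cannot create mass that is absent. You need some input that spreads the mass uniformly over $8B_0$. Among the present hypotheses the only candidate is the square-function smallness \cref{item:usf-small}, which your outline never uses. The same example shows that the estimate the paper imports from Tolsa cannot follow from the density and $\beta$ hypotheses alone either.
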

\begin{proof}
  In \cite[Lemma 10.5]{tolsaPrincipalValuesRiesz2008}, Tolsa shows that
  \begin{align}
    0 \le g \le 1+Cα^2 \quad\text{and}\quad \norm{\indicator{8I_0} (g-1)}_1 \lesssim α^2.
  \end{align}
  From this it follows that
  \begin{align}
    \int_{8I_0} |1-g(t)|^p \dl t \le (1+\norm g_{∞})^{p-1}  \int_{8I_0} |1-g(t)| \dl t \lesssim α^2.
  \end{align}
\end{proof}

Furthermore, we define $h\colon γ\to \RR$,
\begin{align}
  h(x) = \frac{g(π(x))}{J\grapha(π(x))},
\end{align}
where $J\grapha(t)=\sqrt{1+|A'(t)|^2}$ denotes the Jacobian of the parametrization $t\mapsto\grapha(t)$.

We only compare when the domains of the inner integrals are $7I_0$ and $\pi^{-1}(7I_0)$, respectively.
We make up for this in \cref{thm:dt-Integral-outsideRegion-small,thm:dmu-innerIntegral-outsideRegion-small} by showing that the other parts are small anyway.

\begin{lemma}\label{thm:dy-hdH-lp}
  For the localized integrals, we have
  \begin{align}
    \int_{5I_0} \paren[\Big]{ \int_{\tilde{\Gamma}(a)}
      \abs[\Big]{ \int_{7I_0} θ(ξ,y) \dl y - \int_{\pi^{-1}(7I_0)} θ_*(ξ,z) h(z) \dl{\hd1\restriction_{γ}(z)} }^2
      \dl{ξ} } ^{p/2} \dl a
    \lesssim α^2.
  \end{align}
\end{lemma}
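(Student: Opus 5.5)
The plan is to turn the $\hd1\restriction_{γ}$ integral into an integral over the parameter line by the area formula, which reduces everything to the single function $g-1$ on $7I_0$. Writing $z=\grapha(y)$, we have $\dl{\hd1\restriction_{γ}(z)} = J\grapha(y)\dl y$ and $h(\grapha(y))J\grapha(y)=g(y)$. Also $\grapha(y)\in\pi^{-1}(7I_0)$ if and only if $y\in 7I_0$, and $θ_*(ξ,\grapha(y))=θ(ξ,y)$. Hence the difference inside the absolute value equals
\begin{align}
  \int_{7I_0} θ(ξ,y)\,(1-g(y)) \dl y = Θ\paren[\big]{\indicator{7I_0}(1-g)}(ξ).
\end{align}
The left-hand side of the lemma is therefore bounded by $\int_{\RR}\paren[\big]{T^{∨}(\indicator{7I_0}(1-g))(a)}^p \dl a = \norm{T^{∨}(\indicator{7I_0}(1-g))}_p^p$, since $\tilde{\Gamma}(a)\subset Γ(a)$ and the integrand is nonnegative.

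Next, invoke \cref{thm:conical-sf-lp-bdd}, which says that $T^{∨}$ is bounded on $L^p(\RR)$ for $p\in(1,∞)$. In the application $p$ is the exponent $3/2$ from \cref{thm:lower-bound-conical}. The bound has operator norm $\lesssim 1$ uniformly in the small Lipschitz constant. This holds because $\norm{T^{|}}_{2\to2}\lesssim 1$ by \cref{thm:usfe-bdd-lipschitz-graph}, and the extrapolation in \cref{thm:conical-sf-extrapolation} depends only on that norm and on $C_θ\lesssim 1$. This gives
\begin{align}
  \norm{T^{∨}(\indicator{7I_0}(1-g))}_p^p \lesssim \norm{\indicator{7I_0}(g-1)}_p^p \le \norm{\indicator{8I_0}(g-1)}_p^p \lesssim (α^{2/p})^p = α^2,
\end{align}
by \cref{thm:g-small}. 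This is exactly the claimed bound.

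The only point needing care is the identification in the first step. One must check that $h$ is defined through $g\circ\pi$ divided by the Jacobian, so that the Jacobian cancels exactly against the area-formula factor. One must also check that the truncation $\pi^{-1}(7I_0)\cap γ$ corresponds precisely to $y\in 7I_0$. Both hold by the definitions. The remaining subtlety is the uniformity of the $L^p$ bound for $T^{∨}$ in the parameters $α,ε,τ$. I expect this to follow from the explicit dependence recorded in \cref{thm:conical-sf-extrapolation}, so no genuine obstacle arises: the real analytic content has already been placed in \cref{thm:g-small}.
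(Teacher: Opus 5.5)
Your proof is correct and follows the paper's argument: the paper likewise identifies the inner difference as $\Theta\bigl((1-g)\indicator{7I_0}\bigr)$ using the definition of $h$, then concludes from the $L^p$-boundedness of $T^{\vee}$ (\cref{thm:conical-sf-lp-bdd}) together with \cref{thm:g-small}. Your checks of the Jacobian cancellation, of the correspondence between $\pi^{-1}(7I_0)\cap\gamma$ and $y\in 7I_0$, and of the uniform operator norm make explicit what the paper's two-line proof leaves implicit.
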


\begin{proof}
  The inner integral can be identified as $Θ((1-g)\indicator{7I_0})$.
  The claim now follows from \cref{thm:conical-sf-lp-bdd,thm:g-small}.
\end{proof}

\subsection{Comparing \texorpdfstring{$h\hd1\restriction_{\gamma}$ to $\dl{μ\restriction_F}$}{hH|γ to μ|F}}
In the following, we will often reduce the problem to the vertical $L^2$ square function.
\begin{lemma}\label{thm:conical-to-vertical}
  For $f\colon\CC\to [0,∞)$, we have
  \begin{align}
    \int_{5I_0} \paren[\Big]{ \int_{\tilde{\Gamma}(z)} f(ξ)  \dl{ξ} }^{p/2} \dl z
    \lesssim \paren[\Big]{ \int_{6I_0} \int_{\{|s|\in (\frac{τ}{2}D(x),\sqrt ε)\}} f(x+is)  |s| \dl s \dl x }^{p/2}.
  \end{align}
\end{lemma}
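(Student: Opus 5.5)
The plan is to combine Hölder's inequality on the finite interval $5I_0$ with a Fubini argument. This is exactly the computation in \cref{thm:conical-vertical-sf}, except that now the truncation of the cones has to be tracked.

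\emph{Step 1 (Hölder).} Here $p\in(1,2)$, so $p/2<1$ and $t\mapsto t^{p/2}$ is concave. Hölder's inequality with exponents $2/p$ and $2/(2-p)$ on $5I_0$, which has length $10$, gives
\begin{align}
  \int_{5I_0}\paren[\Big]{\int_{\tilde\Gamma(z)}f(ξ)\dl{ξ}}^{p/2}\dl z
  \le 10^{1-p/2}\paren[\Big]{\int_{5I_0}\int_{\tilde\Gamma(z)}f(ξ)\dl{ξ}\dl z}^{p/2}.
\end{align}
It therefore suffices to bound the double integral $\int_{5I_0}\int_{\tilde\Gamma(z)}f\,dξ\,dz$ linearly by the right-hand side without the power.

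\emph{Step 2 (Fubini).} Write $ξ=x+is$ and exchange the order of integration. The double integral becomes
\begin{align}
  \int_{\RR}\int_{\RR}f(x+is)\,\abs[\big]{\cbrac{z\in5I_0: x+is\in\tilde\Gamma(z)}}\dl s\dl x .
\end{align}
Two facts are needed about the set of $z$ appearing here.

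First, $x+is\in\Gamma(z)$ forces $|x-z|<(1+κ)|s|$. So the set of such $z$ has length at most $2(1+κ)|s|\lesssim|s|$.

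Second, the truncation forces $|s|<\sqrt ε$, and hence $|x-z|<(1+κ)\sqrt ε<1$ for $ε$ small. Since $z\in5I_0$, this gives $x\in6I_0$.

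\emph{Step 3 (lower truncation).} The truncation also gives $|s|>τD(z)$. We must convert this into $|s|>\frac τ2 D(x)$. Since $D$ is $1$-Lipschitz,
\begin{align}
  D(x)\le D(z)+|x-z|< D(z)+(1+κ)|s| .
\end{align}
This only yields $τD(x)<τD(z)+τ(1+κ)|s|<|s|\,(1+τ(1+κ))$. That is $|s|>τD(x)/(1+τ(1+κ))\ge \frac τ2 D(x)$, because $τ(1+κ)\le1$ for $τ$ small and $κ$ small.

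Combining Steps 2 and 3, the double integral is bounded by
\begin{align}
  \int_{6I_0}\int_{\{|s|\in(\frac τ2D(x),\sqrt ε)\}}f(x+is)|s|\dl s\dl x .
\end{align}
Together with Step 1 this proves the lemma.

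The only mildly delicate point is Step 3, the transfer of the lower cutoff from the vertex $z$ to the point $x$. It works precisely because $D$ is $1$-Lipschitz and the cone aperture is bounded, so no real obstacle is expected.
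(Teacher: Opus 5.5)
Your proof is correct and follows the paper's argument essentially step for step. You use H\"older on $5I_0$ to pull out the power $p/2$ (valid since $p\le 2$), then Tonelli. You observe that $x+is\in\tilde\Gamma(z)$ forces $|s|<\sqrt{\varepsilon}$, $|x-z|<(1+\kappa)|s|$ and hence $x\in 6I_0$. Finally, the $1$-Lipschitz property of $D$ transfers the lower cutoff, giving $|s|>\tau D(x)/(1+(1+\kappa)\tau)\ge \frac{\tau}{2}D(x)$, exactly as in the paper.
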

\begin{proof}
  We use Hölder to move the $p/2$ outside,
  \begin{align}
    \int_{5I_0} \paren[\Big]{ \int_{\tilde{\Gamma}(z)} f(ξ)  \dl{ξ} }^{p/2} \dl z
    \lesssim \paren[\Big]{ \int_{5I_0} \int_{\RR^2} f(x+is) \indicator{\tilde{\Gamma}(z)}(x+is)  \dl(x,s) \dl z }^{p/2}
  \end{align}
  Now to use Fubini, we need to analyze what happens if $x+is\in\tilde{\Gamma}(z)$.
  In this case, $|s|<\sqrt ε$ and $x\in 6I_0$.
  For a lower bound on $|s|$, we note $|x+is-z|<(1+κ)|s|$ and $|s|>τD(z)$.
  This implies
  \begin{align*}
    |s| > τD(z) \ge τ(D(x) - |x-z|) \ge τ (D(x) - (1+κ)|s|)
  \end{align*}
  and thus
  \begin{align*}
    |s| > \frac{1}{1+(1+κ)τ} τ D(x) \ge \frac{1}{2}τD(x).
  \end{align*}
  This implies that
  \begin{align}
    \int_{5I_0} \indicator{\{x+is\in\tilde{\Gamma}(z)\}} \dl z
    \le \indicator{\{|s|\in (τD(x)/2,\sqrt ε)\}} \indicator{6I_0}(x) \int_{5I_0} \indicator{\{x+is\in Γ(z)\}} \dl z
    \lesssim \indicator{\{|s|\in (τD(x)/2,\sqrt ε)\}} \indicator{6I_0}(x) |s|
  \end{align}
  and concludes the proof.
\end{proof}

\begin{lemma}\label{thm:x-y-A-s-comparability}
  For any $x,y,s\in \RR$, we have
  \begin{equation}
    |x-y+i(A(x) - A(y) + s)| \approx \sqrt{|x-y|^2+s^2}
  \end{equation}
\end{lemma}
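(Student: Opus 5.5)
The estimate is elementary: it only needs that $A$ is Lipschitz with small constant $L:=\norm{A'}_\infty\lesssim α$ (\cref{thm:graph-properties}), and we may assume $L\le 1/2$. Write $u:=x-y$, $v:=A(x)-A(y)$, so that $|v|\le L|u|$. The quantity on the left is the modulus of the complex number $u+i(v+s)$, that is,
\begin{align}
  |x-y+i(A(x)-A(y)+s)|^2 = u^2+(v+s)^2 ,
\end{align}
and we compare it with $u^2+s^2$ in both directions.

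For the upper bound, use $(v+s)^2\le 2v^2+2s^2\le 2L^2u^2+2s^2$. This gives $u^2+(v+s)^2\le (1+2L^2)u^2+2s^2\lesssim u^2+s^2$.

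For the lower bound, split into two cases according to the relative size of $|s|$ and $|u|$.
\begin{enumerate}
  \item If $|s|\ge 2L|u|$, then $|v+s|\ge |s|-L|u|\ge |s|/2$. Hence $u^2+(v+s)^2\ge u^2+s^2/4\ge \tfrac14(u^2+s^2)$.
  \item If $|s|<2L|u|$, then $s^2<4L^2u^2\le u^2$, using $L\le 1/2$. Hence $u^2+(v+s)^2\ge u^2\ge \tfrac12(u^2+s^2)$.
\end{enumerate}
Taking square roots in both bounds yields the stated comparability, with absolute constants.

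No step is a real obstacle. The only point to check is the smallness of the Lipschitz constant, which \cref{thm:graph-properties} provides. In fact the case split works for any finite $L$: in the first case require $|s|\ge 2L|u|$, and in the second use $s^2<4L^2u^2$. The resulting constants then depend on $L$, which is harmless here.
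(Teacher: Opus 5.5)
Your proof is correct and takes essentially the same approach as the paper: both rest on $|A(x)-A(y)|\le L|x-y|$ together with the triangle inequality $|s|-L|x-y|\le |A(x)-A(y)+s|\le |s|+L|x-y|$. The paper avoids your case split by working in the $\ell^1$ norm, which gives $(1-L)(|x-y|+|s|)\le |x-y|+|A(x)-A(y)+s|\le (1+L)(|x-y|+|s|)$ in one line but requires $L<1$; your squared-$\ell^2$ case analysis, as you note, works for any finite Lipschitz constant, with constants depending on $L$.
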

\begin{proof}
  Let $L$ denote the Lipschitz constant of $A$.
  Switching between the $1$- and $2$-norm on $\RR^2$ together with
  \begin{align}
    (1-L) |x-y| + |s|
    \le |x-y| + |s| - |A(x)-A(y)|
     & \le |x-y| + |A(x) - A(y) + s| \\
     & \le (1+L) |x-y| + |s|
  \end{align}
  yields the claim.
\end{proof}

For our next comparison estimate, we import the following lemma from \cite{jayeHuovinenTransformRectifiability2022}.
\begin{lemma}[{\cite[Lemma 11.5]{jayeHuovinenTransformRectifiability2022}}]\label{thm:muF-hdH-f-estimate}
  There exists a constant $C>0$ such that for any (measurable) function $f\colon\CC\to\CC$:
  \begin{align}
    \abs[\Big]{ \int_{\pi^{-1}(7I_0)} f \, \dl(\mu\restriction_F - h\hd1\restriction_{\gamma}) }
    \lesssim & \int_{7I_0} \brac{\osc_{B(\grapha(t),C\sqrt ε ℓ(t))} f} \dl{σ(t)} \\
             & + \int_{(7+C\sqrt ε)I_0 \setminus 7I_0} |f\circ \grapha| \dl{m_1}
    + \abs[\Big]{ \int_{8I_0} (f\circ \grapha) b \dl{m_1}},
  \end{align}
  where $b\colon\RR\to\RR$ satisfies $\supp b\subset 8I_0$ and $\norm b_{\infty}\lesssim \sqrt ε$.
  Here, $\osc_{B} f = \sup_{ξ,ζ\in B} |f(ξ) - f(ζ)|$.
\end{lemma}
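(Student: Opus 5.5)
The plan is the standard transport argument from \cite{tolsaPrincipalValuesRiesz2008,jayeHuovinenTransformRectifiability2022}, adapted to our measures. First I would compute the total mass of $h\hd1\restriction_\gamma$ by the area formula: for any $f$,
\begin{align}
  \int_{\pi^{-1}(7I_0)} f\,h\dl{\hd1\restriction_\gamma} = \int_{7I_0} (f\circ\grapha)\, g\dl{m_1},
\end{align}
since the Jacobian in $h$ cancels the one in the area formula. Likewise $\int_{\pi^{-1}(7I_0)} f\dl{\mu\restriction_F} = \int_{7I_0}\int_{\pi^{-1}(t)\cap F} f\dl{\mu_t}\dl{\sigma(t)}$ by disintegration along $\pi$. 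With $g=\eta_{\sqrt\epsilon D(\cdot)}*\sigma$, the difference splits into three pieces.

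\emph{First piece: replacing $f$ on fibres by $f\circ\grapha$.} For $x\in F\cap\pi^{-1}(7I_0)$, \cref{item:x-in-F-close-to-graph} of \cref{thm:graph-properties} gives $|x-\grapha(\pi(x))|\lesssim\epsilon D(\pi(x))\le \sqrt\epsilon\,\ell(\pi(x))$ (recall $\ell=D/10$). Hence $|f(x)-f(\grapha(\pi x))|\le\osc_{B(\grapha(\pi x),C\sqrt\epsilon\ell(\pi x))}f$. Integrating against $\mu\restriction_F$ produces the first term.

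\emph{Second piece: comparing $\int (f\circ\grapha)\dl\sigma$ with $\int (f\circ\grapha) g\dl m_1$.} By Fubini, $\int\phi\, g\dl m_1=\int\int\phi(t)\eta_{\sqrt\epsilon D(t)}(t-u)\dl t\dl\sigma(u)$. For fixed $u$, the $t$-integrand lives on $|t-u|\le\sqrt\epsilon D(t)$. Since $D$ is $1$-Lipschitz, this forces $D(t)\approx D(u)$ and $|t-u|\lesssim \sqrt\epsilon\ell(u)$, so $\phi(t)-\phi(u)$ is again bounded by the oscillation over $B(\grapha(u),C\sqrt\epsilon\ell(u))$ (using that $\grapha$ is $(1+\alpha)$-Lipschitz). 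The remaining issue is the mass defect $\int\eta_{\sqrt\epsilon D(t)}(t-u)\dl t-1$. Because the width depends on $t$ rather than $u$, this defect equals a derivative term of size $\lesssim\sqrt\epsilon\,\|D'\|_\infty\lesssim\sqrt\epsilon$. It is absorbed into a bounded function $b$ with $\|b\|_\infty\lesssim\sqrt\epsilon$; rewriting it back in $dm_1$ via \cref{thm:sigma-g-properties} keeps it in $L^\infty$ with the same bound, and its support lies in $8I_0$. Boundary effects come from $u\in 7I_0$ with $t\notin 7I_0$, or vice versa. These live in a $C\sqrt\epsilon$-neighbourhood of $\partial(7I_0)$, because $D\lesssim 1$ there, and they give the middle term $\int_{(7+C\sqrt\epsilon)I_0\setminus7I_0}|f\circ\grapha|\dl m_1$. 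The symmetric boundary contribution with $\sigma$ is converted the same way, or put into the oscillation term.

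The main obstacle is the non-constant mollification width. Showing that the normalization defect is genuinely $O(\sqrt\epsilon)$ pointwise, and can be written against $dm_1$ with an $L^\infty$ density, needs the linear growth of $\sigma$ at scales $\ge\sqrt\epsilon D$ from \cref{thm:sigma-g-properties}. I would handle it by a change of variables $t\mapsto (t-u)/(\sqrt\epsilon D(t))$, whose Jacobian is $1+O(\sqrt\epsilon)$ by the Lipschitz bound on $D$. Where $D(t)=0$ (on $\pi(Z_0)$) the mollifier degenerates. There $\sigma$ is absolutely continuous with density $\le1+C\alpha^2$ by \cite[Lemma 10.5]{tolsaPrincipalValuesRiesz2008}, and $g=d\sigma/dm_1$ a.e., so that part contributes nothing.
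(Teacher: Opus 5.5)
The paper gives no argument here. It imports \cite[Lemma 11.5]{jayeHuovinenTransformRectifiability2022} and only records that the proof uses \cref{item:x-in-F-close-to-graph} of \cref{thm:graph-properties}, \cref{thm:sigma-g-properties}, \cref{thm:g-small} and the $1$-Lipschitz property of $D$. Your outline has the expected architecture and the same ingredients. The fibre-to-graph step and the oscillation bound $|\phi(t)-\phi(u)|\le\osc$ are fine, but two steps do not go through as written.

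\emph{The mass defect.} After Fubini with the kernel $\eta_{\sqrt\varepsilon D(t)}(t-u)$ you are left with $\int\phi(u)c(u)\,d\sigma(u)$, where $c(u)=1-\int\eta_{\sqrt\varepsilon D(t)}(t-u)\,dt$ and $|c|\lesssim\sqrt\varepsilon$. This is an integral against $\sigma$, not $m_1$.
\begin{itemize}
\item Off $\pi(Z_0)$, $\sigma$ need not be absolutely continuous; \cref{thm:sigma-g-properties} only controls it at scales $\ge\sqrt\varepsilon D$. So the term cannot simply be rewritten as $\int\phi\,b\,dm_1$.
\item Moving $\phi$ from $\sigma$ to $m_1$ costs another oscillation error. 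Doing it with the same $t$-dependent kernel also regenerates a defect $\int\phi\,c^2\,d\sigma$.
\item Your change of variables only proves $|c|\lesssim\sqrt\varepsilon$, and that uses no growth of $\sigma$ at all.
\end{itemize}
The clean fix is to transport with the source-dependent width, whose $t$-mass is exactly one. Set $\tilde g(t):=\int\eta_{\sqrt\varepsilon D(u)}(t-u)\,d\sigma(u)$. Then $\int_{7I_0}\phi\,d\sigma=\int_{7I_0}\phi\,\tilde g\,dm_1$ up to the oscillation term and collar terms. The function $b:=\indicator{7I_0}(\tilde g-g)$ satisfies
\[
|b(t)|\lesssim\frac{1}{D(t)}\,\sigma\bigl(B(t,C\sqrt\varepsilon D(t))\bigr)\lesssim\sqrt\varepsilon .
\]
This follows from $|\partial_\rho\eta_\rho|\lesssim\rho^{-2}$ on the relevant support, $|D(u)-D(t)|\le|t-u|\lesssim\sqrt\varepsilon D(t)$, and \cref{thm:sigma-g-properties}. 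That last lemma is where the linear growth genuinely enters.

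\emph{The collars.} The piece with $t\in 7I_0$ and $u\notin 7I_0$ is mass that $g$ receives inside $7I_0$ from $\sigma$ outside. It lives in the inner collar $7I_0\setminus(7-C\sqrt\varepsilon)I_0$, with weight of size $g$, not $\sqrt\varepsilon$. It cannot be pushed to the outer collar: transporting it back lands on $\sigma$-mass outside $7I_0$, where the oscillation term on the right does not integrate.

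In fact the displayed bound with only the outer collar fails. Let $\mu$ be a constant multiple of $\hd1$ on $\RR$ with the gaps $(7-d,7)$ and $(7+\kappa\sqrt\varepsilon d,7+d)$ removed, where $d$ is tiny and $\kappa$ is a small absolute constant.
\begin{itemize}
\item $D=0$ on $\pi(F)\cap 7I_0$, so the oscillation term vanishes.
\item $D\approx d$ near $7$, hence $g\gtrsim\kappa$ on $[7-\kappa\sqrt\varepsilon d,7)$.
\item Take $f=M\indicator{\{\Re z\in[7-\kappa\sqrt\varepsilon d,7)\}}$. The left side is $\gtrsim M\kappa^2\sqrt\varepsilon d$, while the right side is $\lesssim M\kappa\varepsilon d$, coming only from the $b$-term.
\end{itemize}
What your argument proves, once the defect is fixed as above, is the estimate with the two-sided collar $\int_{(7+C\sqrt\varepsilon)I_0\setminus(7-C\sqrt\varepsilon)I_0}|f\circ\grapha|\,dm_1$. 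That is all \cref{thm:muF-hdH-estimate} needs, since this collar is still at distance $\gtrsim1$ from $6I_0$.
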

The setup is slightly different in \cite{jayeHuovinenTransformRectifiability2022} (they use modified $α$-numbers), but the proof of the lemma above only uses \cref{item:x-in-F-close-to-graph} of \cref{thm:graph-properties}, \cref{thm:sigma-g-properties}, \cref{thm:g-small} and that $D$ is $1$-Lipschitz.

\begin{lemma}\label{thm:muF-hdH-estimate}
  For the localized integrals, we have
  \begin{equation}
    \int_{5I_0} \paren[\Big]{ \int_{\tilde{\Gamma}(a)} \abs[\Big]{ \int_{\pi^{-1}(7I_0)} θ_*(ξ,z) \dl (μ\restriction_F(z) - h(z)\hd1\restriction_{\gamma}(z))  }^2  \dl{ξ}}^{p/2}  \dl a  \lesssim ε^{p/4}.
  \end{equation}
\end{lemma}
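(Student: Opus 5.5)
We plan to apply \cref{thm:muF-hdH-f-estimate} pointwise in $\xi$ with $f=\theta_*(\xi,\cdot)$. We then square, integrate over the truncated cones, and use \cref{thm:conical-to-vertical} to pass to a vertical $L^2$ integral over the region $\{x\in 6I_0,\ |s|\in(\tfrac{\tau}{2}D(x),\sqrt\varepsilon)\}$. Since $p<2$, \cref{thm:conical-to-vertical} lets us bound the left-hand side by $(\mathcal{I})^{p/2}$, where
\begin{align}
  \mathcal I:=\int_{6I_0}\int_{\{|s|\in(\frac\tau2 D(x),\sqrt\varepsilon)\}}
  \abs[\Big]{\int_{\pi^{-1}(7I_0)}\theta_*(x+is,z)\dl(\mu\restriction_F-h\hd1\restriction_\gamma)(z)}^2|s|\dl s\dl x .
\end{align}
It therefore suffices to show $\mathcal I\lesssim\varepsilon^{1/2}$. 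We bound the three terms from \cref{thm:muF-hdH-f-estimate} separately.

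\emph{Oscillation term.} Write $\xi=x+is$ and $w=x+iA(x)+is$. For $\zeta\in B(\grapha(t),C\sqrt\varepsilon\ell(t))$, \cref{thm:x-y-A-s-comparability} gives $|w-\zeta|\gtrsim |x-t|+|s|$, because $\sqrt\varepsilon\ell(t)\ll \tau D(t)$ is much smaller than $|w-\grapha(t)|$. The last claim uses $|s|\gtrsim\tau D(x)$, $\varepsilon\ll\tau$, and the $1$-Lipschitz property of $D$. Hence
\[
  \osc_{B(\grapha(t),C\sqrt\varepsilon\ell(t))}\theta_*(\xi,\cdot)\lesssim\frac{\sqrt\varepsilon D(t)}{(|x-t|+|s|)^3}.
\]
To integrate against $\sigma$, we use $D(t)\le D(x)+|x-t|\lesssim (|s|/\tau)+|x-t|$. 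Combined with the growth bound $\sigma(B(x,r))\lesssim r$ for $r\ge\sqrt\varepsilon D(x)$ from \cref{thm:sigma-g-properties} (applicable here since $|s|\gtrsim\tau D(x)\gg\sqrt\varepsilon D(x)$), dyadic annuli give
\[
  \int\frac{\sqrt\varepsilon D(t)}{(|x-t|+|s|)^3}\dl\sigma(t)\lesssim\frac{\sqrt\varepsilon}{\tau|s|} .
\]
Squaring and integrating against $|s|\dl s$ gives $\frac{\varepsilon}{\tau^2}\int_{6I_0}\log\frac{\sqrt\varepsilon}{\tau D(x)}\dl x$. The logarithm is the delicate point. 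Rather than relying on the factor $\tau^{-2}$ alone, we would sharpen the estimate by using $D(t)\lesssim D(x)+|x-t|$ more carefully, which yields the bound $\sqrt\varepsilon(D(x)/|s|^2+1/|s|)$. The $1/|s|$ part then contributes $\varepsilon\log(\sqrt\varepsilon/(\tau D(x)))$, while the $D(x)/|s|^2$ part contributes $\varepsilon D(x)^2/(\tau D(x))^2=\varepsilon\tau^{-2}$. Since the parameters satisfy $\log\varepsilon\ll\log\tau$, we can take $\varepsilon\le\tau^{4}$, and then both contributions are at most $\sqrt\varepsilon$, provided the logarithmic term is integrable in $x$. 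For integrability we need $D>0$ a.e.\ on $6I_0$, or more precisely $\log(1/D)\in L^1(6I_0)$. We expect this to be the main obstacle. The fallback is to note that $D(x)=0$ forces $x\in\pi(Z_0)$, and to bound the $s$-range from below by $\varepsilon^{10}$, losing only $\varepsilon\log(1/\varepsilon)\lesssim\sqrt\varepsilon$.

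\emph{Boundary and $b$ terms.} For $x\in6I_0$ and $t\in(7+C\sqrt\varepsilon)I_0\setminus7I_0$, \cref{thm:x-y-A-s-comparability} gives $|\theta_*(\xi,\grapha(t))|\lesssim1$. The boundary term is therefore at most $\sqrt\varepsilon$ pointwise, and its contribution to $\mathcal I$ is at most $\varepsilon\int_0^{\sqrt\varepsilon}s\dl s\lesssim\varepsilon^2$. For the $b$ term, the integral $\int_{8I_0}\theta(\xi,t)b(t)\dl t$ is exactly $\Theta(b)$. By \cref{thm:usfe-bdd-lipschitz-graph}, its vertical square function satisfies $\|T^|b\|_2^2\lesssim\|b\|_2^2\lesssim\varepsilon$.

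Combining the three terms, with the cross terms handled by $(a+b+c)^2\le3(a^2+b^2+c^2)$, gives $\mathcal I\lesssim\varepsilon^{1/2}$. Consequently the left-hand side is at most $\mathcal I^{p/2}\lesssim\varepsilon^{p/4}$, which is the claimed bound.
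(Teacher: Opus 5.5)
Your reduction to $\mathcal I$ via \cref{thm:conical-to-vertical}, the use of \cref{thm:muF-hdH-f-estimate}, and your handling of the boundary and $b$ terms are fine and match the paper. The gap is in the oscillation term, and it is not a technicality.

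\textbf{Where the argument fails.} After you replace $\ell(t)$ by $D(x)+|x-t|$ and integrate against $\sigma$ for fixed $(x,s)$, the $|x-t|$ part gives $\sqrt\varepsilon/|s|$. Its contribution to $\mathcal I$ is therefore $\varepsilon\int_{6I_0}\int_{\tau D(x)/2}^{\sqrt\varepsilon} s^{-1}\,ds\,dx$. On $\pi(Z_0)$ you have $D=0$, so the inner integral is $+\infty$ there. This set is not negligible. Indeed $Z\subset Z_0\subset\gamma$, and the upper density bound on $F$ gives $m_1(\pi(Z_0)\cap 6I_0)\gtrsim\mu(Z\cap B_0)$, which is exactly the quantity the main lemma must show is large. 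So $\log(1/D)\notin L^1(6I_0)$ precisely in the case of interest.

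Your fallback does not help. For $x\in\pi(Z_0)$ the integration region is all of $s\in(0,\sqrt\varepsilon)$, and cutting at $s\ge\varepsilon^{10}$ presupposes that the piece $s<\varepsilon^{10}$ is small, which is the same divergent quantity. The loss happens at the step $\ell(t)\le (D(x)+|x-t|)/10$. It throws away the fact that $\ell(t)=0$ on $\pi(Z_0)$ and is small near it. No estimate of $\int(\cdot)\,d\sigma$ made pointwise in $(x,s)$ can recover this sparsity.

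\textbf{How to fix it.} Keep $\ell(t)$, and do the $(x,s)$-integration for fixed $t$ before integrating in $\sigma$.

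In the region $|s|\ge\frac{\tau}{2}D(x)$ you have $\tau\ell(t)\le\frac{\tau}{10}(D(x)+|x-t|)\lesssim|x-t|+|s|$. So your oscillation bound is $\lesssim\sqrt\varepsilon\,\ell(t)\,(|x-t|+|s|+\tau\ell(t))^{-3}$, which is the paper's form. The regularization $\tau\ell(t)$ is what removes the logarithm.

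The paper then uses duality with a maximal operator $L^2(dx)\to L^2(\sigma)$ over radii $>\ell(y)$. After that, each $y$ contributes $\int_0^\infty \varepsilon\,\ell(y)^2 s\,(\tau\ell(y)+s)^{-4}\,ds\approx\varepsilon\tau^{-2}$, independently of $\ell(y)$.

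A route that is just as short uses Cauchy--Schwarz together with $\int(|x-t|+|s|)^{-2}\,d\sigma(t)\lesssim|s|^{-1}$. The latter follows from \cref{thm:sigma-g-properties}, which applies because $|s|\ge\frac{\tau}{2}D(x)\ge\sqrt\varepsilon D(x)$. This shows that the squared oscillation term times $|s|$ is $\lesssim\varepsilon\int\ell(t)^2(|x-t|+|s|+\tau\ell(t))^{-4}\,d\sigma(t)$. Extend to all $(x,s)$ and apply Fubini: you get $\lesssim\varepsilon\tau^{-2}\sigma(7I_0)$, and points with $\ell(t)=0$ contribute nothing. This is $\lesssim\sqrt\varepsilon$ once $\varepsilon\le\tau^4$.
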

\begin{proof}
  We use \cref{thm:conical-to-vertical} and need to bound
  \begin{align}
    \int_{6I_0} \int_{\{|s|\in (\frac{τ}{2}D(x),\sqrt ε)\}} \abs[\Big]{ \int_{\pi^{-1}(7I_0)} θ_*(x+is,z) \dl (μ\restriction_F(z) - h(z)\hd1\restriction_{\gamma}(z))  }^2  |s| \dl s \dl x.
  \end{align}

  We apply \cref{thm:muF-hdH-f-estimate} to the function $θ_*(x+is,\cdot)$, and treat the three terms separately.

  \proofstep{First term.}
  Let $x\in 6I_0$ and $|s|\ge \frac{τ}{2}D(x)$; equivalently, $|s|\ge 5τℓ(x)$.
  We claim that
  \begin{align}
    \osc_{B(\grapha(ξ),\sqrt{ε} ℓ(ξ))} θ_*(x+is,\cdot)
     & \lesssim \sqrt{ε} \frac{ℓ(ξ)}{(|x-ξ| + |s| + τℓ(ξ))^3}.
  \end{align}
  \begin{proof}
    For any $u\in B(\grapha(ξ),\sqrt{ε} ℓ(ξ))$, we let $u'=u-\grapha(ξ)$, and have
    \begin{align}
      |x-ξ| + |s| + τℓ(ξ) \lesssim |\grapha(x)-u+is| \lesssim |x-ξ| + |s| + \sqrt ε ℓ(ξ).
    \end{align}
    For the lower bound, we calculate
    \begin{align}
      |\grapha(x)-u+is|
       & \ge |x-ξ + is| - |A(x) - A(ξ)| - |u'|                                    \\
       & \gtrsim |x-ξ| + |s| - α|x-ξ| - \sqrt{ε} ℓ(ξ)                             \\
       & \gtrsim |x-ξ| + \frac{|s|}{2} + \frac{τ}{4}(ℓ(ξ) - |x-ξ|) - \sqrt ε ℓ(ξ) \\
       & \gtrsim |x-ξ| + |s| + τℓ(ξ).
    \end{align}
    For the upper bound, we have
    \begin{align}
      |\grapha(x) - u + is|
      \le |x-ξ| + |A(x)-A(ξ)| + \sqrt{ε}ℓ(ξ) + |s|
      \lesssim |x-ξ| + \sqrt{ε}ℓ(ξ) + |s|.
    \end{align}

    To calculate the oscillation, let $y,z\in B(\grapha(ξ),\sqrt{ε} ℓ(ξ))$ and write $y' = y - \grapha(ξ)$, $z'=z-\grapha(ξ)$.
    We have
    \begin{align}
      |θ_*( & x+is,y) - θ_*(x+is,z)|
      = \abs[\Big]{ \frac{1}{(\grapha(x)-y+is)^2} - \frac{1}{(\grapha(x)-z+is)^2 }}                                                                                                                   \\
            & \le |\grapha(x) - \grapha(ξ) + is|^2 \abs[\Big]{ \frac{1}{|\grapha(x) - y+ is|^4} - \frac{1}{|\grapha(x) - z+ is|^4} }                                                                  \\
            & \quad\quad + 2 \sup_{u'\in B(0,\sqrt{ε} ℓ(ξ))} \frac{|2u'(\grapha(x) - \grapha(ξ) + is)|}{|\grapha(x) - \grapha(ξ) - u' + is|^4} + \frac{|u'|^2}{|\grapha(x) - \grapha(ξ) - u' + is|^4} \\
            & \lesssim \frac{|y-z|}{(|x-ξ| + |s| + τℓ(ξ))^3} + \frac{\sqrt ε ℓ(ξ)}{(|x-ξ| + |s| + τℓ(ξ))^3} + \frac{ε ℓ(ξ)^2}{(|x-ξ| + |s| + τℓ(ξ))^4}                                                \\
            & \lesssim \frac{\sqrt ε ℓ(ξ)}{(|x-ξ| + |s| + τℓ(ξ))^3}.
    \end{align}
  \end{proof}
  To get the desired bound for the first term, it suffices to see
  \begin{equation}
    \int_{6I_0} \int_{0}^{\sqrt ε} \abs[\Big]{ \int_{7I_0} \frac{ε^{1/4}ℓ(y)}{(τℓ(y)+s+|x-y|)^3} \dl{σ(y)} }^2 s \dl s  \dl x \lesssim 1.
  \end{equation}
  Using duality, fix $f\in\Lp2[(0,\sqrt ε)\times 7I_0;s\dl s \dl x]$.
  We need to see
  \begin{equation}\label{eq:duality-inequality}
    \int_{6I_0} \int_{0}^{\sqrt ε} \int_{7I_0} \frac{ε^{1/4}ℓ(y)}{(τℓ(y)+s+|x-y|)^3} f(s,x) \dl{σ(y)} \, s \dl s  \dl x \lesssim \norm f_2.
  \end{equation}
  We investigate the integral in $x$:
  \begin{align}
     & \int_{6I_0} \frac{ℓ(y)}{(τℓ(y)+s+|x-y|)^3} f(s,x) \dl x                                         \\
     & \lesssim \int_{0}^{∞} \frac{ℓ(y)}{(τℓ(y)+s+r)^4} \int_{|x-y|<r} |f(s,x)| \dl x \dl r            \\
     & \lesssim \int_{0}^{ℓ(y)} \frac{ℓ(y)^2}{(τℓ(y)+s+r)^4} \fint_{|x-y|<ℓ(y)} |f(s,x)| \dl x \dl r +
    \int_{ℓ(y)}^{∞} \frac{ℓ(y)r}{(τℓ(y)+s+r)^4} \fint_{|x-y|<r} |f(s,x)| \dl x \dl r                   \\
     & \lesssim \int_{0}^{∞} \frac{ℓ(y)}{(τℓ(y)+s+r)^3} (M_x f)(s,y) \dl r
    \lesssim \frac{ℓ(y)}{(τℓ(y)+s)^2} (M_x f)(s,y).
  \end{align}
  Here, $M_x$ is the maximal function with respect to the second variable and radii bounded below.
  Concretely, $(M_x f) (s,y) := (M f(s,\cdot))(y)$, where
  \begin{equation}
    M\colon\Lp2[6I_0;\dl x]\to \Lp2[7I_0;\dl{σ}],\quad
    (Mf)(y) = \sup_{r>ℓ(y)} \frac{1}{m(B(y,r))} \int_{B(y,r)\cap6I_0} |f(x)| \dl x
  \end{equation}
  is bounded by interpolation:
  The crucial \enquote{doubling}-property is $σ(3B_j)\lesssim m(B_j)$ for balls (intervals) sufficiently large (cf. \cref{thm:sigma-g-properties}).
  By homogeneity, it suffices to show $σ(E)\lesssim \norm{f}_{\Lp1[6I_0;\dl x]}$ for $E:=\{|Mf| > 1\}$ and $f\in \Lp1[6I_0;\dl x]$.
  As in the Euclidean case (cf. Vitali covering), choose disjoint intervals $B_j=B(y_j,r_j)$ with $r_j>ℓ(y_j)$ such that $\int_{B_j} |f(x)| \dl x \ge m_1(B_j)$ and $E\subset \cup_j 3B_j$.
  This gives the desired bound
  \begin{equation}
    σ(E) \le \sum_j σ(3B_j) \lesssim \sum_j r_j \lesssim \sum_j m_1(B_j) \le \int_{\cup_j B_j} |f(x)| \dl x.
  \end{equation}

  Coming back to \cref{eq:duality-inequality}, we see that its left-hand side is bounded by
  \begin{align}
    \int_{0}^{\sqrt ε} \int_{7I_0} & \frac{ε^{1/4} ℓ(y)}{(τℓ(y)+s)^2} (M_x f)(s,y) \dl{σ(y)} \, s \dl s                                                                                                                                                  \\
                                   & \le \paren[\Big]{\int_{0}^{\sqrt ε} \int_{7I_0} (M_x f)(s,y)^2 \dl{σ(y)} \, s \dl s}^{1/2} \paren[\Big]{\int_{0}^{\sqrt ε} \int_{7I_0} \paren[\Big]{\frac{ε^{1/4}ℓ(y)}{(τℓ(y)+s)^2}}^2 \dl{σ(y)} \, s \dl s}^{1/2}.
  \end{align}
  \Cref{eq:duality-inequality} now follows from the boundedness of $M$ and
  \begin{align}
    \int_{0}^{\sqrt ε} \int_{7I_0} \paren[\Big]{\frac{ε^{1/4}ℓ(y)}{(τℓ(y)+s)^2}}^2 \dl{σ(y)} \, s \dl s
     & \le \int_{7I_0} \int_{0}^{\sqrt ε}  \frac{\sqrt εℓ(y)^2}{(τℓ(y)+s)^3} \dl s \dl{σ(y)} \\
     & \lesssim \int_{7I_0} \frac{\sqrt{ε}}{τ^2} \dl{σ(y)}
    \lesssim σ(7I_0)
    \lesssim 1.
  \end{align}

  \proofstep{Second term.}
  The second term can be estimated as
  \begin{align}
    \int_{6I_0} \int_{\{|s|\in (\frac{τ}{2}D(x),\sqrt ε)\}} & \abs[\Big]{ \int_{(7+c\sqrt ε)I_0\setminus 7I_0} \frac{1}{|\grapha(x)-\grapha(y)+is|^2} \dl y }^2 |s| \dl s \dl x
    \le 2 \int_{6I_0} \int_{\frac{τ}{2}D(x)}^{\sqrt ε} s \dl s \dl x
    \le 2ε,
  \end{align}
  since the separation of $x$ from $y$ yields $|\grapha(x)-\grapha(y)+is| \ge 1$

  \proofstep{Third term.}
  The third term can be bounded by
  \begin{align}
    \int_{6I_0} \int_{\{|s|\in (\frac{τ}{2}D(x),\sqrt ε)\}} \abs[\Big]{ \int_{8I_0} \frac{b(y)}{(\grapha(x)-\grapha(y)+is)^2} \dl y  }^2  |s| \dl s \dl x \lesssim \norm{b}_2^2 \lesssim ε
  \end{align}
  by \cref{thm:usfe-bdd-lipschitz-graph} and the properties of $b$ in \cref{thm:muF-hdH-f-estimate}.
\end{proof}

\begin{lemma}\label{thm:dt-Integral-outsideRegion-small}
  We have
  \begin{align}
    \int_{5I_0} \paren[\Big]{ \int_{\tilde{\Gamma}(z)} \abs[\big]{ \int_{\RR\setminus 7I_0} θ(ξ,y) \dl y}^2 \dl{ξ} } ^{p/2} \dl z
    \lesssim ε^{p/2}.
  \end{align}
\end{lemma}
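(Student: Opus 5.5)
The plan is to bound the inner integral pointwise and then integrate trivially over the truncated cones. Fix $z\in 5I_0$ and $ξ\in\tilde{\Gamma}(z)$. The cone condition $|ξ-z|<(1+κ)|\Im ξ|$ together with $|\Im ξ|<\sqrt ε$ gives $|\Re ξ-z|\lesssim\sqrt ε$, so $\Re ξ\in 6I_0$ once $ε$ is small. Every $y\in\RR\setminus 7I_0$ then satisfies $|\Re ξ-y|\ge 1$.

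By \cref{thm:x-y-A-s-comparability},
\[
  |θ(ξ,y)|\approx\frac{1}{|\Re ξ-y|^2+(\Im ξ)^2}\le\frac{1}{|\Re ξ-y|^2}.
\]
Integrating over $|\Re ξ-y|\ge 1$ yields the uniform bound
\[
  \abs[\Big]{\int_{\RR\setminus 7I_0}θ(ξ,y)\dl y}\lesssim 1 .
\]
This crude bound carries no $ε$, so the smallness has to come from the size of the region of integration.

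The area of the truncated cone is controlled by
\[
  |\tilde{\Gamma}(z)|\le|Γ(z)\cap\{|\Im ξ|<\sqrt ε\}|\lesssim\int_0^{\sqrt ε}s\dl s\approx ε .
\]
Hence the quantity inside the $p/2$-power is $\lesssim ε$. Raising it to the power $p/2$ and integrating over $a\in 5I_0$, an interval of length $10$, gives the total bound $\lesssim ε^{p/2}$, which is the claim.

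An alternative route is to expand $θ$ via \cref{eq:kernel-expansion} and use the Cauchy cancellation of $θ_0$. This is not needed, since the separation between $\Re ξ$ and $\RR\setminus 7I_0$ makes the kernel integrable uniformly in $ξ$. The only step requiring care is the geometric verification that $\Re ξ$ stays in $6I_0$. This uses the upper truncation $|\Im ξ|<\sqrt ε$ of $\tilde{\Gamma}$ and not the lower truncation $τD(z)$, which plays no role here. There is no real obstacle beyond this bookkeeping.
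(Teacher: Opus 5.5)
Your proof is correct and follows essentially the paper's argument. Both use the separation $|\Re\xi - y|\ge 1$ together with \cref{thm:x-y-A-s-comparability} to bound the inner integral uniformly by a constant, and both get the smallness from the region $|\Im\xi|<\sqrt{\varepsilon}$ having size $\approx\varepsilon$. The only difference is that the paper first passes to the vertical integral over $6I_0\times\{|s|<\sqrt{\varepsilon}\}$ via \cref{thm:conical-to-vertical} (H\"older and Fubini), while you bound the area of each truncated cone directly, which is equally valid and slightly more direct.
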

\begin{proof}
  We use \cref{thm:conical-to-vertical} and are led to bound
  \begin{align}
    \int_{6I_0} \int_{\{|s|\in (\frac{τ}{2}D(x),\sqrt ε)\}} \abs[\Big]{ \int_{\RR\setminus 7I_0} θ(x+is,y) \dl y }^2  |s| \dl s \dl x . \label{eq:inner-truncated-integral-db}
  \end{align}
  Using the integration bounds, \cref{thm:x-y-A-s-comparability} implies
  \begin{align}
    |Θ \indicator{\RR\setminus 7I_0}|
    \lesssim \int_{\RR\setminus 7I_0} \frac{1}{(|x-y|+|s|)^2} \dl y
    \lesssim 1.
  \end{align}
  Plugging this into \cref{eq:inner-truncated-integral-db} yields the claim.
\end{proof}

\begin{lemma}\label{thm:dmu-innerIntegral-outsideRegion-small}
  We have
  \begin{equation}
    \int_{5I_0} \paren[\Big]{ \int_{\tilde{\Gamma}(a)} \abs[\Big]{ \int_{\CC\setminus \pi^{-1}(7I_0)} θ_*(ξ,z) \dl{μ\restriction_F(z)}  }^2  \dl{ξ}}^{p/2}  \dl a  \lesssim ε^{p/2}.
  \end{equation}
\end{lemma}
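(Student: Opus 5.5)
We follow the same pattern as the proof of \cref{thm:dt-Integral-outsideRegion-small}. First we apply \cref{thm:conical-to-vertical} to $f(\xi)=\abs{\int_{\CC\setminus\pi^{-1}(7I_0)}θ_*(\xi,z)\dl{μ\restriction_F(z)}}^2$. This reduces the claim to showing
\begin{align}
  \int_{6I_0}\int_{\{|s|\in(\frac{τ}{2}D(x),\sqrt ε)\}}
  \abs[\Big]{\int_{\CC\setminus\pi^{-1}(7I_0)}\frac{1}{(\grapha(x)+is-z)^2}\dl{μ\restriction_F(z)}}^2|s|\dl s\dl x\lesssim ε,
\end{align}
since raising to the power $p/2$ then gives $ε^{p/2}$.

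The key step is a pointwise bound on the inner integral by an absolute constant, uniformly in $x\in 6I_0$ and $|s|<\sqrt ε\le 1$. Let $z\in F\setminus\pi^{-1}(7I_0)$. Then $|\Re z-x|\ge 1$. Moreover, $\grapha(x)+is$ lies within $\lesssim ε+\sqrt ε$ of $L_0$ by \cref{item:graph-close-to-L0} of \cref{thm:graph-properties}. Hence $|\grapha(x)+is-z|\ge|\Re z-x|\ge 1$, and also $|\grapha(x)+is-z|\approx|x-z|$ for $z$ far away.

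We split $F\setminus\pi^{-1}(7I_0)$ into dyadic annuli $A_k=\{z\in F: 2^{k}\le|z-x|<2^{k+1}\}$ for $k\ge0$. Since $F\subset 10B_0$, only $k\lesssim 5$ contribute. On each annulus we use the growth bound (ii) of \cref{thm:main-lemma} at a point of $F$, which gives $μ(A_k)\le μ(B(z_0,2^{k+2}))\lesssim 2^k$ for any $z_0\in A_k\cap F$. Alternatively, the total mass bound $μ(10B_0)\le μ(B(z_0,20))\lesssim 1$ for any $z_0\in F$ suffices on its own. Either way the inner integral is bounded by $μ(F)\cdot 1\lesssim 1$. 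We do not need the tail condition (vi) here, because we integrate only against $μ\restriction_F$ and $F\subset 10B_0$ is compact.

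Plugging in the constant bound gives
\begin{align}
  \lesssim\int_{6I_0}\int_{|s|<\sqrt ε}|s|\dl s\dl x\lesssim ε.
\end{align}
The only mildly delicate point is ensuring that the distance from the evaluation point to $F\setminus\pi^{-1}(7I_0)$ stays bounded below. This is why the horizontal separation of $7I_0$ from $6I_0$ is used, rather than any vertical information. That separation bound follows directly from projecting to the real axis, so we expect no real obstacle.
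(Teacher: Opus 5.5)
Your proposal is correct and follows essentially the same route as the paper. You reduce via \cref{thm:conical-to-vertical}, use the horizontal separation $|\Re z - x|>1$ to get $|\grapha(x)+is-z|\ge 1$, bound the inner integral by an absolute constant, and integrate $|s|$ over $|s|<\sqrt{\varepsilon}$. The only cosmetic difference is that you bound the inner integral directly by $\mu(F)\le\mu(B(z_0,20))\lesssim 1$, using $F\subset 10B_0$ and hypothesis (ii), whereas the paper writes the same $O(1)$ bound as a dyadic-annulus sum.
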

\begin{proof}
  We use \cref{thm:conical-to-vertical} and are led to estimate
  \begin{equation}\label{eq:inner-truncated-integral-dmu}
    \int_{6I_0} \int_{\{|s|\in (\frac{τ}{2}D(x),\sqrt ε)\}} \abs[\Big]{ \int_{\CC\setminus \pi^{-1}(7I_0)} θ_*(x+is,z) \dl{μ\restriction_F(z)}}^2 |s| \dl s \dl x.
  \end{equation}
  From the separation of $\Re z$ from $x$, we get $|\grapha(x)+is-z| \ge 1$, and thus
  \begin{align}
    \abs[\Big]{ \int_{\CC\setminus \pi^{-1}(7I_0)} θ_*(x+is,z) \dl{μ\restriction_F(z)} }
    \lesssim \sum_{k=0}^{\infty} \int_{|z-\grapha(x)-is|\approx 2^k} \frac{1}{|\grapha(x)+is-z|^2}\dl{μ\restriction_F(z)}
    \lesssim 1.
  \end{align}
  Plugging this into \cref{eq:inner-truncated-integral-dmu} yields the claim.
\end{proof}

\subsection{Comparing \texorpdfstring{$\dl{μ\restriction_F}$ to $μ$}{μ|F to μ}}
We continue to compare the inner integrals, this time trying to reach the original measure $μ$.
In order to manage the mixture with the outer integrals with respect to $s\dl s\dl x$, we need the following lemma relating $s$ and the distance to the graph.
We outsource the proof of it to \cref{sec:appendix} since it requires technical construction details which we omitted in \cref{sec:graph-construction}.
\begin{lemma}\label{thm:compare-t-dist}
  For $x\in 6I_0$ and $|s|\in(\frac{τ}{2}D(x),\sqrt{ε})$, we have $|s|\lesssim \dist(\grapha(x)+is,E)$.
\end{lemma}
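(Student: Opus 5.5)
We argue by contradiction: we show that if some point $y\in E$ lies much closer to $ζ:=\grapha(x)+is$ than $|s|$, then a very good ball sits at a scale comparable to $D(x)$ near $x$, and flatness of $E$ in that ball forces $y$ to be far from $ζ$.

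Fix $x\in 6I_0$ and $|s|\in(\frac{τ}{2}D(x),\sqrt ε)$. Suppose $y\in E$ with $|y-ζ|\le c|s|$ for a small absolute constant $c$. We have $|y-\grapha(x)|\le (1+c)|s|\le 2\sqrt ε$, and by \cref{item:graph-close-to-L0} of \cref{thm:graph-properties} the point $y$ lies in a bounded region near $L_0$. Hence it suffices to find a ball $B(z,r)\in\VG$ with $B(y,|s|)\subset B(z,3r)$ and $r\lesssim |s|/τ$. In that ball the best approximating line $L_B$ satisfies $\angle(L_B,L_0)\le α$. Since $B$ is good, $β_E(z,r)\le λ_1/δ$, so every point of $E\cap B(z,3r)$ is within $(λ_1/δ) r$ of $L_B$.

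To produce this ball, use the definition of $D$: there is $B(z,r)\in\VG$ with $|x-π(z)|+r\le 2D(x)\le 4|s|/τ$. Very good means $B(z,t)\in\G$ for every $t\in[r,50]$. We therefore replace $r$ by $t:=C|s|/τ$, which is still below $50$ because $|s|<\sqrt ε$ and $ε\ll τ$. Then $B(z,3t)$ contains both $y$ and $\grapha(x)$, since $|z-\grapha(x)|\lesssim D(x)$. For the latter, \cref{item:x-in-F-close-to-graph} of \cref{thm:graph-properties} together with $z\in F$ gives $|z-\grapha(π(z))|\lesssim εD(π(z))$, and $A$ is $α$-Lipschitz.

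Now compare heights relative to the line $L_B$ through $B(z,t)$, which we write as $L$.
\begin{itemize}
\item The point $z$ lies within $(λ_1/δ)t$ of $L$.
\item The point $\grapha(x)$ lies on a graph of slope $\lesssim α$ that passes within $εD$ of $z$. So over horizontal distance $\lesssim t$ it deviates from $L$ by at most $\lesssim αt+εD(x)$.
\item The point $ζ$ sits vertically at distance $|s|$ above $\grapha(x)$.
\end{itemize}
Hence $\dist(ζ,L)\ge |s|-C(α+λ_1/δ)t-Cε D(x)$. Since $t\approx |s|/τ$, the error term $αt\approx α|s|/τ$ is not small, because $α\gg τ$. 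This is the main obstacle: the naive scale is too coarse relative to the slope.

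The fix is to run the same argument at scale $t\approx |s|$ rather than $|s|/τ$. Since $s$ exceeds $\frac{τ}{2}D(x)$, we use instead a ball centered at a point $z'\in F$ near $π^{-1}(x)$ at scale $\approx|s|$, and lean on the graph construction in \cref{sec:appendix}: $ℓ$ is comparable to the size of the Whitney-type intervals on which $A$ is built from the lines $L_B$. Concretely, for $|s|\gtrsim D(x)$ the balls $B(z,t)$ with $t\approx|s|$ are very good, because $t\ge r$. The relevant line then approximates $E$ within $(λ_1/δ)|s|$, and $γ$ approximates that same line within $\lesssim ε|s|+α\cdot(\text{horizontal offset}\lesssim D(x))$. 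When $|s|\lesssim D(x)/τ$ we additionally use \cref{item:second-derivative-estimate} of \cref{thm:graph-properties}. Taylor expansion of $A$ at $π(z)$ over a length $\lesssim |s|$ gives deviation $\lesssim ε|s|^2/D+|A'|\,|s|$, and the construction aligns $A'$ with the slope of $L_B$ up to $ε$.

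With these bounds, $\dist(ζ,L)\ge |s|/2$ while $\dist(y,L)\le (λ_1/δ)t\ll |s|$. This contradicts $|y-ζ|\le c|s|$, and so $\dist(ζ,E)\ge c|s|$.
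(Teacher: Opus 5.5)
\textbf{There is a genuine gap.} Your crude argument at scale $t\approx|s|$ works in the regime $|s|\gtrsim D(x)$: there $r\le 2D(x)\lesssim|s|$, and the slope error $\alpha D(x)\le\alpha|s|$ is harmless. The content of the lemma, however, is the complementary regime $\frac{\tau}{2}D(x)<|s|\ll D(x)$, and your repair does not work there.

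You cannot bring the scale down to $|s|$ in this regime. If $B(z',t')\in\VG$ and $\zeta=\grapha(x)+is\in B(z',3t')$, then $|x-\pi(z')|<3t'$. Hence $D(x)\le|x-\pi(z')|+t'<4t'$. So every very good ball that sees $\zeta$ has radius $\gtrsim D(x)$, which can be $\approx|s|/\tau$. Moreover, there need not be any point of $F$ near $\pi^{-1}(x)$ at all. Your ball ``centered at $z'\in F$ near $\pi^{-1}(x)$ at scale $\approx|s|$'' therefore does not exist in general.

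You are forced to work at scale $\approx D(x)$. At that scale you need $\dist(\grapha(x),L_B)\ll|s|$, i.e.\ $\lesssim\varepsilon D(x)$ with $\varepsilon\ll\tau$. This is closeness of $\gamma$ to $L_B$ at the $\varepsilon$-level, not the $\alpha$-level, and it is exactly what is missing.

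Your Taylor sketch does not supply it, for three reasons.
\begin{enumerate}
\item The horizontal distance from $\pi(z)$ to $x$ can be $\approx D(x)$, not $\lesssim|s|$.
\item Along $[\pi(z),x]$ the function $D$ need not stay comparable to $D(x)$; for instance $D(\pi(z))\le r$ may be much smaller than $D(x)$. So $|A''|\lesssim\varepsilon/D$ does not directly integrate to an $O(\varepsilon D(x))$ error.
\item The claim that ``the construction aligns $A'$ with the slope of $L_B$ up to $\varepsilon$'' is not among the properties listed in \cref{thm:graph-properties}. It is precisely the construction-level fact that has to be proved.
\end{enumerate}

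The paper gets this estimate from the construction itself, in four steps.
\begin{enumerate}
\item If $x\notin\pi(Z_0)$, write $A=\sum_i\varphi_iA_i$ and pick $i$ with $\varphi_i(x)\neq0$. By \cref{thm:ax-aix-distance}, $|A(x)-A_i(x)|\lesssim\varepsilon\ell(x)$.
\item The graph of $A_i$ is the line $L_{B(z_i,r_i)}$ of a very good ball, and $|x-\pi(z_i)|\lesssim\ell(R_i)\lesssim D(x)$ (Tolsa's Lemmas 7.20(a) and 7.22(b)).
\item For the concentric enlargement $B=B(z_i,\tilde r)$ with $\tilde r=\max\{r_i,|\zeta-z_i|+|s|\}$, Tolsa's Lemma 7.11(b) gives $\angle(L_{B(z_i,r_i)},L_B)\lesssim\varepsilon$.
\item This produces $\xi\in L_B$ with $\pi(\xi)=x$ and $|\grapha(x)-\xi|+\varepsilon\tilde r\lesssim\varepsilon(D(x)+|s|)\lesssim\varepsilon\tau^{-1}|s|$. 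That is $\ll|s|$ precisely because $\varepsilon\ll\tau$, and then $E\cap B(\zeta,|s|)$ lies in a strip of width $\lesssim\varepsilon\tilde r$ around $L_B$.
\end{enumerate}
The case $x\in\pi(Z_0)$ is handled separately with the ball $B(\grapha(x),2|s|)$.

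Finally, your argument at scale $t\approx|s|/\tau$ also implicitly needs $\lambda_1/\delta\ll\tau$, so that $(\lambda_1/\delta)t\ll|s|$. This is consistent with the parameter choices (the paper effectively uses strip width $\varepsilon r(B)$), but you should state it.
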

Note that the hypothesis in \cref{thm:compare-t-dist} covers the slightly enlarged interval used in \cref{thm:conical-to-vertical}.

\begin{lemma}\label{thm:muF-mu-small}
  For the localized integrals, we have
  \begin{equation}
    \int_{5I_0} \paren[\Big]{ \int_{\tilde{\Gamma}(a)} \abs[\Big]{ \int_{\CC} θ_*(ξ,z) \dl (μ\restriction_F(z) - μ(z))  }^2  \dl{ξ}}^{p/2}  \dl a  \lesssim ε^{p/2}.
  \end{equation}
\end{lemma}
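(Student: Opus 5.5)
Following the pattern of \cref{thm:muF-hdH-estimate,thm:dmu-innerIntegral-outsideRegion-small}, we first reduce to a vertical $L^2$ estimate. By \cref{thm:conical-to-vertical} it suffices to show
\begin{align}
  \int_{6I_0}\int_{\{|s|\in(\frac{τ}{2}D(x),\sqrt ε)\}}
  \abs[\Big]{\int_{\CC\setminus F}\frac{1}{(\grapha(x)+is-z)^2}\dl{μ(z)}}^2 |s|\dl s\dl x
  \lesssim ε .
\end{align}
Here we used $μ\restriction_F-μ=-μ\restriction_{\CC\setminus F}$ and $θ_*(x+is,z)=(\grapha(x)+is-z)^{-2}$. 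We then split the inner integral into the contributions from $10B_0\setminus F$ and from $\CC\setminus 10B_0$ (recall $F\subset 10B_0$).

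\textbf{Far part.} The points $ζ=\grapha(x)+is$ with $x\in 6I_0$ and $|s|<\sqrt ε$ lie in $8B_0$. This uses \cref{item:graph-close-to-L0} of \cref{thm:graph-properties}, which gives $|A|\lesssim ε$. Hence the tail hypothesis \ref{item:main-lemma-tail} of \cref{thm:main-lemma} bounds the inner integral pointwise by $C_{\operatorname{tail}}$. The $s\dl s\dl x$ measure of the region is $\lesssim ε$, so this part contributes $\lesssim C_{\operatorname{tail}}^2 ε$. The tail constant is harmless, since $ε$ may be chosen small in terms of it.

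\textbf{Near part.} This is the main step. We need to convert $|s|\dl s\dl x$ into $\dist(ζ,E)\dl m_2(ζ)$. By \cref{thm:compare-t-dist}, $|s|\lesssim\dist(\grapha(x)+is,E)$ throughout the region, and in particular $ζ\notin E$. The change of variables $(x,s)\mapsto\grapha(x)+is$ has Jacobian $1$, and its image lies in $8B_0\setminus E$. The localized USFE bound \cref{eq:main-lemma-local-usfe} therefore gives
\begin{align}
  \int\!\!\int \abs[\Big]{\int_{10B_0\setminus F}θ_*(x+is,z)\dl μ(z)}^2|s|\dl s\dl x
  \lesssim \int_{8B_0\setminus E}\abs[\Big]{\int_{10B_0\setminus F}\frac{\dl μ(ξ)}{(ζ-ξ)^2}}^2\dist(ζ,E)\dl m_2(ζ)
  \le Mη .
\end{align}
Choosing $η\le ε/M$, which is allowed since $η$ is small in terms of $M$, this is $\lesssim ε$.

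Combining the two parts via $|a+b|^2\le 2|a|^2+2|b|^2$ yields the vertical bound $\lesssim ε$. Raising to the power $p/2$ then gives the claimed estimate $\lesssim ε^{p/2}$. The only delicate point is the use of \cref{thm:compare-t-dist}. It guarantees both that the cone region avoids $E$ and that the weight $|s|$ is dominated by $\dist(\cdot,E)$, which is exactly what allows the hypothesis \ref{item:main-lemma-usfe} to be applied.
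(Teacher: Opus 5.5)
Your proposal is correct and follows the paper's proof essentially step for step. You reduce via \cref{thm:conical-to-vertical}, control the $10B_0\setminus F$ piece by \cref{thm:compare-t-dist} together with the localized USFE hypothesis (giving $M\eta\lesssim\varepsilon$), and bound the $\CC\setminus 10B_0$ piece pointwise by the tail hypothesis (giving $C_{\operatorname{tail}}^2\varepsilon$, absorbed by choosing $\varepsilon$ small in terms of $C_{\operatorname{tail}}$); your remarks that $(x,s)\mapsto\grapha(x)+is$ has Jacobian $1$ and that the $8B_0$-containment follows from $|A|\lesssim\varepsilon$ just make explicit details the paper leaves implicit.
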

\begin{proof}
  We use \cref{thm:conical-to-vertical} and are led to bound
  \begin{align}\label{eq:muF-mu-small-intermediate}
    \int_{6I_0} \int_{\{|s|\in (\frac{τ}{2}D(x),\sqrt ε)\}} \abs[\Big]{ \int_{\CC} \frac{\indicator{E\setminus 10B_0} + \indicator{10B_0\setminus F}}{(\grapha(x)-z+is)^2} \dl {μ(z)}  }^2  |s| \dl s  \dl x.
  \end{align}
  The contribution of $10B_0\setminus F$ is controlled by the localized USFE bound.  Indeed, for the points $ζ=\grapha(x)+is$ occurring above, we have $ζ\in8B_0$ and, by \cref{thm:compare-t-dist}, $|s|\lesssim\dist(ζ,E)$.  Hence the change of variables from $(x,s)$ to $ζ$ and \ref{item:main-lemma-usfe} of \cref{thm:main-lemma} give
  \begin{align}
     & \int_{6I_0} \int_{\{|s|\in (\frac{τ}{2}D(x),\sqrt ε)\}}
    \abs[\Big]{ \int_{10B_0\setminus F} \frac{1}{(\grapha(x)-z+is)^2} \dl {μ(z)}  }^2 |s|\dl s\dl x \\
     & \qquad\lesssim
    \int_{8B_0\setminus E}
    \abs[\Big]{\int_{10B_0\setminus F}\frac{1}{(ζ-z)^2}\dl{μ(z)}}^2
    \dist(ζ,E)\dl{m_2(ζ)}                                                                           \\
     & \qquad\le M μ(10B_0\setminus F)
    \le Mη \lesssim ε.
  \end{align}
  For the other part, $x\in 6I_0$ and $|s| < \sqrt ε$ implies that $\grapha(x)+is\in 8B_0$ and thus $|\grapha(x)-z+is|\ge 1$ for $z\notin 10B_0$.
  Therefore, by the tail assumption \ref{item:main-lemma-tail} of \cref{thm:main-lemma},
  \begin{align}
    \abs[\Big]{ \int_{\CC\setminus 10B_0} \frac{1}{(\grapha(x)-z+is)^2} \dl{μ(z)} }
     & \le \int_{\CC\setminus 10B_0} \frac{1}{|\grapha(x)-z+is|^2} \dl{μ(z)}
    \le C_{\operatorname{tail}}.
  \end{align}
  Plugging this back into \cref{eq:muF-mu-small-intermediate} gives a contribution bounded by
  \begin{align}
    C_{\operatorname{tail}}^2
    \int_{6I_0}\int_0^{\sqrt ε} |s|\dl s\dl x
    \lesssim C_{\operatorname{tail}}^2 ε.
  \end{align}
  The parameter $ε$ is chosen sufficiently small in terms of $C_{\operatorname{tail}}$, so this is acceptable.
\end{proof}

\subsection{Comparison of outer integrals}
In the last comparison step, we relate the outer integrals.
\begin{lemma}\label{thm:outer-integrals-comparison}
  For the outer integrals, we have
  \begin{align}
    \int_{5I_0} \paren[\Big]{ \int_{\tilde{\Gamma}(a)} & \abs[\Big]{ \int_{\CC} θ_*(ξ,z) \dl{μ(z)} }^2  \dl{ξ} }^{p/2}  \dl a                                                                  \\
                                                       & \lesssim \paren[\Big]{ \int_{100B_0\setminus E} \abs[\Big]{ \int_{\CC} \frac{1}{(ζ-z)^2} \dl{μ(z)} }^2  \dist(ζ,E) \dl m_2(ζ) }^{p/2}
  \end{align}
\end{lemma}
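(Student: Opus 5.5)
The plan is to reduce the left-hand side to a vertical square function via \cref{thm:conical-to-vertical}, and then change variables from the graph coordinates $(x,s)$ to the planar point $ζ=\grapha(x)+is$. Applying \cref{thm:conical-to-vertical} with $f(ξ)=\abs{\int_{\CC}θ_*(ξ,z)\dl{μ(z)}}^2$ bounds the left-hand side by
\begin{align}
  \paren[\Big]{ \int_{6I_0} \int_{\{|s|\in (\frac{τ}{2}D(x),\sqrt ε)\}} \abs[\Big]{ \int_{\CC} \frac{1}{(\grapha(x)+is-z)^2} \dl{μ(z)} }^2 |s| \dl s \dl x }^{p/2}.
\end{align}
Since the exponent $p/2$ is already outside, it suffices to bound the inner double integral by a constant multiple of the square-function energy of $μ$ over $100B_0\setminus E$.

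For the change of variables, consider the map $Φ(x,s):=\grapha(x)+is=(x,A(x)+s)$. It is a bi-Lipschitz shear with Jacobian determinant identically $1$, hence injective and measure preserving onto its image. Its domain is $U:=\{(x,s): x\in 6I_0,\ |s|\in(\tfrac{τ}{2}D(x),\sqrt ε)\}$, and I need three facts about $Φ(U)$.
\begin{enumerate}
  \item $Φ(U)\subset 100B_0$. This holds because $|x|<6$, $|A|\lesssim α$ is small, and $|s|<\sqrt ε$; in fact $Φ(U)\subset 8B_0$, as already used in \cref{thm:muF-mu-small}.
  \item $Φ(U)\cap E=\emptyset$. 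This follows from \cref{thm:compare-t-dist}, which gives $\dist(Φ(x,s),E)\gtrsim |s|>0$.
  \item $|s|\lesssim \dist(Φ(x,s),E)$, which is again \cref{thm:compare-t-dist}.
\end{enumerate}
With these, the integrand $\abs{\int_{\CC}(ζ-z)^{-2}\dl{μ(z)}}^2|s|$ is pointwise bounded by a constant times $\abs{\int_{\CC}(ζ-z)^{-2}\dl{μ(z)}}^2\dist(ζ,E)$ at $ζ=Φ(x,s)$. Integrating with $\dl x\dl s=\dl m_2(ζ)$ over $Φ(U)\subset 100B_0\setminus E$ then gives the claim. The integrand is nonnegative, so enlarging the domain of integration is harmless.

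The only substantive input is \cref{thm:compare-t-dist}, which the paper proves in the appendix. Here it is assumed, so the argument is short. The remaining points are bookkeeping. One should check that the constant in \cref{thm:conical-to-vertical} does not depend on $p$ in a harmful way; for fixed $p\in(1,2)$ this is fine. One should also note that $θ_*(x+is,z)=(\grapha(x)+is-z)^{-2}$, which matches the planar kernel at $ζ=Φ(x,s)$. I expect the main (mild) obstacle to be verifying that $Φ(U)$ avoids $E$, together with the comparison of $|s|$ with the distance to $E$ rather than to $γ$. Both are packaged in \cref{thm:compare-t-dist}; the hypothesis $|s|>\tfrac{τ}{2}D(x)$ is exactly what keeps $ζ$ away from the part of $E$ lying near the graph at scale $D$.
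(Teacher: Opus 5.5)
Your proposal is correct and follows the paper's proof. You reduce via \cref{thm:conical-to-vertical} to the vertical integral over $x\in 6I_0$, $|s|\in(\frac{\tau}{2}D(x),\sqrt{\varepsilon})$, replace $|s|$ by $\dist(\grapha(x)+is,E)$ using \cref{thm:compare-t-dist}, and change variables $\zeta=\grapha(x)+is$ into $100B_0\setminus E$; your remark that $(x,s)\mapsto(x,A(x)+s)$ is an injective, measure-preserving shear just spells out the change of variables the paper uses implicitly in its final inequality.
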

\begin{proof}
  We combine \cref{thm:conical-to-vertical} and \cref{thm:compare-t-dist} to get the upper bound
  \begin{align}
    \int_{6I_0} \int_{\{|s|\in (\frac{τ}{2}D(x),\sqrt ε)\}} & \abs[\Big]{ \int_{\CC} \frac{1}{(\grapha(x)+is-z)^2} \dl{μ(z)} }^2  |s| \dl s  \dl x                                                                                     \\
                                                            & \lesssim \int_{6I_0} \int_{\{|s|\in (\frac{τ}{2}D(x),\sqrt ε)\}} \abs[\Big]{ \int_{\CC} \frac{1}{(\grapha(x)+is-z)^2} \dl{μ(z)} }^2  \dist(\grapha(x)+is,E) \dl s  \dl x \\
                                                            & \le \int_{100B_0\setminus E} \abs[\Big]{ \int_{\CC} \frac{1}{(ζ-z)^2} \dl{μ(z)} }^2  \dist(ζ,E) \dl m_2(ζ).
  \end{align}
\end{proof}

\section{Size of \texorpdfstring{$F_1$}{F1} and \texorpdfstring{$F_2$}{F2} and the conclusion of the proof of the Main Lemma}\label{sec:F1}\label{sec:F2}
We adopt two results from \cite{tolsaAnalyticCapacityCauchy2014} regarding the sizes of $F_1$ and $F_2$, respectively.
\begin{lemma}[{\cite[Section 7.7]{tolsaAnalyticCapacityCauchy2014}}]\label{thm:f1-small}
  We have
  \begin{align}
    μ(F_1) \lesssim δ.
  \end{align}
  Here, $δ$ is the lower bound for the good ball density (cf. \cref{sec:graph-construction}).
\end{lemma}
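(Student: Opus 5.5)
The plan is the standard stopping-time density argument from \cite[Section 7.7]{tolsaAnalyticCapacityCauchy2014}, adapted to our notation. For $x\in F_1$ we have $h(x)>0$, and by the definition of $h$ the ball $B(x,h(x))$ fails to be very good at every smaller radius. Since the angle condition (ii) in the definition of $\G$ is inherited from slightly larger scales, I would show that the only way to lose membership in $\VG$ just below $h(x)$ is the density condition. Concretely, if $B(x,s)\in\G$ for all $s\in[h(x),50]$ and $\beta_E(x,s)\le\lambda_1/\delta$ is tiny, then best approximating lines at comparable scales $s$ and $s/2$ make angle $\lesssim \lambda_1/\delta^2\ll\alpha$. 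So the angle cannot jump abruptly unless the density drops. This is consistent with the definition $F_1=\{\delta_\mu(B(x,h(x)))\le\delta\}$, so we may take as given that each $x\in F_1$ carries a stopping ball $B_x:=B(x,h(x))$ with $\mu(B_x)\le\delta h(x)$.

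Next I cover $F_1$ by these balls. Since $h(x)\le 50$, I would first restrict attention to $F_1\cap B_0$ or $F_1\cap 10B_0$, which is the part relevant for the main lemma. Then I apply the Besicovitch covering theorem (or Vitali's $5r$-covering, with enlarged balls) to $\{B_x\}_{x\in F_1}$ and extract a bounded-overlap subfamily $\{B_{x_j}\}$ covering $F_1$. This gives
\begin{align}
\mu(F_1)\le\sum_j\mu(B_{x_j})\le\delta\sum_j h(x_j).
\end{align}

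The remaining, and main, step is to bound $\sum_j h(x_j)\lesssim 1$. I would do this by projecting onto $L_0$. For each $j$ the ball $B(x_j,h(x_j))$ is \emph{very good} at all scales above $h(x_j)$ up to a constant factor, by the stopping rule (for instance $B(x_j,2h(x_j))\in\VG$ after adjusting radii). It therefore has density $\ge\delta$ and a line of small angle $\le\alpha$ with $L_0$, and the support $E$ is $\lambda_1/\delta$-close to this line in $B(x_j,3h(x_j))$. Consequently the projection $\pi(B_{x_j})$ is an interval of length $\approx h(x_j)$ around $\pi(x_j)$, contained in $\pi(10B_0)\subset 10I_0$.

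The difficulty is that bounded overlap of the balls in the plane does not immediately give bounded overlap of their projections. Two stopping balls at very different heights could project onto the same interval. Flatness rules this out: both centres lie in $F$, and the larger very good ball around one centre sees $E$ within $\lambda_1/\delta$ times its radius of a nearly horizontal line. Hence two centres with overlapping projections and comparable radii are close in the plane, and Besicovitch overlap then transfers to the projections. When the radii are not comparable, the smaller ball lies inside a dilate of the larger one, and I would handle this case by selecting a Vitali-type disjoint family instead. Either way, $\sum_j h(x_j)\lesssim m_1(10I_0)\lesssim 1$, and hence $\mu(F_1)\lesssim\delta$.

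I expect this projection-overlap step to be the main obstacle. Should it fail, the fallback is to use the comparison $d(x)\lesssim D(\pi(x))$ from \cref{thm:dx-Dpix}, since $h(x)\gtrsim d(x)$.
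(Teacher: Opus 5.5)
\textbf{There is a genuine gap: the bound on $\sum_j h(x_j)$ is not established.} Your first two steps are fine. For $x\in F_1$, the bound $\mu(B(x,h(x)))\le\delta h(x)$ is just the definition of $F_1$. Your detour claiming that only the density condition can end the very-good range is false in general; that is exactly what $F_2$ records. But you never use it. A Besicovitch cover without enlargement then gives $\mu(F_1)\le\delta\sum_j h(x_j)$.

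The failure is in bounding $\sum_j h(x_j)$. Bounded overlap of the balls in the plane does \emph{not} transfer to their projections, even when the centres lie exactly on a line at an admissible angle. Here is a configuration showing this:
\begin{itemize}
\item Place centres $x_k$ on a line through a point $p$ with slope $\kappa\le\alpha$, at horizontal distance $u_k$ from $p$.
\item Take radii $h_k=u_k(1+\kappa^2/4)$, and let $u_{k+1}\ll\kappa^2u_k$.
\item Since $h_k>u_k$, every projection $\pi(B(x_k,h_k))$ contains $\pi(p)$.
\item Since $h_k<|x_k-p|$, each $B(x_k,h_k)$ stays at distance $\gtrsim\kappa^2u_k$ from $p$, so the balls are pairwise disjoint.
\end{itemize}

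Your comparable-radii argument does give bounded overlap within a single dyadic scale. Summing that over infinitely many scales yields no bound, and the non-comparable case is exactly where the cross-scale control is needed.

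The Vitali alternative does not rescue this. It forces you to cover $F_1$ by enlarged balls $B(x_j,5h(x_j))$. For these only $\mu(B(x_j,5h(x_j)))\le 5(1+\delta_1)h(x_j)$ is available, because above $h(x_j)$ the density is bounded \emph{below} by $\delta$, not above. So the factor $\delta$ is lost.

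The missing idea is the one the paper points to: use the constructed graph $\gamma$ itself. Restrict to $F_1\cap B_0$, which is all the main lemma needs.
\begin{itemize}
\item By \cref{thm:graph-properties}\,(iii), combined with $D(\pi(x))\le d(x)\le h(x)$, every such $x$ satisfies $\operatorname{dist}(x,\gamma)\lesssim\epsilon h(x)$. The inequality $D(\pi(x))\le d(x)\le h(x)$ is immediate from the definitions and goes in the opposite direction to \cref{thm:dx-Dpix}, which you proposed as a fallback.
\item Since the Lipschitz constant of $A$ is $\lesssim\alpha$, this gives $\mathcal H^1(\gamma\cap B(x,h(x)))\gtrsim h(x)$.
\item The bounded overlap $N$ of the Besicovitch balls in the plane then yields
\[
\sum_j h(x_j)\lesssim\sum_j\mathcal H^1\bigl(\gamma\cap B(x_j,h(x_j))\bigr)\le N\,\mathcal H^1\bigl(\gamma\cap B(0,C)\bigr)\lesssim 1 .
\]
\end{itemize}
Equivalently, you should project $\gamma\cap B(x_j,h(x_j))$ rather than the whole ball. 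Those projections do have bounded overlap, since $\pi$ restricted to $\gamma$ is injective.
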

The idea of the proof is to use that any point in $F_1$ is the centre of a ball with density less than $δ$ and these points are all close to the Lipschitz graph we constructed.
The details can be found in \cite[Section 7.7]{tolsaAnalyticCapacityCauchy2014}.
\begin{lemma}[{\cite[Lemma 7.35]{tolsaAnalyticCapacityCauchy2014}}]\label{thm:f2-measure-estimate-A-alpha}
  If $ε$ is sufficiently small, then
  \begin{align}
    μ(F_2) \lesssim \frac{\norm{A'}_2^2}{α^2}.
  \end{align}
\end{lemma}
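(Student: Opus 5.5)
The plan is to run the standard Léger--Tolsa argument: every point of $F_2$ sits at a scale where the support is flat but tilted by more than $α$ relative to $L_0$. On the corresponding interval the graph $γ$ must follow that tilted line, which forces $\int|A'|^2 \gtrsim α^2$ times the length. A covering argument then sums these local estimates.

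\emph{Step 1 (a tilted scale at each point).} Fix $x\in F_2$, so $h(x)>0$ and $δ_μ(B(x,h(x)))>δ$. By the definition of $h$, for $r$ slightly smaller than $h(x)$ the ball $B(x,r)$ is not very good. Choosing $r\in(h(x)/2,h(x))$, some $s\in[r,50]$ has $B(x,s)\notin\G$, while $B(x,s')\in\G$ for all $s'\in[h(x),50]$; thus $s\in[r,h(x))$. Since $μ(B(x,s))\ge μ(B(x,h(x)/2))$ and balls slightly larger than $h(x)$ are good, the density of $B(x,s)$ is $\gtrsim δ$ up to absolute factors. After replacing $δ$ by $δ/4$ in this step, the density condition holds at $B(x,s)$, so the failure must come from the angle: every best approximating line $L_x$ for $B(x,s)$ has $\angle(L_x,L_0)>α$. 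Moreover $β_E(x,s)\le λ_1/(cδ)\ll α$ by \cref{item:weakly-flat-uniform} of \cref{thm:main-lemma}. Hence $E\cap B(x,3s)$ lies in a strip of width $\ll α s$ around $L_x$.

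\emph{Step 2 (transfer to the graph).} Find two points $y_1,y_2\in F\cap B(x,s)$ with $|y_1-y_2|\gtrsim δ s$. Take $y_1=x$. For $y_2$, use $μ(B(x,s)\setminus B(x,cδ s))\ge δ s/2$, which follows from the upper bound (ii) with $c$ small. For $y_i$ we have $D(π(y_i))\le |y_i-x|+h(x)\lesssim s$, so \cref{item:x-in-F-close-to-graph} of \cref{thm:graph-properties} gives $|\grapha(π(y_i))-y_i|\lesssim ε s$. The segment $[y_1,y_2]$ makes angle $\gtrsim α$ with $L_0$, up to errors $λ_1/δ^2+ε/δ\ll α$. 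Therefore
\[
\frac{|A(π(y_1))-A(π(y_2))|}{|π(y_1)-π(y_2)|}\gtrsim α .
\]
By Cauchy--Schwarz on the interval $I_x:=[π(y_1),π(y_2)]\subset π(B(x,s))$, whose length is $\approx_δ s$, we obtain $\int_{π(B(x,s))}|A'|^2\gtrsim_δ α^2 s$. Here $δ$ is absorbed into the implicit constants, as in Tolsa's normalization.

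\emph{Step 3 (covering).} Apply the Besicovitch covering theorem to $\{B(x,s_x)\}_{x\in F_2}$ to obtain a subfamily with bounded overlap. The projected intervals then also have bounded overlap: since $F\setminus Z$ is close to the graph, whose Lipschitz constant is small, projection nearly preserves the disjointness structure. This point needs a short check, or one can use the Vitali $5r$-covering instead, noting that $μ(5B)\lesssim r$ by (ii). Then
\[
μ(F_2)\le\sum_j μ(B(x_j,5s_j))\lesssim\sum_j s_j\lesssim α^{-2}\sum_j\int_{π(B(x_j,s_j))}|A'|^2\lesssim \frac{\norm{A'}_2^2}{α^2}.
\]

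\emph{Main obstacle.} The hardest part is Step 2: ensuring the second point $y_2$ can be taken in $F$ rather than merely in $E$, since $μ(10B_0\setminus F)\le η$ is only an absolute bound and could dominate in small balls. My first attempt is to run the whole argument only for $x\in F_2$ with $μ(B(x,s_x)\setminus F)\le \frac{δ}{4}s_x$. The remaining points are covered by balls where the mass outside $F$ is at least $\frac{δ}{4}s_x$; a Vitali covering then bounds their total measure by $\lesssim η/δ$, which is absorbed by smallness of $η$. If the statement must hold literally without the $η/δ$ term, the fallback is to use $β_E$ together with $Z_0\subset γ$ and the construction from Tolsa, \cite[Lemma 7.35]{tolsaAnalyticCapacityCauchy2014}.
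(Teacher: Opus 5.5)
The paper gives no argument of its own here; it imports Tolsa's \cite[Lemma 7.35]{tolsaAnalyticCapacityCauchy2014}. Your three steps (a tilted scale at $h(x)$, a slope bound for $A$ over the projected interval, a covering) follow that L\'eger--Tolsa scheme.

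The genuine gap is in Step 2. Item (iii) of \cref{thm:graph-properties} only places points of $F$ near $\gamma$, and nothing forces $F$ to meet $B(x,s)\setminus B(x,c\delta s)$. Hypothesis (i) of \cref{thm:main-lemma} bounds $\mu(10B_0\setminus F)$ only globally, so at small scales all the mass you find away from $x$ may lie in $E\setminus F$. Your workaround therefore gives at best $\mu(F_2)\lesssim\alpha^{-2}\norm{A'}_2^2+\eta/\delta$. Such an additive term would be harmless where the lemma is used, since $\eta$ is fixed last, but it is not the statement. Even that bound needs the bad balls $B(x,s_x)$ to lie in $10B_0$, which fails in general because $x\in 10B_0$ and $s_x$ can be as large as $50$. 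Your fallback is a citation of the lemma itself.

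The missing idea is that no second point of $F$ is needed, because the graph itself follows the line of every very good ball. If $B(z,r)\in\VG$ and $\ell_B$ is the affine function with graph $L_{B(z,r)}$, then $|A(p)-\ell_B(p)|\lesssim\lambda_1 r/\delta^2\ll\alpha r$ for $p\in\pi(B(z,r))$. This comes from the construction, not from item (iii):
\begin{itemize}
\item On $\pi(Z_0)$ one has $\grapha(p)\in Z_0\subset F$.
\item Elsewhere $A=\sum_i\phi_iA_i$, where $A_i$ is the line of a very good ball $B(z_i,r_i)$ with $r_i\approx D(p)\le 2r$ and $|p-\pi(z_i)|\lesssim r_i$. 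One also checks $|z_i-z|\lesssim r$ using the good balls $B(z,t)$, $t\in[r,50]$.
\item Since $\beta_E$ is a support quantity, a good ball puts all of $E\cap 3B$, not just $F$, within $(\lambda_1/\delta)r$ of its line.
\item $E\cap B(z_i,r_i)$ contains $z_i$ and a point at distance $\ge c\delta r_i$ from it, by upper growth at $z_i\in F$. This pins $A_i$ to $\ell_B$ at $p$ up to $C\lambda_1 r/\delta^2$, directly and without chaining over scales.
\end{itemize}

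Apply the same two-point comparison to $B(x,s)$ with $s\uparrow h(x)$ and the very good ball $B(x,s')$ with $s'\downarrow h(x)$. This gives $\angle(L_{B(x,s')},L_0)\ge\alpha-C\lambda_1/\delta^2\ge\alpha/2$. Hence $|A(p_1)-A(p_2)|\gtrsim\alpha h(x)$ for $p_{1,2}=\pi(x)\pm h(x)/4$, and so $\int_{\pi(B(x,h(x)/2))}|A'|^2\gtrsim\alpha^2h(x)$. This also avoids the $\delta^{-1}$ loss caused by your interval of length $\approx\delta s$.

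Two smaller repairs are needed.
\begin{itemize}
\item In Step 1 the density threshold in $\G$ is fixed, so you cannot pass to $\delta/4$, and $\mu(B(x,h(x)/2))$ gives no lower bound. Instead let $r\uparrow h(x)$ and use $\delta_\mu(B(x,h(x)))>\delta$ (strict, since $x\notin F_1$) together with $\mu(B(x,r))\to\mu(B(x,h(x)))$ for open balls. Then the failing ball $B(x,s)$, $s\in[r,h(x)]$, has density $>\delta$ and must fail the angle condition.
\item In Step 3, full projections of disjoint balls centred near a graph of slope $\approx\alpha$ need not have bounded overlap. However, the half-radius intervals $\pi(B(x_j,h(x_j)/2))$ are pairwise disjoint, because the centres are $C\varepsilon h(x_j)$-close to $\gamma$. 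Summing $\mu(5B_j)\lesssim h(x_j)$ over these then gives $\mu(F_2)\lesssim\alpha^{-2}\norm{A'}_2^2$.
\end{itemize}
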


\begin{lemma}\label{thm:f2-small}
  We have
  \begin{align}
    μ(F_2) \lesssim α^{1/3} + \frac{\sqrt{λ_2}}{α}.
  \end{align}
\end{lemma}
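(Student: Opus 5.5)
Starting from \cref{thm:f2-measure-estimate-A-alpha}, it suffices to bound $\norm{A'}_2/α$ by $α^{1/6}+λ_2^{1/4}/α^{1/2}$, up to constants. Then squaring and combining gives
\[
μ(F_2)\lesssim \frac{\norm{A'}_2^2}{α^2}\lesssim α^{1/3}+\frac{\sqrt{λ_2}}{α}.
\]
The lower bound for $\norm{A'}_2$ in terms of the square function is \cref{thm:lower-bound-conical} with $p=3/2$. Since $\norm{A'}_\infty\lesssim α$ by \cref{thm:graph-properties}, it gives
\[
\frac{\norm{A'}_2}{α^{1/2}}\lesssim \norm{T^{∨}1}_{3/2}^{1/2}+α^{3/2}.
\]
So the task reduces to an upper bound for $\norm{T^{∨}1}_{3/2}$ in terms of $λ_2$ and small error terms.

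To bound $\norm{T^{∨}1}_{3/2}^{3/2}$ I split the $z$-integral and the cones as in \cref{fig:localization}. The part with $z\notin 5I_0$ is $\lesssim ε^{3/2}$ by \cref{thm:outer-localisation-lp}. For $z\in 5I_0$, the upper truncated cones $\tilde\Gamma(z)^+$ contribute $\lesssim ε^{3/4}$ by \cref{thm:upper-cone-localisation-lp}. The lower truncated cones $\tilde\Gamma(z)^-$ contribute $\lesssim (\sqrtτα)^{3/2}$ by \cref{thm:lower-cone-localisation-lp}.

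On the truncated cones $\tilde\Gamma(z)$ I use the quasi-triangle inequality for $(\int(\cdot)^2)^{p/2}$, with $p/2<1$, to write
\begin{align}
\int_{\RR}θ(ξ,y)\,dy
&=\Bigl[\int_{7I_0}θ\,dy-\int_{π^{-1}(7I_0)}θ_*h\,d\hd1|_γ\Bigr]
+\int_{\RR\setminus7I_0}θ\,dy\\
&\quad+\int_{π^{-1}(7I_0)}θ_*\,d(h\hd1|_γ-μ|_F)
-\int_{\CC\setminus π^{-1}(7I_0)}θ_*\,dμ|_F\\
&\quad+\int θ_*\,d(μ|_F-μ)+\int θ_*\,dμ .
\end{align}
The terms are bounded, respectively, by \cref{thm:dy-hdH-lp} ($α^2$), \cref{thm:dt-Integral-outsideRegion-small} ($ε^{3/4}$), \cref{thm:muF-hdH-estimate} ($ε^{3/8}$), \cref{thm:dmu-innerIntegral-outsideRegion-small} ($ε^{3/4}$) and \cref{thm:muF-mu-small} ($ε^{3/4}$). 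The main term is bounded by \cref{thm:outer-integrals-comparison} together with \cref{item:usf-small} of \cref{thm:main-lemma}, giving $\lesssim λ_2^{3/4}$.

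Collecting these,
\[
\norm{T^{∨}1}_{3/2}^{3/2}\lesssim α^2+λ_2^{3/4}+ε^{3/8}+(\sqrtτα)^{3/2}.
\]
With $ε,τ\ll α$, the error terms are absorbed into $α^2$, so
\[
\norm{T^{∨}1}_{3/2}^{1/2}\lesssim α^{2/3}+λ_2^{1/4}.
\]
Substituting back,
\[
\norm{A'}_2\lesssim α^{1/2}\bigl(α^{2/3}+λ_2^{1/4}\bigr)+α^2\lesssim α^{7/6}+α^{1/2}λ_2^{1/4}.
\]
Dividing by $α$ and squaring gives $\norm{A'}_2^2/α^2\lesssim α^{1/3}+\sqrt{λ_2}/α$, as required.

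The main obstacle is the bookkeeping of exponents. \Cref{thm:dy-hdH-lp} only yields $α^2$ for the $(3/2)$-power integral, and this is exactly what produces the $α^{1/3}$ in the statement. One has to check that the $p/2$ quasi-triangle inequality costs only constants and that every other error term is genuinely of lower order once $ε,τ$ are chosen small relative to $α$.
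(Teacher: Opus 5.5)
Your proposal is correct and takes essentially the same route as the paper. You assemble the same localization and comparison lemmas via the same telescoping decomposition to get $\norm{T^{\vee}1}_{3/2}\lesssim \alpha^{4/3}+\sqrt{\lambda_2}$, and then combine \cref{thm:lower-bound-conical} with \cref{thm:f2-measure-estimate-A-alpha}; the only difference is cosmetic, since you track $p$-th powers with a quasi-triangle inequality while the paper adds the corresponding $L^p$ norms directly.
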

\begin{proof}
  We put together all localization and comparison estimates, i.e.
  \crefnosort{thm:outer-localisation-lp,thm:upper-cone-localisation-lp,thm:lower-cone-localisation-lp,thm:dt-Integral-outsideRegion-small,thm:dy-hdH-lp,thm:muF-hdH-estimate,thm:dmu-innerIntegral-outsideRegion-small,thm:muF-mu-small,thm:outer-integrals-comparison}, and finish with \cref{item:usf-small} of \cref{thm:main-lemma} to get
  \begin{align}
    \norm{T^{∨} 1}_p
    &\lesssim ε + \sqrt{ε} + \sqrt{τ} α +
    \brac[\Bigg]{ \int_{5I_0} \paren[\Big]{ \int_{\tilde{\Gamma}(a)} \abs[\Big]{ \int_{\RR} θ(ξ,y) \dl y}^2 \dl{ξ} } ^{p/2} \dl a }^{1/p} \\
    & \lesssim \sqrt{τ}α + \sqrt ε + α^{2/p} + ε^{1/4} + \sqrt ε + \sqrt ε +
    \brac[\Bigg]{\int_{5I_0} \paren[\Big]{ \int_{\tilde{\Gamma}(a)} \abs[\Big]{ \int_{\CC} θ_*(ξ,z) \dl{μ(z)}  }^2  \dl{ξ}}^{p/2}  \dl a }^{1/p} \\
    & \lesssim α^{2/p} + \brac[\Bigg]{ \int_{\substack{ζ\in 100B_0\setminus E                                     \\ \dist(ζ,E)\le 10}} \abs[\Big]{ \int_{\CC} \frac{1}{(ζ-z)^2} \dl{μ(z)} }^2  \dist(ζ,E) \dl m_2(ζ) }^{1/2} \\
    & \lesssim α^{2/p} + \sqrt{λ_2}.
  \end{align}
  Now, \crefnosort{thm:f2-measure-estimate-A-alpha,thm:lower-bound-conical} (with $p=3/2$) and the above estimate let us conclude
  \begin{align}
    μ(F_2) \lesssim \frac{\norm{A'}_2^2}{α^2}
     \lesssim \frac{1}{α} \norm{T^{∨} 1}_{3/2} + α^2 
     \lesssim α^{1/3} + \frac{\sqrt{λ_2}}{α} + α^2.
  \end{align}
\end{proof}

\begin{proof}[Proof of \cref{thm:main-lemma}]
  The graph $\gamma$ has been constructed in \cref{sec:graph-construction}.  We recall the final bookkeeping.  The sets $Z$, $F_1$, and $F_2$ form a partition of the set $F$ appearing in \cref{thm:main-lemma}.  Moreover, by \cref{thm:graph-properties},
  \begin{align}
    Z\subset Z_0\subset \gamma .
  \end{align}
  Hence
  \begin{align}
    \mu(B_0\cap F\cap \gamma)
      &\ge \mu(B_0\cap Z) \\
      &= \mu(B_0\cap F)-\mu(B_0\cap F_1)-\mu(B_0\cap F_2) \\
      &\ge \mu(B_0)-\mu(B_0\setminus F)-\mu(F_1)-\mu(F_2).
  \end{align}
  Since $B_0\subset 10B_0$, the first assumption of \cref{thm:main-lemma} gives
  \begin{align}
    \mu(B_0\setminus F)\le \mu(10B_0\setminus F)\le \eta .
  \end{align}
  Also $\delta_\mu(B_0)=1$, so $\mu(B_0)=1$.

  \Cref{thm:f1-small} gives $\mu(F_1)\le C\delta$, where $\delta$ is the lower density threshold used to define good balls.
  We choose $\delta$ small enough that this contribution is less than, say, $1/8$.
  \Cref{thm:f2-small} gives the estimate
  \begin{align}
    \mu(F_2)\lesssim \alpha^{1/3}+\frac{\sqrt{\lambda_2}}{\alpha}.
  \end{align}
  After $\delta$ is fixed, we choose $\alpha$ sufficiently small, and then $\lambda_2$ sufficiently small in terms of $\alpha$, so that this contribution is also less than $1/8$.  The parameters $\varepsilon$ and $\tau$ are chosen according to the hierarchy in \cref{sec:graph-construction}, and $\lambda_1$ is chosen small enough for the graph construction and comparison estimates above.  Finally, we choose $\eta$ small enough, in terms of the localized USFE and tail constants in \cref{thm:main-lemma}, so that $\eta<1/8$.

  With these choices,
  \begin{align}
    \mu(B_0\cap F\cap \gamma)
      \ge 1-\eta-\mu(F_1)-\mu(F_2)
      \ge \frac12 .
  \end{align}
  Since $\mu(B_0)=1$, this is the desired conclusion, with for instance $c_0=1/2$ after fixing the preceding parameters.  This proves \cref{thm:main-lemma}.
\end{proof}

\section{Blow-up}\label{sec:blow-up}
The strategy to prove \cref{thm:local-nta-rectifiable} is to use a localized blow-up analysis to show that the assumptions imply the weak-flatness property, which brings us into the setting of \cref{thm:main-theorem}.
We first record the convergence facts for sets and measures used in the compactness argument.
In \cref{sec:main-lemma-local-nta} we prove the localized blow-up lemma, reducing the obstruction to weak flatness to the Cauchy-flat description in \cref{sec:cauchy-flat}.
Finally, we apply the localized lemma in the proof of \cref{thm:local-nta-rectifiable} in \cref{sec:proof-theorem-local-nta}.

\subsection{Convergence of sets}\label{sec:sets}
For non-empty sets $A,B\subset\RR^2$ and $R>0$, we define
\begin{align}
    \excess(A,B) = \sup_{x\in A} \dist(x,B), \quad \excess(\emptyset, B) = 0,
\end{align}
and
\begin{align}
    d_R(A,B) = \max\{ \excess(A \cap \overline{B(0,R)}, B),  \excess(B \cap \overline{B(0,R)}, A) \}.
\end{align}
This leads us to the following definition of local convergence of sets.
\begin{definition}[Local convergence]
    A sequence of non-empty sets $A_j$ \emph{converges locally} to a non-empty set $A$ (we write $A_j\to A$ locally) if, for every $R>0$,
    \begin{align}
        \lim_{j\to ∞} d_R(A_j,A) = 0.
    \end{align}
\end{definition}
In variational analysis this convergence corresponds to the Attouch-Wets topology, which is usually defined by identifying non-empty closed sets with their distance functional.
\begin{lemma}[{\Cite[Theorem 3.1.7]{beerTopologiesClosedClosed1993}}]\label{thm:convergence-of-dist-local}
    Let $F_k,F\subset\RR^2$ be non-empty closed sets.
    Then $F_k\to F$ locally if and only if
    $\dist(\cdot,F_k) \to \dist (\cdot,F)$ uniformly on compact sets.
\end{lemma}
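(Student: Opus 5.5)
We prove the two directions of the equivalence separately, using only the elementary inequality that $\dist(\cdot,A)$ is $1$-Lipschitz for every non-empty $A$, together with the identity $\dist(y,A)\le \dist(y,B)+\excess(B\cap K,A)$ whenever the point of $B$ nearly realizing $\dist(y,B)$ lies in a compact set $K$.

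\emph{Local convergence implies uniform convergence of distances.} Fix a compact set $K\subset\overline{B(0,R_0)}$ and a point $p\in F$. For $y\in K$ we have $\dist(y,F)\le R_0+|p|=:\rho$. Since $F_k\to F$ locally, we have $\dist(p,F_k)\to0$, so for large $k$ also $\dist(y,F_k)\le\rho+1$. Set $R:=R_0+\rho+2$. A near-minimizer $x\in F_k$ for $\dist(y,F_k)$ then lies in $\overline{B(0,R)}$, and therefore
\begin{align}
  \dist(y,F)\le |y-x|+\dist(x,F)\le \dist(y,F_k)+\excess(F_k\cap\overline{B(0,R)},F)+o(1).
\end{align}
Exchanging the roles of $F$ and $F_k$ gives the reverse inequality. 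Hence
\begin{align}
  \sup_K|\dist(\cdot,F_k)-\dist(\cdot,F)|\le d_R(F_k,F)\to0 .
\end{align}
The only delicate point in this direction is the a priori localization: near-minimizers must stay in a fixed ball uniformly in $k$. This is exactly what the step $\dist(p,F_k)\to0$ secures.

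\emph{Uniform convergence of distances implies local convergence.} Fix $R>0$ and let $K=\overline{B(0,R)}$. For $x\in F_k\cap K$ we have $\dist(x,F)=\dist(x,F)-\dist(x,F_k)\le\sup_K|\dist(\cdot,F)-\dist(\cdot,F_k)|$. Similarly, for $x\in F\cap K$ we have $\dist(x,F_k)\le\sup_K|\dist(\cdot,F_k)-\dist(\cdot,F)|$. Together these give
\begin{align}
  d_R(F_k,F)\le\sup_K|\dist(\cdot,F_k)-\dist(\cdot,F)|\to0 .
\end{align}

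Closedness is not needed for the inequalities themselves. It enters only in that distance functions determine closed sets, so that the limit $F$ is unique in both formulations. We expect no real obstacle; the argument is the standard one from \cite[Theorem 3.1.7]{beerTopologiesClosedClosed1993}, which could also simply be cited.
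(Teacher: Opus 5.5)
Your proof is correct and complete. The paper gives no argument of its own for this lemma. It imports the statement from \cite[Theorem 3.1.7]{beerTopologiesClosedClosed1993}, where it belongs to the general metric-space theory of the Attouch--Wets topology (uniform convergence of distance functionals on bounded sets, which in $\RR^2$ is the same as on compact sets).

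Your direct, self-contained verification gives slightly more than the qualitative equivalence, namely the two explicit comparisons
\[
  d_R(F_k,F)\le \sup_{\overline{B(0,R)}}\abs{\dist(\cdot,F_k)-\dist(\cdot,F)}
  \quad\text{and}\quad
  \sup_K\abs{\dist(\cdot,F_k)-\dist(\cdot,F)}\le d_{R}(F_k,F).
\]
The second holds with $R=R(K,p)$ independent of $k$, for all $k$ with $\dist(p,F_k)\le 1$.

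You identified the only non-trivial point correctly: near-minimizers of $\dist(y,F_k)$ must stay in a ball that does not depend on $k$. You secure this through $\dist(p,F_k)\le d_{|p|+1}(F_k,F)\to 0$ for a fixed $p\in F$. For near-minimizers in $F$ the localization is automatic, since $\dist(y,F)\le\rho$. Since the sets are closed in $\RR^2$, you could also take exact minimizers and drop the $o(1)$ slack.

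The citation buys brevity and the general framework. Your version is elementary and makes transparent the direction the paper actually uses, namely local convergence implying convergence of distance functions. This is used in \cref{thm:local-nta-blowup-limit} to keep $p,q$ at distance $\ge\eta t/2$ from $\Gamma_k$, and in \cref{thm:blow-up-lemma} for $\dist(z,\Gamma_k)\to\dist(z,\Gamma_\infty)$.
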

Together with the Arzelà-Ascoli theorem, this identification and characterization directly yields the following compactness result.
\begin{lemma}\label{thm:local-conv-sequential-compactness}
    Let $K$ be a compact set and $F_k\subset\RR^2$ be a sequence of closed sets with $F_k\cap K\neq\emptyset$.
    Then there exists a subsequence $F_{k_j}$ that converges locally to a closed set $F$ (with $F\cap K\neq\emptyset$).
\end{lemma}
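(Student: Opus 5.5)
The plan is to pass to distance functions and use \cref{thm:convergence-of-dist-local} together with Arzelà--Ascoli. For each $k$ set $u_k:=\dist(\cdot,F_k)$. Each $u_k$ is $1$-Lipschitz on $\RR^2$. Since $F_k\cap K\neq\emptyset$, we have $u_k(x)\le \dist(x,K)+\operatorname{diam}K$ for every $x$, so on each closed ball $\overline{B(0,R)}$ the family $(u_k)$ is uniformly bounded and equicontinuous. Arzelà--Ascoli on $\overline{B(0,R)}$ for $R=1,2,\dots$, followed by a diagonal argument, gives a subsequence $u_{k_j}$ converging uniformly on compact sets to a $1$-Lipschitz function $u\ge0$.

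The main step is to show that $u=\dist(\cdot,F)$ for the closed set $F:=u^{-1}(0)$. First, $F$ is non-empty and meets $K$. Pick $x_j\in F_{k_j}\cap K$ and pass to a further subsequence with $x_j\to x\in K$. Then
\begin{align}
u(x)=\lim_j u_{k_j}(x)\le \lim_j \paren{u_{k_j}(x_j)+|x-x_j|}=0,
\end{align}
so $x\in F\cap K$.

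Next we prove $u\le\dist(\cdot,F)$. For $y\in F$ and any $z$, the Lipschitz bound gives $u(z)\le u(y)+|z-y|=|z-y|$. Taking the infimum over $y\in F$ yields the claim.

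For the reverse inequality $\dist(z,F)\le u(z)$, fix $z$ and choose $y_j\in F_{k_j}$ with $|z-y_j|\le u_{k_j}(z)+1/j$. These points are bounded, so along a subsequence $y_j\to y$. Then
\begin{align}
u(y)=\lim_j u_{k_j}(y)\le \lim_j |y-y_j|=0,
\end{align}
so $y\in F$, and hence $\dist(z,F)\le |z-y|=\lim_j|z-y_j|\le u(z)$. The subsequence used here depends on $z$, but that is harmless: the full sequence $u_{k_j}(z)$ already converges to $u(z)$, so only the limiting inequality matters. This reverse inequality is the one delicate point. It is precisely the place where limits of distance functions could fail to be distance functions, and it is resolved by the compactness of minimizing points.

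Combining the two inequalities gives $\dist(\cdot,F_{k_j})\to\dist(\cdot,F)$ uniformly on compact sets. The sets $F_{k_j}$ are non-empty since they meet $K$, and $F$ is closed and non-empty, so \cref{thm:convergence-of-dist-local} applies and gives $F_{k_j}\to F$ locally, with $F\cap K\neq\emptyset$ as shown above.
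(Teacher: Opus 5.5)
Your proof is correct and follows the same route as the paper, which obtains this lemma in one line from \cref{thm:convergence-of-dist-local} and the Arzel\`a--Ascoli theorem. Your argument fills in the details the paper leaves implicit, in particular the check that the locally uniform limit $u$ of the distance functions equals $\dist(\cdot,F)$ for $F=u^{-1}(0)$, and that $F$ meets $K$.
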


Except for boundary effects, local convergence also carries over to the respective $β$-numbers.
\begin{lemma}\label{thm:beta-local-convergence}
    Let $F_k,F\subset\RR^2$ be non-empty closed sets such that $F_k\to F$ locally.
    Then for any $η>1$ we have $\limsup_{k\to \infty} β_{F_k}(x,r) \le η β_{F}(x,ηr)$.
\end{lemma}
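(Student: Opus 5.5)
The plan is a direct approximation argument: fix a nearly optimal line for $F$ at the enlarged scale $\eta r$, and use it as a competitor for $F_k$ at scale $r$. Local convergence moves each point of $F_k\cap B(x,3r)$ only slightly. The factor $\eta>1$ is exactly the room needed so that the nearby points of $F$ still lie in the larger ball $B(x,3\eta r)$.

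Fix $x\in\RR^2$, $r>0$ and $\eta>1$, and set $b:=\beta_F(x,\eta r)$. Given $\epsilon'>0$, choose a line $L$ with
\begin{align}
  \sup_{y'\in F\cap B(x,3\eta r)}\dist(y',L)\le (b+\epsilon')\eta r .
\end{align}
Choose $R>0$ so large that $\overline{B(x,3r)}\subset \overline{B(0,R)}$, and put $\delta_k:=d_R(F_k,F)$. By the definition of local convergence, $\delta_k\to0$.

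Now let $y\in F_k\cap B(x,3r)$ (if this set is empty, then $\beta_{F_k}(x,r)=0$ and there is nothing to prove). Since $y\in F_k\cap\overline{B(0,R)}$, we have $\dist(y,F)\le\excess(F_k\cap\overline{B(0,R)},F)\le\delta_k$. Because $F$ is closed, there is $y'\in F$ with $|y-y'|\le\delta_k$. Take $k$ so large that $\delta_k<3(\eta-1)r$. Then $|y'-x|<3r+3(\eta-1)r=3\eta r$, so $y'\in F\cap B(x,3\eta r)$ and consequently
\begin{align}
  \dist(y,L)\le |y-y'|+\dist(y',L)\le \delta_k+(b+\epsilon')\eta r .
\end{align}
Taking the supremum over $y$ and using $L$ as a competitor in the definition of $\beta_{F_k}(x,r)$ gives
\begin{align}
  \beta_{F_k}(x,r)\le \frac{\delta_k}{r}+\eta(b+\epsilon')
\end{align}
for all sufficiently large $k$.

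Letting $k\to\infty$ yields $\limsup_k\beta_{F_k}(x,r)\le\eta(b+\epsilon')$, and then letting $\epsilon'\downarrow0$ gives the claim.

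The only delicate point is the boundary effect: points of $F_k$ near $\partial B(x,3r)$ may approximate points of $F$ just outside $B(x,3r)$. This is precisely why the comparison has to be made with $\beta_F$ at the enlarged radius $\eta r$. The extra factor $\eta$ in front of $\beta_F(x,\eta r)$ comes only from normalizing by $r$ instead of $\eta r$. Note that only the one-sided excess $\excess(F_k\cap\overline{B(0,R)},F)$ is used, not the full two-sided convergence.
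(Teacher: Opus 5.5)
Your proof is correct and follows essentially the same argument as the paper: use local convergence to approximate each point of $F_k\cap B(x,3r)$ by a point of $F$ that still lies in $B(x,3\eta r)$, compare distances to a common line, and normalize by $r$ to pick up the factor $\eta$. The only cosmetic differences are that you fix a near-optimal line for $F$ first instead of taking the infimum over lines at the end, and that you make the smallness condition $\delta_k<3(\eta-1)r$ explicit.
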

\begin{proof}
    Let $η>1$, $ε>0$ and $L$ be a line.
    For all large $k$, the local convergence implies that for every $z\in F_k\cap B(x,3r)$ there exists $y_z\in F$ with $|z-y_z|<ε$.
    Therefore,
    \begin{align}
        \sup_{z\in F_k\cap B(x,3r)} \dist(z,L)
        \le ε + \sup_{y\in F\cap B(x,3ηr)} \dist(y,L),
    \end{align}
    if $ε$ is sufficiently small with respect to $η$.
    Taking the infimum over all lines $L$ and dividing by $r$ gives
    \begin{align}
        β_{F_k}(x,r)
        \le \frac{ε}{r} +\frac{1}{r}\inf_L\sup_{y\in F\cap B(x,3ηr)} \dist(y,L)
        = \frac{ε}{r} + η β_F(x,ηr).
    \end{align}
    Taking $\limsup_{k\to ∞}$ and then $ε\to 0$ concludes the proof.
\end{proof}

\begin{lemma}[Blow-up of a local two-sided NTA chart]\label{thm:local-nta-blowup-limit}
    Let $\Gamma\subset\CC$ be closed and locally two-sided NTA at $x\in\Gamma$.  Let $r_x>0$, $c_x>0$, $C_x<\infty$, and $\Omega_x^\pm$ be a local two-sided NTA chart at $x$ in the sense of \cref{def:local-two-sided-nta}.  Let $r_k\downarrow0$ and set
    \begin{align}
        R_k:=\frac{r_x}{r_k},\qquad
        \Gamma_k:=\frac{\Gamma-x}{r_k},\qquad
        \Omega_k^\pm:=\frac{\Omega_x^\pm-x}{r_k} .
    \end{align}
    Then, after passing to a subsequence, there are a closed set $\Gamma_\infty\subset\CC$ with $0\in\Gamma_\infty$ and disjoint open sets $\Omega_\infty^\pm$ such that
    \begin{enumerate}
        \item $\Gamma_k\to\Gamma_\infty$ locally;
        \item
              \begin{align}
                  \CC\setminus\Gamma_\infty=\Omega_\infty^+\,\dot\cup\,\Omega_\infty^-,
                  \qquad
                  \partial\Omega_\infty^+=\partial\Omega_\infty^-=\Gamma_\infty;
              \end{align}
        \item each $\Omega_\infty^\pm$ is connected and satisfies the corkscrew condition: for some $c_0=c_0(c_x)>0$, for every $y\in\Gamma_\infty$, $t>0$ and fixed sign there are balls
              \begin{align}
                  B(a^\pm,c_0t)\subset \Omega_\infty^\pm\cap B(y,t);
              \end{align}
              moreover, each $\Omega_\infty^\pm$ satisfies the Harnack-chain condition: for every $\eta>0$ there are constants $N_\eta,\rho_\eta>0$, depending only on $c_x$, $C_x$, and $\eta$, such that for every fixed sign, whenever       
               \begin{align}
                  p,q\in\Omega_\infty^\pm\cap B(b,t)
                  \quad\text{and}\quad
                  \dist(p,\Gamma_\infty),\ \dist(q,\Gamma_\infty)\ge \eta t,
              \end{align}
              the points $p$ and $q$ can be joined by a Harnack-chain $B_j = B(a_j,s_j)$, $j=1,...,N$ with
              \(B_j \subset \Omega_∞^\pm \cap B(b,C_x t)\), $s_j \approx \dist(B_j, Γ_∞)$, \(N\le N_\eta\), consecutive balls intersecting, $p\in B_1$ and $q\in B_N$.
        \item compact subsets of the limiting sides eventually remain on the corresponding side: if $K$ is a compact subset of $\Omega_\infty^\pm$, then $K\subset\Omega_k^\pm$ for all sufficiently large $k$.  Also, if $K$ is a compact subset of $\CC\setminus\Gamma_\infty$, then $K\cap\Gamma_k=\emptyset$ for all sufficiently large $k$.
    \end{enumerate}
\end{lemma}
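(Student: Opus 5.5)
We argue by compactness, using \cref{thm:local-conv-sequential-compactness} for the sets and defining the limiting sides directly as open sets. Since $0\in\Gamma_k$ for every $k$, \cref{thm:local-conv-sequential-compactness} with $K=\overline{B(0,1)}$ gives a subsequence with $\Gamma_k\to\Gamma_\infty$ locally, $\Gamma_\infty$ closed and $0\in\Gamma_\infty$. This is (i). By \cref{thm:convergence-of-dist-local}, $\dist(\cdot,\Gamma_k)\to\dist(\cdot,\Gamma_\infty)$ locally uniformly. The second half of (iv) follows at once: if $K\subset\CC\setminus\Gamma_\infty$ is compact, then $\dist(\cdot,\Gamma_\infty)\ge c>0$ on $K$, hence $\dist(\cdot,\Gamma_k)\ge c/2$ on $K$ for large $k$.

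\textbf{Defining the sides.} Let $U=\CC\setminus\Gamma_\infty$ and let $V$ be a connected component of $U$. Every compact connected $K\subset V$ eventually lies in $B(0,R_k)\setminus\Gamma_k=\Omega_k^+\cup\Omega_k^-$. Being connected, it lies in exactly one of the two open sets. Exhausting $V$ by an increasing sequence of compact connected sets and diagonalizing over countably many components, we pass to a further subsequence so that each component is eventually assigned a fixed sign. Let $\Omega_\infty^\pm$ be the union of the components assigned sign $\pm$. These sets are open and disjoint, their union is $U$, and the first half of (iv) holds because any compact $K\subset\Omega_\infty^\pm$ meets only finitely many components. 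The main remaining points are that each $\Omega_\infty^\pm$ is connected and that both boundaries equal $\Gamma_\infty$.

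\textbf{Corkscrews.} Fix $y\in\Gamma_\infty$, $t>0$, and choose $y_k\in\Gamma_k$ with $y_k\to y$. For large $k$ we have $B(y_k,t/2)\subset B(0,R_k)$, so the rescaled corkscrew condition gives balls $B(a_k^\pm,c_xt/2)\subset\Omega_k^\pm\cap B(y_k,t/2)$. Pass to a limit $a_k^\pm\to a^\pm$. Then the ball $B(a^\pm,c_xt/4)$ stays at distance $\gtrsim c_xt$ from $\Gamma_k$, so it avoids $\Gamma_\infty$ by local convergence of distances. It is connected and contained in $\Omega_k^\pm$ for all large $k$, which forces its sign assignment to be $\pm$. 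This gives the corkscrew condition with $c_0=c_x/4$. Since both sides then have corkscrews at every scale around every $y\in\Gamma_\infty$, we get $\Gamma_\infty\subset\partial\Omega_\infty^\pm$. Conversely, $\partial\Omega_\infty^\pm\subset\Gamma_\infty$ because $\Omega_\infty^\pm$ is a union of components of the open set $U$. This proves (ii).

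\textbf{Harnack chains and connectedness.} This is the step I expect to be the main obstacle, since chains must pass to the limit while keeping sides and distances under control. Take $p,q\in\Omega_\infty^+\cap B(b,t)$ with $\dist(\cdot,\Gamma_\infty)\ge\eta t$. For large $k$ they lie in $\Omega_k^+$ by (iv), with $\dist(\cdot,\Gamma_k)\ge\eta t/2$. Apply the rescaled chain condition with $\eta/2$; this is legitimate since $B(b,C_xt)\subset B(0,R_k)$ eventually. We obtain chains of at most $N_{\eta/2}$ balls $B(a_j^k,s_j^k)$ with $s_j^k\approx\dist(B_j^k,\Gamma_k)$. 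Because consecutive balls intersect, the radii are comparable, and the endpoints are at distance about $\eta t$ from $\Gamma_k$, each $s_j^k$ is bounded below by $c(\eta)t$, and above by $C_xt$. Passing to a subsequence with a fixed number $N$ of balls and with $a_j^k\to a_j$, $s_j^k\to s_j$, the limit balls shrunk slightly (say $B(a_j,(1-\theta)s_j)$, enlarged where necessary to keep consecutive intersections and to contain $p$ and $q$) satisfy $s_j\approx\dist(B_j,\Gamma_\infty)$ by \cref{thm:convergence-of-dist-local}. They avoid $\Gamma_\infty$, and they lie in $\Omega_\infty^+$ by the connectedness and sign argument of the previous step. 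Hence $p$ and $q$ are joined by a Harnack chain inside $\Omega_\infty^+\cap B(b,C_xt)$, with constants depending only on $c_x$, $C_x$ and $\eta$.

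Connectedness of $\Omega_\infty^+$ follows: any two of its points are joined by this chain argument, applied with $t$ large and $\eta$ small enough for the given pair. The same argument applies to $\Omega_\infty^-$, which completes (iii).
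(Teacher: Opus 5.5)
Your proposal is correct and follows essentially the same route as the paper: you take a locally convergent subsequence of $\Gamma_k$, fix by a diagonal subsequence the side on which each piece of $\CC\setminus\Gamma_\infty$ eventually lies, and pass corkscrew balls and Harnack chains to the limit, using the margin $\dist(B_j,\Gamma_k)\gtrsim s_j$ to enlarge the limiting balls slightly. The only real difference is bookkeeping: you assign signs to connected components of $\CC\setminus\Gamma_\infty$ through compact connected exhaustions rather than to rational balls, so disjointness of $\Omega_\infty^\pm$ is automatic instead of needing the paper's overlapping-balls argument.
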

\begin{proof}
    We work in the scaled coordinates above.  For every fixed $R>0$ and all large $k$, the local chart identities give
    \begin{align}\label{eq:scaled-local-chart-identity}
        B(0,R)\setminus\Gamma_k
        = (\Omega_k^+\cup\Omega_k^-)\cap B(0,R),
    \end{align}
    and the local corkscrew and Harnack-chain estimates hold in $B(0,R)$ with the same constants, since $R<R_k$ for all large $k$.

    By \cref{thm:local-conv-sequential-compactness}, and since $0\in\Gamma_k$ for every $k$, we may pass to a subsequence so that $\Gamma_k\to\Gamma_\infty$ locally for some closed set $\Gamma_\infty$ with $0\in\Gamma_\infty$.

    We next define the two limiting sides.  Let $\mathcal B$ be the countable family of balls with rational centre and rational radius whose closures are contained in $\CC\setminus\Gamma_\infty$.  If $B\in\mathcal B$, then $\overline B\cap\Gamma_k=\emptyset$ for all large $k$.  For such $k$, \cref{eq:scaled-local-chart-identity} and the connectedness of $B$ imply that either $B\subset\Omega_k^+$ or $B\subset\Omega_k^-$.  Passing to one more diagonal subsequence, we may assume that, for every $B\in\mathcal B$, this sign is eventually constant.  Let $\mathcal B^\pm$ be the subfamilies with eventual sign $\pm$, and set
    \begin{align}
        \Omega_\infty^\pm:=\bigcup_{B\in\mathcal B^\pm} B .
    \end{align}
    Then $\Omega_\infty^\pm$ are open.  They cover $\CC\setminus\Gamma_\infty$, because every point outside $\Gamma_\infty$ is contained in a ball from $\mathcal B$.  They are disjoint: if a ball from $\mathcal B^+$ met a ball from $\mathcal B^-$, a smaller ball from $\mathcal B$ would eventually be contained in both $\Omega_k^+$ and $\Omega_k^-$, which is impossible.  If $K$ is a compact subset of $\Omega_\infty^\pm$, then the finite subcover property gives $K\subset\Omega_k^\pm$ for all sufficiently large $k$.  The assertion for compact subsets of $\CC\setminus\Gamma_\infty$ follows from the local convergence of $\Gamma_k$.

    We record a consequence of the construction that will be used below.
    Suppose that $k_j$ is a subsequence, $a_j\to a$, $s_j\to s>0$, and for a fixed sign $B(a_j,s_j)\subset\Omega_{k_j}^\pm$ for all $j$.  Then, for every $0<\theta<1$, $B(a,\theta s)\subset\Omega_\infty^\pm$.  Indeed, every ball from $\mathcal B$ whose closure is contained in $B(a,\theta s)$ is contained in $B(a_j,s_j)$ for all large $j$, and hence cannot have the opposite eventual sign.

    Let $y\in\Gamma_\infty$ and $t>0$.  Choose $y_k\in\Gamma_k$ with $y_k\to y$.
    Since $B(y_k,t/2)\subset B(0,R)$, we can apply the scaled local corkscrew condition for all large $k$.
    Thus, for each sign, there are balls
    \begin{align}
        B(a_k^\pm,c_x t/2)\subset\Omega_k^\pm\cap B(y_k,t/2).
    \end{align}
    Passing to subsequences of the centres and applying the preceding paragraph gives,
    \begin{align}
        B(a^\pm,c_x t/4)\subset\Omega_\infty^\pm\cap B(y,t).
    \end{align}
    Hence every point of $\Gamma_\infty$ is a boundary point of both sides.  Since $\CC\setminus\Gamma_\infty$ is the disjoint union of the two open sides, no point outside $\Gamma_\infty$ lies in either boundary.  Therefore
    \begin{align}
        \partial\Omega_\infty^+=\partial\Omega_\infty^-=\Gamma_\infty .
    \end{align}

    Finally we pass the Harnack-chain condition to the limit.  Fix a sign and let $p,q\in\Omega_\infty^\pm$ lie in a ball $B(b,t)$ with
    \begin{align}
        \dist(p,\Gamma_\infty),\ \dist(q,\Gamma_\infty)\ge \eta t
    \end{align}
    for some $\eta>0$.  By the compact-subset assertion above and \cref{thm:convergence-of-dist-local}, for all large $k$ the points $p,q$ lie in $\Omega_k^\pm$ and are at distance at least $\eta t/2$ from $\Gamma_k$.  The local Harnack-chain condition gives constants $N_\eta,\rho_\eta,\tau_\eta>0$, depending only on $x$ and $\eta$, and, for all large $k$ such that $B(b,(1+τ_η)C_x t)\subset B(0,R_k)$, a chain of balls
    \begin{align}
        B_{k,j}=B(a_{k,j},s_{k,j}),\qquad j=1,\ldots,N_k,
    \end{align}
    joining $p$ to $q$, with $N_k\le N_\eta$, $s_{k,j}\ge\rho_\eta t$, consecutive balls intersecting, and
    \begin{align}
        B(a_{k,j},(1+\tau_\eta)s_{k,j})\subset\Omega_k^\pm\cap B(b,(1+τ_η)C_x t).
    \end{align}
    Passing to a further subsequence, we may suppose that $N_k=N$, $a_{k,j}\to a_j$, and $s_{k,j}\to s_j$ for each $j$.  Applying the observation above to the enlarged balls shows that
    \begin{align}
        B_j^*:=B(a_j,(1+\tau_\eta/2)s_j)\subset\Omega_\infty^\pm .
    \end{align}
    Since the original consecutive balls intersect, $|a_j-a_{j+1}|\le s_j+s_{j+1}$; hence the balls $B_j^*$ intersect consecutively.  Also $p\in B_1^*$ and $q\in B_N^*$.  Thus the balls $B_j^*$ form a Harnack-chain in $\Omega_\infty^\pm$ joining $p$ to $q$.  This proves the limiting Harnack-chain condition,
    and, in particular each $Ω_∞^{\pm}$ is connected.
\end{proof}

\subsection{Convergence of measures}\label{sec:measures}~
\begin{definition}
    Let $(μ_k)_k$ be a sequence of (Radon) measures.
    We say that \emph{$μ_k$ converges to $μ$ weakly} (we write $μ_k\weaklyto μ$) if
    \begin{align}
        \lim_{k\to \infty} \int f \dl {μ_k} = \int f \dl {μ} ,\qquad\forall f\in C_c(\RR^2),
    \end{align}
    where $C_c(\RR^2)$ denotes the continuous functions with compact support.
\end{definition}
\begin{lemma}\label{thm:weak-limit-squared-conv}
    Let $(ρ_k)_k$ be a sequence of positive Radon measures with $ρ_k\weaklyto ρ$, and let $z\in \CC\setminus\supp ρ$.
    Assume that there exists $r_z>0$ such that $B(z,r_z)\cap\supp ρ_k=\emptyset$ for all sufficiently large $k$.
    Assume moreover that
    \begin{align}\label{eq:kernel-tail-tightness}
        \lim_{N\to\infty}\limsup_{k\to\infty}
        \int_{\CC\setminus B(0,N)} \frac{1}{1+|ξ|^2}\dl{ρ_k(ξ)}=0,
    \end{align}
    and that the same tail integral is finite for $ρ$.
    Then
    \begin{align}
        \int \frac{1}{(z-ξ)^2} \dl{ρ_k(ξ)} \to \int \frac{1}{(z-ξ)^2} \dl{ρ(ξ)}.
    \end{align}
\end{lemma}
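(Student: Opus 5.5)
Fix $z\in\CC\setminus\supp\rho$ and write $r:=\min\{r_z,\dist(z,\supp\rho)\}/2>0$. The plan is to split the kernel $\xi\mapsto (z-\xi)^{-2}$ into a compactly supported continuous piece and a tail piece. We then pass to the limit in the first using weak convergence. The second is made uniformly small using the tightness hypothesis \cref{eq:kernel-tail-tightness}. The singularity at $\xi=z$ is harmless: by hypothesis neither $\rho_k$ (for large $k$) nor $\rho$ charges $B(z,r)$, so the kernel may be replaced by a continuous function that agrees with it off $B(z,r)$.

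\textbf{Step 1 (removing the singularity).} Choose $\psi\in C(\CC)$ with $\psi(\xi)=(z-\xi)^{-2}$ for $|\xi-z|\ge r$ and $|\psi|\le r^{-2}$. For all large $k$ we have $\supp\rho_k\cap B(z,r)=\emptyset$, and likewise $\supp\rho\cap B(z,r)=\emptyset$. Hence $\int (z-\xi)^{-2}\,d\rho_k=\int\psi\,d\rho_k$ and $\int (z-\xi)^{-2}\,d\rho=\int\psi\,d\rho$.

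\textbf{Step 2 (cutoff).} For $N>2|z|+1$ let $\chi_N\in C_c(\CC)$ satisfy $0\le\chi_N\le1$, $\chi_N=1$ on $B(0,N)$ and $\supp\chi_N\subset B(0,2N)$. Then $\psi\chi_N\in C_c(\CC)$, so weak convergence gives $\int\psi\chi_N\,d\rho_k\to\int\psi\chi_N\,d\rho$. For the remainder, if $|\xi|\ge N\ge 2|z|+1$ then $|z-\xi|\ge|\xi|/2$, which gives the key comparison $|\psi(\xi)(1-\chi_N(\xi))|\lesssim (1+|\xi|^2)^{-1}\indicator{\CC\setminus B(0,N)}(\xi)$. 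Therefore
\begin{align}
\limsup_{k\to\infty}\abs[\Big]{\int\psi(1-\chi_N)\,d\rho_k}\lesssim\limsup_{k\to\infty}\int_{\CC\setminus B(0,N)}\frac{d\rho_k(\xi)}{1+|\xi|^2},
\end{align}
and this tends to $0$ as $N\to\infty$ by \cref{eq:kernel-tail-tightness}. The same bound for $\rho$ tends to $0$ by dominated convergence, since the tail integral is finite for $\rho$.

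\textbf{Step 3 (conclusion).} Combining the two pieces,
\begin{align}
\limsup_{k\to\infty}\abs[\Big]{\int\psi\,d\rho_k-\int\psi\,d\rho}\lesssim\limsup_{k\to\infty}\int_{\CC\setminus B(0,N)}\frac{d\rho_k}{1+|\xi|^2}+\int_{\CC\setminus B(0,N)}\frac{d\rho}{1+|\xi|^2}.
\end{align}
Letting $N\to\infty$ and using Step 1 completes the argument.

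The only point needing care is Step 1. Weak convergence tests only continuous compactly supported functions, and $\supp\rho_k$ could approach $z$ without the hypothesis $B(z,r_z)\cap\supp\rho_k=\emptyset$; that hypothesis is exactly what allows replacing the singular kernel by $\psi$. Everything else is routine.
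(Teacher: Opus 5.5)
Your proof is correct and follows essentially the same route as the paper. Both arguments replace the kernel by a continuous function that agrees with it off a small ball around $z$ (the paper multiplies by a cutoff $\chi$ vanishing on $B(z,r_z/4)$, you build $\psi$ directly), truncate with a compactly supported cutoff to invoke weak convergence, and control the remainder uniformly through the tightness hypothesis. One small detail: your tail bound $|\psi(1-\chi_N)|\lesssim (1+|\xi|^2)^{-1}$ uses that $\psi$ equals the kernel on $\{|\xi|\ge N\}$, which holds once $N\ge 2r$ and is harmless since $N\to\infty$.
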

\begin{proof}
    Let $χ\in C(\CC)$ be equal to $0$ on $B(z,r_z/4)$ and equal to $1$ on $\CC\setminus B(z,r_z/2)$.
    For $N>2|z|+2$, choose $φ_N\in C_c(\CC)$ with $φ_N=1$ on $B(0,N)$ and $φ_N=0$ outside $B(0,2N)$.
    Then
    \begin{align}
        ξ\mapsto \frac{χ(ξ)φ_N(ξ)}{(z-ξ)^2}
    \end{align}
    is continuous and compactly supported, so the corresponding integrals against $ρ_k$ converge to the integral against $ρ$.
    The tails are uniform in $k$ by \cref{eq:kernel-tail-tightness}; indeed, for $|ξ|>N>2|z|+2$ one has $|z-ξ|\gtrsim 1 +|ξ|$.
    Letting first $k\to\infty$ and then $N\to\infty$ proves the claim.
\end{proof}

\begin{definition}
    Let $μ$ be a measure. We call $ν$ a \emph{tangent measure} of $μ$ (at $x$), if there exists a sequence of positive numbers $r_i\to 0$ such that
    \begin{align}
        \frac{μ(x+r_i\cdot)}{r_i} \weaklyto ν .
    \end{align}
\end{definition}

\subsection{Cauchy-flat measures with finitely many components}
\label{sec:cauchy-flat}
Cauchy-flat measures were introduced by David and Semmes in their study of the usual square function and uniform rectifiability (cf. \cite[Definition III.2.30]{davidAnalysisUniformlyRectifiable1993}).
\begin{definition}\label{def:cauchy-flat}
    We say that a positive Radon measure $μ$ with at most linear growth is \emph{Cauchy-flat} if
    \begin{align}
        \int \frac{1}{(ζ-ξ)^2} \dl{μ(ξ)} = 0 ,\qquad\forall ζ\in\CC\setminus \supp μ.
    \end{align}
\end{definition}
These measures will arise as \enquote{extremal measures} for USFE during the blow-up procedure in \cref{sec:main-lemma-local-nta}.
\begin{proposition}\label{thm:cauchy-flat-finite-components}
    Let $ν$ be a non-trivial positive Radon measure on $\CC$ with
    \begin{itemize}
        \item at most linear growth,
        \item $\supp ν$ having empty interior, and
        \item $\CC\setminus \supp ν$ consisting of finitely many connected components.
    \end{itemize}
    If $ν$ is Cauchy-flat, then $\CC\setminus \supp ν$ consists of at least two distinct connected components and each component is a convex, possibly unbounded, polygonal domain.
    Furthermore, $ν$ is a finite positive linear combination of one-dimensional Hausdorff measure restricted to the sides of these polygonal domains.
\end{proposition}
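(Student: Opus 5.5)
On each component $U$ of $\CC\setminus\supp\nu$ I would work with the Cauchy transform $F(\zeta):=\int \frac{d\nu(\xi)}{\zeta-\xi}$. Linear growth makes this converge after subtracting a constant, for instance using the kernel $\frac{1}{\zeta-\xi}+\frac{1}{\xi}\indicator{|\xi|>1}$. Its derivative is $-\int(\zeta-\xi)^{-2}d\nu$. Cauchy flatness therefore says exactly that $F$ is constant on each component, $F\equiv c_U$ on $U$. The distributional identity $\bar\partial F=\pi\nu$ (up to normalisation) then ties the measure to the jumps of these constants: $\nu$ is recovered from how $F$ changes across $\supp\nu$.

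The plan has four steps.

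First, I would show that there are at least two components. If there were only one, $F$ would equal a single constant on $\CC\setminus\supp\nu$, which has full Lebesgue measure because $\supp\nu$ has empty interior. I would take this to imply $F$ is a.e. constant, hence $\bar\partial F=0$ and $\nu=0$, contradicting non-triviality. The measure-zero point is delicate: empty interior does not give $m_2(\supp\nu)=0$. I would handle it with linear growth, which gives $F\in L^1_{loc}$, together with a Lebesgue-differentiation argument at points of $\supp\nu$. Alternatively, I would restrict to the boundaries actually used below.

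Second, I would establish convexity. On a component $U$, consider the real-valued function $\phi(\zeta):=\Re\bigl(\overline{\omega}(F(\zeta)-c_U)\bigr)$ for unit vectors $\omega$. Writing $F(\zeta)-c_U=\int\bigl(\frac1{\zeta-\xi}-\frac1{\zeta_0-\xi}\bigr)d\nu$ for a fixed $\zeta_0\in U$ shows that $F$ being constant on $U$ amounts to an identity for the potential $\int\log|\zeta-\xi|\,d\nu$. Its gradient is $\overline{F}$, so this potential is affine on $U$, say $u|_U=\ell_U$. Since $u$ is subharmonic on $\CC$ with Riesz measure $2\pi\nu$, we have $u\ge$ any affine function agreeing with it on an open set, locally on the relevant side. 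The set $\{u=\ell_U\}$ is then a convex-like region, and $U$ should be the interior of $\{u\le \ell_U\}$ intersected with the correct region. More concretely, I would use $u=\max$-type structure: on $\overline{\bigcup U}=\CC$ (a.e.), $u$ coincides piecewise with finitely many affine functions $\ell_{U_1},\dots,\ell_{U_N}$. A continuous subharmonic function that is piecewise affine with finitely many pieces should be locally a maximum of those affine pieces. Each region $\{u=\ell_{U_j}\}=\{\ell_{U_j}\ge \ell_{U_i}\ \forall i\}$ is then an intersection of half-planes, so it is a convex polygonal domain, possibly unbounded.

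Third, I would identify the measure. The Laplacian of $\max_j\ell_j$ is computed explicitly: across a common side of $U_i$ and $U_j$ it equals $|\nabla\ell_i-\nabla\ell_j|\,\hd1$ restricted to that side. Hence $\nu=\frac1{2\pi}\sum_{i<j}|\nabla\ell_i-\nabla\ell_j|\,\hd1|_{\partial U_i\cap\partial U_j}$, a finite positive combination of arclength measure on the sides, and this is compatible with linear growth.

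I expect the main obstacle to be justifying that $u$ is globally a finite piecewise-affine function equal to $\max_j\ell_j$ (locally). This requires showing that $u$ is continuous and that $\supp\nu$ has no extra mass invisible to the components: a priori $\supp\nu$ might have positive area or carry mass on a fractal set. I would first try the following route. Finitely many components plus corkscrew-free reasoning gives $v:=u-\max_j\ell_j$, which is $\le 0$ on $\bigcup U_j$ and harmonic-free. I would show that $v$ is subharmonic-minus-convex and vanishes on a dense open set. Then, using upper semicontinuity of $u$ together with the sub-mean-value property, I would show $u\ge\max_j\ell_j$ everywhere and conversely $u\le\max_j\ell_j$, the latter via the fine topology or via linear growth bounding the potential's oscillation. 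If that fails, I would fall back on the paper's intended setting, where the components are corkscrew domains, and use the Harnack chains to control boundary behaviour.
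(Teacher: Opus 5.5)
\textbf{There is a genuine gap.} Your overall architecture matches the paper's:
\begin{itemize}
\item the renormalized logarithmic potential $u$ is affine on each complementary component, since $2\partial u=F_\nu$ is locally constant there;
\item $\Delta u=2\pi\nu$;
\item $u=\max_j\lambda_j$;
\item $\nu$ is read off from the slope jumps across edges.
\end{itemize}
But the step you flag as the main obstacle, that $u$ is the maximum of its finitely many affine pieces, is essentially the whole content of the proposition. None of the routes you sketch establishes it.

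\emph{Why your routes fail.} The principle ``$u\ge$ any affine function agreeing with it on an open set'' is false for general subharmonic functions. For instance, let $v=0$ on the unit disc and $v=\log r+\tfrac14(r^2-r^{-2})\cos2\theta$ outside. Then $v$ is continuous and subharmonic, since its normal-derivative jump across the circle is $1+\cos2\theta\ge0$, yet $v(3i)<0$. That principle only becomes available once convexity of $u$ is known. Your $v=u-\max_j\ell_j$ ``vanishing on a dense open set'' presupposes that $\ell_j$ is already the largest piece on $U_j$, which is what must be shown. The fine-topology and corkscrew/Harnack fallbacks are not arguments under the stated hypotheses. Knowing only that $u$ equals one of finitely many affine functions on each of finitely many components, whose boundaries are a priori arbitrary (even fractal or of positive area), gives you no local model in which to compute anything.

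\emph{The missing idea.} First confine $E=\operatorname{supp}\nu$ to a finite line arrangement: $E\subset\bigcup_{\alpha\neq\beta}\{\lambda_\alpha=\lambda_\beta\}$. The argument runs as follows.
\begin{itemize}
\item Take $x\in E$ off all equality lines and a ball $B$ around $x$ missing them. The pieces occurring in $B$ are totally ordered there; let $\lambda_*$ be the largest.
\item Then $u-\lambda_*$ is subharmonic on $B$ and $\le0$ on the dense set $B\setminus E$, hence $\le0$ on $B$. It also vanishes on an open subset of $B$.
\item The strong maximum principle gives $u\equiv\lambda_*$ on $B$, so $\nu(B)=0$, contradicting $x\in E$.
\end{itemize}

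This uses continuity of $u$ essentially, and you need to prove it rather than assume it. Linear growth gives $|u(z)-u(w)|\lesssim_K|z-w|(1+\log_+|z-w|^{-1})$ on compact sets $K$. Continuity also settles your Step~1 at once: with one component, $u=\lambda_1$ on a dense set, hence everywhere, so $\nu=0$. Your $F\in L^1_{\mathrm{loc}}$ plus Lebesgue-differentiation idea does not by itself show $F$ is a.e.\ constant on a support of positive area.

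Once $E$ lies on finitely many lines, the rest follows:
\begin{itemize}
\item Near each open edge $F$, $u=\lambda^-+\kappa_F(d_F)_+$ with $\kappa_F\ge0$, by positivity of $\nu$.
\item Along segments avoiding the vertices, $u$ is piecewise affine with nondecreasing slopes. By continuity of $u$, it is convex on all of $\CC$.
\item Only now does $u\ge\lambda_\alpha$ for every $\alpha$ follow, hence $u=\max_\alpha\lambda_\alpha$.
\end{itemize}

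Finally, you should verify that each component equals $\{\lambda_\alpha>\lambda_\beta\ \forall\beta\ne\alpha\}$, so that distinct components carry distinct affine pieces. Your Step~2 identifies components with pieces without justification.
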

The proof is given in \cref{app:proof-cauchy-flat-finite-components}.

We remark that the converse of \cref{thm:cauchy-flat-finite-components} is not true.
Consider the measure
\begin{align}
    μ:= μ_1 + μ_2 := c_1\hd1\restriction_\RR + c_2 \hd 1\restriction_{\{\Re z = 0,\,\Im z \ge 0\}}
\end{align}
for $c_1,c_2>0$.
It is not difficult to see that $μ_1$ is Cauchy-flat while $μ_2$ is not.

\subsection{Localized blow-up lemma}\label{sec:main-lemma-local-nta}
Our goal is to derive weak flatness from a pointwise blow-up argument.  The next lemma is the form in which the blow-up is used in the proof of \cref{thm:local-nta-rectifiable}.

\begin{lemma}\label{thm:blow-up-lemma}
    Let $μ$ be a Radon measure, set $E:=\supp μ$, and assume that $E\subset Γ$.  Fix $C,ε,δ>0$.
    There does not exist a Borel set $H\subset E$ with $μ(H)<\infty$, a point $x\in H$ at which $Γ$ is locally two-sided NTA, and scales $r_k\downarrow0$ such that
    \begin{enumerate}
        \item there is $r_H>0$ such that $μ(B(y,r))\le Cr$ for all $y\in H$ and all $0<r<r_H$;
        \item for every $R>0$,
              \begin{align}\label{eq:localized-density-point}
                  μ(B(x,Rr_k)\setminus H)=o(r_k),
                  \quad\text{ i.e. } \frac{μ(B(x,Rr_k)\setminus H)}{r_k} \to 0;
              \end{align}
        \item it holds that
              \begin{align}\label{eq:localized-bad-scale}
                  β_E(x,50r_k)\ge ε
                  \quad\text{and}\quad
                  μ(B(x,50r_k))\ge δ r_k;
              \end{align}
        \item for every $R>0$,
              \begin{align}\label{eq:localized-energy-small}
                  \frac{1}{r_k}
                  \int_{z\in B(x,Rr_k)\setminus E}
                  \abs[\Big]{\int_H \frac{1}{(z-ξ)^2}\dl{μ(ξ)}}^2 \dist(z,E)\dl{m_2(z)}
                  \to 0 .
              \end{align}
    \end{enumerate}
\end{lemma}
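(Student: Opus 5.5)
We argue by contradiction: suppose $H$, $x$ and $r_k$ exist. We blow up at $x$ along the scales $r_k$. Set $\mu_k:=\frac{(T_{x,r_k})_\#(\mu\restriction_H)}{r_k}$ with $T_{x,r_k}(y)=(y-x)/r_k$, and $E_k:=(E-x)/r_k$, $\Gamma_k:=(\Gamma-x)/r_k$.

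\textbf{Compactness.} Since $x\in H$, hypothesis (i) gives $\mu_k(B(0,R))\le CR$ for all $R<r_H/r_k$. For a general ball $B(y,\rho)\subset B(0,R)$ that meets $\supp\mu_k$, we pick a point of $H$ nearby and get $\mu_k(B(y,\rho))\le 2C\rho$. Hence a subsequence converges weakly to a measure $\nu$ with linear growth. By (ii) and (iii), $\nu(\overline{B(0,50)})\ge\limsup\mu_k(B(0,50))\ge\delta$, so $\nu\neq0$.

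By \cref{thm:local-nta-blowup-limit} we may also assume $\Gamma_k\to\Gamma_\infty$ locally, with two-sided corkscrew and Harnack sides $\Omega_\infty^\pm$. By \cref{thm:local-conv-sequential-compactness} we may assume $E_k\to E_\infty$ locally, where $E_\infty\subset\Gamma_\infty$ is closed. Clearly $\supp\nu\subset E_\infty$. The reverse inclusion $E_\infty\subset\supp\nu$ follows from (ii) together with the linear-growth bounds: points of $E$ near $x$ at scale $r_k$ that are not close to $H$-mass would carry mass $o(r_k)$. I expect this point to be delicate, since $E$ may contain $\mu$-small pieces. If it fails, I would settle for $\supp\nu\subset E_\infty$ and carry $\beta$ through $E_\infty$.

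By \cref{thm:beta-local-convergence}, $\beta_{E_\infty}(0,50\eta)\ge\varepsilon/\eta$ for every $\eta>1$.

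\textbf{Cauchy flatness.} Let $z\notin E_\infty$ and fix a small ball $B=B(z,\rho)$ with $\overline{B(z,2\rho)}\cap E_\infty=\emptyset$. Then $\dist(\cdot,E_k)\approx\rho$ on $B$ for large $k$. Rescaling (iv) with $R$ large gives
\begin{align}
\int_B\abs[\Big]{\int\frac{1}{(w-\xi)^2}\dl{\mu_k(\xi)}}^2\dl{m_2(w)}\to0.
\end{align}
To pass to the limit we use \cref{thm:weak-limit-squared-conv}. The tail tightness follows from the linear growth of $\mu_k$ up to scale $r_H/r_k$, plus $\mu(H)<\infty$ for the far part, which contributes $\lesssim r_k\mu(H)$ after rescaling. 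This gives pointwise convergence on $B$. Together with uniform bounds and dominated convergence, we obtain $\int(w-\xi)^{-2}\dl\nu(\xi)=0$ on $B$, hence on all of $\CC\setminus E_\infty\supset\CC\setminus\supp\nu$ locally. So $\nu$ is Cauchy-flat.

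\textbf{Structure and contradiction.} $\Gamma_\infty$ has empty interior, since it equals the common boundary of two open sides. The complement of $\supp\nu$ is $\Omega_\infty^+\cup\Omega_\infty^-$ together with the open set $\Gamma_\infty\setminus\supp\nu$. Here is the main obstacle: we must show that $\CC\setminus\supp\nu$ has finitely many components. If $\supp\nu\subsetneq\Gamma_\infty$, then $\Omega^+_\infty$ and $\Omega^-_\infty$ merge through the gaps into one component. Then \cref{thm:cauchy-flat-finite-components} yields a contradiction, because it requires at least two components. Otherwise $\supp\nu=\Gamma_\infty$, and there are exactly two components, $\Omega_\infty^\pm$. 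I would first try to prove finiteness by using the corkscrew condition to count components.

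Applying \cref{thm:cauchy-flat-finite-components}, both components are convex polygonal domains whose common boundary is $\Gamma_\infty$. Two disjoint convex open sets with the same boundary and covering the complement of that boundary must be the two half-planes of a line. So $\Gamma_\infty$ is a line, and $E_\infty=\supp\nu$ is that line. Hence $\beta_{E_\infty}\equiv0$, which contradicts $\beta_{E_\infty}(0,50\eta)\ge\varepsilon/\eta>0$.
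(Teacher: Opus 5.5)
Your overall route is the paper's: blow up along $r_k$, extract $\nu\neq 0$ with linear growth, prove Cauchy-flatness from the rescaled (iv) via \cref{thm:weak-limit-squared-conv}, and let \cref{thm:cauchy-flat-finite-components} force $\Gamma_\infty$ to be a line. There is, however, one real hole, in the Cauchy-flatness step. You show that $\int (w-\xi)^{-2}\,d\nu(\xi)=0$ for $w\notin E_\infty$ and then write $\CC\setminus E_\infty\supset\CC\setminus\supp\nu$. Since you only have $\supp\nu\subset E_\infty$, the inclusion goes the other way. So the points of $E_\infty\setminus\supp\nu$ are not covered, and this is exactly the set you suspected might be non-empty. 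Without vanishing there, \cref{def:cauchy-flat} is not verified, and \cref{thm:cauchy-flat-finite-components} cannot be invoked, in particular not in your Case~1.

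The missing step is short, and the paper supplies it. The Cauchy transform of $\nu$ is holomorphic on $\CC\setminus\supp\nu$. It vanishes on $\CC\setminus\Gamma_\infty\subset\CC\setminus E_\infty$. That set is dense because $\Gamma_\infty$ has empty interior: every ball centred on $\Gamma_\infty$ contains corkscrew balls of both limiting sides. Hence the transform vanishes on all of $\CC\setminus\supp\nu$, by continuity or by the identity theorem on a small ball around a point of $\Gamma_\infty\setminus\supp\nu$.

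Two further remarks.

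First, the reverse inclusion $E_\infty\subset\supp\nu$ does not follow from (ii). Mass $o(r_k)$ does not stop points of $E\setminus H$ from surviving in the limit set. You also never need it. In the only case that survives, $\supp\nu=\Gamma_\infty\supset E_\infty\supset\supp\nu$, so your plan of carrying $\beta$ through $E_\infty$ works. The paper avoids $E_\infty$ altogether. It uses $\beta_{E_k}(0,50)\le\beta_{\Gamma_k}(0,50)$, which is monotonicity of $\beta$ under inclusion, together with \cref{thm:beta-local-convergence}, to get $\beta_{\Gamma_\infty}(0,51)\ge\varepsilon/2$. It runs Fatou with $\dist(z,E_k)\ge\dist(z,\Gamma_k)\to\dist(z,\Gamma_\infty)$ on $B(0,R)\setminus\Gamma_\infty$. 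This is slightly cleaner.

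Second, what you call the main obstacle, finiteness of the components of $\CC\setminus\supp\nu$, is already settled by your own case split. The sides $\Omega^\pm_\infty$ are connected by \cref{thm:local-nta-blowup-limit}. Every point of $\Gamma_\infty$ lies in $\partial\Omega^+_\infty\cap\partial\Omega^-_\infty$. So a ball around a point of $\Gamma_\infty\setminus\supp\nu$ that misses $\supp\nu$ joins the two sides. This gives exactly one component when $\supp\nu\subsetneq\Gamma_\infty$ and exactly two otherwise. That is precisely the paper's ``at most two components'' argument, and no counting via corkscrews is needed.
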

\begin{proof}
    Assume that such $H,x$ and $r_k$ exist.  Let $r_x>0$ and $Ω_x^\pm$ be the local two-sided NTA chart for $Γ$ at $x$.
    Applying \cref{thm:local-nta-blowup-limit}, we obtain a (sub)sequence, a closed set $Γ_∞$ and open connected limiting sides $Ω_∞^\pm$ such that
    \begin{align}\label{eq:local-nta-blowup-limit-output}
        Γ_k\to Γ_∞ \text{ locally},\qquad
        \CC\setminus Γ_∞=Ω_∞^+\,\dot\cup\,Ω_∞^-,
        \qquad
        \partial Ω_∞^+=\partial Ω_∞^-=Γ_∞,
    \end{align}
    where\footnote{We put a tilde on $\widetilde E_k$ to not confuse it with $\supp μ_k$.}
    \begin{align}
        Γ_k:=\frac{Γ-x}{r_k},\qquad
        Ω_k^\pm:=\frac{Ω_x^\pm-x}{r_k},\qquad
        \widetilde E_k:=\frac{E-x}{r_k},\qquad
        μ_k:=\frac{(μ\restriction H)(x+r_k\cdot)}{r_k} .
    \end{align}
    The limiting sides satisfy the corkscrew condition at all boundary points and scales, with constants depending only on the local NTA constants at $x$, and compact subsets of $Ω_∞^\pm$ are eventually contained in the corresponding $Ω_k^\pm$.

    We first record compactness of the measures.  Let $B(a,s)$ be fixed.  If $H\cap B(x+r_ka,r_ks)$ is nonempty, choose $y_k$ in this intersection.
    We have
    \begin{align}
        μ(H\cap B(x+r_ka,r_ks))\le μ(B(y_k,2r_ks))\le 2C r_ks
    \end{align}
    for all large $k$.  If the intersection is empty, the same bound is trivial.  Thus
    \begin{align}\label{eq:localized-growth-limit}
        \limsup_{k\to\infty} μ_k(B(a,s))\le 2Cs .
    \end{align}
    In particular, the sequence $(μ_k)_k$ is locally bounded.  After passing to a subsequence, we have
    \begin{align}
        μ_k\weaklyto ν, \label{eq:limit-measure-blowup-main-lemma}
    \end{align}
    for a Radon measure $ν$.  By \cref{eq:localized-growth-limit}, $ν$ has at most linear growth.

    We have $\supp ν\subset Γ_∞$, since for any $z\in \CC\setminus Γ_∞$ a small ball around it is contained in $\CC\setminus Γ_k\subset \CC\setminus \supp μ_k$ for all large $k$.
    The lower density in \cref{eq:localized-bad-scale}, together with \cref{eq:localized-density-point}, also shows that $ν$ is non-zero:
    Indeed, if $φ\in C_c(B(0,51))$ satisfies $0\le φ\le1$ and $φ=1$ on $B(0,50)$, then
    \begin{align}\label{eq:limit-measure-nontrivial}
        ν(B(0,51))
         & \ge \int φ\dl{ν}
        =\lim_{k\to\infty}\int φ\dl{μ_k}                           \\
         & \ge \liminf_{k\to\infty}\frac{μ(H\cap B(x,50r_k))}{r_k}
        \ge δ .
    \end{align}
    Moreover, by \cref{eq:localized-bad-scale} and \cref{thm:beta-local-convergence},
    \begin{align}\label{eq:limit-boundary-nonflat}
        ε\le \limsup_{k\to\infty}β_{\widetilde E_k}(0,50)
        \le \limsup_{k\to\infty}β_{Γ_k}(0,50)
        \le 2β_{Γ_∞}(0,51).
    \end{align}

    We next prove that $ν$ is Cauchy-flat.  We verify the tail hypothesis in \cref{thm:weak-limit-squared-conv} for the measures $μ_k$.
    Since $x\in H$, the first assumption gives $μ(B(x,t))\le Ct$ for all $0<t<r_H$.
    A dyadic annulus decomposition in the region $N<|ξ|<r_H/r_k$ gives a contribution $\lesssim N^{-1}$.
    On the remaining tail $|ξ|>r_H/r_k$, the finiteness of $μ(H)$ gives a contribution $\lesssim_{H,r_H} r_k$.  Hence, for fixed $N$ and all large $k$,
    \begin{align}\label{eq:scaled-tail-tightness}
        \int_{\CC\setminus B(0,N)} \frac{1}{1+|ξ|^2}\dl{μ_k(ξ)}
        \lesssim \frac{1}{N}+ C_{H,r_H} r_k .
    \end{align}
    Thus the tails are tight uniformly in the sense of \cref{eq:kernel-tail-tightness}.
    The tail integral of $ν$ is finite since $ν$ has at most linear growth.

    Let $R>1$.  Changing variables in \cref{eq:localized-energy-small} yields
    \begin{align}\label{eq:scaled-energy-small}
        \int_{y\in B(0,R)\setminus \widetilde E_k}
        \abs[\Big]{\int \frac{1}{(y-ξ)^2}\dl{μ_k(ξ)}}^2 d(y,\widetilde E_k)\dl{m_2(y)}\to0 .
    \end{align}
    Let $z\in B(0,R)\setminus Γ_{∞}$.
    Then a ball around $z$ is eventually disjoint from $Γ_k$, hence also from $\supp μ_k$.
    By \cref{thm:weak-limit-squared-conv}, the Cauchy kernels for $μ_k$ converge pointwise at such $z$ to the Cauchy kernel of $ν$.
    Using \cref{thm:convergence-of-dist-local}, we get $\dist(z,\widetilde E_k)\ge \dist(z,Γ_k) \to \dist(z,Γ_{∞})$.
    Fatou's lemma applied to \cref{eq:scaled-energy-small} gives
    \begin{align}
        \int_{B(0,R)\setminus Γ_∞}
        \abs[\Big]{\int \frac{1}{(z-ξ)^2}\dl{ν(ξ)}}^2 \dist(z,Γ_∞)\dl{m_2(z)}=0 .
    \end{align}
    Since $R$ was arbitrary, the Cauchy transform of $ν$ vanishes on $Ω_∞^+\cup Ω_∞^-$.

    Let now $z\in\CC\setminus\suppν$.  If $z\in Ω_∞^+\cup Ω_∞^-$, we are done.  Otherwise $z\in Γ_∞\setminus\suppν$.  Choose $r>0$ with $B(z,r)\cap\suppν=\emptyset$.  The function
    \begin{align}
        w\mapsto \int \frac{1}{(w-ξ)^2}\dl{ν(ξ)}
    \end{align}
    is holomorphic on $B(z,r)$, and the two-sided corkscrew condition gives a nonempty open subset of $B(z,r)\cap(Ω_∞^+\cup Ω_∞^-)$ on which it vanishes.  Hence it also vanishes at $z$.  Thus $ν$ is Cauchy-flat in the sense of \cref{def:cauchy-flat}.

    We can apply \cref{thm:cauchy-flat-finite-components} to $ν$.  The measure is non-trivial by \cref{eq:limit-measure-nontrivial}, has at most linear growth by \cref{eq:localized-growth-limit}, and $\suppν$ has empty interior because it is contained in the two-sided corkscrew boundary $Γ_∞$.
    Moreover, $\CC\setminus\suppν$ has at most two connected components.  Indeed, $Ω_∞^+$ and $Ω_∞^-$ lie in components of $\CC\setminus\suppν$, and any point of $Γ_∞\setminus\suppν$ has a ball disjoint from $\suppν$ which, by the two-sided corkscrew condition, meets $Ω_∞^+\cup Ω_∞^-$.  Thus every component is one of the components containing $Ω_∞^+$ or $Ω_∞^-$, possibly with these two components merged.  Each component is corkscrew, since at every boundary point and scale it contains a corkscrew ball from one of the limiting domains.

    The proposition implies that $\CC\setminus\suppν$ has exactly two components and that both are convex polygonal domains.  Two complementary convex polygonal domains whose common complement has empty interior must be complementary half-planes; hence $\suppν$ is a line $L$.  Since $L\subset Γ_∞$ and $Ω_∞^+$, $Ω_∞^-$ are the two components of $\CC\setminus Γ_∞$ with common boundary $Γ_∞$, the inclusion is equality:
    Otherwise a point of $Γ_∞\setminus L$ would have a small ball around it lying in one of the two half-planes determined by $L$, contradicting that it is a boundary point of both limiting domains.  This gives $Γ_∞=L$, contradicting \cref{eq:limit-boundary-nonflat}.
\end{proof}

\subsection[Proof of the locally two-sided NTA theorem]{Proof of \texorpdfstring{\cref{thm:local-nta-rectifiable}}{theorem \ref{thm:local-nta-rectifiable}}}\label{sec:proof-theorem-local-nta}

\begin{proof}[Proof of \cref{thm:local-nta-rectifiable}]
    It is enough to prove that $μ$ is weakly flat; the conclusion then follows from \cref{thm:main-theorem}.

    For $N\ge1$, set $E:=\supp μ$ and
    \begin{align}
        H_N:=\cbrac{ x\in E:
            μ(B(x,r))\le Nr \text{ for }0<r<1/N,
            \text{ and } δ_μ^*(x)\ge 1/N } .
    \end{align}
    The density hypothesis gives $μ(E\setminus\bigcup_N H_N)=0$.
    Suppose now that $μ$ is not weakly flat.
    Then for some $N$ there is a subset $A\subset H_N$ of positive $μ$-measure on which weak flatness fails.
    We will use \cref{thm:blow-up-lemma} to arrive at a contradiction.

    Since $μ$ is finite, $μ(H_N)<\infty$.
    Applying \cref{thm:localized-square-function-differentiation} to $G:=H:=H_N$, gives a $μ$-nullset $O$ such that for every $x\in H_N\setminus O$ and every $n\in\NN$,
    \begin{align}\label{eq:local-nta-pointwise-energy-zero}
        \lim_{r\to0}\frac{1}{r}
        \int_{z\in B(x,nr)\setminus E }
        \abs[\Big]{\int_{H_N} \frac{1}{(z-ξ)^2}\dl{μ(ξ)}}^2 \dist(z,E)\dl{m_2(z)}=0 .
    \end{align}

    Choose a point $x\in A\setminus O$ which is a $μ$-density point of $H_N$ and at which $Γ$ is locally two-sided NTA.
    Such $x$ exists since the set of points with these properties has positive measure.
    There is a number $τ>0$ and a sequence $s_k\downarrow0$ such that
    \begin{align}
        β_E(x,s_k)\frac{μ(B(x,s_k))}{s_k}\ge τ .
    \end{align}
    Since $x\in H_N$, after discarding finitely many terms we have $μ(B(x,s_k))/s_k\le N$.
    Also $β_E(x,s_k)\le3$, because the line passing through $x$ is part of the infimum in the definition of $β$.
    Thus
    \begin{align}
        β_E(x,s_k)\ge τ/N,
        \qquad
        μ(B(x,s_k))\ge (τ/3)s_k .
    \end{align}
    Put $r_k=s_k/50$.  Then \cref{eq:localized-bad-scale} holds with $ε=τ/N$ and $δ=50τ/3$.

    The density point property gives \cref{eq:localized-density-point}: for every fixed $R>0$,
    \begin{align}
        μ(B(x,Rr_k)\setminus H)
        =μ(B(x,Rr_k)\setminus G_N)
        =o(μ(B(x,Rr_k)))=o(r_k),
    \end{align}
    using the upper bound $μ(B(x,Rr_k))\le NRr_k$ for large $k$.
    Finally, \cref{eq:local-nta-pointwise-energy-zero} implies \cref{eq:localized-energy-small}.

    Therefore, $H=H_N$, the point $x$, and the scales $r_k$ satisfy all hypotheses forbidden by \cref{thm:blow-up-lemma}, a contradiction.  Hence $μ$ is weakly flat.
\end{proof}

\appendix
\section{Properties of the graph}\label{sec:appendix}

For terminology in this \namecref{sec:appendix}, compare the construction in \cite[Section 7]{tolsaAnalyticCapacityCauchy2014}.
Before we prove \cref{thm:compare-t-dist} and the additional property claimed in \cref{thm:graph-properties}, we note the following lemma.
\begin{lemma}\label{thm:ax-aix-distance}
  For $x\in L_0\setminus π(Z_0)$ and $i\in I$ such that $φ_i(x)\neq 0$, we have
  \begin{align}
    |A(x) - A_i(x)| \lesssim εℓ(x).
  \end{align}
\end{lemma}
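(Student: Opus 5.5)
The plan is to reduce the estimate to a comparison between the affine pieces $A_j$ that overlap near $x$, following \cite[Section 7]{tolsaAnalyticCapacityCauchy2014}. Recall the structure of the construction there. Off $π(Z_0)$ the function $A$ is defined as
\begin{align}
  A=\sum_{j\in I}φ_jA_j .
\end{align}
Here $(φ_j)_{j\in I}$ is a smooth partition of unity subordinate to a Whitney-type family of intervals $\{I_j\}$ with $\ell(I_j)\approx D$ on $I_j$, and with $\supp φ_j\subset 3I_j$. Each $A_j$ is the affine function whose graph is a best approximating line $L_{B_j}$ of a very good ball $B_j$, whose radius is $\approx\ell(I_j)$ and whose centre projects within a comparable distance of $I_j$. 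Since $\sum_jφ_j\equiv1$ near $x$, we can write
\begin{align}
  A(x)-A_i(x)=\sum_{j:\,φ_j(x)\neq0}φ_j(x)\paren{A_j(x)-A_i(x)} .
\end{align}
The Whitney structure gives only boundedly many nonzero terms. For each such $j$ we have $\ell(I_j)\approx\ell(I_i)\approx D(x)\approx\ell(x)$, using that $D$ is $1$-Lipschitz and $x\in 3I_i\cap3I_j$. It therefore suffices to prove
\begin{align}
  |A_j(x)-A_i(x)|\lesssim ε\,\ell(x)
\end{align}
for every pair $i,j$ with $φ_i(x)φ_j(x)\neq0$.

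To prove this pairwise estimate, I choose a single very good ball $B=B(z,r)$ containing both $B_i$ and $B_j$, with $r\approx\ell(x)$. Such a ball exists because both balls have radius $\approx\ell(x)$ and centres within $\lesssim\ell(x)$ of each other (after projection, and by \cref{item:graph-close-to-L0}-type control in the vertical direction). Since very good balls remain good at all larger scales up to $50$, we may enlarge the radius of $B_i$ about its own centre by a bounded factor. Then $B\in\G$, so $δ_μ(B)\ge δ$ and $β_E(z,r)\le λ_1/δ\le ε$ (assuming $λ_1\ll εδ$, as allowed in the hierarchy). The same holds for $B_i$ and $B_j$ themselves.

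The core step is the standard \enquote{two flat balls have close lines} lemma. Since $δ_μ(B_i)\ge δ$ and $B_i$ satisfies the growth bound $μ(B(y,s))\le(1+δ_1)s$ at points of $F$, the measure in $B_i$ cannot concentrate in a ball of radius $\ll δ\,r(B_i)$. Hence there are two points $y_1,y_2\in E\cap B_i$ with $|y_1-y_2|\gtrsim δ\,r(B_i)$. Both points lie within $\lesssim ε\,r$ of $L_{B_i}$, of $L_{B_j}$ and of $L_B$, because the $β$-numbers control the full sup over $E\cap 3B$. Two points at mutual distance $\gtrsim δ r$ and within $ε r$ of two lines force the lines to make an angle $\lesssim ε/δ$. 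It also forces them to stay within $\lesssim (ε/δ)\,r$ of each other on a ball of radius $\approx r$ around $x$. Comparing $L_{B_i}$ and $L_{B_j}$ through $L_B$, and using that both have slope $\le α$ relative to $L_0$ so that vertical distances are comparable to Euclidean ones, gives $|A_i(x)-A_j(x)|\lesssim (ε/δ)\ell(x)$.

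The main obstacle is the loss of $δ$ in this comparison: it yields $ε/δ$ rather than $ε$. I would handle it in one of two ways. The first option is to rename constants, since $\log ε\ll\log δ$, so that the bound reads $\lesssim ε'$ with $ε'$ still in the hierarchy. The second option, the one I would try first, follows Tolsa by building the lower bound into the definition of the parameter $ε$. That is, one defines the graph's flatness parameter as $ε:=λ_1/δ^2$ (or an analogous quantity) and absorbs the density lower bound there. The only remaining geometric check is that the non-concentration argument uses points of $F$, so that the growth bound is available. This follows because $μ(10B_0\setminus F)\le η\ll δ\, r(B_i)$ at the relevant scales, which holds since very good balls have radius bounded below in terms of $D$ away from $Z_0$.
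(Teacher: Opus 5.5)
Your reduction is the same as the paper's. Writing $A(x)-A_i(x)=\sum_j\varphi_j(x)\bigl(A_j(x)-A_i(x)\bigr)$ gives a convex combination, so bounded overlap is not even needed. Everything then rests on the pairwise bound $|A_j(x)-A_i(x)|\lesssim\varepsilon\ell(x)$. The paper simply cites that bound from \cite[Lemma 7.23 (b)]{tolsaAnalyticCapacityCauchy2014}.

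You instead re-prove it from the $\beta_\infty$-type goodness used here ($\beta_E\le\lambda_1/\delta$ on good balls), via two separated witness points and a common enlarged very good ball. This is a genuinely more self-contained route, and it buys two things:
\begin{itemize}
\item It checks that Tolsa's lemma survives the change in the definition of good balls.
\item It exposes the parameter constraint. Your argument gives $|A_i(x)-A_j(x)|\lesssim(\lambda_1/\delta^2)\ell(x)$, so one needs $\lambda_1\le c\,\varepsilon\delta^2$. This is harmless because $\lambda_1$ is fixed after $\varepsilon$ and $\delta$; it is the clean form of your second option. The paper leaves this implicit in ``$\lambda_1$ small enough for the graph construction''.
\end{itemize}

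Two supporting steps are justified incorrectly, although both claims are true.

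\textbf{Vertical proximity of the centres.} \cref{item:graph-close-to-L0} only gives an absolute $O(\varepsilon)$ bound. That is useless at the scale $\ell(x)$, which can be arbitrarily small near $\pi(Z_0)$. It is also a property of the graph assembled from the $A_j$, so it should not be used to locate the balls defining them. To get $|z_i-z_j|\lesssim\ell(x)$, argue as in \cref{thm:dx-Dpix}. Set $t:=\max\{r_i,|z_i-z_j|\}<50$. The ball $B(z_i,t)$ is very good and contains $z_j$. So both centres lie within $(\lambda_1/\delta)t$ of a line at angle at most $\alpha$ with $L_0$. Hence $t\lesssim|\pi(z_i)-\pi(z_j)|+(\alpha+\lambda_1/\delta)t$, and therefore $t\lesssim\ell(x)$.

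\textbf{Non-concentration.} Your last paragraph claims $\eta\ll\delta\,r(B_i)$. This is false for small radii: the radii are only bounded below by a multiple of $D$, which tends to $0$ near $\pi(Z_0)$. The claim is also unnecessary. The witness points only have to lie in $E$, and non-concentration follows from the growth bound at the centre $z_i\in F\subset E$ alone. If $E\cap B_i\subset B(z_i,c\delta r_i)$, then $\delta r_i\le\mu(B_i)\le(1+\delta_1)c\delta r_i$, which is impossible for $c<1/2$. Hence $z_i$ together with some $y\in E\cap B_i$ satisfying $|y-z_i|\ge c\delta r_i$ serve as the two points.
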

\begin{proof}
  Let $i,j\in I$ be such that $φ_i(x),φ_j(x)\neq 0$.
  The claim follows from summing up \cite[Lemma 7.23 (b)]{tolsaAnalyticCapacityCauchy2014} which says that $|A_j(x) - A_i(x)| \lesssim εℓ(x)$.
\end{proof}

\begin{proof}[Proof of \cref{thm:compare-t-dist}]
  The strategy of the proof is to find a good ball $B$ that contains $B(\grapha(x)+is,|s|)$.
  Then, we find a point $ξ$ in $U(L_B,εr(B))$, the $εr(B)$-strip around the line $L_{B}$ from the $β$-number, satisfying $π(ξ) = x$.
  This will allow us to calculate
  \begin{align}
    \dist(\grapha(x)+is,E)
    &\ge \min\cbrac[\big]{|s|,\dist(\grapha(x)+is,U(L_{B},εr(B)))} \\
    &\ge \min\cbrac[\big]{|s|,\cos(α)|\grapha(x)+is-ξ| - 2εr(B)}. \\
    &\gtrsim |s| - |\grapha(x)-ξ| - 2εr(B),
  \end{align}
  so that it suffices to show $|\grapha(x)-ξ| + εr(B) \le η |s|$, for some very small $η>0$.

  \begin{figure}
    \centering
    \begin{tikzpicture}
    \def\r{1.8}  %
    \def\w{0.5}  %
    \def\l{5}  %
    \coordinate (Ax) at (0,-0.5);
    \coordinate (xi) at (0,0);
    \coordinate (Axt) at (0,5);

    \begin{scope}[rotate=20]
        \begin{scope}[shift={(0,\w)},]
            \fill[fill=yellow!50, opacity=0.6]  (-\l/2,-\w) rectangle (\l,\w);
            \draw[loosely dashed] (-\l/2,0) -- (\l+0.3,0) node[right] {$L_B$};
            \coordinate (dir) at (\l,-\w);
        \end{scope}
    \end{scope}

    \coordinate (proj) at ($(xi)!(Axt)!(dir)$);
    \coordinate (proj2) at ($(proj)!1cm!(Axt)$);

    \draw (Ax) -- (Axt) node[midway,left]{}
    -- (proj2) node[midway,right]{};

    \draw[decorate,decoration={brace,amplitude=5pt,raise=6pt},yshift=0pt] (Axt) -- (proj2) node [midway,anchor=west,xshift=9pt,yshift=6pt]{$\dist\paren[\big]{\grapha(x)+is, U(L_B,εr(B))}$};

    \draw (proj2) -- (proj) {};
    \draw[decorate,decoration={brace,amplitude=5pt,raise=4pt},yshift=0pt] (proj2) -- (proj) node [midway,anchor=west,xshift=9pt,yshift=6.4pt,
        fill=white,        %
        fill opacity=0.5,  %
        text opacity=1,    %
        draw=none,         %
    ]{$2εr(B)$};
    \pic ["$α$", draw, angle radius=1.3cm] {angle = Ax--Axt--proj};

    \filldraw (Ax) circle (1mm) node[below=0.5mm] {$\grapha(x)$};
    \filldraw (xi) circle (1mm) node[left=0.5mm] {$ξ$};
    \filldraw (Axt) circle (1mm) node[right=0.5mm,above=0.2mm] {$\grapha(x)+is$};
    \filldraw (proj) circle (1mm) node[below=0.5mm] {};
    \filldraw (proj2) circle (1mm) node[below=0.5mm] {};

\end{tikzpicture}
    \caption{$E\cap B(\grapha(x)+is,|s|)$ is contained in $U(L_B, εr(B))$, the $2εr(B)$-wide strip around $L_B$.}\label{fig:triangle}
  \end{figure}
  
  If $x\in π(Z_0)$, then $B:=B(\grapha(x),2|s|)\in\VG$ and $ξ:=\grapha(x)\in U(L_{B},2ε|s|)$.
  If $x\not\in π(Z_0)$, consider $i\in I$ such that $φ_i(x)\neq 0$ in the definition of $A(x) = \sum_{i\in I} φ_i(x) A_i(x)$, and write $\grapha_i(t):=t+iA_i(t)$.
  Let $A_i$ be associated to the interval $R_i$ and the ball $B(z_i,r_i)\in \VG$.
  Denote $\tilde r := \max\{ r_i, |\grapha(x)+is-z_i| + |s| \}$, and $B := B(z_i,\tilde r)$.
  Applying \cite[Lemma 7.11 (b)]{tolsaAnalyticCapacityCauchy2014} to $B$ and $B(z_i,r_i)$ yields that $\angle(L_{B(z_i,r_i)}, L_B)\lesssim ε$.
  Therefore, the $ξ\in L_{B}$ with $π(ξ) = x$ satisfies
  \begin{align}
    |\grapha_i(x) - ξ| \lesssim (\tan(α+ε) - \tan(α)) |x-π(z_i)| + ε\tilde r \lesssim ε |x-π(z_i)| + ε\tilde r.
  \end{align}
  Since $φ_i(x)\neq 0$, we have $x\in 3R_i$.
  Together with \cite[Lemmas 7.22 (b) and 7.20 (a)]{tolsaAnalyticCapacityCauchy2014}, we get
  \begin{equation}
    |x - π(z_i)| \le \dist(x,R_i) + \dist(R_i,π(z_i))
    \lesssim ℓ(R_i)
    \lesssim D(x).
  \end{equation}
  Finally, this means
  \begin{align}
    |\grapha(x)-ξ| + εr(B)
    &\lesssim |\grapha(x) - \grapha_i(x)| + |\grapha_i(x) - ξ | + ε |\grapha_i(x) - z_i | + ε|s| \\
    &\lesssim εD(x)+ε|s|
    \lesssim ε\paren[\big]{ 1 + \frac{1}{τ} }|s|,
  \end{align}
  where we used \cref{thm:ax-aix-distance} and the fact that
  $\grapha_i(x)\in L_{B(z_i,r_i)}$ with $\angle(L_0, L_{B(z_i,r_i)})\le α$ implies that
  $|\grapha_i(x) - z_i| \lesssim (1+\tan α) |x-π(z_i)| + 2εr_i \lesssim D(x)$.
\end{proof}

\begin{lemma}\label{thm:dx-Dpix}
  For $x\in F\cap π^{-1}(8I_0)$, we have
  \begin{align}
    d(x) \lesssim D(π(x)).
  \end{align}
\end{lemma}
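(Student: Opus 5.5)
The plan is to compare the two infima term by term. Since $d(x)=\inf_{B(z,r)\in\VG}(|x-z|+r)$ and $D(π(x))=\inf_{B(z,r)\in\VG}(|π(x)-π(z)|+r)$, it suffices to show that for every very good ball $B(z,r)$ (with $z\in F$, $r\in(0,50)$) we have
\begin{align}
  |x-z| \lesssim |π(x)-π(z)| + r ,
\end{align}
with an absolute implicit constant. Taking the infimum over $B(z,r)\in\VG$ then gives $d(x)\lesssim D(π(x))$. If $\VG$ is empty, both quantities equal $+\infty$ and there is nothing to prove.

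Fix $B(z,r)\in\VG$. If $|x-z|\le 2r$, the bound is immediate. Otherwise set $t:=|x-z|/2$, so that $t> r$. Since $x\in F\cap π^{-1}(8I_0)$ and $z\in F\subset 10B_0$, we have $|x-z|\le 20$, hence $t\le 10<50$. The very good property then gives $B(z,t)\in\G$: we have $δ_μ(B(z,t))\ge δ$, and there is a best approximating line $L:=L_{B(z,t)}$ with $\angle(L,L_0)\le α$. By \cref{item:weakly-flat-uniform} of \cref{thm:main-lemma}, this yields $β_E(z,t)\le λ_1/δ$.

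Both $z$ and $x$ lie in $E\cap B(z,3t)$, because $|x-z|=2t<3t$ and $F\subset E$ up to the harmless choice that $F\subset\supp μ$. Hence
\begin{align}
  \dist(x,L),\ \dist(z,L)\le \frac{λ_1}{δ}\, t .
\end{align}
Let $x',z'$ be the orthogonal projections of $x,z$ onto $L$. Since $\angle(L,L_0)\le α$, we get $|x'-z'|\le |π(x')-π(z')|/\cos α$. Moreover $|π(x')-π(x)|$ and $|π(z')-π(z)|$ are at most $\frac{λ_1}{δ}t$. Altogether,
\begin{align}
  |x-z| \le \frac{|π(x)-π(z)|}{\cos α} + C\frac{λ_1}{δ}\, t
  = \frac{|π(x)-π(z)|}{\cos α} + \frac{C λ_1}{2δ}\,|x-z| .
\end{align}
Since $λ_1\ll δ$, the last term can be absorbed into the left-hand side, giving $|x-z|\lesssim |π(x)-π(z)|$. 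This proves the claim.

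The only delicate point is ensuring that the auxiliary scale $t$ is admissible. We need $t\in[r,50]$ so that the very good property applies, and $t<100$ so that the flatness hypothesis of \cref{thm:main-lemma} holds. This is exactly where the restriction $x\in π^{-1}(8I_0)$ (together with $F\subset 10B_0$) is used, and it is the step I would check most carefully. A second point to check is that $x$ genuinely lies in $E$, so that the $β$-number controls its distance to $L$.
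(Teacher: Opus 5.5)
Your proof is correct, and it takes a different route from the paper's.

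The paper works inside Tolsa's construction:
\begin{enumerate}
\item Assuming $d(x)>0$, it first shows $\pi(x)\notin\pi(Z_0)$.
\item It takes the interval $R_i\ni\pi(x)$ with its very good ball $B(z_i,r_i)$, and uses Tolsa's Lemma 7.22(b) to get $|\pi(x)-\pi(z_i)|\lesssim r_i$.
\item It shows $|x-z_i|\lesssim r_i$ by a contradiction argument with the best line of $B(z_i,\max\{r_i,|x-z_i|\})$.
\item It concludes with Tolsa's Lemma 7.20(a), $r_i\lesssim D(\pi(x))$.
\end{enumerate}

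You instead compare the two infima ball by ball, proving $|x-z|+r\lesssim|\pi(x)-\pi(z)|+r$ for every $B(z,r)\in\VG$. The geometric input is the same as in the paper's two $\beta$-number steps. Two points of $E$ at distance comparable to the radius of a good ball have comparable horizontal separation, since $\beta\le\lambda_1/\delta$ and the best line is within angle $\alpha$ of $L_0$. You apply this at the scale $t=|x-z|/2$, which is admissible precisely because a very good ball remains good at every scale in $[r,50]$.

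This makes your argument self-contained. It uses only the definitions of Section 3 and hypothesis (iii), and needs no Whitney-type intervals or external lemmas. It handles the degenerate case $D(\pi(x))=0$ automatically through the infimum, so the separate step $\pi(x)\notin\pi(Z_0)$ disappears; the paper's first step is really the same mechanism applied to a single ball. It also shows the restriction $x\in\pi^{-1}(8I_0)$ is not needed beyond $F\subset 10B_0$, which already gives $t<10$.

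The paper's route, in exchange, attaches $x$ to a specific ball $B(z_i,r_i)$ with $r_i\approx\ell(x)$. This matches how the construction is used elsewhere, e.g.\ in the proof of \cref{thm:compare-t-dist}.

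Your two flagged points are fine. The scales satisfy $r<t<10$. The inclusion $F\subset E$ may be assumed: replace $F$ by $F\cap E$. The paper's own proof also uses $x\in E$, and in the application $F\subset\supp\nu$.
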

\begin{proof}
  Assume $d(x) > 0$, i.e. $x\not\in Z_0$.
  We claim that this implies $π(x)\not\in π(Z_0)$.
  Assume the contrary, i.e. there exists $y\in Z_0$ such that $π(y)=π(x)$.
  Consider the ball $B:=B(y,2|x-y|)\in\mathcal{VG}$.
  Since $\angle (L_B,L_0)\le α$ and $x,y\in E$, we have $|x-y|\lesssim \dist(x,L_B) + \dist(y,L_B) \lesssim ε|x-y|$ which is a contradiction for small $ε$.

  By construction $π(x)\in R_i$ for some $i\in I$ with corresponding ball $B:= B(z_i,r_i)$.
  By \cite[Lemma 7.22 (b)]{tolsaAnalyticCapacityCauchy2014}, $|π(x) - π(z_i)| \lesssim ℓ(x) \lesssim r_i$, say $|π(x)-π(z_i)|\le cr_i$.
  Consider $\tilde r := \max\cbrac{r_i,|x-z_i|}$.
  We claim that $\tilde r \le 2cr_i$.
  Assume to the contrary that $|x-z_i| \ge 2cr_i$, implying $|\Im x - \Im z_i|>cr_i$.
  Taking into account that $\angle(L_{B(z_i,\tilde r)}, L_0)\le α$, we get
  \begin{align}
    \tilde r = |x-z_i|
    \lesssim 2 |\Im x - \Im z_i|
    &\lesssim \dist(x,L_{B(z_i,\tilde r)}) + \tan(α) |π(x)-π(z_i)| +  \dist(z_i,L_{B(z_i,\tilde r)})\\
    &\lesssim ε \tilde r + \tan(α)c\tilde r,
  \end{align}
  which is a contradiction for small $α$ and $ε$.
  
  By the definition of $d$ we get $d(x)\lesssim r_i$.
  In addition, \cite[Lemma 7.20 (a)]{tolsaAnalyticCapacityCauchy2014} yields $r_i\lesssim D(π(x))$, which completes the proof.
\end{proof}

\section[Proof of the Cauchy-flat description]{Proof of the Cauchy-flat description\except{toc}{\texorpdfstring{ (\cref{thm:cauchy-flat-finite-components})}{}}}
\label{app:proof-cauchy-flat-finite-components}

Throughout this section, let $ν$ satisfy the hypotheses of \cref{thm:cauchy-flat-finite-components}, and set $E:=\supp ν$.
Choose $z_0\in\CC\setminus E$ and define, for $z\in\CC\setminus E$,
\begin{align}
  F_ν(z):=\int \paren[\Big]{\frac{1}{z-ξ}-\frac{1}{z_0-ξ}}\dl{ν(ξ)}.
\end{align}
Also define
\begin{align}
  u(z):=\int
  \paren[\Big]{
    \log|z-ξ|-
    \log|z_0-ξ|-
    \Re\frac{z-z_0}{z_0-ξ}
  }\dl{ν(ξ)},
  \qquad z\in\CC.
\end{align}

\begin{lemma}\label{thm:cf-potential-continuity}
  The function $u$ is well-defined and continuous on $\CC$.
  More precisely, for every compact set $K\subset\CC$ there exists $C_K>0$ such that
  \begin{align}
    |u(z)-u(w)|
    \le C_K |z-w|\paren[\Big]{1+\log_+\frac{1}{|z-w|}},
    \qquad z,w\in K,
  \end{align}
  where $\log_+ := \max(0,\log(\cdot))$
\end{lemma}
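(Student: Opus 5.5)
The plan is to control the integrand
\begin{align}
  k(z,ξ):=\log|z-ξ|-\log|z_0-ξ|-\Re\frac{z-z_0}{z_0-ξ}
\end{align}
pointwise, separately near and far from the points $z,w,z_0$, and to integrate these bounds against $ν$ using only its at most linear growth, $ν(B(a,r))\le C_ν r$. Fix a compact $K$ and choose $R\ge1$ with $K\cup\{z_0\}\subset B(0,R)$. Note also $d_0:=\dist(z_0,E)>0$, since $z_0\notin E=\supp ν$.

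\emph{Far region} $|ξ|\ge 2R$. Here $|z-ξ|\approx|ξ|\approx|z_0-ξ|$. Write $\frac{z-ξ}{z_0-ξ}=1+\frac{z-z_0}{z_0-ξ}$ and set $t:=\frac{z-z_0}{z_0-ξ}$, so $|t|\le 1/2$ and $\log|1+t|=\Re\log(1+t)$. Then the Taylor expansion of $\log(1+t)$ gives
\begin{align}
  |k(z,ξ)|\lesssim_R |ξ|^{-2}.
\end{align}
The $z$-derivative of $k$ is $\Re\bigl(\frac{1}{z-ξ}-\frac{1}{z_0-ξ}\bigr)$, which is $O(|ξ|^{-2})$. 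So $|k(z,ξ)-k(w,ξ)|\lesssim_R |z-w|\,|ξ|^{-2}$. By linear growth and dyadic annuli, $\int_{|ξ|\ge2R}|ξ|^{-2}\dl{ν}\lesssim C_ν/R<\infty$. Hence the far part of $u$ is well defined and Lipschitz on $K$.

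\emph{Near region} $|ξ|<2R$. Here $\log|z_0-ξ|$ and $\Re\frac{z-z_0}{z_0-ξ}$ are bounded and Lipschitz in $z$, uniformly in $ξ$, because $|z_0-ξ|\ge d_0$. Also $ν(B(0,2R))<\infty$. It therefore remains to treat $v(z):=\int_{B(0,2R)}\log|z-ξ|\dl{ν(ξ)}$. For finiteness, linear growth gives
\begin{align}
  \int_{B(z,1)}\bigl|\log|z-ξ|\bigr|\dl{ν}\lesssim \sum_j j2^{-j}C_ν<\infty,
\end{align}
and the rest is bounded. The same estimate shows $v$ is defined everywhere, even at points of $E$.

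\emph{Modulus of continuity of $v$.} This is the main step. Let $z,w\in K$ and $d:=|z-w|$; we may assume $d<1$. Split $B(0,2R)$ into $N:=B(z,2d)$ and its complement.

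On $N$, bound $\int_N\bigl|\log|z-ξ|\bigr|\dl{ν}$ and $\int_N\bigl|\log|w-ξ|\bigr|\dl{ν}$ separately. Note $N\subset B(w,3d)$. By dyadic shells and linear growth, each integral is
\begin{align}
  \lesssim \sum_{j\ge0} 2^{-j}d\,\bigl(\log\tfrac1d+j\bigr)\lesssim d\bigl(1+\log_+\tfrac1d\bigr).
\end{align}

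Off $N$, we have $|z-ξ|\ge 2d$, so $|w-ξ|\approx|z-ξ|$ and
\begin{align}
  \bigl|\log|z-ξ|-\log|w-ξ|\bigr|\lesssim \frac{d}{|z-ξ|}.
\end{align}
Integrating over $2d\le|z-ξ|\le 4R$ by dyadic shells gives $d\sum_j C_ν\lesssim d\,\log(R/d)$. With $C_K$ depending on $R$, $C_ν$ and $d_0$, this is the claimed bound $C_K d(1+\log_+\frac1d)$.

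Continuity of $u$ on $\CC$ follows because $K$ is arbitrary. The only delicate point is the logarithmic singularity at $ξ\in E$ near $z$, and it is exactly what produces the $\log_+$ factor.
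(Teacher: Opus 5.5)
Your proof is correct and follows essentially the same route as the paper: a Taylor expansion giving $O(|\xi|^{-2})$ in the far region, a dyadic shell estimate on a ball of radius $\approx|z-w|$, and the mean value bound $|z-w|/|z-\xi|$ summed over $\approx\log(1/|z-w|)$ shells. Pulling out the $z_0$-terms via $\dist(z_0,E)>0$ is only a cosmetic reorganization. One small slip: with $K\cup\{z_0\}\subset B(0,R)$ and $|\xi|\ge 2R$ you only get $|t|<2$, not $|t|\le 1/2$. So take the far region to be, e.g., $|\xi|\ge 5R$ to justify the Taylor expansion; the paper does the analogous thing with $K\cup\{z_0\}\subset B(0,M/4)$ and $|\xi|>M$.
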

\begin{proof}
  Fix a compact set $K\subset\CC$.  Choose $M>1$ so large that $K\cup\{z_0\}\subset B(0,M/4)$.
  In this proof all constants can depend on $K$, $M$ and $z_0$ without further mention.
  We denote
  \begin{align}
    K_z(ξ):=\log|z-ξ|-\log|z_0-ξ|-\Re\frac{z-z_0}{z_0-ξ}.
  \end{align}
  
  For $|ξ|>M$, Taylor expansion gives $K_z(ξ)= O(|ξ|^{-2})$ uniformly for $z\in K$.
  Since $ν$ has at most linear growth, a dyadic annulus decomposition gives
  \begin{align}
    \int_{|ξ|>M}\frac{1}{|ξ|^2}\dl{ν(ξ)}<\infty.
  \end{align}
  On $B(0,M)$, the logarithmic singularity is integrable against $ν$, again by the linear growth bound.
  Hence $u$ is well-defined.

  Let $z,w\in K$ and set $δ:=|z-w|$.
  It suffices to prove the estimate when $δ\le 1$.
  Indeed, the case $δ>1$ follows by subdividing the segment $[z,w]$ into subsegments of length at least $1/2$ and applying the $δ\le1$ estimate on the compact convex hull of $K$.
  We decompose $\CC=A_{\mathrm{loc}}\cup A_{\mathrm{mid}}\cup A_\infty$, where
  \begin{align}
    A_{\mathrm{loc}}:=B(z,2δ)\cup B(w,2δ),\quad
    A_{\mathrm{mid}}:=B(0,M)\setminus A_{\mathrm{loc}},\quad
    A_\infty:=\CC\setminus B(0,M).
  \end{align}
  On the local part, the standard shell estimate
  \begin{align}
    \int_{B(a,r)} \paren[\big]{1+|\log|a-ξ||}\dl{ν(ξ)}
    \lesssim r\paren[\big]{1+|\log r|},
    \qquad 0<r\le 1,
  \end{align}
  gives
  \begin{align}
    \int_{A_{\mathrm{loc}}}|K_z(ξ)-K_w(ξ)|\dl{ν(ξ)}
    \lesssim δ\paren[\Big]{1+\log\frac{1}{δ}}.
  \end{align}
  On $A_{\mathrm{mid}}$, $\abs[\big]{\log|z-ξ| - \log|w-ξ|} \le δ/|z-ξ|$ by the mean value theorem, and the integral of the renormalizing linear term in $K_z(ξ)$ contributes $O(δ)$.
  So another dyadic decomposition around $z$ yields
  \begin{align}
    \int_{A_{\mathrm{mid}}}|K_z(ξ)-K_w(ξ)|\dl{ν(ξ)}
    \lesssim δ\int_{A_{\mathrm{mid}}}\frac{\dl{ν(ξ)}}{|z-ξ|}+C_Mδ
    \lesssim δ\paren[\Big]{1+\log\frac{1}{δ}}.
  \end{align}
  Finally, for $|ξ|>M$,
  \begin{align}
    |K_z(ξ)-K_w(ξ)|\lesssim \frac{δ}{|ξ|^2},
  \end{align}
  and therefore
  \begin{align}
    \int_{A_\infty}|K_z(ξ)-K_w(ξ)|\dl{ν(ξ)}
    \lesssim δ\int_{|ξ|>M}\frac{\dl{ν(ξ)}}{|ξ|^2}
    \lesssim δ.
  \end{align}
  Combining these estimates proves the asserted modulus of continuity.
\end{proof}

\begin{lemma}\label{thm:cf-potential-identities}
  In the sense of distributions on $\CC$,
  \begin{align}
    \Delta u=2πν.
  \end{align}
  Moreover, on $\CC\setminus E$,
  \begin{align}
    2\partial u=F_ν.
  \end{align}
  Consequently, $u$ is affine on each connected component of $\CC\setminus E$.
\end{lemma}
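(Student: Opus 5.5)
The plan is to verify both identities by computing distributional derivatives of the kernel
\[
K_z(ξ)=\log|z-ξ|-\log|z_0-ξ|-\Re\frac{z-z_0}{z_0-ξ}
\]
in the variable $z$. We then interchange the derivative with the $ν$-integral, using the growth bounds from \cref{thm:cf-potential-continuity}. The notation is $\partial=\tfrac12(\partial_x-i\partial_y)$.

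\emph{Laplacian.} Fix a test function $φ\in C_c^\infty(\CC)$. By Fubini,
\[
\langle u,\Deltaφ\rangle=\int\int K_z(ξ)\Deltaφ(z)\dl{m_2(z)}\dl{ν(ξ)}.
\]
This is justified because $\int_{\supp φ}|K_z(ξ)|\dl{m_2(z)}$ is bounded near the origin. For large $|ξ|$ it is $O(|ξ|^{-2})$, which is exactly the estimate used in the preceding lemma, and that is integrable against $ν$ by linear growth. For fixed $ξ$, the terms $-\log|z_0-ξ|$ and $-\Re\frac{z-z_0}{z_0-ξ}$ are affine in $z$, so they are annihilated after integrating against $\Deltaφ$. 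The identity $\Delta\log|\cdot-ξ|=2πδ_ξ$ then gives $\int\log|z-ξ|\,\Deltaφ(z)\dl{m_2(z)}=2πφ(ξ)$. Integrating in $ξ$ yields $\Delta u=2πν$.

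\emph{Holomorphic derivative.} On $\CC\setminus E$, fix a compact $K$ at positive distance from $E$. For $z\in K$, the derivative is
\[
\partial_z K_z(ξ)=\frac{1}{2(z-ξ)}-\frac{1}{2(z_0-ξ)}.
\]
Here we use $\partial\log|z-ξ|=\tfrac{1}{2(z-ξ)}$ and $\partial\Re(a(z-z_0))=a/2$ with $a=1/(z_0-ξ)$. This derivative is bounded uniformly on $K$ for $ξ$ near $K$. For large $|ξ|$ it equals $\frac{z_0-z}{2(z-ξ)(z_0-ξ)}=O(|ξ|^{-2})$, which is $ν$-integrable. Dominated convergence therefore allows differentiation under the integral, and we get $2\partial u=F_ν$ on $\CC\setminus E$. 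Note that $u$ is smooth there; in fact it is harmonic, since $\Delta u=0$ off $E$.

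\emph{Affineness.} On a component $U$ of $\CC\setminus E$, Cauchy-flatness gives $F_ν'(z)=-\int (z-ξ)^{-2}\dl{ν(ξ)}=0$. Differentiating under the integral is justified by the same domination. Hence $F_ν$ equals a constant $c$ on $U$ because $U$ is connected. Since $u$ is real-valued, $\bar\partial u=\overline{\partial u}=\bar c/2$. Thus all first partial derivatives of $u$ are constant on $U$, and $u$ is affine there.

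The main obstacle is justifying the Fubini and dominated-convergence steps uniformly at infinity, because the renormalization in $K_z$ is exactly what makes the kernel decay like $|ξ|^{-2}$. I would reuse the dyadic annulus bound $\int_{|ξ|>M}|ξ|^{-2}\dl{ν}<\infty$ from \cref{thm:cf-potential-continuity}.
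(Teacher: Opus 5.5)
Your proposal is correct and follows essentially the same route as the paper. You obtain $\Delta u=2\pi\nu$ from $\Delta\log|\cdot-\xi|=2\pi\delta_\xi$ with the affine renormalizing terms dropping out, get $2\partial u=F_\nu$ by differentiating under the integral on compact subsets of $\CC\setminus E$, and conclude that Cauchy-flatness makes $F_\nu$, hence $\nabla u$, constant on each component. You also supply the Fubini and dominated-convergence details (the $O(|\xi|^{-2})$ decay, and implicitly $\dist(z_0,E)>0$ to bound the $z_0$-terms on $\supp\nu$) that the paper leaves unstated.
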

\begin{proof}
  Since $\Delta\log|z-ξ|=2πδ_ξ$ distributionally and the renormalizing terms of $K_z$ are harmonic in $z$, we have $\Delta u=2πν$.
  If $z\notin E$, differentiation under the integral sign is justified on compact subsets of $\CC\setminus E$, and gives
  \begin{align}
    2\partial_z u(z)
    =\int \paren[\Big]{\frac{1}{z-ξ}-\frac{1}{z_0-ξ}}\dl{ν(ξ)}
    =F_ν(z).
  \end{align}
  Cauchy-flatness says that the complex derivative of $F_ν$ is zero on $\CC\setminus E$, so $F_ν$ is constant on each connected component of $\CC\setminus E$.
  Hence the gradient of $u$ is constant on each such component, and $u$ is affine there.
\end{proof}

Let $Ω_1,\dots,Ω_m$ be the connected components of $\CC\setminus E$.
By \cref{thm:cf-potential-identities}, there exist affine functions
\begin{align}
  λ_j(z)=\Re(\overline{c_j}z)+b_j, \qquad j=1,...,m,
\end{align}
such that $u=λ_j$ on $Ω_j$.
Let $λ_1,\dots,λ_N$ denote the distinct affine functions appearing in this list.
For each pair $α\neq β\in\{1,...,N\}$, we define the \emph{equality line}
\begin{align}
  L_{α,β}:=\{z\in\CC:λ_α(z)=λ_β(z)\}.
\end{align}

\begin{lemma}\label{thm:cf-support-lines}
  We have
  \begin{align}
    E\subset \bigcup_{α\neq β}L_{α,β},
  \end{align}
  where the union is taken over $α,β\in\{1,...,N\}$.
\end{lemma}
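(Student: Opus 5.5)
We argue by contraposition: we fix a point $z\in E$ that lies on no equality line and show $z\notin\supp\nu$. The tool is the identity $\Delta u=2\pi\nu$ from \cref{thm:cf-potential-identities}, combined with the fact that $u$ is affine on each complementary component. Since there are finitely many pairs $\alpha\neq\beta$, the union $\bigcup_{\alpha\neq\beta}L_{\alpha,\beta}$ is a closed set. So if $z\notin\bigcup_{\alpha\neq\beta}L_{\alpha,\beta}$, there is a disc $B=B(z,r)$ disjoint from all equality lines. On $B$ the distinct affine functions $\lambda_1,\dots,\lambda_N$ are then strictly ordered: each difference $\lambda_\alpha-\lambda_\beta$ is affine and does not vanish on the convex set $B$, so it has a constant sign there. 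Relabel so that $\lambda_1>\lambda_2>\dots>\lambda_N$ on $B$.

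Next we identify $u$ on all of $B$. Since $E$ has empty interior, $B\setminus E$ is dense in $B$. On $B\setminus E$ we have $u=\lambda_{j(w)}$ for some index depending on the component containing $w$. By \cref{thm:cf-potential-continuity}, $u$ is continuous on $B$. Let $U_\alpha:=\{w\in B\setminus E: u(w)=\lambda_\alpha(w)\}$; these sets are open, since each is a union of components intersected with $B$, and they are pairwise disjoint. Their closures cover $B$. We aim to show that only one $U_\alpha$ is nonempty, which gives $u=\lambda_\alpha$ on $B\setminus E$, hence on $B$ by continuity and density. Then $\Delta u=0$ on $B$, so $\nu(B)=0$ and $z\notin E$, a contradiction.

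The main obstacle is excluding two indices $\alpha\neq\beta$ with $U_\alpha,U_\beta$ both nonempty. Suppose this happens, and take $w\in\overline{U_\alpha}\cap\overline{U_\beta}\cap B$. Such a point exists because $B$ is connected and is the finite union of the closed sets $\overline{U_\gamma}\cap B$, so two of them with nonempty sets must meet. By continuity, $\lambda_\alpha(w)=u(w)=\lambda_\beta(w)$, so $w\in L_{\alpha,\beta}$. This contradicts the choice of $B$.

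The only point to check carefully is the connectivity argument. If $B$ were the union of finitely many closed sets that are pairwise disjoint inside $B$, connectedness of $B$ would fail, so some two of these closed sets must intersect. Continuity of $u$ on $B$, including at points of $E$, is exactly what the modulus of continuity in \cref{thm:cf-potential-continuity} provides. With this, every point of $E$ lies on some equality line, which proves the inclusion.
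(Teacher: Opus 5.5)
Your proof is correct, but the key step runs on a different mechanism than the paper's. The shared framing is this: a ball $B$ around a point of $E$ that misses the equality lines, density of $B\setminus E$ in $B$, and the conclusion that once $u$ is a single affine function on $B$ one gets $\Delta u=0$ there, contradicting membership in $\supp\nu$.

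The paper obtains the single affine function analytically. It takes $\lambda_*$ to be the largest of the affine pieces occurring on components that meet $B$; these pieces are totally ordered on $B$. Then $v=u-\lambda_*$ is subharmonic because $\Delta v=2\pi\nu\ge0$. By density and continuity $v\le 0$ on $B$, and $v=0$ on a nonempty open set, so the strong maximum principle gives $v\equiv0$.

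You argue purely topologically. The relatively closed sets $\overline{U_\gamma}\cap B$ cover the connected ball, so if two of the $U_\gamma$ were nonempty, some pair of their closures would meet in $B$. Continuity of $u$ would then place a meeting point on an equality line.

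Your route uses only continuity of $u$ and the empty interior of $E$. It does not depend on $\nu\ge 0$ or on the maximum principle. It also makes the ordering of the $\lambda_\alpha$ on $B$ unnecessary; your relabelling step is in fact never used. The paper's route instead exploits the sign of $\nu$, the same ingredient that drives the subsequent convexity and $\max$-representation lemmas.

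One small wording point: connectedness only guarantees that \emph{some} pair $\gamma\neq\gamma'$ of nonempty $U$'s has closures meeting in $B$, not necessarily the pair $\alpha,\beta$ you started with. Since any such pair yields the contradiction, this is harmless.
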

\begin{proof}
  Let $x\in E$ and suppose that $x$ does not belong to the union of the equality lines.
  Choose $r>0$ so that $B(x,r)$ is disjoint from every equality line.
  Since $E$ has empty interior, $B(x,r)\setminus E$ is dense in $B(x,r)$.

  Let $\mathcal A := \{ λ_j : Ω_j \cap B(x,r)\neq \emptyset\}$.
  This set is finite and non-empty.
  Because no equality line meets $B(x,r)$, for $λ_α,λ_β\in\mathcal A$, each difference $λ_α-λ_β$ has a constant sign on $B(x,r)$.
  Thus the functions in $\mathcal A$ are totally ordered on $B(x,r)$; let $λ_*$ be the largest one.

  Set $v:=u-λ_*$.
  Then $\Delta v=2πν\ge0$ in $B(x,r)$, so $v$ is subharmonic there.
  On every complementary component meeting $B(x,r)$, the function $v$ is equal to $λ_α-λ_*$ for some $λ_α\in\mathcal A$, hence $v\le0$ on $B(x,r)\setminus E$.
  By density of $B(x,r)\setminus E$ and continuity of $v$ (cf. \cref{thm:cf-potential-continuity}), it follows that $v\le0$ on $B(x,r)$.
  Since $λ_*$ occurs on a component meeting $B(x,r)$, $v=0$ on a non-empty open subset of $B(x,r)$.
  The strong maximum principle for subharmonic functions gives $v\equiv0$ in $B(x,r)$.
  Therefore $\Delta u=0$ in $B(x,r)$, so by \cref{thm:cf-potential-identities}, we conclude $ν\restriction_{B(x,r)}=0$ which contradicts $x\in\supp ν$.
\end{proof}

Let $\mathcal L$ be the finite collection of equality lines $L_{α,β}$.
We call the connected components of $\CC\setminus\bigcup_{L\in\mathcal L}L$ \emph{cells}.
By \cref{thm:cf-support-lines}, every cell is contained in $\CC\setminus E$ and therefore carries one affine piece: if $C$ is a cell, then $u=λ_{α(C)}$ on $C$ for some $α(C)\in\{1,\dots,N\}$.
We denote by an \emph{open edge} a connected component of a line $L\in\mathcal L$ after removing the finitely many intersection points of lines in $\mathcal L$, which we call \emph{vertices} of $\mathcal L$.

\begin{lemma}\label{thm:cf-edge-model}
  Let $F$ be an open edge separating two neighboring cells $C^+$ and $C^-$, and suppose
  \begin{align}
    u=λ^+ \text{ on } C^+,
    \qquad
    u=λ^- \text{ on } C^-.
  \end{align}
  Let $d_F$ be the signed distance to the line containing $F$, chosen so that $d_F>0$ on $C^+$.
  Then there exists $κ_F\ge0$ such that
  \begin{align}
    λ^+-λ^-=κ_Fd_F,
    \qquad
    u=λ^-+κ_F(d_F)_+
  \end{align}
  on a neighborhood of $F$, where $(d_F)_+ := \max\{0, d_F\}$.
  Moreover,
  \begin{align}
    ν\restriction_F=\frac{κ_F}{2π}\hd 1\restriction_F.
  \end{align}
\end{lemma}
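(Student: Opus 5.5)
Near the open edge $F$ the plan is to use only three facts: $u$ is continuous (\cref{thm:cf-potential-continuity}), $u$ is affine on each of the two adjacent cells, and $\Delta u=2\pi\nu\ge0$ (\cref{thm:cf-potential-identities}). Fix a point $p\in F$ and a small ball $B=B(p,r)$ that meets no other line of $\mathcal L$; it exists because $F$ avoids the finitely many vertices. The line $L\supset F$ splits $B$ into two half-discs, one in $C^+$ and one in $C^-$. On these, $u=\lambda^+$ and $u=\lambda^-$ respectively.

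\emph{Step 1 (the difference is a multiple of $d_F$).} Continuity of $u$ across $L\cap B$ gives $\lambda^+=\lambda^-$ on a segment of $L$. The function $\lambda^+-\lambda^-$ is affine and vanishes on a non-degenerate segment, so it vanishes on all of $L$. Hence $\lambda^+-\lambda^-=\kappa_F d_F$ for some real $\kappa_F$, because an affine function vanishing on $L$ is a scalar multiple of the signed distance to $L$. It follows that $u=\lambda^-+\kappa_F(d_F)_+$ on $B$. Covering $F$ by such balls, with the same constant $\kappa_F$ throughout since $\lambda^\pm$ are global affine functions, gives the formula on a neighborhood of $F$.

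\emph{Step 2 (sign and the jump formula).} Compute $\Delta\bigl(\kappa_F(d_F)_+\bigr)$ distributionally on $B$. For $\varphi\in C_c^\infty(B)$, Green's formula on the half-disc $\{d_F>0\}$ gives
\[
\int \kappa_F (d_F)_+\,\Delta\varphi \,dm_2=\kappa_F\int_{L}\varphi\,d\hd1.
\]
Here the boundary term involving $\partial_n\varphi$ vanishes because $(d_F)_+=0$ on $L$, and $\Delta d_F=0$ inside the half-disc. The remaining term comes from $\partial_n d_F$, which equals $-1$ for the outward normal; the sign should be checked carefully, but the result must be $+\kappa_F\hd1\restriction_L$, since $(d_F)_+$ is convex. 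Since $\lambda^-$ is harmonic, this yields $2\pi\nu\restriction_B=\kappa_F\hd1\restriction_{L\cap B}$. Positivity of $\nu$ then forces $\kappa_F\ge0$. Patching over a cover of $F$ gives $\nu\restriction_F=\frac{\kappa_F}{2\pi}\hd1\restriction_F$.

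The only delicate point is making sure no part of $E$ lies inside the cells near $F$. This holds because each cell is contained in $\CC\setminus E$ by \cref{thm:cf-support-lines}, so $u$ really is affine on each half-disc. Apart from that, the main obstacle is just the sign bookkeeping in the Green's formula computation.
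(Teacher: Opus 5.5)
Your proposal is correct and follows the same route as the paper. Continuity forces $\lambda^+-\lambda^-$ to vanish on the line through $F$, so it equals $\kappa_F d_F$; the distributional Laplacian of $\lambda^-+\kappa_F(d_F)_+$ on a small ball centered on $F$ is then $\kappa_F\hd1\restriction_F$, and positivity of $\nu=\frac{1}{2\pi}\Delta u$ gives both $\kappa_F\ge0$ and the density formula. The sign you flagged does come out as you claim: Green's formula on $\{d_F>0\}$ leaves only $-\int_L\varphi\,\partial_n d_F\,d\hd1=+\int_L\varphi\,d\hd1$, with $\partial_n d_F=-1$ for the outward normal. The paper simply asserts this step without computing it.
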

\begin{proof}
  Since $u$ is continuous and agrees with $λ^+$ and $λ^-$ on $C^+$ and $C^-$, respectively, $λ^+=λ^-$ on $F$.
  Furthermore $λ^+ - λ^-$ is affine, so we have $λ^+ - λ^- = κ_F d_F$ for some $κ_F\in\RR$.
  On a neighborhood of $F$, this implies $u=λ^-+κ_F(d_F)_+$.

  Choose a ball $B$ centered on $F$ which meets no other open edge.
  Without loss of generality, let $F\cap B=\{x_2=0\}\cap B$ and $d_F(x_1,x_2)=x_2$.
  We have
  \begin{align}
    u= λ^-+κ_F(x_2)_+
  \end{align}
  on $B$, where $(x_2)_+ := \max\{0,x_2\}$.
  Thus $\Delta u=κ_F\hd 1\restriction_F$ on $B$.
  Since $\Delta u=2πν$ and $ν$ is positive, $κ_F\ge0$ and the claimed identity follows.
\end{proof}

\begin{lemma}\label{thm:cf-convex}
  The function $u$ is convex on $\CC$.
\end{lemma}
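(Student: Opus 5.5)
The plan is to combine two facts. First, $u$ is subharmonic, since $\Delta u=2\pi\nu\ge0$. Second, $u$ is piecewise affine on the cells, and across every open edge it has the form $\lambda^-+\kappa_F(d_F)_+$ with $\kappa_F\ge0$ by \cref{thm:cf-edge-model}. The key observation is that convexity of a continuous function on $\CC$ is equivalent to convexity of its restriction to every line, and for a continuous piecewise affine function it suffices to check local convexity near every point.

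Near a point of a cell, $u$ is affine, so there is nothing to check. Near a point of an open edge $F$, \cref{thm:cf-edge-model} gives $u=\lambda^-+\kappa_F(d_F)_+$ on a neighbourhood. This is the sum of an affine function and a nonnegative multiple of the convex function $(d_F)_+$, so it is convex there. Points of $\CC$ off the union of the lines in $\mathcal L$ lie in cells. Hence $u$ is locally convex at every point except possibly the finitely many vertices.

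The vertices are the main obstacle, and I would handle them by a restriction-to-lines argument. Fix a line $\ell$ in $\CC$. I claim it suffices to treat lines $\ell$ that avoid all vertices and are not parallel to any line of $\mathcal L$.
\begin{itemize}
\item For such an $\ell$, the restriction $u|_\ell$ is a continuous function of one variable.
\item It is affine between consecutive crossings of lines of $\mathcal L$.
\item At each crossing point, which lies on an open edge, it is locally convex by the previous paragraph.
\item A continuous, piecewise affine function of one real variable that is locally convex at each breakpoint is convex, because its slopes are nondecreasing.
\end{itemize}
Now take an arbitrary segment $[z,w]$ and approximate it by segments $[z_n,w_n]$ lying on such generic lines; the excluded directions and positions form a null set. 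Then
\[
u\bigl(tz_n+(1-t)w_n\bigr)\le tu(z_n)+(1-t)u(w_n),
\]
and passing to the limit using the continuity of $u$ from \cref{thm:cf-potential-continuity} gives the convexity inequality on $[z,w]$. Thus $u$ is convex on $\CC$.

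Two points need care. One is that every line of $\mathcal L$ crossed by $\ell$ is crossed at a point of an open edge that separates two cells, which holds because $\ell$ avoids the vertices. The other is that \cref{thm:cf-edge-model} applies even if the two adjacent cells carry the same affine piece; in that case $\kappa_F=0$ and the conclusion is trivially fine.
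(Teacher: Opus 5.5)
Your proposal is correct and is essentially the paper's proof. You restrict $u$ to perturbed segments that avoid the vertices of $\mathcal L$ (the paper only asks that they not lie in an equality line; your non-parallel condition is a slightly cleaner variant), use the edge model to get nonnegative slope jumps $\kappa_F|a_F|\ge 0$ at each crossing of an open edge, and pass to the limit using the continuity of $u$; your preliminary local-convexity discussion away from vertices is just a repackaging of the same slope-jump observation.
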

\begin{proof}
  It is enough to prove convexity on every line segment.
  Fix distinct points $z,w\in\CC$.
  Choose perturbations $[z_ε,w_ε]$ converging to $[z,w]$ such that each perturbed segment avoids the vertices of $\mathcal L$ and is not contained in any equality line (recall that $\mathcal L$, the collection of equality lines, is finite).
  Parameterize one such segment by
  \begin{align}
    γ_ε(t)=z_ε+t(w_ε-z_ε),\qquad t\in[0,1].
  \end{align}
  Then $g_ε(t):=u(γ_ε(t))$ is continuous and piecewise affine, with breakpoints only at transverse intersections with open edges.
  Near such a breakpoint $t_0$, \cref{thm:cf-edge-model} gives
  \begin{align}
    g_ε(t)=at+b+κ_F\paren[\big]{a_F(t-t_0)}_+
  \end{align}
  for some $a,b,a_F\in\RR$ with $a_F\neq0$ and $κ_F\ge0$.
  The slope jump at $t_0$ is $κ_F|a_F|\ge0$.
  Therefore the slopes of $g_ε$ are nondecreasing, so $g_ε$ is convex on $[0,1]$.
  Letting $ε\to0$ and using continuity of $u$ gives convexity of the restriction of $u$ to $[z,w]$.
\end{proof}

\begin{lemma}\label{thm:cf-max-representation}
  On $\CC$,
  \begin{align}
    u=\max\{λ_1,\dots,λ_N\}.
  \end{align}
\end{lemma}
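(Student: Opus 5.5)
We prove the two inequalities $u\ge\max_\alpha\lambda_\alpha$ and $u\le\max_\alpha\lambda_\alpha$ separately.

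\emph{Lower bound.} We use the convexity of $u$ from \cref{thm:cf-convex}. Fix $\alpha\in\{1,\dots,N\}$. By definition, $\lambda_\alpha$ agrees with $u$ on some component $\Omega_j$, which is a non-empty open set. Pick $z_1\in\Omega_j$. Near $z_1$ the function $u$ is affine and equal to $\lambda_\alpha$, so $\lambda_\alpha$ is the tangent plane of $u$ at $z_1$. A convex function lies above each of its supporting affine functions, so $u\ge\lambda_\alpha$ on $\CC$. To make this rigorous, restrict to any line through $z_1$: there $u$ is a convex function of one variable that is affine on an interval around $z_1$, so it lies above that affine extension. Taking the maximum over $\alpha$ gives $u\ge\max_\alpha\lambda_\alpha$.

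\emph{Upper bound.} Let $z\in\CC\setminus\bigcup_{L\in\mathcal L}L$. Then $z$ lies in some cell $C$, and $u(z)=\lambda_{\alpha(C)}(z)\le\max_\alpha\lambda_\alpha(z)$. The union of the finitely many equality lines is closed with empty interior, so its complement is dense in $\CC$. Both $u$ (by \cref{thm:cf-potential-continuity}) and $\max_\alpha\lambda_\alpha$ are continuous, so the inequality extends to all of $\CC$. Combining the two bounds gives $u=\max\{\lambda_1,\dots,\lambda_N\}$.

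\emph{Main obstacle.} The delicate step is justifying the supporting-plane property in the lower bound. The restriction to lines handles it cleanly: a one-variable convex function that coincides with an affine function $\ell$ on an open interval satisfies $g\ge\ell$ everywhere. This follows from the monotonicity of difference quotients, since the slopes of $g$ to the right of the interval are at least $\ell'$ and to the left at most $\ell'$. Everything else is routine bookkeeping.
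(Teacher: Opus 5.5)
Your proof is correct and is essentially the paper's argument. The lower bound comes from the convexity of $u$ (\cref{thm:cf-convex}) restricted to segments from a point where $u$ coincides with $\lambda_\alpha$ on a neighbourhood, and the upper bound from $u=\lambda_{\alpha(C)}$ on the dense union of cells together with continuity of $u$; the only cosmetic difference is that you base the point in a component $\Omega_j$ rather than in a cell, which is harmless and even spares the check that some cell carries $\lambda_\alpha$.
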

\begin{proof}
  Fix $α\in\{1,\dots,N\}$ and choose a cell on which $u=λ_α$.
  Let $z_α$ be a point in this cell.
  For any $w\in\CC$, define
  \begin{align}
    h(t):=u\paren[\big]{(1-t)z_α+tw}-λ_α\paren[\big]{(1-t)z_α+tw},
    \qquad t\in[0,1].
  \end{align}
  By \cref{thm:cf-convex}, $h$ is convex.
  Since $u=λ_α$ in a neighborhood of $z_α$, $h(t)=0$ for all sufficiently small $t\ge0$.
  Hence $h(1)\ge0$, and so $u(w)\ge λ_α(w)$.
  Taking the maximum over $α$ yields
  \begin{align}
    u\ge\max\{λ_1,\dots,λ_N\}.
  \end{align}
  The reverse inequality holds on the (dense) union of cells, and then everywhere by continuity.
\end{proof}

For each $α\in\{1,\dots,N\}$, define
\begin{align}
  P_α:=\{z\in\CC:λ_α(z)>λ_β(z)\text{ for every }β\neq α\}.
\end{align}

\begin{lemma}\label{thm:cf-regions}
  The connected components of $\CC\setminus E$ are exactly the sets $P_α$ which are non-empty.
  In particular, every component of $\CC\setminus E$ is a convex, possibly unbounded, polygonal domain.
\end{lemma}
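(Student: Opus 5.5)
We would prove this using \cref{thm:cf-max-representation}, namely $u=\max\{λ_1,\dots,λ_N\}$, together with the fact that the $λ_α$ are distinct affine functions. First we show that each $P_α$ is a convex polygonal domain. Indeed, $P_α=\bigcap_{β\neq α}\{λ_α-λ_β>0\}$. Since the $λ_α$ are distinct, each difference $λ_α-λ_β$ is a non-zero affine function. Hence each set $\{λ_α-λ_β>0\}$ is either an open half-plane or, when the difference is a nonzero constant, empty or all of $\CC$. A finite intersection of such sets is an open convex polygonal region, so each non-empty $P_α$ is connected, open and convex.

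Next we show that $P_α\subset\CC\setminus E$. On $P_α$ we have $u=λ_α$ by the max representation, so $u$ is affine and in particular harmonic there. By \cref{thm:cf-potential-identities}, $2πν=\Delta u=0$ on $P_α$. Since $P_α$ is open, $ν(P_α)=0$, and therefore $P_α\cap E=\emptyset$.

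Conversely, let $Ω$ be a component of $\CC\setminus E$. Then $u=λ_j$ on $Ω$ for some $j$, and $j$ equals some index $α$. We claim $Ω\subset P_α$. Suppose instead that some $z\in Ω$ has $λ_β(z)\ge λ_α(z)$ for some $β\neq α$. Since $u=λ_α=\max_γ λ_γ$ on the open set $Ω$, we get $λ_β\le λ_α$ on $Ω$ with equality at $z$. An affine function $λ_α-λ_β\ge 0$ on an open set that vanishes at an interior point must vanish identically, which contradicts distinctness. Hence $Ω\subset P_α$.

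It remains to combine these facts. Since $P_α$ is connected and contained in $\CC\setminus E$, and it contains the component $Ω$, we get $P_α=Ω$. Every non-empty $P_α$ meets some component: a point $z\in P_α$ lies outside $E$ by the previous step, so $z$ lies in some component $Ω$, and that component equals $P_{α'}$ for some $α'$. Since the $P$'s are disjoint, $α'=α$. Thus the components are exactly the non-empty $P_α$, and each is a convex, possibly unbounded, polygonal domain.

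The main subtlety is the step showing $P_α\cap E=\emptyset$. Here we must use that $\Delta u=2πν$ holds distributionally on all of $\CC$, not only off $E$; this is exactly what \cref{thm:cf-potential-identities} provides. The rest is elementary convexity.
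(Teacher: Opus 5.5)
Your proof is correct and follows essentially the same route as the paper's. Each $P_\alpha$ is convex as a finite intersection of open half-planes; $P_\alpha\subset\CC\setminus E$ because $u=\lambda_\alpha$ is affine there and $\Delta u=2\pi\nu$; and each component $\Omega$ lies in $P_\alpha$ because $\lambda_\alpha-\lambda_\beta\ge 0$ on $\Omega$ cannot vanish at an interior point unless $\lambda_\alpha=\lambda_\beta$. You are slightly more explicit than the paper about the case where $\lambda_\alpha-\lambda_\beta$ is a nonzero constant and about checking that every non-empty $P_\alpha$ is actually a component.
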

\begin{proof}
  Each $P_α$ is an intersection of finitely many open half-planes, hence is convex and polygonal.
  If $z\in P_α$, then \cref{thm:cf-max-representation} gives $u(z)=λ_α(z)$ with strict inequality against the other affine pieces in a neighborhood of $z$.
  Thus $u$ is affine near $z$, and $z\notin E$.
  Hence $P_α\subset\CC\setminus E$.

  Conversely, let $Ω$ be a connected component of $\CC\setminus E$, and suppose that $u=λ_α$ on $Ω$.
  We claim that $Ω$ is disjoint from every equality line $L_{α,β}$ with $β\neq α$.
  If $z\in Ω\cap L_{α,β}$, then $λ_α-λ_β$ is nonnegative on $Ω$ by \cref{thm:cf-max-representation} and vanishes at the interior point $z$ of $Ω$.
  Since $λ_α-λ_β$ is affine, this is possible only if $λ_α=λ_β$ as affine functions, contradicting the choice of distinct affine pieces.

  Therefore the sign of $λ_α-λ_β$ is constant on $Ω$ for each $β\neq α$.
  Since $u=λ_α=\max\{λ_1,\dots,λ_N\}$ on $Ω$, we have $λ_α>λ_β$ on $Ω$ for every $β\neq α$.
  Thus $Ω\subset P_α$.
  Since $P_α$ is connected, contained in $\CC\setminus E$, and meets $Ω$, we also have $P_α\subset Ω$.
\end{proof}

\begin{proof}[Proof of \cref{thm:cauchy-flat-finite-components}]
  If $\CC\setminus E$ had only one connected component, then only one affine piece would occur and \cref{thm:cf-support-lines} would force $E=\emptyset$, which contradicts that $ν$ is non-trivial.
  Thus $\CC\setminus E$ has at least two connected components.
  By \cref{thm:cf-regions}, every component is a convex, possibly unbounded, polygonal domain.

  The support $E$ is contained in the finite union of equality lines by \cref{thm:cf-support-lines}.
  Away from the finitely many vertices of $\mathcal L$, a point of $E$ lies on a single open edge.
  If the same affine piece occurred on both sides of such an edge $F$, \cref{thm:cf-edge-model} would imply that $ν\restriction_F=0$.
  Hence $E$ is contained in the union of the open edges on which the adjacent affine pieces differ, together with the finitely many vertices of $\mathcal L$.
  Taking into account \cref{thm:cf-max-representation}, we conclude that $E = \bigcup_{α=1}^N \partial P_α$.
  The vertices carry no $ν$-mass by the upper linear growth bound.
  Applying \cref{thm:cf-edge-model} to the remaining edges gives constants $a_j>0$ and sides $e_j$ of the polygonal decomposition such that
  \begin{align}
    ν=\sum_j a_j\hd 1\restriction_{e_j}.
  \end{align}
  This proves the proposition.
\end{proof}

\section{A support-dust obstruction with vanishing \texorpdfstring{$\alpha$}{alpha}-numbers}\label{app:finite-support-dust-example}

The examples in this appendix show that the weak flatness hypothesis in \cref{thm:main-theorem} cannot be replaced merely by a measure-theoretic flatness condition.  The construction has two parts.  First we isolate a non-rectifiable measure, due to Preiss \cite{preissGeometryMeasures$mathbbR^n$1987}, which has linear growth, finite support length, positive finite upper density, and vanishing Tolsa's $\alpha$-numbers \cite{tolsaUniformRectifiabilityCalderon2009}.
We present the reduction to a compact density-controlled piece in detail so that the uniform growth condition is transparent.
We then add a very small amount of rectifiable dust to the support of the \enquote{core measure}.
The dust makes the support-distance weight in USFE small near the core while carrying negligible density at almost every point of the core.

If $B=B(x,r)$ and $\sigma,\tau$ are locally finite Radon measures, write
\begin{align}\label{eq:F-B}
  F_B(\sigma,\tau)
  :=
  \sup_{\substack{\varphi\in \operatorname{Lip}_1(\CC)\\ \supp\varphi\subset B}}
  \abs[\Big]{\int \varphi\,d\sigma-\int\varphi\,d\tau} .
\end{align}
Here $\operatorname{Lip}_1(\CC)$ denotes the class of $1$-Lipschitz functions on $\CC$.
The $\alpha$-number used below is
\begin{align}\label{eq:appendix-line-alpha}
  \alpha_{\mu}(x,r)
  :=
  \frac{1}{r^2}
  \inf_{c\ge0,\ L}
  F_{B(x,r)}\left(\mu,c\hd1\restriction_L\right),
\end{align}
where the infimum is over all lines $L\subset\CC$.  We also write
\begin{align}
  N_r(A):=\{z\in\CC:\dist(z,A)<r\}.
\end{align}

\begin{lemma}[Flat tangents imply vanishing $\alpha$-numbers]
  Let $\Phi$ be a finite Radon measure on $\CC$, and suppose that, at a point $x$,
  \begin{align}\label{eq:flat-tangent-alpha-density-bound}
    \limsup_{r\to0}\frac{\Phi(B(x,r))}{r}<\infty .
  \end{align}
  If every non-zero tangent measure of $\Phi$ at $x$ is of the form $c\hd1\restriction_L$, with $c>0$ and $L$ a line, then
  \begin{align}
    \alpha_{\Phi}(x,r)\to0
    \qquad\text{as }r\to0 .
  \end{align}
\end{lemma}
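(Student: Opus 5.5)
We argue by contradiction and use compactness of blow-ups. Suppose there are $\epsilon_0>0$ and radii $r_k\downarrow0$ with $\alpha_\Phi(x,r_k)\ge\epsilon_0$. Set
\[
  \Phi_k:=\frac{\Phi(x+r_k\,\cdot)}{r_k},
\]
the push-forward of $\Phi$ under $y\mapsto (y-x)/r_k$, divided by $r_k$. By \cref{eq:flat-tangent-alpha-density-bound}, there are $C<\infty$ and $r_*>0$ with $\Phi(B(x,r))\le Cr$ for $r<r_*$. Hence for every $R>0$ we have $\Phi_k(B(0,R))\le CR$ once $k$ is large, so the $\Phi_k$ are locally uniformly bounded. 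Passing to a subsequence, $\Phi_k\weaklyto\nu$ for a Radon measure $\nu$ with $\nu(B(0,R))\le CR$. Either $\nu=0$, or by hypothesis $\nu=c\hd1\restriction_L$ with $c>0$ and $L$ a line.

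Next we record the scaling of $F_B$ and $\alpha$. If $\varphi\in\operatorname{Lip}_1$ is supported in $B(x,r_k)$, then $\psi(y):=\varphi(x+r_ky)/r_k$ is $1$-Lipschitz, supported in $B(0,1)$, and
\[
  \int\varphi\,d\Phi=r_k^2\int\psi\,d\Phi_k .
\]
The same identity holds with $\Phi$ replaced by $c\hd1\restriction_{L'}$ and $\Phi_k$ by $c\hd1\restriction_{(L'-x)/r_k}$. Therefore
\[
  \alpha_\Phi(x,r_k)=\inf_{c\ge0,\,L}F_{B(0,1)}\bigl(\Phi_k,c\hd1\restriction_L\bigr).
\]
With the limit $\nu$, either $\nu=0$ (choose $c=0$) or $\nu=c\hd1\restriction_L$. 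In both cases it suffices to show
\[
  F_{B(0,1)}(\Phi_k,\nu)\to0 .
\]

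The main point is to upgrade weak convergence to convergence that is uniform over the class $\mathcal K$ of $1$-Lipschitz functions supported in $\overline{B(0,1)}$. The class $\mathcal K$ is uniformly bounded by $1$ and equicontinuous, so by Arzelà--Ascoli it is compact in $C(\overline{B(0,2)})$. Given $\theta>0$, pick a finite $\theta$-net $\psi_1,\dots,\psi_m\in\mathcal K$ in the sup norm. For any $\psi\in\mathcal K$ choose $\psi_i$ with $\|\psi-\psi_i\|_\infty\le\theta$; the difference is supported in $\overline{B(0,1)}$. Then
\[
  \Bigl|\int\psi\,d(\Phi_k-\nu)\Bigr|\le\max_i\Bigl|\int\psi_i\,d(\Phi_k-\nu)\Bigr|+\theta\bigl(\Phi_k(\overline{B(0,1)})+\nu(\overline{B(0,1)})\bigr).
\]
The first term tends to $0$ because each $\psi_i\in C_c$. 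The second term is at most $2C\theta+C\theta$ for large $k$, using that $\Phi_k(\overline{B(0,1)})\le\Phi_k(B(0,2))\le 2C$. Letting $k\to\infty$ and then $\theta\to0$ gives $F_{B(0,1)}(\Phi_k,\nu)\to0$. This contradicts $\alpha_\Phi(x,r_k)\ge\epsilon_0$ along the subsequence, and proves the lemma.

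The only delicate point is this uniformity step, together with keeping track of the supports: the test functions in $\mathcal K$ are supported in the closed ball, and the net approximation uses that $\nu$ and the $\Phi_k$ have uniformly bounded mass on a neighbourhood of it. Boundary mass of $\nu$ causes no difficulty, since every $\psi\in\mathcal K$ is continuous and vanishes on $\partial B(0,1)$.
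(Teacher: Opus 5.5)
Your proposal is correct and follows essentially the paper's route: argue by contradiction, blow up along the bad radii to a tangent measure, and compare $\Phi_k$ with the flat limit in $F_{B(0,1)}$. The differences are minor. You dispose of a zero limit directly through the admissible choice $c=0$, whereas the paper proves non-triviality separately by a uniformly convergent subsequence of test functions. You also supply the Arzelà--Ascoli net argument for $F_{B(0,1)}(\Phi_k,\nu)\to0$, which the paper asserts from weak convergence without comment.
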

\begin{proof}
  Suppose not.  Then there are $\epsilon_0>0$ and $r_j\downarrow0$ such that
  \begin{align}\label{eq:bad-alpha-sequence}
    \alpha_{\Phi}(x,r_j)>\epsilon_0
    \qquad\text{for every }j.
  \end{align}
  Let
  \begin{align}
    T_{x,r}(y):=\frac{y-x}{r},
    \qquad
    \Phi_{x,r}:=\frac1r T_{x,r\#}\Phi .
  \end{align}
  By \cref{eq:flat-tangent-alpha-density-bound}, the sequence $(\Phi_{x,r_j})_j$ is locally uniformly bounded.  Passing to a subsequence, assume that $\Phi_{x,r_j}$ converges weakly to a tangent measure $\Psi$ of $\Phi$ at $x$.

  The limit $\Psi$ is non-zero.  Indeed, since the zero measure is allowed in the infimum defining \cref{eq:appendix-line-alpha}, \cref{eq:bad-alpha-sequence} gives
  \begin{align}
    F_{B(0,1)}(\Phi_{x,r_j},0)>\epsilon_0
  \end{align}
  for every $j$.  Hence we may choose $1$-Lipschitz functions $g_j$ supported in $B(0,1)$ with
  \begin{align}
    \abs[\Big]{\int g_j\,d\Phi_{x,r_j}}>\epsilon_0/2 .
  \end{align}
  After passing to a further subsequence, $g_j$ converges uniformly to a $1$-Lipschitz function $g$ supported in $\overline{B(0,1)}$.  The weak convergence then gives $|\int g\,d\Psi|\ge\epsilon_0/2$, so $\Psi\ne0$.

  By assumption,
  \begin{align}
    \Psi=c\hd1\restriction_L
  \end{align}
  for some $c>0$ and some line $L$.  If $g$ is $1$-Lipschitz and supported in $B(0,1)$, then $f_j(y):=r_j g(T_{x,r_j}(y))$ is $1$-Lipschitz and supported in $B(x,r_j)$.  Hence the weak convergence $\Phi_{x,r_j}\to\Psi$ gives
  \begin{align}
    \frac1{r_j^2}
    F_{B(x,r_j)} \paren[\Big]{\Phi,r_jT_{x,r_j}^{-1}{}_{\#}\Psi}
    =F_{B(0,1)}(\Phi_{x,r_j},\Psi)\to0 .
  \end{align}
  Since $r_jT_{x,r_j}^{-1}{}_{\#}\Psi$ is a constant multiple of $\hd1$ on a line, this contradicts \cref{eq:bad-alpha-sequence}.
\end{proof}

\begin{theorem}[A Preiss-type measure with vanishing $\alpha$-numbers]\label{thm:preiss-core-alpha}
  There is a compactly supported finite Radon measure $\nu$ on $\CC$, with $K:=\supp\nu$, such that
  \begin{enumerate}
    \item $\hd1(K)<\infty$;
    \item $\nu$ has linear growth, that is, $\nu(B(x,r))\le Cr$ for all $x\in\CC$ and $r>0$;
    \item $0<\delta_\nu^*(x)<\infty$ for $\nu$-almost every $x$;
    \item $\displaystyle \liminf_{r\to0}\nu(B(x,r))/r=0$ for $\nu$-almost every $x$;
    \item $\alpha_{\nu}(x,r)\to0$ for $\nu$-almost every $x$;
    \item $\nu$ is not $1$-rectifiable.
  \end{enumerate}
\end{theorem}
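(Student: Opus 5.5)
The plan is to start from Preiss's example \cite{preissGeometryMeasures$mathbbR^n$1987}: a finite Radon measure $\Phi$ on $\CC$ with $0<\delta_\Phi^*(x)<\infty$ for $\Phi$-a.e.\ $x$, whose tangent measures at $\Phi$-a.e.\ point are all of the form $c\hd1\restriction_L$, and which is nevertheless not $1$-rectifiable. In that example the tangent measures are flat but the density ratio oscillates, so the lower density vanishes almost everywhere. From $\Phi$ we then extract a compact, density-controlled piece.

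\textbf{Step 1: uniform growth and density bounds.} For $N\in\NN$ let
\begin{align}
  G_N:=\cbrac[\big]{x: \Phi(B(x,r))\le Nr \text{ for } 0<r<1/N}.
\end{align}
Finiteness of the upper density gives $\Phi(\CC\setminus\bigcup_N G_N)=0$. Non-rectifiability of $\Phi$ lets us fix $N$ such that $\Phi\restriction_{G_N}$ is not rectifiable. Choose a compact $K_0\subset G_N$ of positive measure, with $\Phi\restriction_{K_0}$ still non-rectifiable, and set $\nu:=\Phi\restriction_{K_0}$.

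Linear growth (ii) then holds for all $x$ and $r$. For $x\in K_0$ and $r<1/N$ this is the defining bound of $G_N$. For arbitrary $x$, if $B(x,r)$ meets $K_0$ at some $y$, then $\nu(B(x,r))\le\Phi(B(y,2r))\le 2Nr$ when $2r<1/N$. For large $r$ use $\nu(B(x,r))\le\Phi(\CC)\le N\Phi(\CC)\,r$ once $r\ge1/N$, which is a bound of the form $C\,r$.

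\textbf{Step 2: the remaining properties.} Property (vi) holds by the choice of $K_0$. By Lebesgue differentiation, $\nu$-a.e.\ point of $K_0$ is a $\Phi$-density point of $K_0$. At such points $\nu$ and $\Phi$ have the same tangent measures and the same upper and lower densities, so (iii) and (iv) are inherited from $\Phi$. Property (v) follows from the preceding lemma on flat tangents, applied to $\nu$ at these points.

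For (i), note that $K=\supp\nu\subset K_0\subset G_N$, and every point of $G_N$ has upper density at most $N$. Alternatively, one can arrange $\hd1(K)<\infty$ by taking $\Phi$ itself supported on a set of finite length. The point to be careful about is that the density bound holds on all of $K$, not merely almost everywhere; the former follows because $K\subset G_N$. The density theorem \cite[Theorem 6.9]{mattilaGeometrySetsMeasures1995} then gives $\hd1(K)\lesssim\nu(K)<\infty$, since the lower bound $\Phi(B(x,r))>0$ holds on the support.

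That last step is slightly subtle. The comparison needed is $\hd1\restriction_K\lesssim$ (a density-type measure) in the direction of finite length, which requires a \emph{lower} density bound on $K$. We do not have that bound, since (iv) says the lower density vanishes. So we instead rely on Preiss's construction directly, where $\Phi$ is a weighted measure on a Cantor-type set of finite $\hd1$-measure, and we verify $\hd1(K)<\infty$ from the construction.

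\textbf{Main obstacle.} The hardest part is quoting or rebuilding Preiss's example with all the needed properties simultaneously: flat tangents a.e., oscillating density, non-rectifiability, and support of finite length. If the literature version does not directly state that all tangents are flat, I would construct it explicitly. The construction I would try first is a product-type measure along a line with density weights $1+a_k\sin(\cdot)$ on scales $2^{-k}$, with $a_k$ chosen not square-summable, and then check the tangent measures scale by scale.
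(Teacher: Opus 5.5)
Your treatment of (ii)--(vi) is sound and follows the paper's route. You restrict Preiss's measure to a compact piece with a uniform upper bound, work at $\Phi$-density points of $K_0$ where $\Phi(B(x,Rr)\setminus K_0)/r\to0$, and invoke the flat-tangent lemma. The paper instead gets (v) from the direct estimate $\alpha_\nu(x,r)\le\alpha_\Phi(x,r)+C\Phi(B(x,r)\setminus K_0)/r$ rather than by identifying tangents, but either works.

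The genuine gap is (i). Your pieces $G_N$ carry only an \emph{upper} density bound, and the density theorem in that direction gives $\Phi(A)\lesssim N\,\hd1(A)$, which is a lower bound on length. The observation that $\Phi(B(x,r))>0$ on the support is not quantitative. Since $G_N$ contains every point where $\Phi$ has small density at scales below $1/N$, nothing prevents a compact $K_0\subset G_N$, and hence $K=\supp\nu$, from having large or infinite length. You then abandon the density route because you believe a bound on the \emph{lower} density is needed and (iv) rules it out. That is the misidentification. What controls length is a lower bound on the \emph{upper} density at \emph{every} point of the set. If $\delta^*_\Phi\ge\lambda$ on a bounded set $A$, choose for each $x\in A$ a radius $r_x<\rho$ with $\Phi(B(x,r_x))\ge\lambda r_x/2$. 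The Vitali $5r$-covering lemma then gives $\hd1_{10\rho}(A)\lesssim\lambda^{-1}\Phi(N_\rho(A))$, hence $\hd1(A)<\infty$. The vanishing lower density is irrelevant.

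The fix is to build this into the pieces, which is exactly what the paper does with $E_{M,N,Q}$. These are the points of $B(0,Q)$ with $\Phi(B(x,r))\le Mr$ for $0<r<1/N$ such that every $s<1/N$ admits some $r<s$ with $\Phi(B(x,r))\ge r/M$. Since $0<\delta^*_\Phi<\infty$ almost everywhere, these sets exhaust $\Phi$, so one of them carries a non-rectifiable restriction. A compact $K_0$ inside it with $\Phi\restriction_{K_0}$ non-rectifiable then satisfies $K=\supp\nu\subset K_0$ and $\hd1(K)\le\hd1(K_0)<\infty$. The lower bound must hold at every point of $K_0$. That is why it has to be imposed in the definition rather than inferred from (iii), which holds only almost everywhere.

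Your fallbacks do not close the gap. You give no argument that the support of Preiss's measure has finite length, and the paper does not need this. The replacement construction you sketch also cannot work if, as your description suggests, it is supported on a line $L$. A finite measure on $L$ with finite upper density almost everywhere has no singular part with respect to $\hd1\restriction_L$, since that part would have infinite density almost everywhere. So it equals $f\,\hd1\restriction_L$. By Lebesgue differentiation its density then exists almost everywhere and the measure is rectifiable, contradicting (iv) and (vi).
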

\begin{proof}
  The starting point is an example of Preiss \cite[Example~5.9~(2)]{preissGeometryMeasures$mathbbR^n$1987}.  It gives a finite non-zero Radon measure $\Phi$ on $\RR^2$ such that
  \begin{align}\label{eq:preiss-example-densities}
    \limsup_{r\to0}\frac{\Phi(B(x,r))}{r}=2,
    \qquad
    \liminf_{r\to0}\frac{\Phi(B(x,r))}{r}=0,
  \end{align}
  and every non-zero tangent measure of $\Phi$ at $x$ is a constant multiple of arclength measure on a line, for $\Phi$-almost every $x$.  By the preceding lemma,
  \begin{align}\label{eq:preiss-alpha-zero}
    \alpha_{\Phi}(x,r)\to0
    \qquad\text{for }\Phi\text{-almost every }x.
  \end{align}
  The measure $\Phi$ is not $1$-rectifiable (in fact purely unrectifiable).  Indeed, Preiss's rectifiability criterion \cite[Theorem~5.6]{preissGeometryMeasures$mathbbR^n$1987} would force a positive finite one-dimensional density at $\Phi$-almost every point, contradicting the zero lower density in \cref{eq:preiss-example-densities}.

  We now extract a compact piece on which the global growth and finite support-length bounds are transparent.  For integers $M,N,Q\ge1$, let $E_{M,N,Q}$ be the Borel set of all points $x\in B(0,Q)$ such that
  \begin{align}\label{eq:EMN-upper}
    \Phi(B(x,r))\le Mr
    \qquad\text{whenever }0<r<1/N,
  \end{align}
  and such that, for every $0<s<1/N$, there is a radius $0<r<s$ with
  \begin{align}\label{eq:EMN-lower}
    \Phi(B(x,r))\ge M^{-1}r .
  \end{align}
  The sets $E_{M,N,Q}$ are Borel, and by \cref{eq:preiss-example-densities}, their union has full $\Phi$-measure.
  Each $E_{M,N,Q}$ has finite $\hd1$ measure:
  Indeed, given $0<\delta<1/N$, choose for each $x\in E_{M,N,Q}$ a radius $r_x<\delta$ for which \cref{eq:EMN-lower} holds.  By the Vitali covering theorem, there is a disjoint subcollection $B(x_i,r_i)$ such that $E_{M,N,Q}$ is covered by the balls $B(x_i,5r_i)$.  Hence
  \begin{align}
    \hd1_{10\delta}(E_{M,N,Q})
    \lesssim \sum_i r_i
    \le M\sum_i \Phi(B(x_i,r_i))
    \le M\Phi(B(0,Q+2))<\infty .
  \end{align}
  Letting $\delta\downarrow0$ proves $\hd1(E_{M,N,Q})<\infty$.

  Since $\Phi$ is not rectifiable and is carried by the countable union of the sets $E_{M,N,Q}$, there are $M,N,Q$ such that $\Phi\restriction_{E_{M,N,Q}}$ is not rectifiable.  By inner regularity and an increasing compact exhaustion of $E_{M,N,Q}$, there is a compact set
  \begin{align}
    K_0\subset E_{M,N,Q}
  \end{align}
  such that
  \begin{align}
    \nu:=\Phi\restriction_{K_0}
  \end{align}
  is still not rectifiable; otherwise $\Phi\restriction_{E_{M,N,Q}}$ would be a countable sum of rectifiable restrictions.  Let $K:=\supp\nu\subset K_0$.  Then $\hd1(K)<\infty$.

  We next prove linear growth.  If $B(y,r)\cap K=\emptyset$, there is nothing to show.  Otherwise choose $x\in B(y,r)\cap K$.  For $0<r<1/(2N)$, \cref{eq:EMN-upper} gives
  \begin{align}
    \nu(B(y,r))
    \le \Phi(B(x,2r))
    \le 2Mr .
  \end{align}
  For $r\ge1/(2N)$, the same estimate holds after increasing the constant, since $\nu$ is finite.  Thus $\nu(B(y,r))\le Cr$ for all $y,r$.

  By the differentiation theorem, for $\nu$-almost every $x\in K_0$,
  \begin{align}\label{eq:K-density-point-absolute}
    \frac{\Phi(B(x,r)\setminus K_0)}{\Phi(B(x,r))}\to0 .
  \end{align}
  At such points, \cref{eq:EMN-upper} implies
  \begin{align}\label{eq:K-complement-over-r}
    \frac{\Phi(B(x,r)\setminus K_0)}{r}\to0 .
  \end{align}
  Combining \cref{eq:EMN-lower} with \cref{eq:K-density-point-absolute} shows that $\delta_\nu^*(x)>0$, while linear growth gives $\delta_\nu^*(x)<\infty$.  The lower-density identity follows from the second equality in \cref{eq:preiss-example-densities} and the fact that $\nu\le\Phi$.

  Finally, let $x$ be a point for which \cref{eq:preiss-alpha-zero} and \cref{eq:K-complement-over-r} both hold.  If $\varphi$ is supported in $B(x,r)$ and has Lipschitz constant at most $1$, then $\norm{φ}_{∞}\le r$.
  Since $\nu=\Phi\restriction_{K_0}$,
  \begin{align}
    F_{B(x,r)}(\Phi,\nu)
    \le r\,\Phi(B(x,r)\setminus K_0).
  \end{align}
  Dividing by $r^2$ and using \cref{eq:K-complement-over-r}, we get
  \begin{align}
    \alpha_{\nu}(x,r)
    \le \alpha_{\Phi}(x,r)
      +C\frac{\Phi(B(x,r)\setminus K_0)}{r}\to0 .
  \end{align}
  This completes the proof.
\end{proof}

\begin{lemma}[Segment square function estimate]
  There is an absolute constant $C$ such that, for every line segment $S\subset\CC$ and every $g\in L^2(\hd1\restriction_S)$,
  \begin{align}\label{eq:segment-usfe-appendix}
    \int_{\CC\setminus S}
    \abs[\Big]{\int_S \frac{g(\xi)}{(z-\xi)^2}\,d\hd1(\xi)}^2
    \dist(z,S)\,dm_2(z)
    \le C\int_S |g(\xi)|^2\,d\hd1(\xi) .
  \end{align}
\end{lemma}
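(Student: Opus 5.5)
We reduce to the Lipschitz-graph square function bound of \cref{thm:usfe-bdd-lipschitz-graph}, in its scale-invariant form for the real line. By rotation and translation invariance of the left-hand side of \cref{eq:segment-usfe-appendix} (the kernel $1/(z-\xi)^2$ changes only by a unimodular factor, and $\dist(z,S)$ and the measures are invariant), we may assume $S=[a,b]\subset\RR$. Extending $g$ by zero to a function $f\in L^2(\RR)$, the inner integral becomes $\int_{\RR}f(y)(z-y)^{-2}\dl y$.

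The main point is to compare the weight $\dist(z,S)$ with $\dist(z,\RR)=|\Im z|$. We always have $\dist(z,S)\ge|\Im z|$, so the comparison is not directly monotone in the right direction. We therefore split $\CC\setminus S$ into two regions.

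\textbf{Region $U$.} Let $U$ be the set of $z$ with $\Re z\in[a,b]$, or more generally $\dist(z,S)\le 2|\Im z|$. On $U$ we have $\dist(z,S)\lesssim|\Im z|$. The contribution of $U$ is then bounded by
\begin{align}
  \int_{\RR}\int_{\RR}\abs[\Big]{\int_{\RR}\frac{f(y)}{(x+is-y)^2}\dl y}^2|s|\dl s\dl x .
\end{align}
This is the vertical square function $T^|$ for $A\equiv0$. Its $L^2$ bound follows from \cref{thm:usfe-bdd-lipschitz-graph}, or directly: by Plancherel in $x$, the kernel $(x+is)^{-2}$ has Fourier transform $c\,\xi e^{-|s\xi|}\indicator{\{s\xi>0\}}$ up to constants, and $\int|s||\xi|^2e^{-2|s\xi|}\dl s$ is a constant.

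\textbf{Region $V=\CC\setminus U$.} Here $z$ lies near the ends of $S$, inside sectors, with $\dist(z,S)\approx|z-e|$ for the nearest endpoint $e\in\{a,b\}$, and $|z-y|\gtrsim|z-e|+|y-e|$ for $y\in S$. We do not use the line at all; we bound the contribution crudely by a Hardy-type estimate. Let $\rho=|z-e|$. Then
\begin{align}
  \abs[\Big]{\int_S\frac{g(y)}{(z-y)^2}\dl y}\lesssim\int_0^\infty\frac{|\tilde g(t)|}{(\rho+t)^2}\dl t=:\mathcal{K}\tilde g(\rho),
\end{align}
where $\tilde g(t)=|g(e\pm t)|$ is measured from $e$ along $S$. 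Integrating in polar coordinates about $e$ gives $\dl m_2\approx\rho\dl\rho\dl\theta$, so the contribution of $V$ is
\begin{align}
  \lesssim\int_0^\infty(\mathcal{K}\tilde g(\rho))^2\rho^2\dl\rho .
\end{align}
The operator $\tilde g\mapsto\rho\,\mathcal{K}\tilde g(\rho)$ has kernel $\rho/(\rho+t)^2$, which is homogeneous of degree $-1$. By Schur's test with weight $t^{-1/2}$ (equivalently, Hardy's inequality), it is bounded on $L^2(0,\infty)$, with constant $\int_0^\infty u^{1/2}(1+u)^{-2}\dl u<\infty$. Summing over the two endpoints gives the bound $C\norm{g}_2^2$.

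\textbf{Main obstacle.} The only delicate step is the geometric bookkeeping of the splitting. We need $\dist(z,S)\lesssim|\Im z|$ on $U$, and $\dist(z,S)\approx|z-e|$ together with $|z-y|\gtrsim\rho+|y-e|$ on $V$. Choosing $U$ as the set where $\dist(z,S)\le2|\Im z|$ makes both properties elementary:
\begin{itemize}
  \item If $z\in V$, then $\Re z\notin[a,b]$, so the nearest point of $S$ is the endpoint $e$ and $\dist(z,S)=|z-e|$. In addition $|\Re z-e|\ge\frac{\sqrt3}{2}|z-e|$.
  \item For $y\in S$, the points $\Re z$ and $y$ lie on opposite sides of $e$, so $|z-y|\ge|\Re z-y|=|\Re z-e|+|e-y|\gtrsim\rho+|y-e|$.
\end{itemize}
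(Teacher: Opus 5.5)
Your proof is correct and follows essentially the same route as the paper. Plancherel for the vertical square function on the full line handles the region where $\dist(z,S)\lesssim|\Im z|$, and polar coordinates about an endpoint plus Schur's test with weight $t^{-1/2}$ handle the endpoint regions. The only difference is bookkeeping: the paper splits directly into the strip $0<\Re z<L$ (where $\dist(z,S)=|\Im z|$ exactly) and the half-planes $\Re z<0$, $\Re z>L$. On those half-planes $\dist(z,S)=|z-e|$ and $|z-t|^2\ge|z-e|^2+|t-e|^2$ hold throughout, so your cone condition $\dist(z,S)\le2|\Im z|$ is not actually needed.
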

\begin{proof}
  By translation, rotation, and dilation invariance, it is enough to consider $S=[0,L]\subset\RR$.  Extend $g$ by zero to all of $\RR$ and write
  \begin{align}
    Tg(z):=\int_0^L \frac{g(t)}{(z-t)^2}\,dt .
  \end{align}
  We split $\CC\setminus S$ into the vertical strip over the segment and the two endpoint regions.

  First consider the strip $\{z=x+iy:0<x<L,\ y\ne0\}$.  There $\dist(z,S)=|y|$.  For $y>0$, Plancherel's theorem and the standard Fourier transform of $(x+iy)^{-2}$ give
  \begin{align}
    \int_{\RR}|Tg(x+iy)|^2\,dx
    \lesssim
    \int_{\RR}|\xi|^2e^{-c y|\xi|}|\widehat g(\xi)|^2\,d\xi
  \end{align}
  with an absolute constant $c>0$.  Therefore
  \begin{align}
    \int_0^\infty\int_{\RR}|Tg(x+iy)|^2y\,dx\,dy
    &\lesssim
    \int_{\RR}|\widehat g(\xi)|^2|\xi|^2
      \int_0^\infty y e^{-c y|\xi|}\,dy\,d\xi  \\
    &\lesssim \int_{\RR}|\widehat g(\xi)|^2\,d\xi
    \lesssim \int_0^L |g(t)|^2\,dt .
  \end{align}
  The lower half-plane is analogous.
  Restricting the $x$-integral to $0<x<L$ gives the desired estimate in the strip.

  It remains to control the endpoint regions.  We treat $\Omega_-:=\{z:\Re z<0\}$; the region $\Omega_+:=\{z:\Re z>L\}$ is the same after the change of variables $t\mapsto L-t$ and $z\mapsto L-z$.  Write $z=\rho e^{i\theta}$ in $\Omega_-$, so that $\pi/2<\theta<3\pi/2$.  Then $\dist(z,S)=|z|=\rho$, and for $t\ge0$,
  \begin{align}
    |z-t|^2=|z|^2+t^2-2t\Re z\ge \rho^2+t^2 .
  \end{align}
  Hence
  \begin{align}
    \int_{\Omega_-}|Tg(z)|^2\dist(z,S)\,dm_2(z)
    &\lesssim
    \int_0^\infty
      \paren[\Big]{\int_0^L\frac{|g(t)|}{\rho^2+t^2}\,dt}^2\rho^2\,d\rho \\
    &=
    \int_0^\infty
      \paren[\Big]{\int_0^L\frac{\rho |g(t)|}{\rho^2+t^2}\,dt}^2\,d\rho .
  \end{align}
  The operator on $(0,\infty)$ with kernel
  \begin{align}
    K(\rho,t):=\frac{\rho}{\rho^2+t^2}
  \end{align}
  is bounded on $L^2(0,\infty)$.  Indeed, Schur's test with the weight $t^{-1/2}$ gives
  \begin{align}
    \int_0^\infty K(\rho,t)t^{-1/2}\,dt=C\rho^{-1/2},
    \qquad
    \int_0^\infty K(\rho,t)\rho^{-1/2}\,d\rho=Ct^{-1/2}.
  \end{align}
  Thus the contribution of $\Omega_-$ is bounded by $C\norm{g}_{L^2(S)}^2$, and the same estimate holds for $\Omega_+$.  Combining the strip and endpoint estimates proves \cref{eq:segment-usfe-appendix}.
\end{proof}

\begin{theorem}[USFE does not force rectifiability under $\alpha$-number flatness]\label{thm:alpha-usfe-counterexample}\label{thm:finite-support-counterexample}
  There is a compactly supported finite Radon measure $\mu$ on $\CC$, with support $E=\supp\mu$, such that
  \begin{enumerate}
    \item $\hd1(E)<\infty$;
    \item $\mu$ has linear growth;
    \item $0<\delta_\mu^*(x)<\infty$ for $\mu$-almost every $x$;
    \item $\alpha_{\mu}(x,r)\to0$ for $\mu$-almost every $x$;
    \item $\mu$ satisfies USFE;
    \item $\mu$ is not $1$-rectifiable;
    \item $\mu$ is not weakly flat.
  \end{enumerate}
  Consequently, the weak flatness assumption in \cref{thm:main-theorem} cannot be replaced by the necessary measure-theoretic condition $\alpha_{\mu}(x,r)\to0$.
\end{theorem}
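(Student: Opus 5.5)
We will take the core measure $\nu$ from \cref{thm:preiss-core-alpha} and set $\mu:=\nu+\omega$. Here $\omega$ is a very small \enquote{dust} measure: a weighted sum of arclength measures on short segments accumulating onto $K=\supp\nu$. The segments serve to shrink the weight $\dist(z,E)$ near $K$, while carrying so little mass that the densities, the $\alpha$-numbers and the non-rectifiability of $\nu$ are unaffected.

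\emph{Construction.} For each $k\ge1$ we choose a maximal $2^{-k}$-separated set $\{y_{k,j}\}_j\subset K$. By $\hd1(K)<\infty$ (through the Vitali argument from the proof of \cref{thm:preiss-core-alpha}), its cardinality is at most $N_k\lesssim 2^{k}$ times a constant. We place a segment $S_{k,j}$ of length $\ell_k=4^{-k}$ through $y_{k,j}$ and set
\[
\omega:=\sum_{k,j} \epsilon_k\,\hd1\restriction_{S_{k,j}},
\]
with weights $\epsilon_k\downarrow0$ fast enough that $\sum_k\epsilon_k<\infty$. Then $\hd1(E)\le\hd1(K)+\sum_k N_k\ell_k<\infty$, and $\omega$ has linear growth since $\epsilon_k\le1$ and the segments at level $k$ are $2^{-k}$-separated. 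Hence (i) and (ii) hold.

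\emph{Densities, $\alpha$-numbers and non-rectifiability.} Since $\omega(\CC)\lesssim\sum_k\epsilon_k 2^{-k}$ is small, a Vitali/density argument shows that $\lim_{r\to0}\omega(B(x,r))/r=0$ for $\nu$-a.e.\ $x$. This is the standard fact that a measure singular to $\nu$ has zero density with respect to $\nu$-scales. To see that $\omega\perp\nu$, we choose the segments transverse so that $\nu(S_{k,j})=0$, which is possible because $\nu$ charges only countably many lines positively. Given this, (iii) and (iv) for $\nu$-a.e.\ point follow from \cref{thm:preiss-core-alpha}, using $F_B(\mu,\cdot)\le F_B(\nu,\cdot)+r\,\omega(B)$. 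On the segments themselves the claims are trivial. Item (vi) holds because $\mu\restriction_K=\nu$ is not rectifiable.

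\emph{USFE, the main step.} We bound the operator for $f\,d\mu$ by splitting $f$ into its parts on $K$ and on the segments. For the segment parts we use the segment lemma on each $S_{k,j}$, together with $\dist(z,E)\le\dist(z,S_{k,j})$ and almost-orthogonality from the separation, to bound the cross terms. For the core part the key gain is the following. Every $z$ with $\dist(z,K)\approx 2^{-k}$ satisfies $\dist(z,E)\lesssim 2^{-k}$, while $\nu$ has linear growth. We would instead place the dust so densely that $\dist(z,E)\le \theta_k\dist(z,K)$ on the layer $\dist(z,K)\approx2^{-k}$, with $\theta_k\to0$ summable. This means using more segments per scale: $N_k\approx 2^k/\theta_k$ segments of length $\ell_k\ll \theta_k 2^{-k}$, which keeps $\hd1$ finite. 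With this, a Schur-type estimate using linear growth of $\nu$ gives
\[
\int_{\dist(z,K)\approx 2^{-k}}\Bigl|\int \tfrac{f\,d\nu}{(z-\xi)^2}\Bigr|^2\dist(z,E)\,dm_2\lesssim \theta_k\|f\|_{L^2(\nu)}^2,
\]
summable in $k$. The main obstacle is exactly this step: the off-layer interactions between segment pieces and the core must be controlled without AD-regularity. I would handle it with dyadic annulus estimates on $|z-\xi|^{-2}$ and linear growth, since $\sum_k\theta_k<\infty$ absorbs the logarithmic losses.

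\emph{Failure of weak flatness.} Item (vii) follows because at $\nu$-a.e.\ $x$ the upper density is positive, while $\beta_E(x,r)\gtrsim1$ along a sequence of scales, since $\nu$ is purely unrectifiable and $E\supset K$. If instead $\beta_E\delta_\mu\to0$ held on a set of positive $\nu$-measure, then \cref{thm:main-theorem} combined with the USFE just proved would make $\mu$ rectifiable, contradicting (vi). This last step is the cleanest route to (vii).
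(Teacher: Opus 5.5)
\textbf{Gap: the dust as you describe it does not shrink the weight.} Your architecture is the paper's: the core $\nu$ from \cref{thm:preiss-core-alpha}, plus weighted segment dust that makes $\dist(z,E)$ small near $K=\supp\nu$. The problem is that the dust you actually describe does not deliver the one property your core estimate rests on.

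As you notice, segments through a $2^{-k}$-separated subset of $K$ give no gain over $\dist(z,E)\le\dist(z,K)$ on the layer $\{\dist(z,K)\approx 2^{-k}\}$. Your replacement, $N_k\approx 2^k/\theta_k$ segments of length $\ell_k\ll\theta_k2^{-k}$, is still one-dimensional dust. Their $\theta_k2^{-k}$-neighbourhood has area $\lesssim N_k(\theta_k2^{-k})^2\approx\theta_k2^{-k}$, while the layer has area $\gtrsim_K 2^{-k}$. If, as in your first construction, the segments pass through points of $K$, they stay within $\theta_k2^{-k}$ of $K$ and never reach the layer. So $\dist(z,E)\le\theta_k\dist(z,K)$ fails on most of the layer, and the Schur step has no input.

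The dust must be a genuinely two-dimensional net. For each $k$, place short segments in $\CC\setminus K$ centred at a $\theta_k2^{-k}$-net of the whole layer; this needs at least of order $2^k\theta_k^{-2}$ of them. The paper does this with an $h_k/8$-net of $\overline{N_{r_k}(K)}$, $h_k=r_k^3$. Choose lengths and weights \emph{after} the count, so that $\sum_j\hd1(S_j)<\infty$ and $\sum_j a_j<\infty$ over individual segments.

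Relatedly, $N_k\lesssim 2^k$ does not follow from $\hd1(K)<\infty$. The Vitali argument bounds $\hd1$, not packing numbers, and $\nu$ has zero lower density a.e. This is harmless once lengths are chosen after the counts.

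\textbf{The remaining steps, once repaired.} With that repair the rest works, partly by routes different from the paper's.
\begin{itemize}
\item Summable per-segment weights give the dust estimate directly from the segment lemma, Minkowski and Cauchy--Schwarz, with constant $(\sum_j a_j)^{1/2}$. They also give linear growth, $\omega(B(x,r))\le 2r\sum_j a_j$, which your separation argument no longer provides.
\item No almost-orthogonality is needed. Your version with only $\sum_k\epsilon_k<\infty$ is unproven, and the dense segments are no longer separated.
\item There is no core--dust interaction to control, since $|T_\mu f|^2\le 2|T_\nu f|^2+2|T_\omega f|^2$.
\item Your layerwise Schur test, using linear growth of $\nu$, is a valid alternative to the paper's Hilbert--Schmidt bound $\|f\|_{L^2(\nu)}^2\int_K I\,d\nu$. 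The paper's bound needs only $\nu(\CC)<\infty$, but requires the stronger smallness $\dist(z,E)\le r_k^3$ on $N_{r_k}(K)$. Your Schur test yields $\lesssim k\theta_k$ per layer, because the integral of $|z-\xi|^{-2}$ over a bounded layer is $O(k)$. So take $\sum_k k\theta_k<\infty$, not merely $\sum_k\theta_k<\infty$.
\item Your density step is correct once made precise: $\omega\perp\nu$ and Besicovitch differentiation give $\omega(B(x,r))/\nu(B(x,r))\to0$ $\nu$-a.e., and then $\nu(B(x,r))\le Cr$ gives $\omega(B(x,r))/r\to 0$. This avoids the paper's generation bound $A_k\le2^{-k}$.
\item Obtaining (vii) as the contrapositive of \cref{thm:main-theorem} applied to $\mu$, using (iii), (v), (vi), is legitimate. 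As phrased, however, you refute flatness on every set of positive $\nu$-measure, and that needs the localized form implicit in the theorem's proof. The paper argues directly: the two-dimensional dust forces $\beta_E(x,2r_k)\ge c_0$ at every $x\in K$. This gives failure at $\nu$-a.e.\ point without invoking the main theorem.
\end{itemize}
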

\begin{proof}
  Let $\nu$ and $K=\supp\nu$ be the measure from \cref{thm:preiss-core-alpha} which will act as the core.
  We add a small amount of rectifiable dust near $K$.  Set
  \begin{align}
    r_k:=2^{-k}\quad\text{and}\quad h_k:=r_k^3 .
  \end{align}
  For each $k$, choose a finite $h_k/8$-net $\{q_{k,\ell}\}_{\ell=1}^{N_k}$ in the compact set $\overline{N_{r_k}(K)}$.  Since $\hd1(K)<\infty$, the set $K$ has empty interior.  Thus we may choose points $p_{k,\ell}\in\CC\setminus K$ with $|p_{k,\ell}-q_{k,\ell}|<h_k/8$.

  Successively choose closed line segments $S_{k,\ell}\subset\CC\setminus K$ ($K$ is closed with empty interior), centered at $p_{k,\ell}$, so small that
  \begin{align}\label{eq:alpha-dust-segment-lengths}
    S_{k,\ell}\subset B(p_{k,\ell},h_k/16),
    \qquad
    \sum_{k,\ell}\hd1(S_{k,\ell})<\infty,
  \end{align}
  and so that no two distinct segments share a non-trivial subsegment.  This is possible by choosing the lengths sufficiently small and avoiding, at each step, the countably many directions that would make the new segment overlap a previously chosen one.  Since $\nu$ is supported on $K$, each such segment has zero $\nu$-measure.  Put
  \begin{align}
    G_k:=\bigcup_{\ell=1}^{N_k}S_{k,\ell} .
  \end{align}
  Then
  \begin{align}\label{eq:alpha-dust-net-properties}
    G_k\subset N_{2r_k}(K)
    \quad\text{and}\quad
    \overline{N_{r_k}(K)}\subset N_{h_k}(G_k).
  \end{align}

  Enumerate the segments as $\{S_j\}_{j\ge1}$.  Choose positive weights $a_j$ so small that, if
  \begin{align}
    A_k:=\sum_{j:S_j\subset G_k} a_j,
  \end{align}
  then
  \begin{align}\label{eq:alpha-dust-weight-choice}
    A_k\le 2^{-k}
    \quad\text{and}\quad
    \sum_{j=1}^\infty a_j<\infty .
  \end{align}
  Define the measures
  \begin{align}
    \sigma:=\sum_{j=1}^\infty a_j\hd1\restriction{S_j}
    \quad\text{and}\quad
    \mu:=\nu+\sigma,
  \end{align}
  and let $E:=\supp\mu$.  Since $r_k\to0$ and $G_k\subset N_{2r_k}(K)$,
  \begin{align}\label{eq:alpha-dust-support}
    E
    = \overline{ K\cup\bigcup_{k=1}^\infty G_k  }
    =K\cup\bigcup_{k=1}^\infty G_k .
  \end{align}
  The set $E$ is compact, and by \cref{eq:alpha-dust-segment-lengths},
  \begin{align}
    \hd1(E)\le \hd1(K)+\sum_{k,\ell}\hd1(S_{k,\ell})<\infty .
  \end{align}

  The linear growth is immediate from the linear growth of $\nu$ and the bound $\hd1(S_j\cap B(x,r))\le2r$, which gives
  \begin{align}
    \sigma(B(x,r))\le2r\sum_{j=1}^{∞} a_j .
  \end{align}
  Hence $\mu(B(x,r))\lesssim r$ for all $x,r$.  The upper density is therefore finite everywhere.  It is positive at $\nu$-almost every point by \cref{thm:preiss-core-alpha}, and it is positive at $\sigma$-almost every point because $\sigma$ is a countable sum of weighted arclength measures on segments.  This proves $0<\delta_\mu^*<\infty$ at $\mu$-almost every point.

  We next prove that the $\alpha$-numbers vanish.  First let $x$ be a $\nu$-typical point.  Since $\nu(S_j)=0$ for every $j$, the point $x$ lies on none of the dust segments for $\nu$-almost every $x$.  For such an $x$ and fixed $N$, all sufficiently small balls centered at $x$ miss the finitely many generations $G_1,\dots,G_{N-1}$.  The contribution of all later generations satisfies
  \begin{align}
    \sigma(B(x,r))
    \le 2r\sum_{k\ge N}A_k
  \end{align}
  for all sufficiently small $r$.  Letting $N\to\infty$ and using \cref{eq:alpha-dust-weight-choice}, we get
  \begin{align}\label{eq:sigma-density-zero-nu-ae}
    \frac{\sigma(B(x,r))}{r}\to0
    \qquad\text{for }\nu\text{-a.e. }x.
  \end{align}
  Since $\alpha_{\nu}(x,r)\to0$ at $\nu$-almost every $x$, \cref{eq:sigma-density-zero-nu-ae} and the definition of $F_B$ (cf. \cref{eq:F-B}) imply
  \begin{align}
    \alpha_{\mu}(x,r)\to0
  \end{align}
  for $\nu$-almost every $x$.

  Now fix a segment $S_j$.  Since $S_j\subset\CC\setminus K$ is compact and $K$ is closed, $\dist(S_j,K)>0$. 
  $\hd1$-almost every point of $S_j$ lies in the relative interior of $S_j$ and on no other dust segment.
  Fix such a point $x$.  All sufficiently late generations are contained in an arbitrarily small neighborhood of $K$, hence miss a fixed neighborhood of $S_j$.  The remaining earlier generations form a finite family of segments, and, since $x$ lies on none of them except $S_j$, sufficiently small balls centered at $x$ avoid them.  These balls are also disjoint from $K$.  Thus, for sufficiently small r,
  \begin{align}
    \mu(B(x,r)) = a_j\hd1(S_j\cap B(x,r)).
  \end{align}
  Since, for all small enough $r$, the restriction $a_j\hd1\restriction_{S_j}$ agrees in $B(x,r)$ with a constant multiple of arclength measure on a line, this gives $\alpha_{\mu}(x,r)\to0$ for $\sigma$-almost every $x$.
  Thus the $\alpha$-number condition holds $\mu$-almost everywhere.

  The measure $\mu$ is not rectifiable because $\nu\le\mu$ and $\nu$ is not rectifiable.

  We now prove USFE.  Write
  \begin{align}
    T_\rho f(z):=\int \frac{f(\xi)}{(z-\xi)^2}\,d\rho(\xi)
    \quad\text{and}\quad
    w(z):=\dist(z,E).
  \end{align}
  For $\xi\in K$, define
  \begin{align}
    I(\xi):=\int_{\CC\setminus E}\frac{w(z)}{|z-\xi|^4}\,dm_2(z) .
  \end{align}
  We claim that $\sup_{\xi\in K}I(\xi)<\infty$.  If $r_{k+1}<|z-\xi|\le r_k$, then $z\in\overline{N_{r_k}(K)}$, and \cref{eq:alpha-dust-net-properties} gives $w(z)\le h_k$.  Consequently
  \begin{align}
    \int_{\{r_{k+1}<|z-\xi|\le r_k\}}
    \frac{w(z)}{|z-\xi|^4}\,dm_2(z)
    \lesssim \frac{h_k}{r_k^2}=r_k .
  \end{align}
  Summing in $k$ gives a finite contribution.  The region $|z-\xi|>r_1$ is harmless, because $\xi\in E$ and hence $w(z)\le |z-\xi|$.  Thus the claim follows.  By Cauchy--Schwarz and Fubini,
  \begin{align}\label{eq:alpha-dust-core-usfe}
    \int_{\CC\setminus E}|T_\nu f(z)|^2w(z)\,dm_2(z)
    \le \norm{f}_{L^2(\nu)}^2\int_K I(\xi)\,d\nu(\xi)
    \lesssim \norm{f}_{L^2(\nu)}^2 .
  \end{align}

  For the dust part, let $\sigma_j=a_j\hd1\restriction_{S_j}$.  Since $w(z)\le\dist(z,S_j)$ and $\CC\setminus E\subset\CC\setminus S_j$, the segment estimate \cref{eq:segment-usfe-appendix}, applied to the measure $\sigma_j$, gives
  \begin{align}
    \norm{T_{\sigma_j} f}_{L^2(w\,dm_2)}
    \lesssim a_j^{1/2}\norm{f}_{L^2(\sigma_j)} .
  \end{align}
  By Minkowski's inequality, first for finite partial sums and then by passing to the limit in $L^2(w\,dm_2)$,
  \begin{align}\label{eq:alpha-dust-sigma-usfe}
    \norm{T_\sigma f}_{L^2(w\,dm_2)}
    &\le \sum_j \norm{T_{\sigma_j} f}_{L^2(w\,dm_2)} \\
    &\lesssim \sum_j a_j^{1/2}\norm{f}_{L^2(\sigma_j)} \\
    &\le \paren[\Big]{ \sum_j a_j }^{1/2}
      \paren[\Big]{ \sum_j \norm{f}_{L^2(\sigma_j)}^2 }^{1/2}
    \lesssim \norm{f}_{L^2(\sigma)} .
  \end{align}
  Since $T_\mu f=T_\nu f+T_\sigma f$, combining \cref{eq:alpha-dust-core-usfe} and \cref{eq:alpha-dust-sigma-usfe} gives USFE for $\mu$.

  Finally, we verify that weak flatness fails.  Let $x\in K$ be a point with $\delta_\nu^*(x)>0$.  Choose radii $s_i\downarrow0$ such that $\nu(B(x,s_i))\ge c_xs_i$ for some $c_x>0$.  For each $i$, choose $k_i$ with $r_{k_i}\le s_i<2r_{k_i}$.  Then
  \begin{align}\label{eq:alpha-dust-density-lower-weak-flatness}
    \frac{\mu(B(x,2r_{k_i}))}{2r_{k_i}}
    \ge \frac{c_x}{2} .
  \end{align}
  On the other hand, for any line $L$ there is a point $y\in B(x,r_{k_i}/2)$ with $\dist(y,L)\ge c r_{k_i}$, where $c>0$ is absolute.  Since $y\in N_{r_{k_i}}(K)$, \cref{eq:alpha-dust-net-properties} gives a point $e\in G_{k_i}\subset E$ with $|e-y|<h_{k_i}$.  For large $i$, $h_{k_i}\ll r_{k_i}$, so $e\in B(x,r_{k_i})$ and $\dist(e,L)\ge c r_{k_i}/2$.  Taking the infimum over $L$ gives
  \begin{align}
    \beta_E(x,2r_{k_i})\ge c_0>0 .
  \end{align}
  Together with \cref{eq:alpha-dust-density-lower-weak-flatness}, this proves
  \begin{align}
    \limsup_{r\to0}\beta_E(x,r)\delta_\mu(x,r)>0
  \end{align}
  for $\nu$-almost every $x$.  Thus $\mu$ is not weakly flat.
\end{proof}

\begin{rem}\label{rem:jordan-curve-version}
  Containment in a Jordan curve alone does not replace the local two-sided NTA hypothesis in \cref{thm:local-nta-rectifiable}.
  Indeed, if one does not require the $\alpha$-number conclusion in \cref{thm:alpha-usfe-counterexample}, the same support-dust mechanism may be applied to a four-corner Cantor set core, and the auxiliary segments may be chosen so that the resulting support is contained in a Jordan curve.
  That variant satisfies the density assumption and USFE and has finite $\hd1$ support, but it is not rectifiable.
\end{rem}

\printbibliography

\end{document}